\newtheorem{theorem}{Theorem}[section]
\newtheorem{proposition}[theorem]{Proposition}
\newtheorem{corollary}[theorem]{Corollary}
\newtheorem{lemma}[theorem]{Lemma}
\newtheorem*{theorem*}{Theorem}
\newtheorem*{proposition*}{Proposition}
\newtheorem*{corollary*}{Corollary}
\newtheorem*{lemma*}{Lemma}
\newtheorem*{conjecture*}{Conjecture}
\theoremstyle{definition}
\newtheorem{definition}[theorem]{Definition}
\newtheorem*{definition*}{Definition}
\theoremstyle{remark}
\newtheorem{example}[theorem]{Example}
\newtheorem{remark}[theorem]{Remark}
\newtheorem{remarks}[theorem]{Remarks}
\newtheorem*{example*}{Example}
\newtheorem*{examples*}{Examples}
\newtheorem*{remark*}{Remark}
\newtheorem*{remarks*}{Remarks}
\newtheorem*{exercise*}{Exercise}
\numberwithin{equation}{section}
\newcommand\da{\!\downarrow\!}
\newcommand\ra{\rightarrow}
\newcommand\lra{\longrightarrow}
\newcommand\lla{\longleftarrow}
\newcommand\id{\mathrm{id}}
\newcommand\ten{\otimes}
\newcommand\vareps{\varepsilon}
\newcommand\eps{\epsilon}
\renewcommand\H{\mathrm{H}}
\newcommand\z{\mathrm{Z}}
\newcommand\N{\mathbb{N}}
\newcommand\Z{\mathbb{Z}}
\newcommand\bD{\mathbb{D}}
\newcommand\bL{\mathbb{L}}
\newcommand\bR{\mathbb{R}}
\newcommand\bS{\mathbb{S}}
\newcommand\C{\mathcal{C}}
\newcommand\cA{\mathcal{A}}
\newcommand\cD{\mathcal{D}}
\newcommand\cG{\mathcal{G}}
\newcommand\cH{\mathcal{H}}
\newcommand\cL{\mathcal{L}}
\newcommand\cM{\mathcal{M}}
\newcommand\cP{\mathcal{P}}
\newcommand\cS{\mathcal{S}}
\newcommand\V{\mathscr{V}}
\newcommand\sA{\mathscr{A}}
\newcommand\sE{\mathscr{E}}
\newcommand\sR{\mathscr{R}}
\newcommand\fL{\mathfrak{L}}
\newcommand\fS{\mathfrak{S}}
\renewcommand\L{\Lambda}
\newcommand\m{\mathfrak{m}}
\newcommand\Ho{\mathrm{Ho}}
\newcommand\Alg{\mathrm{Alg}}
\newcommand\Mod{\mathrm{Mod}}
\newcommand\Hom{\mathrm{Hom}}
\newcommand\Map{\mathrm{Map}}
\newcommand\HHom{\underline{\mathrm{Hom}}}
\newcommand\im{\mathrm{Im\,}}
\newcommand\Gp{\mathrm{Gp}}
\newcommand\Spec{\mathrm{Spec}\,}
\newcommand\Spf{\mathrm{Spf}\,}
\newcommand\Art{\mathrm{Art}}
\newcommand\Set{\mathrm{Set}}
\newcommand\Sp{\mathrm{Sp}}
\newcommand\FD{\mathrm{FD}}
\newcommand\Lim{\varprojlim}
\newcommand\LLim{\varinjlim}
\DeclareMathOperator*{\holim}{holim}
\newcommand\into{\hookrightarrow}
\newcommand\onto{\twoheadrightarrow}
\newcommand\abuts{\implies}
\newcommand\xra{\xrightarrow}
\newcommand\xla{\xleftarrow}
\newcommand\bt{\bullet}
\newcommand\by{\times}
\newcommand\mc{\mathrm{MC}}
\newcommand\ddef{\mathrm{Def}}
\newcommand\Vect{\mathrm{Vect}}
\newcommand\Symm{\mathrm{Symm}}
\newcommand\Tot{\mathrm{Tot}\,}
\newcommand\diag{\mathrm{diag}\,}
\newcommand\ev{\mathrm{ev}}
\newcommand\ind{\mathrm{ind}}
\newcommand\pro{\mathrm{pro}}
\newcommand\pd{\partial}
\newcommand\half{\frac{1}{2}}
\newcommand\Gr{\mathrm{Gr}}
\newcommand\LA{\mathrm{LA}}
\newcommand\sk{\mathrm{sk}}
\newcommand\cosk{\mathrm{cosk}}
\newcommand\op{\mathrm{opp}}
\newcommand\opp{\mathrm{opp}}
\newcommand\co{\colon\thinspace}
\newcommand\uleft\underleftarrow
\newcommand\uline\underline
\newcommand\uright\underrightarrow
\begin{document}
\title{Unifying derived deformation theories}
\author{J. P. Pridham
\thanks{  This work was supported by Trinity College, Cambridge; and by the Engineering and Physical Sciences Research Council [grant number  EP/F043570/1]. %\newline
%Email: J.P.Pridham@dpmms.cam.ac.uk\newline
%Department of Pure Mathematics and Mathematical Statistics,
%Centre for Mathematical Sciences,
%University of Cambridge,
%Wilberforce Road,
%Cambridge,
%CB3 0WB, 
%U.K.}
}
%\subjclass{14A20, 14B12,18G55}
%\keywords{Deformation theory, derived moduli}
}
\date{}
\maketitle

\begin{abstract}
We develop a framework for derived deformation theory, valid in all characteristics. This gives a model category reconciling local and global approaches to derived moduli theory. In characteristic $0$, we use this to show that the homotopy categories of DGLAs and SHLAs ($L_{\infty}$-algebras) considered by Kontsevich, Hinich and Manetti are equivalent, and are compatible with the derived stacks of To\"en--Vezzosi and Lurie. Another application is that the cohomology groups associated to any classical deformation problem (in any characteristic)   admit the same operations as Andr\'e-Quillen cohomology. 
\end{abstract}

\tableofcontents

\section*{Introduction}
%\addcontentsline{toc}{section}{Introduction}

There are two main approaches to derived moduli theory. The local approach of \cite{Kon}, \cite{Man2} and \cite{hinstack} uses DGLAs and SHLAs to yield derived deformation functors for a very wide range of problems, but is restricted to characteristic zero, with its constructions seldom extending to global problems. By contrast, the derived moduli stacks
of \cite{hag2} and \cite{lurie} give a global formulation, valid in all characteristics, but  have only been constructed for a comparatively narrow class of examples. In this paper, we bridge the gap between the two approaches, % answering \cite{toenseattle} Question 4.6 
as explained in Proposition \ref{seattle}.

In \cite{lurie}, Lurie defines a derived stack as a functor from topological rings to topological spaces, or equivalently from simplicial rings to simplicial sets.  As we are only studying infinitesimal deformations, our functors are instead defined on  Artinian simplicial rings.   The classical deformation groupoid will then be the fundamental groupoid of this functor, restricted to rings (rather than simplicial rings). \cite{ddt2}  shows how to define such functors for all classical deformation problems.

Section \ref{one} contains definitions and basic properties of functors of this form. The crucial new ingredient is    a property of functors $F$ which we call quasi-smoothness\footnote{This terminology is adapted from earlier deformation theory literature; unfortunately, ``quasi-smooth'' nowadays is almost always used to mean virtually LCI.}; this means that $F$ maps small extensions to fibrations, and acyclic small extensions to trivial fibrations. This is partly  motivated by noting that an $\infty$-hypergroupoid  is just a fibrant simplicial set (\cite{duskin}). For any such functor, we can define cohomology groups $\H^i(F)$, for $i \in \Z$, and there are long exact sequences in which these groups simultaneously play the r\^oles of tangent and obstruction spaces (Theorem \ref{robs}). Thus quasi-smoothness   captures the flavour of $\infty$-geometricity considered in \cite{hag2} and \cite{lurie}, without the drawbacks of an inductive construction.

Rather than embedding the geometric stacks in a larger model structure (as for the $D^-$-stacks of  \cite{hag2}), we have  a model category all of whose objects are geometric: in Section \ref{model}, we show how to put a model structure on the category of all left-exact functors from Artinian simplicial rings to simplicial sets. In this model structure, the fibrations are precisely the quasi-smooth maps, so each equivalence class has a quasi-smooth representative. There are analogues of Eilenberg-Maclane spaces for representing cohomology groups, and every weak equivalence class has a unique minimal model. The homotopy category satisfies a  Brown-type representability  property  (Theorem \ref{schrep}) analogous to Schlessinger's Theorem.\footnote{Functors satisfying the property were later dubbed \emph{formal moduli problems} by Lurie.} 

Section \ref{back} provides a summary of existing approaches to derived deformations: Manetti's extended functors, Hinich's formal stacks, and the derived stacks of To\"en--Vezzosi and Lurie. The only new result is
Proposition \ref{toencomp}, which  shows how our geometric stacks  may be regarded as germs of geometric $D^-$-stacks.

Section \ref{alternative} compares the homotopy category of Section \ref{model} with established homotopy categories used to study derived deformations in characteristic zero. Its objects correspond to the geometric
 deformation functors defined  by Manetti in \cite{Man2} (Corollary \ref{allequiv} and Remark \ref{manchar}). We then prove that our homotopy category  
is, in turn, is  equivalent to Kontsevich's category of SHLAs modulo tangent quasi-isomorphisms, as in \cite{Kon} (Proposition \ref{tqiscor}), and to the homotopy categories of DG coalgebras and DGLAs considered by Hinich in \cite{hinstack} (Corollary \ref{cfhin}). 
This shows that all existing approaches to derived deformations are essentially equivalent (Remarks \ref{cfothers} and Theorem \ref{mcallequiv}).

In Section \ref{sopsH}, we establish an Adams-type spectral sequence, enabling us to define a graded Lie algebra structure on the cohomology groups $\H^*(F)$ of any deformation functor. These are all the operations in characteristic $0$, but there are many additional operations in general, and we apply the model structure to outline the operations common to all deformation cohomologies.

I would like to thank the anonymous referees for their diligent work in identifying errors and omissions in the manuscript. I would also like to thank Andrey Lazarev for identifying an error in the published version of Lemma \ref{manrep}.

\section{Generalising smoothness}\label{one}

\subsection{Pro-categories}

In this section, we recall various background results.

\begin{definition}\label{proC}
Given a category $\C$, recall from \cite{descent} that the category  of pro-objects in $\C$, denoted $\pro(\C)$ or $\hat{\C}$, has objects consisting of filtered inverse systems $\{A_{\alpha}\in \C\}$, with 
$$
\Hom_{\pro(\C)}(\{A_{\alpha}\}, \{B_{\beta}\})= \lim_{\substack{\lla \\ \beta}} \lim_{\substack{\lra \\ \alpha}} \Hom_{\C}(A_{\alpha},B_{\beta}).
$$

The category $\ind(\C)$ of ind-objects is given by $\ind(\C)= \pro(\C^{\op})^{\op}$ (in other words, objects are filtered direct systems, and morphisms behave accordingly).
\end{definition}

\begin{definition}
A functor $F: \C \to \Set$ is said to be pro-representable (by $A$) if there exist $A \in \pro(\C)$ and a natural isomorphism
$$
F \cong \Hom_{\pro(\C)}(A, -)
$$
of functors from $\C$ to $\Set$.
\end{definition}

\begin{lemma}\label{profd}
The category $\pro(\FD\Vect_k)$ of pro-finite-dimensional vector spaces over a field $k$ is opposite to the category $\Vect_k$ of all vector spaces over $k$.
\end{lemma}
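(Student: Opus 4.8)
The idea is to realise the asserted anti-equivalence by the single functor $\Phi\colon\pro(\FD\Vect_k)\to\Vect_k^{\op}$ sending a filtered inverse system $\{A_{\alpha}\}$ to the vector space $\varinjlim_{\alpha}A_{\alpha}^{\vee}$, where $(-)^{\vee}=\Hom_k(-,k)$; a morphism of pro-objects is sent, contravariantly, to the induced map on these colimits. Conceptually $\Phi$ is just the composite of the self-duality $\FD\Vect_k\simeq\FD\Vect_k^{\op}$ — valid because biduality $V\to V^{\vee\vee}$ is an isomorphism for finite-dimensional $V$ — with the standard identification $\ind(\FD\Vect_k)\simeq\Vect_k$, together with the relation $\pro(\C^{\op})=\ind(\C)^{\op}$ of Definition \ref{proC}; but it is cleaner to verify $\Phi$ directly.

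First I would check that $\Phi$ is fully faithful. Expanding the Hom-set of $\pro(\FD\Vect_k)$ as in Definition \ref{proC}, applying biduality termwise, and then using that a finite-dimensional vector space is a compact object of $\Vect_k$ (so $\Hom_k(B_{\beta}^{\vee},-)$ commutes with the filtered colimit over $\alpha$), one obtains
\begin{align*}
\Hom_{\pro(\FD\Vect_k)}(\{A_{\alpha}\},\{B_{\beta}\})
&=\varprojlim_{\beta}\varinjlim_{\alpha}\Hom_k(A_{\alpha},B_{\beta})
=\varprojlim_{\beta}\varinjlim_{\alpha}\Hom_k(B_{\beta}^{\vee},A_{\alpha}^{\vee})\\
&=\varprojlim_{\beta}\Hom_k\!\bigl(B_{\beta}^{\vee},\,\varinjlim_{\alpha}A_{\alpha}^{\vee}\bigr)
=\Hom_k\!\bigl(\varinjlim_{\beta}B_{\beta}^{\vee},\,\varinjlim_{\alpha}A_{\alpha}^{\vee}\bigr),
\end{align*}
the last step being that $\Hom_k\bigl(-,\varinjlim_{\alpha}A_{\alpha}^{\vee}\bigr)$ turns the colimit over $\beta$ into a limit. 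The right-hand side is precisely $\Hom_{\Vect_k^{\op}}(\Phi\{A_{\alpha}\},\Phi\{B_{\beta}\})$, and one checks that this chain of identifications is the map induced by $\Phi$.

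Then I would check essential surjectivity: given any $W\in\Vect_k$, let $\{W_i\}$ be the filtered system of finite-dimensional subspaces of $W$, and take the pro-object $\{W_i^{\vee}\}$ with transition maps the surjections $W_j^{\vee}\twoheadrightarrow W_i^{\vee}$ dual to the inclusions $W_i\hookrightarrow W_j$; then $\Phi(\{W_i^{\vee}\})=\varinjlim_i W_i^{\vee\vee}=\varinjlim_i W_i=W$. Hence $\Phi$ is an equivalence onto $\Vect_k^{\op}$.

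The only step requiring real care is the interchange of $\varprojlim_{\beta}$ and $\varinjlim_{\alpha}$ in the displayed computation: this is exactly where finite-dimensionality enters (equivalently, that finite limits commute with filtered colimits in $\Vect_k$), and it is the crux of full faithfulness. Beyond this there is only a routine size remark: $\pro(\FD\Vect_k)$ has a proper class of isomorphism classes but small Hom-sets, exactly as does $\Vect_k^{\op}$, so the equivalence is unproblematic.
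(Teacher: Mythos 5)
Your argument is correct and is essentially the paper's proof: the paper first shows $\varinjlim\colon\ind(\FD\Vect_k)\to\Vect_k$ is an equivalence (full faithfulness via compactness of finite-dimensional spaces, essential surjectivity via the finite-dimensional subspaces of a given $W$) and then dualises to identify $\ind(\FD\Vect_k)$ with $\pro(\FD\Vect_k)^{\op}$, which is exactly your $\Phi$ written as a composite. One small wording quibble: the delicate step in your display is not an interchange of $\varprojlim_{\beta}$ and $\varinjlim_{\alpha}$ but the passage $\varinjlim_{\alpha}\Hom_k(B_{\beta}^{\vee},A_{\alpha}^{\vee})=\Hom_k(B_{\beta}^{\vee},\varinjlim_{\alpha}A_{\alpha}^{\vee})$, i.e.\ compactness of the finite-dimensional object $B_{\beta}^{\vee}$, which your parenthetical remark does correctly identify.
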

\begin{proof}
  There is a functor
$$
\varinjlim: \ind(\FD\Vect) \to \Vect
$$
from the  category of ind-finite-dimensional vector spaces to the category of all vector spaces, given by mapping a direct system $\{V_{\alpha}\}$ to $\varinjlim V_{\alpha}$. This is essentially surjective, since  any vector space is the  direct limit of its finite-dimensional subspaces. It is also full and faithful:
$$
\Hom_{\Vect}(\lim_{\substack{\lra\\ \alpha}} V_{\alpha}, \lim_{\substack{\lra\\ \beta}} W_{\beta})= \lim_{\substack{\lla\\ \alpha}}\Hom_{\Vect}(V_{\alpha}, \lim_{\substack{\lra\\ \beta}} W_{\beta})= \lim_{\substack{\lla\\ \alpha}}\lim_{\substack{\lra\\ \beta}}\Hom_{\Vect}(V_{\alpha},  W_{\beta}),
$$
since $V_{\alpha}$ is finite-dimensional. 

By taking duals, we see that $\ind(\FD\Vect)$ is equivalent to the opposite category of $\widehat{\FD\Vect}$.
\end{proof}

\begin{definition}\label{levelmap}
Recall from \cite{isaksenlim} that  if there  exists a cofiltered category $I$ and a system of morphisms $f_{\alpha}: X_{\alpha} \to Y_{\alpha}$ for $\alpha \in I$ in a category $\C$, then the resulting morphism $\{f_{\alpha}\}_{\alpha \in I}: \{X_{\alpha}\}_{\alpha \in I} \to \{Y_{\alpha}\}_{\alpha \in I}$ in $\pro(\C)$ is called a level map. By \cite{arma} Appendix 3.2, every morphism in $\pro(\C)$ is isomorphic to a level map. 
%
%Given a property $P$ of morphisms in $\C$, we say that a level map $\{f_{\alpha\}_{\alpha \in I}$ is a level $P$-morphism if each $f_{\alpha}$ is a %$P$-morphism.
\end{definition}
%
%\begin{definition}
%A cofiltered category $I$ is said to be cofinite if for any object $\alpha$ in $I$, there are finitely many arrows with source $\alpha$. By %\cite{SGA4.1} Expos\'e 1, 8.1.6, every object of $\pro(\C)$ is equivalent to an object indexed by a cofinite category. 
%\end{definition}

\begin{definition}
We follow \cite{descent} in saying that an object in $\pro(\C)$ is strict if all the transition morphisms are epimorphisms.
\end{definition}

\begin{definition} As in \cite{descent},  we say that a functor is left exact if it preserves all finite limits. 
\end{definition}

\begin{definition}
Say that a pro-object $\{A_{\alpha}\}_{\alpha \in I}$  is saturated if it is strict, and for any $\alpha \in I$ and any epimorphism $A_{\alpha} \to B$, there exists a unique morphism  $\alpha \to \beta$ in $I$ such that $B \cong A_{\beta}$.  As observed in \cite{descent}, every strict pro-object is isomorphic to a saturated  pro-object. Beware that ``saturated'' is not standard terminology.
\end{definition}

%%%Artinian category if all objects are Artinian. This means DCC on subobjects, by which we mean strict monomorphisms.

%Definition of stric monomorphism:
%For a morphism $f : Y\to X$  of a category $A$, denote by $\L_f$ the class of all pairs of morphisms $u1,u2:X \to V$  equalizing $f$. Then $f$  is called a strict monomorphism if any morphism $g: Z\to X$  such that $\L_f \subset \L_g$ has a unique decomposition: $g=f \circ g'$. 

\begin{lemma}\label{lexlemma}
For a functor $F:\C \to \Set$ on an Artinian category $\C$ with all finite limits, the following are equivalent
\begin{enumerate}
\item $F$ is left exact.
\item $F$ is pro-representable.
\item $F$ is pro-representable by a (saturated) strict pro-object.
\end{enumerate}
\end{lemma}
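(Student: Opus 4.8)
The plan is to prove the implications $(3)\Rightarrow(2)\Rightarrow(1)\Rightarrow(3)$. The first is immediate, since a strict pro-object is in particular a pro-object. For $(2)\Rightarrow(1)$, write $F\cong\Hom_{\pro(\C)}(\{A_\alpha\}_\alpha,-)$; then for $B\in\C$ we have $F(B)=\varinjlim_\alpha\Hom_\C(A_\alpha,B)$, a filtered colimit of the representable functors $\Hom_\C(A_\alpha,-)$, each of which preserves all limits. Since filtered colimits commute with finite limits in $\Set$, $F$ preserves finite limits. (This direction uses neither that $\C$ is Artinian nor that it is small.)

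The real content is $(1)\Rightarrow(3)$, which I would break into two stages: first prove $F$ pro-representable, then improve the representing object. For the first stage, let $\C_F$ be the category of elements of $F$: its objects are pairs $(A,\xi)$ with $A\in\C$ and $\xi\in F(A)$, and a morphism $(A,\xi)\to(B,\eta)$ is a map $f\co A\to B$ in $\C$ with $F(f)(\xi)=\eta$. I would show $\C_F$ is cofiltered. It is non-empty, since $F$ preserves the empty limit and hence sends the terminal object of $\C$ to a point. Given $(A,\xi),(B,\eta)$, left-exactness gives $F(A\times B)=F(A)\times F(B)$, so $(\xi,\eta)$ defines an object over $A\times B$ whose two projections map it to $(A,\xi)$ and $(B,\eta)$. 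Given parallel maps $f,g\co(A,\xi)\rightrightarrows(B,\eta)$, let $e\co E\to A$ be the equalizer of $f,g$ in $\C$; by left-exactness $F(e)$ is the equalizer of $F(f),F(g)$, so the common value $F(f)(\xi)=F(g)(\xi)$ forces a unique $\zeta\in F(E)$ with $F(e)(\zeta)=\xi$, and $e\co(E,\zeta)\to(A,\xi)$ coequalizes $f$ and $g$. Thus $\C_F$ is cofiltered, and it is essentially small because $\C$ is. The co-Yoneda lemma then gives a natural isomorphism $F(B)\cong\varinjlim_{(A,\xi)\in\C_F^{\op}}\Hom_\C(A,B)$, exhibiting $F$ as pro-represented by the system $\{A\}_{(A,\xi)}$ (after passing to a small skeleton of $\C_F$).

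For the second stage I would strictify, and this is where the Artinian hypothesis enters. Given any pro-object $\{A_\alpha\}$, inside each $A_\alpha$ the images of the transition maps $A_\beta\to A_\alpha$, over all $\beta$ lying above $\alpha$, form a downward-directed family of subobjects; since $A_\alpha$ has finite length this family stabilises, and replacing each $A_\alpha$ by its stable value $A'_\alpha$ produces a strict pro-object isomorphic to $\{A_\alpha\}$. The verification that the induced transition maps $A'_\beta\to A'_\alpha$ are epimorphisms and that $\{A'_\alpha\}$ is mutually inverse to $\{A_\alpha\}$ in $\pro(\C)$ is routine, and is the substance of the corresponding lemma of \cite{descent}. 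Finally, as recalled in the excerpt, \cite{descent} shows every strict pro-object is isomorphic to a saturated one, which completes $(1)\Rightarrow(3)$.

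The main obstacle I anticipate is the bookkeeping in the strictification step — assembling the stabilised images into an honest pro-object with epimorphic transitions and checking it is isomorphic to the original — together with the minor care needed to ensure $\C_F$ (or a cofinal subcategory) is small enough to count as a pro-object in the sense of Definition \ref{proC}. The cofilteredness verification, by contrast, is a direct transcription of the preservation of finite limits and should be entirely formal.
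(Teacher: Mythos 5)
Your proof is correct and is essentially the argument behind the paper's own ``proof'', which consists solely of the citation to \cite{descent} (Corollary to Proposition 3.1): the two stages you give --- left-exactness forcing the category of elements to be cofiltered, hence $F$ a filtered colimit of representables, followed by strictification of an arbitrary pro-object by stabilising the images of the transition maps via the descending chain condition --- are precisely the content of that reference. The only points requiring care are the ones you already flag: essential smallness of the category of elements (harmless in the cases the lemma is applied to, since isomorphism classes in $\C_{\L}$, $s\C_{\L}$ and the module categories form sets, as the paper itself observes later), and the routine check that the stabilised images assemble into a strict pro-object isomorphic to the original, which uses that these categories have well-behaved images.
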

\begin{proof}
\cite{descent}, Corollary to Proposition 3.1.
\end{proof}

\subsection{Pro-Artinian algebras}

Fix a complete local Noetherian ring $\L$, with maximal ideal $\mu$ and residue field $k$. Let $\C_{\L}$ denote the category of local Artinian $\L$-algebras with residue field $k$. Let   $\hat{\C}_{\L}$ be its pro-category (as in Definition \ref{proC}).

\begin{remark}
Note that our definition of  $\hat{\C}_{\L}$   differs slightly from that in \cite{Sch} (which only admitted pro-Artinian rings with finite-dimensional cotangent spaces). Consequently our notion of pro-representability, which agrees with that in  \cite{descent}, is broader than that in  \cite{Sch}.
\end{remark}

Observe that epimorphisms in $\C_{\L}$ are precisely surjective maps.

\begin{definition}\label{smoothdef}
As in \cite{Man}, we say that a functor $F:\C_{\L}\to \Set$ is smooth if for all surjections $A \to B$ in $\C_{\L}$, the map $F(A) \to F(B)$ is surjective. 
\end{definition}
%smooth homogeneous = smooth lex. Prob won't mention 

\begin{lemma}
There is a fully faithful embedding of  $\hat{\C}_{\L}$ into the category of Hausdorff topological  rings, denoted by $A \mapsto  \uleft{A}$.
\end{lemma}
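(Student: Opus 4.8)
The plan is to construct the embedding explicitly and then verify full faithfulness by unwinding the definitions of morphisms in $\hat{\C}_{\L}$ and of continuous ring homomorphisms. Given a pro-object $A = \{A_{\alpha}\}_{\alpha \in I}$ in $\C_{\L}$, each $A_{\alpha}$ is a (discrete) local Artinian $\L$-algebra; set $\uleft{A} := \varprojlim_{\alpha} A_{\alpha}$, the inverse limit taken in the category of topological rings, where each $A_{\alpha}$ carries the discrete topology. Concretely this is the subring of $\prod_{\alpha} A_{\alpha}$ consisting of compatible systems, equipped with the subspace topology from the product of discrete spaces; equivalently, it is the limit topology, which has the kernels of the projections $\uleft{A}\to A_{\alpha}$ as a neighbourhood basis of $0$. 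Since each $A_{\alpha}$ is discrete hence Hausdorff and the product of Hausdorff spaces is Hausdorff, $\uleft{A}$ is a Hausdorff topological ring. By Definition \ref{levelmap} (every morphism in $\pro(\C)$ is isomorphic to a level map), and using that morphisms in $\C_{\L}$ are honest ring maps, this is clearly functorial once full faithfulness on level maps is checked.

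The main point is full faithfulness. For $A=\{A_{\alpha}\}_{\alpha\in I}$ and $B=\{B_{\beta}\}_{\beta\in J}$ in $\hat{\C}_{\L}$ we must show
$$
\Hom_{\hat{\C}_{\L}}(A,B) \;\xrightarrow{\ \sim\ }\; \Hom_{\mathrm{cts}\text{-}\Ring}(\uleft{A},\uleft{B}).
$$
By definition $\Hom_{\hat{\C}_{\L}}(A,B) = \varprojlim_{\beta}\varinjlim_{\alpha}\Hom_{\C_{\L}}(A_{\alpha},B_{\beta})$, and $\uleft{B}=\varprojlim_{\beta}B_{\beta}$, so a morphism of pro-objects is exactly a compatible system of maps $\uleft{A}\to B_{\beta}$, each of which factors through some $A_{\alpha}$. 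The content is therefore: a continuous ring homomorphism $\varphi\co \uleft{A}\to\uleft{B}$ is the same thing as such a compatible system. Composing $\varphi$ with each projection $\uleft{B}\to B_{\beta}$ gives a continuous map $\uleft{A}\to B_{\beta}$ to a discrete ring; continuity at $0$ forces the kernel to be open, hence to contain $\ker(\uleft{A}\to A_{\alpha})$ for some $\alpha$, so that $\uleft{A}\to B_{\beta}$ factors as $\uleft{A}\to A_{\alpha}\to B_{\beta}$. This recovers an element of $\varinjlim_{\alpha}\Hom_{\C_{\L}}(A_{\alpha},B_{\beta})$, and compatibility over $\beta$ is automatic; this gives the inverse map and proves bijectivity. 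One should also check that the induced map $A_{\alpha}\to B_{\beta}$ is $\L$-linear and sends the maximal ideal into the maximal ideal — but this is automatic, since a unital ring map between local Artinian rings with the same residue field $k$ is automatically local, and $\L$-linearity is inherited from $\uleft{A},\uleft{B}$ being $\L$-algebras via the structure maps $\L\to A_{\alpha}$.

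The step I expect to be the main obstacle — or at least the one requiring the most care — is the reduction from an arbitrary pro-object to one indexed so that $\uleft{A}\to A_{\alpha}$ really does give a neighbourhood basis of $0$, i.e. making sure the topology on $\uleft{A}$ is the one whose open ideals are cofinal among the $\ker(\uleft{A}\to A_\alpha)$, and that this is insensitive to replacing $A$ by an isomorphic pro-object (e.g. a strict or saturated one, in the sense defined above). Here one uses that isomorphic pro-objects have the same limit with the same topology, together with Definition \ref{levelmap} to reduce comparisons of morphisms to level maps. A secondary subtlety is that not every open ideal of $\uleft{A}$ need be of the form $\ker(\uleft{A}\to A_\alpha)$, but every open ideal \emph{contains} one such kernel (since the latter form a neighbourhood basis), and that containment is all that is needed for the factorisation argument above. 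Once this is in place, functoriality, faithfulness, and fullness all drop out of the double-limit description of $\Hom_{\hat{\C}_{\L}}$ together with the universal property of $\varprojlim$.
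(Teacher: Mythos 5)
Your proposal takes essentially the same route as the paper: replace the pro-object by a strict one, form $\uleft{A}=\varprojlim_\alpha A_\alpha$ with the topology whose basic neighbourhoods of $0$ are the kernels $\ker(\uleft{A}\to A_\alpha)$, and identify continuous morphisms with pro-morphisms — the paper merely asserts this last identification, while you correctly spell it out via openness of the kernel of each composite $\uleft{A}\to B_\beta$ with discrete target and the surjectivity of $\uleft{A}\to A_\alpha$ in the strict case. The one soft spot, your assertion that $\L$-linearity of a continuous ring homomorphism is automatic (it is not for general $\L$, e.g.\ $\L=k=\Cx$; one should read the target category as topological $\L$-algebras), is glossed over in exactly the same way by the paper itself, so your argument matches the paper's proof with the omitted verification filled in.
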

\begin{proof}
Take $A \in \hat{\C}_{\L}$. By Lemma \ref{lexlemma}, we may assume that $A= \{A_s\}_{s \in S}$ is strict. Set  $\underleftarrow{A}: = \Lim_s A_s$; since $A$ is strict, the maps $\underleftarrow{A}\to A_s$ are surjective, so we may write $A_s= \underleftarrow{A}/I_s$. 

Define a topology on $\underleftarrow{A}$ by setting $\{a +I_s\,:\, a \in \underleftarrow{A},\, s \in S\}$ to be a basis of open neighbourhoods. Continuous morphisms are now precisely the morphisms in $\hat{\C}_{\L}$.
\end{proof}

\begin{remarks}
Note that giving a  strict pro-object $A= \{A_s\}_{s \in S}$ is equivalent to giving a  $\L$-algebra $ \underleftarrow{A}$ with a maximal ideal $\m(\uleft{A})$, together with a set $S$ of ideals contained in $ \m(\underleftarrow{A})$, with the properties that 
\begin{enumerate}
\item $\bigcap_{I  \in S} I=0$;
 \item  for all $I \in S$, the quotient $ \underleftarrow{A}/I$ is in $\C_{\L}$;
 \item If $I,J \in S$, then there exists $K \in S$ with $K \le I \cap J$ (weak closure).
\end{enumerate}

For a saturated pro-object, there is the additional condition that if $I \in S$ and $J \ge I$ is an ideal, then $J \in S$, and we may then replace weak closure with strong closure ($I, J \in S$ implies $I \cap J \in S$).

Observe that the saturated pro-object isomorphic to $A$ is $\{  \underleftarrow{A}/I\}_{I \in U}$, where $U$ is the set of all open ideals in $ \underleftarrow{A}$. 
\end{remarks}

\begin{definition}\label{surjdef}
Say a morphism $f: A \to B$ in $\hat{\C}_{\L}$ is surjective if the map $\uleft{f}: \uleft{A} \to \uleft{B}$ is surjective.
\end{definition}

\begin{remark}\label{surjchar}
If $\{  \underleftarrow{A}/I\}_{I \in S}$ is  saturated, then  subsets $T \subset S$ satisfying weak closure give rise to surjections with domain $A$, by setting $B= \{  \underleftarrow{A}/I\}_{I \in T}$. Every surjection with domain $A$ is isomorphic to one of this form, and we may also assume that if $I \in T$ and $I \le J \in S$, then $J \in T$ (which corresponds to $B$ being saturated).
\end{remark}

\subsection{Pro-Artinian simplicial algebras}

\begin{definition}\label{N^s}
Given a simplicial complex $V_{\bt}$ in an abelian category, recall that the normalised chain complex $N^s(V)_{\bt}$ is given by  $N^s(V)_n:=\bigcap_{i>0}\ker (\pd_i: V_n \to V_{n-1})$, with differential $\pd_0$. The simplicial Dold-Kan correspondence says that $N^s$ gives an equivalence of categories between simplicial complexes and non-negatively graded chain complexes in any abelian category. Where no ambiguity results, we will denote $N^s$ by $N$.
\end{definition}

\begin{lemma}\label{cotdef}
A simplicial complex $A_{\bt}$  of local $\L$-algebras with residue field $k$ and maximal ideal $\m(A)_{\bt}$   is Artinian if and only if:
\begin{enumerate}
\item the normalisation $N(\cot A)$ of the cotangent space $\cot A:=\m(A)/(\m(A)^2+\mu \m(A))$  is finite-dimensional (i.e. concentrated in finitely many degrees, and finite-dimensional in each degree). 
\item For some $n>0$, $\m(A)^n=0$.
\end{enumerate} 
\end{lemma}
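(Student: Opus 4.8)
The plan is to translate everything through the Dold--Kan correspondence of Definition~\ref{N^s} and to play the $\m(A)$-adic filtration against the tower of small extensions that defines the Artinian objects. Two routine ring-theoretic facts will be used repeatedly. Since the structure map $\L\to A_n$ is local, $\mu$ lands in $\m(A_n)$, so $\mu\m(A)\subseteq\m(A)^2$ and hence $\cot A=\m(A)/\m(A)^2$. Moreover each graded piece $\m(A)^i/\m(A)^{i+1}$ is annihilated by $\m(A)$ (indeed by $\m(A/\m(A)^i)$), so it is a simplicial $k$-vector space, and the multiplication map presents it as a quotient of the $i$-fold tensor power $(\cot A)^{\ten_k i}$.

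For the implication ``$A$ Artinian $\Rightarrow$ (1),(2)'', I would induct on the number of small extensions needed to present $A$ as built up from $k$, the case $A=k$ being trivial. For the inductive step, let $A\onto B$ be a small extension with kernel $I$ --- a finite-dimensional simplicial $k$-vector space with $\m(A)I=0$ --- and suppose $B$ satisfies (1) and (2) with $\m(B)^m=0$. From the exact sequence $0\to I\to\m(A)\to\m(B)\to 0$ one gets $\m(A)^m\subseteq I$ and hence $\m(A)^{m+1}\subseteq\m(A)I=0$, which is (2); and applying the exact functor $N$ to $I\to\cot A\to\cot B\to 0$ exhibits $N(\cot A)$ as an extension of $N(\cot B)$ by a subquotient of $N(I)$, both finite-dimensional, which is (1).

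For the converse, assume (1) and (2), say $\m(A)^n=0$, and consider the tower
$$
A=A/\m(A)^n\;\onto\;\cdots\;\onto\;A/\m(A)^2\;\onto\;A/\m(A)=k ,
$$
each map $A/\m(A)^{i+1}\to A/\m(A)^i$ being a square-zero extension with kernel the simplicial $k$-vector space $\m(A)^i/\m(A)^{i+1}$. The crux is that every such kernel is a \emph{finite-dimensional} simplicial $k$-vector space. It is finite-dimensional in each simplicial degree because it is a quotient of $(\cot A)^{\ten_k i}$, whose degree-$m$ term is a finite tensor power of the finite-dimensional space $(\cot A)_m$ (Dold--Kan applied to (1)); and its normalisation is concentrated in finitely many degrees because a tensor product of skeletal simplicial modules is again skeletal, so that if $N(\cot A)$ vanishes above degree $D$ then $N\big((\cot A)^{\ten_k i}\big)$ vanishes above degree $iD$, and exactness of $N$ passes this to the quotient. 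Finally, a square-zero extension by a finite-dimensional simplicial $k$-module $M$ is a finite composition of elementary small extensions: the skeletal filtration $\sk_0 M\subseteq\sk_1 M\subseteq\cdots$ is by $k$-subspaces, hence by ideals of $A/\m(A)^{i+1}$, with successive quotients the Eilenberg--MacLane modules $K(N(M)_j,j)\cong K(k,j)^{\oplus r_j}$, which split further along coordinate subspaces of $k^{\oplus r_j}$. Composing everything along the tower exhibits $A$ as Artinian.

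I expect the converse to be the delicate part: one must make precise the interaction of the Dold--Kan correspondence with tensor powers and skeleta --- in particular that tensor products and quotients of skeletal (equivalently, bounded) simplicial modules remain bounded --- and one must decompose a square-zero extension by such a module into the small extensions used to define Artinian objects. (If instead ``Artinian'' is taken to mean a levelwise-Artinian simplicial complex with bounded normalisation, the same tower argument yields the boundedness and finiteness claims, and the substantive content of the forward direction becomes the uniform vanishing $\m(A)^n=0$, extracted from the skeletal structure of $A$.) The ring-theoretic inputs and the induction in the forward direction are routine.
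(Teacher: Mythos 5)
The crux here is what ``Artinian'' means. In the paper it is the chain condition: the descending chain condition on simplicial ideals (equivalently on quotient objects), which is exactly what is needed for the pro-representability statement (Lemma \ref{lexlemma}) to apply to $s\C_{\L}$. Your forward direction instead takes as its starting point that $A$ is presented as a finite tower of small extensions of $k$ and inducts on the length of that tower. That is not the definition, and within the paper's development it is not even available at this point: small extensions in $s\C_{\L}$ are defined only after this lemma, and the factorisation of surjections of Artinian objects into small extensions (Lemma \ref{small}) itself rests on the finite length of $N(A)$, which is precisely what conditions (1) and (2) supply. So the induction assumes something essentially equivalent to the conclusion. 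Under the actual definition the necessity argument is different and is the paper's adaptation of the classical Artinian local ring proof: (1) holds because every simplicial subspace of $\cot A$ pulls back to a simplicial ideal of $A$ containing $\m(A)^2+\mu\m(A)$, so the DCC descends to $\cot A$, and an Artinian simplicial $k$-vector space is exactly one whose total normalisation is finite-dimensional; (2) holds because $\{\m(A)^i\}$ is a descending chain of simplicial ideals, hence stabilises, after which the standard minimal-ideal (Nakayama-type) argument forces the stable power to vanish. Neither step appears in your proposal, and your parenthetical alternative reading (``levelwise Artinian with bounded normalisation'') is not the paper's notion either.

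Your converse direction, by contrast, is essentially the paper's argument and is sound: since $\mu\m(A)\subseteq\m(A)^2$, each graded piece $\m(A)^i/\m(A)^{i+1}$ of the $\m(A)$-adic filtration is a quotient of $(\cot A)^{\ten_k i}$, and your skeletal/tensor-power bound correctly shows these are finite-dimensional (hence finite-length, i.e.\ Artinian) simplicial $k$-vector spaces; one then concludes that $A$ has a finite filtration by simplicial ideals with finite-length quotients, hence has finite length and satisfies the DCC. The further decomposition of each square-zero layer into elementary pieces via the skeletal filtration of the kernel is correct but unnecessary for this purpose (and note that by the paper's definition any surjection whose kernel is killed by $\m(A)$ is already a small extension), since it is aimed at your substitute definition of ``Artinian'' rather than at the chain condition the lemma is actually about.
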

\begin{proof}
This is just an adaptation of the standard proof for algebras.
The first condition is clearly necessary, since it is equivalent to saying that the simplicial vector space $\cot A$ is Artinian. The second condition is also necessary, since $\m(A)^n$ is a descending chain of simplicial ideals.
For sufficiency, use the standard filtration of $A$ by powers of $\m(A)$ and $\mu$, whose graded pieces are Artinian simplicial $k$-vector spaces. 
\end{proof}

\begin{definition} We define $s\C_{\L}$ to be the category of Artinian simplicial  local $\L$-algebras, with residue field $k$. 
\end{definition}

\begin{definition}\label{spdef}
Define $\Sp$, the category of spaces, to be  the category $(\hat{\C}_{\L})^{\op}$ (equivalent to the category  of left-exact functors from $\C_{\L}$ to $\Set$, since $\C_{\L}$ is Artinian). Given $R \in (\hat{\C}_{\L})^{\op}$, we let its formal spectrum $\Spf R$ be the corresponding object of the opposite category.
\end{definition}

\begin{proposition}\label{cSp}
The category $\pro(s\C_{\L})$ is equivalent to the category $s\hat{\C}_{\L}$ of simplicial objects in $\hat{\C}_{\L}$. 
\end{proposition}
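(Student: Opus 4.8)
The plan is to write down an explicit functor $\Phi\colon \pro(s\C_\L)\to s\hat\C_\L$ and prove it is an equivalence by checking essential surjectivity and full faithfulness separately; the one substantive input, which gets used in both halves, is that objects of $s\C_\L$ are bounded in the sense of Lemma \ref{cotdef}. The functor itself is the obvious levelwise one: if $A_\bullet\in s\C_\L$ then each $A_n$ lies in $\C_\L$ (the hypothesis on $\cot A$ forces each $(\cot A)_n$, and hence, filtering $A$ by powers of $\m(A)$, each $A_n$, to have finite length), so a pro-object $\{A(\alpha)\}_{\alpha\in I}$ of $s\C_\L$ gives for each $[n]$ a pro-object $\{A(\alpha)_n\}_\alpha$ of $\C_\L$, i.e.\ an object of $\hat\C_\L$, with face and degeneracy operators assembling into level maps (Definition \ref{levelmap}). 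This produces a simplicial object of $\hat\C_\L$, manifestly functorially.

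\emph{Essential surjectivity.} Given $B_\bullet\in s\hat\C_\L$, I would realise it as a levelwise cofiltered limit of objects of $s\C_\L$. For $n\ge 0$ and an open ideal $I\subseteq B_n$ (so $B_n/I\in\C_\L$), set
\[
J(I,n)_m := \{\, x\in B_m : B(g)(x)\in I \text{ for all } g\in\Hom_\Delta([n],[m])\,\},
\]
$\Delta$ being the simplex category; one checks that $J(I,n)_\bullet$ is the largest simplicial ideal of $B_\bullet$ whose degree-$n$ part lies in $I$. Then $B_\bullet/J(I,n)\in s\C_\L$: in degree $m$ it embeds into a finite power of $B_n/I$, hence has finite length; $\m(B_\bullet/J(I,n))^c=0$ once $\m(B_n)^c\subseteq I$; and the normalisation of its maximal ideal vanishes in degrees $\ge n+2$, since under that embedding $d_1,\dots,d_m$ cut down to the components indexed by maps $g\colon[n]\to[m]$ with $\{1,\dots,m\}\subseteq\operatorname{im}(g)$, of which there are none when $m\ge n+2$; so $N(\cot(B_\bullet/J(I,n)))$ is finite-dimensional and concentrated in degrees $\le n+1$, and Lemma \ref{cotdef} applies. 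The $J(I,n)$ are closed under finite intersection, hence cofiltered, and the $J(I,n)_n\subseteq I$ are cofinal among open ideals of $B_n$ for each fixed $n$, so $B_n\cong\{B_n/J(I,n)_n\}$ in $\hat\C_\L$ compatibly in $n$, i.e.\ $B_\bullet\cong\Phi(\{B_\bullet/J\})$.

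\emph{Full faithfulness.} Since limits in $s\hat\C_\L$ are computed levelwise and a pro-object is the limit (in the pro-category) of its own system, $\Phi(\{B(\beta)\}_\beta)=\varprojlim_\beta B(\beta)$, and $\Hom$ out of $\Phi\{A(\alpha)\}$ turns this into a limit over $\beta$; the matching statement on the $\pro(s\C_\L)$ side is the definition of morphisms in a pro-category. So it suffices to show, for $B'\in s\C_\L$, that $\Hom_{s\hat\C_\L}(\Phi\{A(\alpha)\},B')\cong\varinjlim_\alpha\Hom_{s\C_\L}(A(\alpha),B')$. Here Lemma \ref{cotdef} (filter $B'$ by powers of $\m(B')$, bounding the normalisation of each graded piece by the product of the degree bound for $\cot B'$ and the nilpotence order) gives a $D$ with $N(\m(B'))$ concentrated in degrees $\le D$; hence $(d_0,\dots,d_m)\colon B'_m\hookrightarrow (B'_{m-1})^{m+1}$ is injective for $m>D$, so a simplicial algebra map into $B'$ is determined by its components in degrees $\le D$ subject to finitely many conditions in degrees $\le D+1$. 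Thus $\Hom_{s\hat\C_\L}(\Phi\{A(\alpha)\},B')$ is a \emph{finite} limit of copies of $\Hom_{\hat\C_\L}(\{A(\alpha)_n\}_\alpha,B'_n)=\varinjlim_\alpha\Hom_{\C_\L}(A(\alpha)_n,B'_n)$, and filtered colimits commute with finite limits, giving the result. (As an alternative packaging, Lemma \ref{cotdef} also shows $s\C_\L$ has descending chains of subobjects terminating, so is itself Artinian, and Lemma \ref{lexlemma} identifies both sides with left-exact functors $s\C_\L\to\Set$ once this $\Hom$-computation is made.)

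\emph{Where the difficulty lies.} The crux in both halves is the same combinatorial fact: the conditions of Lemma \ref{cotdef} make an object of $s\C_\L$ ``coskeletal above a fixed degree'', so that $\Hom$ into it, and the quotients of $B_\bullet$ used to approximate it, involve only finitely many simplicial degrees. Pinning down the degree bounds cleanly, and verifying that the $J(I,n)$ really are simplicial ideals with quotients as claimed, is the fiddly-but-routine work; the rest is formal pro-category manipulation.
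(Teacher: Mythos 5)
Your argument is correct in substance, but it takes a genuinely different route from the paper's. The paper never checks essential surjectivity or fullness by hand: since $s\C_{\L}$ is Artinian, Lemma \ref{lexlemma} pro-represents $A\mapsto \Hom_{s\hat{\C}_{\L}}(R,A)$ by some $F(R)\in\pro(s\C_{\L})$, giving an adjunction $F\dashv U$ with your $\Phi=U$, and the test objects $A^{\Delta_n}$ (which co-represent $R\mapsto R_n$ on both sides, and lie in $s\C_{\L}$ because $N_iA^{\Delta_n}=0$ for $i\ge n+2$) then show $(FR)_n\cong R_n$, so unit and counit are isomorphisms. Your proof is more constructive, and in fact your quotients are the paper's objects in disguise: $B_\bullet/J(I,n)$ is exactly the image of the canonical map $B_\bullet\to (B_n/I)^{\Delta_n}$, so the same combinatorial finiteness (nothing nondegenerate above level $n+1$) powers both arguments; what the paper's route buys is that the representability lemma absorbs the work your $J(I,n)$ do and no analysis of mapping spaces into a fixed $B'$ is needed, while your route exhibits an explicit cofinal system of ``simplicial open ideals'' and makes the finiteness mechanism visible. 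Two points in your full-faithfulness step deserve to be spelled out. First, your bound $D$ on $N(\m(B'))$ rests on the (true, but not quite immediate) fact that the levelwise tensor product of simplicial vector spaces with normalisation bounded by $a$ and $b$ has normalisation bounded by $a+b$; this follows from the Dold--Kan decomposition and a common-degeneracy count. Second, and more importantly, injectivity of $(\pd_0,\dots,\pd_m)\colon B'_m\to (B'_{m-1})^{m+1}$ for $m>D$ only gives that a simplicial map into $B'$ is \emph{determined} by its low-degree components; for the finite-limit description of $\Hom_{s\hat{\C}_{\L}}(-,B')$ you also need that every compatible family in degrees $\le D+1$ \emph{extends}, i.e.\ that the matching maps $B'_m\to M_mB'$ are surjective for $m\ge D+2$ (equivalently, $B'$ is $(D+1)$-coskeletal). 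This is true and routine --- a compatible tuple over $\pd\Delta^m$ has its relevant normalised components landing in $N_{m-1}B'=N_mB'=0$, so the filler exists and is unique --- but it is precisely the content of your phrase ``subject to finitely many conditions in degrees $\le D+1$'' and should be proved rather than asserted; with it, the exchange of the filtered colimit over $\alpha$ with the finite limit goes through exactly as you say.
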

\begin{proof}
There is a canonical functor $U:\pro(s\C_{\L})\to s\hat{\C}_{\L}$. Given $R \in s\hat{\C}_{\L}$, we may define a left-exact functor on $s\C_{\L}$ by $A \mapsto \Hom_{s\hat{\C}_{\L}}(R,A)$. Since $s\C_{\L}$ is Artinian, Lemma \ref{lexlemma} implies that  this is pro-represented by some $F(R)$ in $\pro(s\C_{\L})$. For $\{S(\alpha)\}_{\alpha}\in \pro(s\C_{\L})$, we then have
$$
\Hom_{\pro(s\C_{\L})}(F(R),\{S(\alpha)\} )= \Lim_{\alpha} \Hom_{s\hat{\C}_{\L}}(R, S(\alpha))= \Hom_{s\hat{\C}_{\L}}(R,U\{S(\alpha)\} ).
$$

Now, given $A \in \C_{\L}$, define $A^{\Delta_n}$  (not to be confused with $A^{\Delta^n}$) to be the simplicial ring 
$$
(A^{\Delta_n})_i:= \overbrace{A\by_k A \by_k \ldots \by_k A}^{\Delta^i_n},
$$
with $\pd_j: (A^{\Delta_n})_i \to (A^{\Delta_n})_{i-1}$ coming from  $\pd^j: \Delta^{i-1} \to \Delta^i$, and $\sigma_j$ coming from $\sigma^j: \Delta^{i+1} \to \Delta^i$. Clearly $ A^{\Delta_n} \in (\C_{\L})^{\Delta^{\op}}$, and since $N_iA^{\Delta_n}=0$ for all $i \ge n+2$, Lemma \ref{cotdef} implies that $A^{\Delta_n} \in s\C_{\L}$.

The key property of $A^{\Delta_n}$ is that for $R \in s\hat{\C}_{\L}$
$$
\Hom_{s\hat{\C}_{\L}}(R, A^{\Delta_n}) \cong \Hom_{\hat{\C}_{\L}}(R_n, A),
$$
which (taking colimits) implies that  for $S \in \pro(s\C_{\L})$, 
$$
\Hom_{\pro(s\C_{\L})}(S, A^{\Delta_n}) = \Hom_{\hat{\C}_{\L}}(S_n, A).
$$

Therefore
\begin{eqnarray*}
\Hom_{\hat{\C}_{\L}}((FR)_n, A) &\cong& \Hom_{\pro(s\C_{\L})}(FR, A^{\Delta_n})\\
&\cong& \Hom_{s\hat{\C}_{\L}}(R, A^{\Delta_n})\\
&\cong& \Hom_{\hat{\C}_{\L}}(R_n, A)
\end{eqnarray*}
for all $A \in \C_{\L}$, so $(FR)_n \cong R_n$, and the unit and counit of the adjunction $F\dashv U$ are both isomorphisms. This implies that the functors $F$ and $U$ are quasi-inverse.
\end{proof}

\begin{definition}\label{spfdef}
Define  $c\Sp:= \Sp^{\Delta}$, which is clearly opposite to the category $s\hat{\C}_{\L}$, and we denote this equivalence by $\Spf: (s\hat{\C}_{\L})^{\op} \to c\Sp$.
 Proposition \ref{cSp}  implies that $c\Sp$  is also equivalent to the category of left-exact functors from $s\C_{\L}$ to $\Set$.
\end{definition}

\begin{definition}
We say that a map $f:A \to B$ in $s\hat{\C}_{\L}$ is acyclic if $\pi_i(f):\pi_i(A) \to \pi_i(B)$ is an isomorphism of pro-Artinian $\L$-modules for all $i$.  $f$ is said to be surjective if each $f_n:A_n \to B_n$ is  surjective.
\end{definition}

Note that for any simplicial abelian group $A$, the homotopy groups can be calculated by $\pi_iA \cong \H_i(NA)$, the homology groups of the normalised chain complex. These in turn are isomorphic to the homology groups of the unnormalised chain complex associated to $A$. 

\begin{definition}
We define a small extension $e:I \to A \to B$ in $s\C_{\L}$ to consist of a surjection $A \to B$ in $s\C_{\L}$ with kernel $I$, such that $\m(A)\cdot I=0$. Note that this implies that $I$ is a simplicial complex of $k$-vector spaces.
\end{definition}

\begin{lemma}\label{small}
Every surjection in $s\C_{\L}$ can be factorised as a composition of small extensions. Every acyclic surjection in $s\C_{\L}$ can be factorised as a composition of acyclic small extensions.  
\end{lemma}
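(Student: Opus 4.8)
The plan is to deduce both claims from the construction of a filtration of the kernel. Write $f\colon A\to B$ and $I=\ker f$. A surjection in $s\C_\L$ is surjective in every simplicial degree, so $0\to I\to A\to B\to 0$ is exact as simplicial $\L$-modules, and the homotopy long exact sequence shows $f$ is acyclic iff $I$ is. The point is that a chain of simplicial ideals $0=J_0\subseteq J_1\subseteq\cdots\subseteq J_m=I$ with $\m(A)J_{j+1}\subseteq J_j$ gives a factorisation $A=A/J_0\to A/J_1\to\cdots\to A/J_m=B$ into small extensions — each $A/J_j$ is a quotient of $A$, hence lies in $s\C_\L$ by the criterion of Lemma~\ref{cotdef} — in which the $j$-th step, with kernel $J_{j+1}/J_j$, is moreover acyclic exactly when $J_{j+1}/J_j$ is acyclic. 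For the first assertion one just takes the $\m(A)$-adic filtration: since $\m(A)^n=0$ for some $n$, put $J_j=\m(A)^{n-j}I$, so that $\m(A)J_{j+1}=\m(A)^{n-j}I=J_j$ and the chain is finite.

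For the second assertion the $\m(A)$-adic filtration is useless, since an acyclic $I$ may have non-acyclic pieces $\m(A)^iI/\m(A)^{i+1}I$. Instead I would build the filtration from the bottom, removing one elementary acyclic ideal at a time. Let $E(n)$ be the simplicial $k$-vector space with $N(E(n))$ equal to $k\xrightarrow{\ \sim\ }k$ in degrees $n+1$ and $n$; it is acyclic, and because the annihilator $(0:_I\m(A))$ is a module over $A/\m(A)$ it carries no structure beyond that of a simplicial vector space, so every simplicial $k$-subspace of it — in particular every copy of $E(n)$ inside it — is automatically a simplicial ideal of $A$. Assuming the claim below, I would induct on $\dim_kN(I)$ (finite as $A$ is Artinian): choosing $E(n)\subseteq(0:_I\m(A))$, the map $A\to A/E(n)$ is an acyclic small extension, while $A/E(n)\to B$ still has acyclic kernel $I/E(n)$ with $\dim_kN(I/E(n))=\dim_kN(I)-2$; iterating terminates at $B$ and exhibits $f$ as a composition of acyclic small extensions.

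The claim to be established is: if $I\neq 0$ is acyclic then $(0:_I\m(A))$ contains a copy of some $E(n)$; equivalently, $N\bigl((0:_I\m(A))\bigr)$ has a nonzero boundary. One first checks — using that each face map $\partial_i\colon A_j\to A_{j-1}$ is a split surjection (via a degeneracy) — that $(0:_I\m(A))$ is closed under faces and degeneracies, so that $N\bigl((0:_I\m(A))\bigr)$ is a subcomplex of $NI$. If its differential were zero, then $N\bigl((0:_I\m(A))\bigr)\subseteq Z_\bullet(NI)=B_\bullet(NI)$, the equality because $I$ is acyclic; taking $m$ maximal with $NI_m\neq 0$ gives $B_m(NI)=0$, hence $N\bigl((0:_I\m(A))\bigr)_m=0$. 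But this space is $(0:_{NI_m}\m(A_m))$, the socle of the nonzero $A_m$-module $NI_m$ over the local Artinian ring $A_m$, and is therefore nonzero — a contradiction. So some $x\in N\bigl((0:_I\m(A))\bigr)$ has $dx\neq 0$, and the simplicial subspace spanned by $x$ and $dx$ is the required $E(\deg x-1)$.

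I expect this last claim to be the crux: the obvious multiplicative filtrations do not have acyclic graded pieces, so one must exploit the $A$-module structure of $I$ (not merely its underlying complex), combining the Dold--Kan description of acyclic simplicial vector spaces as sums of the $E(n)$ with the elementary fact that a nonzero module over a local Artinian ring has nonzero socle. The remaining ingredients — the homotopy long exact sequence, closure of $s\C_\L$ under quotients, and the $\m(A)$-adic filtration — are routine.
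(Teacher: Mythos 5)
Your first assertion is fine and is essentially the paper's argument: the $\m(A)$-adic filtration $J_j=\m(A)^{n-j}I$ is exactly what the paper's induction (quotienting by $\m(A)\cdot I$ at each step) produces. The problem is in the acyclic case, and it sits precisely at the step you yourself flag as the crux. Your construction needs the annihilator $S:=(0:_I\m(A))$ to be a simplicial subobject of $I$, and more specifically needs the degeneracies of your chosen $x$ and $dx$ to be killed by $\m(A)$, since otherwise the span $E$ of $x$, $dx$ and their degeneracies is not an ideal of $A$ at all (let alone one with $\m(A)\cdot E=0$), and the quotient $A/E$ and the claimed acyclic small extension do not exist. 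Your justification only treats the face maps: from $\pd_i\sigma_i=\id$ one gets $\m(A_{n-1})\pd_i x=\pd_i(\sigma_i(\m(A_{n-1}))x)=0$, but there is no corresponding section for a degeneracy, and for $m\in\m(A_{n+1})$ nothing forces $m\cdot\sigma_i x=0$. This is a genuine failure, not a gap in exposition: take $C=k[s]/(s^3)$ as a constant simplicial ring, let $M$ be the simplicial $C$-module with normalisation $C$ concentrated in degree $1$ (generator $w$), set $A=C\oplus M\eps$ (square-zero) and $I=A\cdot s^2$; then $s^2$ lies in $(0:_{I_0}\m(A_0))$ since $s^3=0$, but $w\in\m(A_1)$ and $w\cdot\sigma_0(s^2)=s^2w\neq 0$, so $S$ is not closed under $\sigma_0$. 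This particular $I$ is not acyclic, so it does not by itself refute the statement you need, but nothing in your argument uses acyclicity of $I$ to control the degeneracies, so the claim is unproven; the subsequent nice observations (that $N_mS$ is the socle of the nonzero $A_m$-module $N_mI$, hence nonzero, and that $d\neq 0$ on $NS$) cannot be deployed until this is repaired.

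The paper avoids the difficulty by working from the top rather than the bottom: instead of seeking an acyclic sub-ideal of $I$ killed by $\m(A)$, it takes $V$ to be the maximal acyclic quotient of the simplicial $k$-vector space $I/\m(A)\cdot I$ and sets $J=\ker(I\to V)$. Because $I/\m(A)\cdot I$ is canonically a simplicial $k$-module on which $A$ acts through $k$, \emph{every} simplicial subspace of it pulls back to a simplicial ideal $J\supseteq\m(A)\cdot I$ of $A$, so $A/J\to B$ is automatically an acyclic small extension; the only subtlety is termination, handled by showing that $J=I$ (i.e. $d=0$ on $N(I/\m(A)\cdot I)$) forces $I=0$ via the long exact sequence and a Nakayama-type argument in the lowest degree. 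In short: quotients of $I/\m(A)I$ are unproblematic where subobjects of the socle are not, and this asymmetry is why your bottom-up dualisation of the idea breaks down as written.
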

\begin{proof}
Let $f:A \to B$ be a surjection  in $s\C_{\L}$ with kernel $I$. Note that $N(A)$ has finite length, hence so does $NI$. We will prove the statements by induction on the length $l(NI)$. For $I=0$, both statements are trivial. 

If $I \ne 0$, then $l(N(\m(A) \cdot I)) < l( NI)$,  so the inductive hypothesis implies that $A \to A/\m(A)\cdot I$ can be factorised as a composition of small extensions. Since $A/\m(A)\cdot I \to B$ is a small extension, this gives a factorisation of $A \to B$ as a composition of small extensions.

If $f$ is acyclic, the argument takes more care. Let $V$ be a maximal acyclic quotient of $I/\m(A)\cdot I$, so that $d=0$ on $N(\ker(I/\m(A) \cdot I\to V))$. Let $J$ be the kernel of $I \to V$, so that
$
A/J \to B
$
is an acyclic small extension, having kernel $V$. 

Since $A \to A/J$ is also necessarily acyclic, the induction proceeds unless $J=I$, in which case $d=0$ on $N(I/\m(A)\cdot I)$. If so,  the long exact sequence of homology gives isomorphisms
$$
N_n(I/\m(A)\cdot I) \cong \left\{ \begin{matrix} \H_{n-1}N(\m(A)\cdot I) &n>0\\ 0 & n=0 \end{matrix}\right.
$$
Thus, if $n$ is the least such that $I_n \ne 0$, we have 
$$
I_n/(\m(A)\cdot I)_n = N_n(I/\m(A)\cdot I)=0,
$$ 
so $I_n=0$, giving the required contradiction. 
\end{proof}

\subsection{The model structure}

\begin{definition}
Denote the category of simplicial sets by $\bS$. 
\end{definition}

\begin{definition}\label{smcldef} In the category $c\Sp$, we say that $f:\Spf S\to \Spf R$ is:
\begin{enumerate}
\item
 a cofibration if the corresponding morphisms $ N_i(\uleft{f}^{\sharp}): N_i(\uleft{R}) \to N_i(\uleft{S})$ are surjective for all $i>0$ (cf. Definition \ref{surjdef});
\item
a weak equivalence if  $f^{\sharp}:R \to S$  is   acyclic;  
\item  
a fibration if it has the right lifting property (RLP) with respect to all trivial cofibrations.
\end{enumerate}
The simplicial structure is given by setting 
$$
(R\ten K)_i:= R_i^{\ten K_i},
$$
and 
$$
(R^K)_i=\Hom_{\bS}(K\by \Delta^i, R)\by_{\Hom_{\Set}(\pi_0K, k)}k,
$$
with $(\Spf R)^K=\Spf (R\ten K)$ and $(\Spf R)\ten K=\Spf (R^K)$.
\end{definition}

Observe that every surjection $A \onto B$ in $s\hat{\C}_{\L}$ is dual to a cofibration.

\begin{proposition}\label{smcl}
With the classes of morphisms  given above, $c\Sp$ is a simplicial model category.
\end{proposition}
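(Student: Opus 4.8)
The plan is to exhibit $c\Sp$ as a cofibrantly generated simplicial model category, with generators built from the Artinian simplicial algebras $A^{\Delta_n}$ of Proposition \ref{cSp} (together with their boundary and horn analogues), and then to invoke Quillen's small object argument and the standard recognition theorem. The formal axioms come first. The pro-category $\hat{\C}_{\L}$ inherits all finite limits and colimits from $\C_{\L}$, so $s\hat{\C}_{\L}$ does too (computed levelwise), and hence $c\Sp = (s\hat{\C}_{\L})^{\op}$ is finitely bicomplete; using Lemma \ref{cotdef} one checks that $R\otimes K$ and $R^K$ again lie in $s\hat{\C}_{\L}$ and that the stated adjunctions hold, so that $c\Sp$ is simplicially enriched, tensored and cotensored — this is MC1 together with the simplicial structure. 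MC2 is immediate, since weak equivalences are the maps inducing isomorphisms on the pro-Artinian homotopy modules $\pi_i$ and isomorphisms satisfy two-out-of-three. For MC3: fibrations are closed under retracts because they are defined by a lifting property; weak equivalences because a retract of an isomorphism is an isomorphism; cofibrations because $R\mapsto N_i(\uleft{R})$ is additive and surjections in $\hat{\C}_{\L}$ are closed under retracts.

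Next come the generators. For $A\in\C_{\L}$ and a finite simplicial set $K$, let $A^{\langle K\rangle}\in s\C_{\L}$ be the simplicial algebra whose $i$-simplices form the $K_i$-fold fibre power of $A$ over $k$ (this lies in $s\C_{\L}$ by Lemma \ref{cotdef}), so that $A^{\langle\Delta^n\rangle}=A^{\Delta_n}$ and, exactly as in the proof of Proposition \ref{cSp}, $\Hom_{c\Sp}(\Spf A^{\Delta_n},\Spf R)=\Hom_{\hat{\C}_{\L}}(R_n,A)$; thus the objects $\Spf A^{\Delta_n}$ jointly co-represent the level functors. The coordinate projections $A^{\Delta_n}\onto A^{\langle\partial\Delta^n\rangle}$ and $A^{\Delta_n}\onto A^{\langle\Lambda^n_k\rangle}$ induced by $\partial\Delta^n\into\Delta^n$ and $\Lambda^n_k\into\Delta^n$ are surjective in every simplicial degree, hence dual to cofibrations in the sense of Definition \ref{smcldef}; put
$$I=\{\,\Spf A^{\langle\partial\Delta^n\rangle}\to\Spf A^{\Delta_n}\,\},\qquad J=\{\,\Spf A^{\langle\Lambda^n_k\rangle}\to\Spf A^{\Delta_n}\,\},$$
indexed over all $A\in\C_{\L}$ and admissible $n,k$. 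Since $\Delta^n$ and $\Lambda^n_k$ are contractible, both $A^{\langle\Delta^n\rangle}$ and $A^{\langle\Lambda^n_k\rangle}$ have homotopy concentrated in degree $0$, equal to $A$, so the maps in $J$ are weak equivalences. The key finiteness point is that the sources of all maps in $I$ and $J$ lie in $s\C_{\L}$, not merely in $s\hat{\C}_{\L}$: because $\Hom(\Lim_\alpha R_\alpha,B)=\LLim_\alpha\Hom(R_\alpha,B)$ in a pro-category whenever $B$ is in the underlying category, these sources are small with respect to the cofiltered limits of $s\hat{\C}_{\L}$ that appear as transfinite compositions in $c\Sp$. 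Hence the small object argument gives functorial factorisations of every morphism through $I$-cof followed by $I$-inj, and through $J$-cof followed by $J$-inj.

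It then remains to identify $I$-cof with the cofibrations, $J$-cof with the trivial cofibrations, $I$-inj with the trivial fibrations, and $J$-inj with the fibrations; granting this, MC4 and MC5 are formal, and since a fibration is by definition a map with the RLP against all trivial cofibrations $=J$-cof, automatically $J$-inj $=$ fibrations. That $I$-cof $\subseteq$ cofibrations and $J$-cof $\subseteq$ trivial cofibrations follows from the generators lying in the respective classes together with stability of those classes under pushout, transfinite composition and retract (in the trivial case using that acyclic maps in $s\hat{\C}_{\L}$ are stable under the relevant pullbacks and cofiltered limits), and the reverse inclusions follow from the usual retract argument applied to the two factorisations. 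The substantive step — and the one I expect to be the main obstacle — is the homotopical identification of the injectives: translating the lifting condition against the $A^{\Delta_n}$-type objects into a matching/latching condition via the simplicial Dold--Kan correspondence (Definition \ref{N^s}), one must show that $I$-inj consists exactly of the $\Spf S\to\Spf R$ with $R\to S$ surjective on normalisations in positive degrees and acyclic, and that $J$-inj consists of those satisfying the analogous relative lifting condition against small extensions — which one recognises as the ``quasi-smooth'' maps of the introduction. The forward direction is a diagram chase, but the converse is an obstruction-theoretic induction along Lemma \ref{small}: lifting across a single small extension $e:I\to A\to B$, the obstruction lies in a homotopy group of the simplicial $k$-vector space $I$, and the set of lifts, when non-empty, is a torsor under a homotopy group of $I$; in the acyclic case these vanish, so the lift exists and can be propagated along the whole factorisation supplied by Lemma \ref{small}.

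Finally, for SM7 one must show that the pushout-product of a cofibration $i$ in $c\Sp$ with a cofibration $j:K\to L$ in $\bS$ is a cofibration, trivial when $i$ or $j$ is. By adjunction and the explicit formulae for $R\otimes K$ and $R^K$ it suffices to let $j$ run over the generating (trivial) cofibrations $\partial\Delta^n\into\Delta^n$ and $\Lambda^n_k\into\Delta^n$ of $\bS$, whereupon the pushout-product is computed levelwise in $\hat{\C}_{\L}$ from finite fibre powers of the rings involved; the required surjectivity statements then reduce to the combinatorics of products of simplicial sets already used, and the ``trivial'' half follows from the identification of trivial cofibrations with $J$-cof together with the fact that $R\mapsto R^K$ preserves acyclicity for contractible $K$.
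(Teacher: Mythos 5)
Your overall strategy (cofibrant generation plus the small object argument) is not the paper's route — the paper obtains Proposition \ref{smcl} by applying Bousfield's resolution model structure theorem to $\Sp$ with its discrete model structure and the single injective model $A \mapsto \m(\uleft{A})$, then identifies the resulting $\cG$-classes with those of Definition \ref{smcldef} using Dold--Kan and the exactness of $\Lim$ on pro-Artinian modules; cofibrant generation is established only afterwards, with generating trivial cofibrations the duals of \emph{all} acyclic small extensions in $s\C_{\L}$. A different route would be fine in principle, but your choice of generators contains a genuine gap. All the maps in your $J$ keep the Artinian coefficient ring $A$ fixed and only vary the simplicial shape, so $J$-inj is characterised by lifting conditions internal to each $A$ and cannot detect the deformation direction. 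Concretely, let $R$ be the constant simplicial algebra $k[x]/(x^2)$ and $X=\Spf R$. For any reasonable formalisation of your horn objects, $\Hom_{s\hat{\C}_{\L}}(R,A^{\Delta_n})$ and $\Hom_{s\hat{\C}_{\L}}(R,A^{\langle\Lambda^n_k\rangle})$ are both computed from the constant diagram $j\mapsto\Hom_{\C_{\L}}(k[x]/(x^2),A)$, so the restriction map is a bijection and $X\to\Spf k$ has the RLP against every map in your $J$. Yet $X\to\Spf k$ is not quasi-smooth, hence not a fibration: for the small extension $k[y]/(y^3)\onto k[y]/(y^2)$ in $\C_{\L}$, the induced map $A^{\Delta_0}\to B^{\Delta_0}$ is an acyclic small extension in $s\C_{\L}$ (as in the proof of Lemma \ref{qscofib}), and $x\mapsto y$ does not lift, since no element of $k[y]/(y^3)$ congruent to $y$ squares to zero. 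So $J$-inj strictly contains non-fibrations, your claimed identification ``$J$-inj $=$ quasi-smooth maps'' fails, and the factorisations produced by the small object argument are not the ones needed for MC5; the retract argument identifying $I$-cof and $J$-cof then collapses as well. The correct generating trivial cofibrations must mix both directions, exactly as the paper's $J_{\Sp}$ (duals of acyclic small extensions in $s\C_{\L}$, which include the maps $A^{\Delta_n}\to B^{\Delta_n}$ your $J$ omits) does.

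There is also a technical error in the construction of the generators themselves: for a simplicial set $K$, the assignment $i\mapsto A^{\by_k K_i}$ is naturally a \emph{cosimplicial} algebra (faces of $K$ induce maps the wrong way), so your $A^{\langle K\rangle}$ does not lie in $s\C_{\L}$; indeed the paper's $A^{\Delta_n}$ has $i$-simplices indexed by $\Hom_{\Delta}([n],[i])$, not by $(\Delta^n)_i$, which is what makes it co-represent evaluation at level $n$. Repairing this forces you to work with cosimplicial shapes (latching/matching data in the Reedy direction of $c\Sp$), which changes what your lifting conditions say but does not remove the counterexample above.
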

\begin{proof}
We apply \cite{Bou} Theorem 12.4 and Proposition 3.13 to the category $\Sp$ with its discrete model structure.
By Lemma \ref{lexlemma}, every object in $s\hat{\C}_{\L}$ can be represented by a strict pro-object.

We therefore take the class $\cG$ of injective models to consist of the single functor $A \mapsto  \m(\uleft{A})$, i.e.
$$
\{A_s\}_{s \in S} \mapsto  \Lim_s \m(A_s).
$$ 
Thus a map $\Spf B \to \Spf A$ in $\Sp$ is $\cG$-monic when $A \to B$ is a surjection.  The class of $\cG$-injectives  therefore consists of   smooth  morphisms (in the sense of Definition \ref{smoothdef}) in $\Sp$.

For the model structure defined in \cite{Bou} 3.2,  
a map $f: X^{\bt} \to Y^{\bt}$, for  $X^{\bt} = \Spf B_{\bt}$, $Y^{\bt}=\Spf A_{\bt}$  is then:
\begin{enumerate}

\item a $\cG$-weak equivalence if $\uleft{f}^{\sharp}: \m(\uleft{A})_{\bt}  \to \m(\uleft{B})_{\bt} $ is a weak equivalence of simplicial groups;

\item a $\cG$-cofibration if $\m(A)_n \to \m(B)_n\by_{M_{\L^n_k}\m(B)_{\bt}}M_{\L^n_k}\m(A)_{\bt}$ is a surjection (in the sense of Definition \ref{surjdef}) for all $0\le k\le n$, where $\L^n_k\subset \Delta^n$ is the $k$th horn, and $M_KX:= \Hom_{\bS}(K,X)$; 
 
\item a $\cG$-fibration if the cosimplicial matching maps $X^n\to Y^n\by_{M^nY^{\bt}}M^nX^{\bt}$ are smooth for all $n \ge 0$. 
\end{enumerate}

From the Dold-Kan correspondence, we deduce that $f$ is a $\cG$-cofibration when for $i>0$, $N_i(\uleft{f}^{\sharp})$ is surjective.

Now observe that since every morphism in the category $\pro(\cM)$ of pro-Artinian $\L$-modules is isomorphic to a level map (as in Definition \ref{levelmap}), the functor
\begin{eqnarray*}
\Lim: \pro(\cM) &\to& \L-\Mod\\
\{M_{\alpha}\}_{\alpha \in I} &\mapsto& \Lim_{I} M_{\alpha} 
\end{eqnarray*}
 is  exact, so 
 $$
 \pi_i \m(\uleft{A})_{\bt}= \pi_i( \Lim \m(A)_{\bt})= \Lim \pi_i(A_{\bt}).
 $$
  
  In order to show  that $\cG$-weak equivalences are acyclic, it will suffice to prove that $\Lim$ reflects isomorphisms. Considering images under $\Lim$ of  kernels and  cokernels, we need only show that if  $\Lim_{I} M_{\alpha}\cong 0$, then $\{M_{\alpha}\}_{\alpha \in I}\cong 0$. By Lemma \ref{lexlemma}, every object in $\pro(\cM)$ is isomorphic to a strict pro-object, and $\Lim$ maps non-zero strict pro-modules to non-zero modules, as required.

To see that this model structure is simplicial, it is straightforward to verify \cite{sht} Proposition II.3.13. 
\end{proof}

\subsection{Properties of functors}

\begin{definition}
We say that a morphism $\alpha:F\to G$ in $c\Sp$  is smooth if for all small extensions $A \onto B$ in $s\C_{\L}$, the map $F(A) \to F(B)\by_{G(B)}G(A)$ is surjective. 

Similarly, we call $\alpha$ quasi-smooth if for all acyclic small extensions $A \to B$ in $s\C_{\L}$, the map $F(A) \to F(B)\by_{G(B)}G(A)$ is surjective.
\end{definition}

\begin{remarks}\label{aqsmoothchar}
 A quasi-smooth map $\alpha$ is smooth if
 the Andr\'e-Quillen homology groups $D_i(R/S)=0$ for all $i>0$, or equivalently the relative cotangent space $\cot(R/S)$ is acyclic in strictly positive degrees.

Our notion of quasi-smoothness will broadly correspond  to that used in \cite{Man2}. Some authors (e.g. \cite{toenseattle}) take quasi-smoothness to mean $D_i(R/S)=0$ for all $i>1$; this is a generalisation of LCI morphisms to simplicial rings, and is completely unrelated to our notion of quasi-smoothness.

However, our notion of smoothness differs from \cite{Man2} (where the term is only applied to functors on the homotopy category), and is stronger than that in \cite{hag2}. 
The latter roughly amounts to being smooth up to homotopy, or equivalently that the higher Andr\'e-Quillen homology groups vanish. Thus smoothness in our sense corresponds to quasi-smoothness (in our sense) plus formal smoothness in the sense of \cite{hag2}.

\end{remarks}

\begin{lemma}\label{qscofib}
A morphism $f:X^{\bt} \to Y^{\bt}$ in $c\Sp$ is a fibration if and only if it is quasi-smooth if and only if each $f^n:X^n \to Y^n$ is smooth.
\end{lemma}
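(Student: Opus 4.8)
The claim is a three-way equivalence: (i) $f$ is a fibration, (ii) $f$ is quasi-smooth, (iii) each cosimplicial level map $f^n$ is smooth. The plan is to prove (iii) $\Rightarrow$ (ii) $\Rightarrow$ (iii) first, then relate both to (i) via the model structure of Proposition \ref{smcl}.

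For (ii) $\Leftrightarrow$ (iii): note that $f^n : X^n \to Y^n$ being smooth means, by Definition \ref{smoothdef} applied in $\Sp$ (equivalently, in $\hat\C_\L$), that for every surjection $A \onto B$ in $\C_\L$ the map $X^n(A) \to X^n(B) \times_{Y^n(B)} Y^n(A)$ is surjective. On the other hand $f$ quasi-smooth asks the analogous lifting against acyclic small extensions $A \to B$ in the \emph{simplicial} category $s\C_\L$. The bridge is the simplicial ring $A^{\Delta_n}$ introduced in the proof of Proposition \ref{cSp}, which satisfies $\Hom_{\hat\C_\L}(S_n, A) \cong \Hom_{s\hat\C_\L}(S, A^{\Delta_n})$; dually this identifies $X^n(A) = F(\Spf A)^n$ with $F(\Spf(A^{\Delta_n}))$. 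First I would check that a small extension $I \to A \to B$ in $\C_\L$ induces, after applying $(-)^{\Delta_n}$, a small extension in $s\C_\L$, and conversely that every (acyclic) small extension in $s\C_\L$ can be built by a composition (using Lemma \ref{small}) out of the "elementary" ones $k[n] \to A \to B$ where the kernel is concentrated in a single simplicial degree; tracking the matching-object conditions these reduce to the level-wise smoothness conditions. So quasi-smoothness of $f$ is equivalent to all the maps $X^n(A) \to (X^n \times_{Y^n} Y^n)(A)$ being surjective for surjections (not just acyclic ones) in $\C_\L$ — the acyclicity in $s\C_\L$ being automatically supplied by the combinatorics of $\Delta_n$, since the kernel $I^{\Delta_n}$ of $A^{\Delta_n} \to B^{\Delta_n}$ has vanishing homology above degree $0$ once $I$ is a $k$-vector space. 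This gives (ii) $\Leftrightarrow$ (iii).

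For (i) $\Leftrightarrow$ (ii): recall from Proposition \ref{smcl} that the model structure on $c\Sp$ comes via \cite{Bou} from the discrete model structure on $\Sp$, where fibrations are the $\cG$-fibrations, i.e. maps whose cosimplicial matching maps $X^n \to Y^n \times_{M^n Y^\bt} M^n X^\bt$ are $\cG$-injective, hence smooth in $\Sp$. By Definition \ref{smcldef}(3) a map in $c\Sp$ is a fibration iff it has the RLP against trivial cofibrations; one direction is to verify that trivial cofibrations in $c\Sp$ are precisely the (retracts of) duals of acyclic small extensions in $s\C_\L$ — the cofibration condition being the $N_i(\uleft{f}^\sharp)$ surjectivity of Definition \ref{smcldef}(1), the acyclicity being Definition \ref{smcldef}(2), and Lemma \ref{small} letting one reduce a general acyclic surjection to a tower of acyclic small extensions. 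Lifting against this generating set is, unwinding the definitions, exactly the quasi-smoothness condition $F(A) \to F(B) \times_{G(B)} G(A)$ surjective. Conversely a quasi-smooth map, being levelwise smooth by the previous paragraph, has smooth (hence $\cG$-injective) matching maps, so is a $\cG$-fibration, hence a fibration in $c\Sp$.

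The main obstacle I anticipate is the bookkeeping in the reduction of a general (acyclic) small extension in $s\C_\L$ to the elementary pieces handled by $A^{\Delta_n}$, and correspondingly matching up the cosimplicial matching-object conditions of \cite{Bou} with the levelwise smoothness of the $f^n$ — i.e. showing the two inductions (on length of $NI$, as in Lemma \ref{small}, and on cosimplicial degree $n$) are compatible. Once one knows that quasi-smoothness is detected on the generating trivial cofibrations and that these are cofibrant-enough to apply the small object argument / the explicit characterisation in \cite{Bou}, the rest is formal. I would also need to be slightly careful that "acyclic" for maps in $s\hat\C_\L$ (isomorphism on pro-Artinian homotopy groups) is correctly matched with the weak equivalences of Definition \ref{smcldef}(2), but this is exactly the exactness of $\Lim$ on $\pro(\cM)$ already established in the proof of Proposition \ref{smcl}.
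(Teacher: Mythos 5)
Your skeleton is essentially the paper's: the adjunction $X(A^{\Delta_n})\cong X^n(A)$ applied to $A^{\Delta_n}\to B^{\Delta_n}$ is exactly how the paper gets from quasi-smoothness to levelwise smoothness, and the identification of fibrations with quasi-smooth maps does come from Lemma \ref{small} together with the $\cG$-fibration description in Proposition \ref{smcl}. But two of your steps, as stated, would not go through. First, your justification that $A^{\Delta_n}\to B^{\Delta_n}$ is an \emph{acyclic} small extension is wrong: you say its kernel ``has vanishing homology above degree $0$''. If that were all, the map would change $\pi_0$ by $I$ and would not be acyclic, so quasi-smoothness would say nothing about it and the step collapses. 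What is true, and what the argument needs, is that the kernel is $I\ten(i\mapsto k^{\Hom_{\Delta}([n],[i])})$, whose normalisation is a direct sum of disc complexes $(k\xra{\id}k)$ in adjacent degrees; it is acyclic in \emph{all} degrees, including degree $0$.

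Second, your route from levelwise smoothness back to quasi-smoothness is based on a false reduction. You propose to factor an acyclic small extension in $s\C_{\L}$ into elementary small extensions whose kernels are concentrated in a single simplicial degree. Such pieces are never acyclic (a nonzero kernel concentrated in one degree has nonzero homology), and having the lifting property against all of them is precisely \emph{smoothness} in $c\Sp$ (every small extension factors through them), which is strictly stronger than quasi-smoothness and is not implied by levelwise smoothness: levelwise smooth objects can have nonvanishing $\H^{i}$ for $i>0$, and these are exactly the quasi-smooth non-smooth ones (cf.\ Remarks \ref{aqsmoothchar} and Corollary \ref{cohosmoothchar}). The correct elementary pieces for \emph{acyclic} small extensions have contractible kernels of the form $V\ten N^{-1}(k[-(m+1)]\xra{\id}k[-m])$, and even after that correction the substantive input---that levelwise smoothness forces the cosimplicial matching maps $X^n\to Y^n\by_{M^nY}M^nX$ to be smooth---is the Standard Smoothness Criterion (\cite{Man} Proposition 2.17), which the paper invokes at precisely this point and which you neither prove nor cite; this, not mere bookkeeping, is the missing content in your sketch.
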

\begin{proof}
By Lemma \ref{small}, we know that fibrations are precisely quasi-smooth maps.

If each $f^n$ is smooth, we may apply the Standard Smoothness Criterion (\cite{Man} Proposition 2.17) to deduce that the cosimplicial matching maps are smooth.

If $f$ is a cofibration, take a small extension $A \to B$ in $\C_{\L}$ and consider the acyclic small extension $A^{\Delta_n} \to B^{\Delta_n}$ in $s\C_{\L}$, for $A^{\Delta_n} $ as in the proof of Proposition \ref{cSp}. Observe that $X(A^{\Delta_n})= X^n(A)$, so quasi-smoothness of $f$ implies smoothness of $f^n$.
\end{proof}

\begin{definition}
Given a functor $F:\C_{\L} \to \Set$, we write $F:s\C_{\L} \to \Set$ to mean $A \mapsto F(A_0)$  (corresponding to the inclusion $\Sp \into c\Sp$).
\end{definition}

\begin{lemma}\label{ctod} A morphism  $\alpha:F\to G$ in $\Sp$ is smooth if and only if the induced morphism between the  objects $F, G \in c\Sp$ is quasi-smooth, if and only if it is smooth.
\begin{proof}
Apply Lemma \ref{qscofib}, noting that $F^0=F$. 
\end{proof}
\end{lemma}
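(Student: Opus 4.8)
The statement has two non-trivial equivalences, and the guiding idea is that for a functor $F$ coming from $\C_{\L}$ — i.e. $F(A) = F(A_0)$ — the object $F^n \in \Sp$ recovers $F$ itself, so the hierarchy of cosimplicial matching conditions collapses. Concretely, I would first record the elementary observation that if $F:\C_{\L}\to\Set$ and we view $F$ in $c\Sp$ via $A\mapsto F(A_0)$, then for every $n$ the functor $F^n$ (defined as in the proof of Lemma \ref{qscofib} by $F^n(A) = F(A^{\Delta_n})$) satisfies $F^n(A) = F((A^{\Delta_n})_0) = F(A)$, since $(A^{\Delta_n})_0 = A$. Hence $F^0 = F^1 = \cdots = F$ as functors on $\C_{\L}$, and likewise $G^n = G$.

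Given that, the plan is to invoke Lemma \ref{qscofib}: the induced morphism $F\to G$ in $c\Sp$ is quasi-smooth if and only if each $F^n\to G^n$ is smooth (as a morphism in $\Sp$). But every $F^n\to G^n$ is literally the original morphism $\alpha:F\to G$. So $\alpha$ being smooth in $\Sp$ is equivalent, via Lemma \ref{qscofib}, to the induced map in $c\Sp$ being quasi-smooth. This simultaneously disposes of the ``quasi-smooth'' equivalence and, since Lemma \ref{qscofib} also characterizes fibrations/quasi-smooth maps by ``each $f^n$ smooth,'' shows this is the same as ``smooth'' for the $c\Sp$-morphism (the three conditions in Lemma \ref{qscofib} are all equivalent, and here they all reduce to smoothness of $\alpha$).

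One point that needs a line of care: ``smoothness'' of a morphism in $\Sp$ as defined just before Lemma \ref{qscofib} refers to small extensions in $\C_{\L}$, whereas quasi-smoothness in $c\Sp$ refers to acyclic small extensions in $s\C_{\L}$. The bridge is exactly the device used in the last paragraph of the proof of Lemma \ref{qscofib}: a small extension $A\to B$ in $\C_{\L}$ gives rise to the acyclic small extension $A^{\Delta_n}\to B^{\Delta_n}$ in $s\C_{\L}$ with $X(A^{\Delta_n}) = X^n(A)$, and conversely every acyclic small extension in $s\C_{\L}$ is tested against these via the matching-map/Standard Smoothness Criterion argument. I would simply cite that proof rather than repeat it. The main (very mild) obstacle is bookkeeping: making sure the two notions of ``smooth'' — for the source functor on $\C_{\L}$ and for the induced functor on $c\Sp$ — are kept straight, and that the appeal to Lemma \ref{qscofib} is legitimate because the hypotheses there ($f^n$ smooth for all $n$) are automatically uniform here. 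No genuinely new computation is required; the proof is the one-line application of Lemma \ref{qscofib} together with $F^n = F$.

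Thus:
\begin{proof}
Write $\alpha\colon F\to G$ for the morphism in $\Sp$, and also for the induced morphism between the associated objects of $c\Sp$, given by $A\mapsto F(A_0)$ on $s\C_{\L}$. For $A\in\C_{\L}$ we have $(A^{\Delta_n})_0 = A$, so with $F^n$ as in the proof of Lemma \ref{qscofib} we get $F^n(A) = F((A^{\Delta_n})_0) = F(A)$, and similarly for $G$; hence $\alpha^n = \alpha$ for all $n$. By Lemma \ref{qscofib}, the induced morphism in $c\Sp$ is quasi-smooth if and only if it is a fibration if and only if each $\alpha^n$ is smooth, and since every $\alpha^n$ is $\alpha$ itself, all three conditions are equivalent to $\alpha$ being smooth in $\Sp$.
\end{proof}
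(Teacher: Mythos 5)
Your handling of the first equivalence is correct and is exactly the paper's argument: since the induced object of $c\Sp$ is represented by a constant simplicial pro-ring, $F^n(A)=F((A^{\Delta_n})_0)=F(A)$ for all $n$ (this makes explicit what the paper's terse ``$F^0=F$'' is gesturing at), so Lemma \ref{qscofib} identifies quasi-smoothness of the induced morphism with smoothness of $\alpha$ in $\Sp$.

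The gap is in the third clause. ``It is smooth'' in the lemma means that the induced morphism is smooth as a morphism in $c\Sp$, i.e.\ $F(A)\to F(B)\by_{G(B)}G(A)$ is surjective for \emph{all} small extensions $A\to B$ in $s\C_{\L}$, not only the acyclic ones. You justify this by saying that ``the three conditions in Lemma \ref{qscofib} are all equivalent, and here they all reduce to smoothness of $\alpha$,'' identifying this with smoothness of the $c\Sp$-morphism. But smoothness in $c\Sp$ is not one of the three conditions of Lemma \ref{qscofib} (those are: fibration, quasi-smooth, and each $f^n$ smooth in $\Sp$), and in general levelwise smoothness does \emph{not} imply smoothness in $c\Sp$: any quasi-smooth $X$ with $\H^i(X)\ne 0$ for some $i>0$ is levelwise smooth but not smooth, by Corollary \ref{cohosmoothchar}. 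So the implication you need cannot be extracted from Lemma \ref{qscofib}, and your final written proof never addresses the third condition at all. The missing step is easy but must be said: smooth in $c\Sp$ trivially implies quasi-smooth (acyclic small extensions are small extensions), giving one direction; conversely, if $\alpha$ is smooth in $\Sp$, then for any small extension $A\to B$ in $s\C_{\L}$ the level-zero map $A_0\to B_0$ is a small extension (possibly an isomorphism) in $\C_{\L}$, and since the induced functors depend only on $A_0$, the required surjectivity of $F(A_0)\to F(B_0)\by_{G(B_0)}G(A_0)$ follows from smoothness of $\alpha$. With that line added, your proof agrees with the paper's.
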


\begin{definition}
Define the  $sc\Sp$ to be the category of left-exact functors from $s\C_{\L}$ to the category $\bS$ of simplicial sets.
\end{definition}

Now, observe that $sc\Sp$ is equivalent to the category $(c\Sp)^{\Delta^{\op}}$ of simplicial objects in $c\Sp$. We will make use of this identification without further comment.

We say that a morphism $X \xra{f} Y$ in $\bS$ is a surjective fibration if it is a fibration and $\pi_0(f)$ is surjective.
\begin{definition}\label{scspqsdef}
 A morphism $\alpha:F\to G$ in    $sc\Sp $ is  said to be smooth if 
\begin{enumerate}
\item[(S1)]
for every acyclic surjection $A \to B$ in $s\C_{\L}$, the map $F(A)\to F(B)\by_{G(B)}G(A)$ is a trivial fibration in $\bS$; 
\item[(S2)]
for every surjection $A \to B$ in $s\C_{\L}$, the map $F(A)\to F(B)\by_{G(B)}G(A)$ is a surjective fibration in $\bS$.
\end{enumerate}

A morphism $\alpha:F\to G$ in    $sc\Sp $  is  said to be quasi-smooth if it satisfies (S1) and
\begin{enumerate}
\item[(Q2)]
for every surjection $A \to B$ in $s\C_{\L}$, the map $F(A)\to F(B)\by_{G(B)}G(A)$ is a  fibration in $\bS$.
\end{enumerate}
\end{definition}

\begin{remark}
In \cite{ddt2} \S \ref{ddt2-extsdc}, it is shown that a quasi-smooth object of $sc\Sp$ can be canonically associated to all deformation problems governed by the SDCs of \cite{paper1} and \cite{paper2}. This includes all classical deformation problems, such as deformations of an arbitrary scheme.
\end{remark}

\begin{definition}\label{underline}
Given   $F \in sc\Sp$, define $\underline{F}:s\C_{\L}\to \bS$ by 
$$
\underline{F}(A)_n:= F_n(A^{\Delta^n}).
$$
Observe that if $F=\Hom(R,-):s\C_{\L} \to \Set$, for $R \in s\hat{\C}_{\L}$,  then $\underline{F}=\HHom(R,-)$.
 
For $F \in c\Sp$, we may regard $F$ as an object of $sc\Sp$ (with the constant simplicial structure), and then define $\underline{F}$  as above.
\end{definition}

\begin{lemma}\label{settotop} A map $\alpha:F\to G $ in $c\Sp$ is smooth (resp. quasi-smooth) if and only if the induced map of functors  $\underline{\alpha}:\underline{F}\to \underline{G}$  is smooth (resp. quasi-smooth) in $sc\Sp$.
\begin{proof}
 This follows from the fact that $s\C_{\L}$  is a simplicial model category, and that every surjection is a fibration. If we pro-represent $\alpha$ by $R \to S$ in $s\hat{\C}_{\L}$, then quasi-smoothness of $\underline{\alpha}$ is equivalent to the conditions:
\begin{enumerate} 
\item for all cofibrations $K \into L$ in $\bS$, $\theta:(R\ten L)\ten_{R\ten K}(S\ten K) \to S\ten L$ is quasi-smooth;

\item if in addition $K \into L$ is a weak equivalence, then $\theta$ is smooth.
\end{enumerate}
Smoothness of $\underline{\alpha}$ is then just the further condition that $\alpha$ be smooth.
\end{proof}
\end{lemma}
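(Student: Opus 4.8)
The plan is to translate smoothness (resp. quasi-smoothness) of $\alpha:F\to G$ in $c\Sp$ into a lifting statement, then push that lifting statement through the simplicial tensor/cotensor structure on $s\hat{\C}_{\L}$ coming from Proposition \ref{smcl}. First I would pro-represent $\alpha$ by a map $f^{\sharp}:R\to S$ in $s\hat{\C}_{\L}$ (possible since $c\Sp$ is opposite to $s\hat{\C}_{\L}$), and recall from Definition \ref{underline} that $\underline{F}=\HHom(R,-)$, $\underline{G}=\HHom(S,-)$. By definition $\underline{F}(A)_n=\Hom_{s\hat{\C}_{\L}}(R,A^{\Delta^n})$, and using the defining property of $A^{\Delta_n}$ established in the proof of Proposition \ref{cSp} one identifies the simplicial set $\underline{F}(A)$ with the mapping space $\Map_{s\hat{\C}_{\L}}(R,A)$ in the simplicial category $s\hat{\C}_{\L}$. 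Under this identification, for a surjection $A\onto B$ the map $\underline{F}(A)\to \underline{F}(B)\by_{\underline{G}(B)}\underline{G}(A)$ becomes the canonical map
$$
\Map(R,A)\lra \Map(R,B)\by_{\Map(S,B)}\Map(S,A),
$$
i.e.\ the map adjoint (via the simplicial structure) to testing the lifting property of $f^{\sharp}$ against the dual cofibration $\Spf B\to \Spf A$.

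Next I would invoke the standard adjunction (SM7 in Quillen's axioms, verified for $c\Sp$ in Proposition \ref{smcl}): for a cofibration $\Spf B\to\Spf A$ in $c\Sp$ (equivalently a surjection $A\onto B$, by the observation right before Proposition \ref{smcl}) and a simplicial-set inclusion $K\into L$, the pushout–product $(R\ten L)\amalg_{R\ten K}(S\ten K)\to S\ten L$ is a cofibration in $c\Sp$, weak equivalence if either $K\into L$ or $\Spf B\to\Spf A$ is. Dualising and taking the cotensor side, the map $\Map(R,A)\to\Map(R,B)\by_{\Map(S,B)}\Map(S,A)$ is a Kan fibration (in $\bS$) precisely when $\theta:(R\ten L)\amalg_{R\ten K}(S\ten K)\to S\ten L$ is quasi-smooth for every cofibration $K\into L$, and it is a trivial fibration precisely when moreover $\theta$ is smooth whenever $K\into L$ is in addition a weak equivalence — these are exactly conditions (1) and (2) in the statement of Lemma \ref{settotop}, and by Lemma \ref{qscofib} they hold whenever $f^{\sharp}:R\to S$ is dual to a quasi-smooth map, i.e.\ whenever $\alpha$ itself is quasi-smooth. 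Running this with $K\into L$ ranging over all (trivial) cofibrations and combining with (S1)/(Q2) of Definition \ref{scspqsdef}, one gets: $\underline{\alpha}$ is quasi-smooth iff $\theta$ satisfies (1) and (2) iff $\alpha$ is quasi-smooth. For the smooth case, one adds the extra datum that $\alpha$ (equivalently $f^{\sharp}$) is itself smooth; then even for a non-acyclic surjection $A\onto B$ the relevant map on mapping spaces is a surjective fibration, giving (S2) and hence smoothness of $\underline{\alpha}$, and conversely taking $K=L$ recovers smoothness of $\alpha$ from (S2).

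I expect the main obstacle to be the bookkeeping in the first paragraph: carefully verifying that $\underline{F}(A)$, defined levelwise by $F_n(A^{\Delta^n})$, really is the simplicial mapping space $\Map_{s\hat{\C}_{\L}}(R,A)$ associated to the simplicial model structure of Proposition \ref{smcl}, including matching face and degeneracy maps and — crucially — that fibrancy of this mapping space (which is automatic in a simplicial model category only when the source is cofibrant and the target fibrant) is genuinely governed by the (quasi-)smoothness of $f^{\sharp}$ rather than being automatic or vacuous. Here one must use that \emph{every} object of $s\hat{\C}_{\L}$ is cofibrant (every surjection is a cofibration, so in particular $\Spf R$ is cofibrant over the terminal object) while fibrancy is exactly smoothness, so the fibration $\Map(R,A)\to\Map(R,B)\by\Map(S,B)\Map(S,A)$ is controlled precisely by how $f^{\sharp}$ interacts with the fibration $A\onto B$ — which is the content of Lemma \ref{qscofib}. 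Once this identification is pinned down, the rest is a formal application of the pushout–product axiom already available from Proposition \ref{smcl}, together with Lemmas \ref{qscofib} and \ref{small}.
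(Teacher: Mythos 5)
Your proposal follows essentially the same route as the paper's proof: identify $\underline{F}(A)$ with the simplicial mapping space coming from the structure of Proposition \ref{smcl}, translate conditions (S1)/(Q2) by adjunction into pushout--product conditions on $\theta$, and conclude via SM7 together with Lemmas \ref{small} and \ref{qscofib}. Only minor bookkeeping needs correcting: with $\alpha$ represented by $R\to S$ one has $\underline{F}=\HHom(S,-)$ and $\underline{G}=\HHom(R,-)$, and your dictionary is transposed --- the trivial-fibration condition (S1) corresponds to $\theta$ being quasi-smooth for \emph{all} cofibrations $K\into L$, while the fibration condition (Q2) corresponds to $\theta$ being smooth for \emph{trivial} cofibrations --- but since you only use the conjunction of the two conditions, the argument is unaffected.
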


\begin{definition}\label{pioqs}
A map $ \alpha:F\to G$ of functors   $F,G:\C_{\L}\to \bS$ is said to be smooth (resp. quasi-smooth, resp. trivially smooth)  if for all surjections $A \onto B$ in $\C_{\L}$, the maps 
$$
 F(A) \to F(B)\by_{G(B)}G(A)
$$ 
  are surjective fibrations (resp. fibrations, resp. trivial fibrations).
\end{definition}

\begin{definition}\label{tdef0}
Given a left-exact functor $F: \C_{\L} \to \Set$, define the tangent space $t_F$ (or $t(F)$) by $t_F:= F(k[\eps]/(\eps^2))$. Since  $k[\eps]/(\eps^2)$ is an abelian group object in $\C_{\L}$, $t_F$ is an abelian group. The endomorphisms $\eps \mapsto \lambda \eps$ of $k[\eps]/(\eps^2)$ make $t_F$ into a  vector space over $k$. 

Given a  left-exact functor $F: \C_{\L} \to \bS$, define the simplicial vector space $t_F$ by $(t_F)_n:= t(F_n)$. 
\end{definition}

\begin{proposition}\label{smoothchar}
A map  $ \alpha:F\to G$  of left-exact functors   $F,G:\C_{\L}\to \bS$ is smooth if and only if the maps  $F_n\xra{\alpha_n} G_n$ of functors   $F_n,G_n:\C_{\L}\to \Set$ are all smooth.
\end{proposition}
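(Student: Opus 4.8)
\emph{Proof plan.} The idea is to rephrase Definition~\ref{pioqs} as lifting properties of simplicial sets and then read those off degreewise. For a surjection $A\onto B$ in $\C_{\L}$ write $p_{A,B}\colon F(A)\to F(B)\by_{G(B)}G(A)$; by definition $\alpha$ is smooth exactly when every $p_{A,B}$ is a surjective fibration in $\bS$. The elementary input is that a surjective fibration of simplicial sets is surjective in each simplicial degree: on $0$-simplices this comes from $\pi_0$-surjectivity together with the path-lifting property of fibrations, and on $n$-simplices from lifting along the anodyne inclusion $\Delta^{\{0\}}\hookrightarrow\Delta^n$ once the $0$th vertex has been corrected. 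Since $F(A)_n=F_n(A)$ and $(F(B)\by_{G(B)}G(A))_n=F_n(B)\by_{G_n(B)}G_n(A)$, this immediately yields the implication ``$\alpha$ smooth $\Rightarrow$ each $\alpha_n$ smooth''.

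For the converse, assume all $\alpha_n$ are smooth and fix a surjection $A\onto B$, writing $Z:=F(B)\by_{G(B)}G(A)$. Surjectivity of $\pi_0(p_{A,B})$ is clear, as it is a quotient of the surjection $F_0(A)\to Z_0$. It remains to show $p_{A,B}$ is a Kan fibration, i.e.\ has the right lifting property against every horn inclusion $\Lambda^n_k\hookrightarrow\Delta^n$. Writing $F^{K}:=\Hom_{\bS}(K,F(-))$ and $G^{K}:=\Hom_{\bS}(K,G(-))$ for finite simplicial sets $K$ --- these are finite limits of the left-exact functors $F_m,G_m$ ($m<n$), hence are themselves left-exact, so pro-representable by Lemma~\ref{lexlemma} --- an unwinding of the fibre products (using naturality of $\alpha$) identifies the relative matching map for $p_{A,B}$ and $\Lambda^n_k\hookrightarrow\Delta^n$ with the map
$$F_n(A)\;\lra\;\bigl(F^{\Lambda^n_k}(A)\by_{G^{\Lambda^n_k}(A)}G_n(A)\bigr)\by_{\bigl(F^{\Lambda^n_k}(B)\by_{G^{\Lambda^n_k}(B)}G_n(B)\bigr)}F_n(B).$$
So the task is to show that every such relative matching map of $\alpha$ is surjective.

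I would prove this, more generally for every monomorphism $K\hookrightarrow L$ of finite simplicial sets, by building $L$ from $K$ one non-degenerate cell at a time. Because $\Hom_{\bS}(-,F(A))$ and $\Hom_{\bS}(-,G(A))$ take the relevant pushouts of simplicial sets to pullbacks, and because the class of smooth morphisms of left-exact functors on $\C_{\L}$ (equivalently, of morphisms in $\Sp$) is closed under composition and base change, this reduces --- by induction on $\dim L$ and on the number of top-dimensional cells of $L$ not lying in $K$ --- to the single case of a boundary inclusion $\partial\Delta^m\hookrightarrow\Delta^m$, namely to surjectivity of
$$F_m(A)\;\lra\;\bigl(F^{\partial\Delta^m}(A)\by_{G^{\partial\Delta^m}(A)}G_m(A)\bigr)\by_{\bigl(F^{\partial\Delta^m}(B)\by_{G^{\partial\Delta^m}(B)}G_m(B)\bigr)}F_m(B).$$

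Here $F^{\partial\Delta^m}$ and $G^{\partial\Delta^m}$ depend only on $\alpha_0,\dots,\alpha_{m-1}$ (the inductive hypothesis), while the Standard Smoothness Criterion (\cite{Man} Proposition~2.17), together with the factorisation of surjections into small extensions (Lemma~\ref{small}), reduces the surjectivity to the case of a small extension $A\onto B$; over such an extension one uses smoothness of $\alpha_m$ to adjust a chosen lift of $F_m(A)\to F_m(B)\by_{G_m(B)}G_m(A)$ within its fibre (which is controlled by the tangent space $t(F_m)$) so that its boundary is the prescribed one. The main obstacle is exactly this last step: recognising the horn-matching objects as left-exact functors on $\C_{\L}$ and organising the cell-by-cell induction so that the Standard Smoothness Criterion can be applied one cell at a time. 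The forward implication and the $\pi_0$-surjectivity in the converse are routine.
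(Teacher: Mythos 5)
Your forward implication is fine and is essentially the paper's (the paper just notes that surjective fibrations in $\bS$ are levelwise surjective via the lifting property). The converse, however, has a genuine gap: your cell-by-cell reduction runs in the wrong direction. Building $L$ from $K$ attaches cells along boundary inclusions $\partial\Delta^m\hookrightarrow\Delta^m$, so your induction needs, as its base case, surjectivity of the relative matching maps for all boundary inclusions; but that statement, for all surjections $A\onto B$, says precisely that $F(A)\to F(B)\by_{G(B)}G(A)$ has the RLP against all monomorphisms, i.e.\ is a \emph{trivial} fibration --- the ``trivially smooth'' condition of Definition \ref{pioqs} --- which is strictly stronger than smoothness and does not follow from smoothness of the $\alpha_n$. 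Concretely, take $\Gamma:\C_{\L}\to\Gp$ smooth and left-exact, say $\Gamma(A)=\m(A)$, let $F(A)=B\Gamma(A)$ be its nerve (cf.\ Example \ref{BG}) and $G=\bt$. Each $F_m=\Gamma^{\by m}$ is smooth and the conclusion of the proposition holds (a surjection of groups induces a surjective fibration of nerves), yet for any surjection $A\onto k$ with $\m(A)\neq 0$ the matching map for $\partial\Delta^2\hookrightarrow\Delta^2$ is $\Gamma(A)^2\to\Gamma(A)^3$, $(g_1,g_2)\mapsto(g_1,\,g_1+g_2,\,g_2)$, which is not surjective. So the intermediate statement your plan sets out to prove is false; one cannot deduce horn-filling for $p_{A,B}$ from boundary-filling obtained one cell at a time, because a map with the RLP against horns need not have it against boundaries.

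The correct mechanism is the one your final sentence gestures at, but it must be applied directly to a small extension rather than inside the cell induction, and only to horns. The paper's proof: for a small extension $A\onto B$ with kernel $I$, left-exactness gives $F(A)\by_{F(B)}F(A)\cong F(A)\by(t_F\ten I)$, so the simplicial abelian group $t_F\ten I$ acts freely on $F(A)$ with quotient the image of $F(A)\to F(B)$; levelwise smoothness shows $F(A)$ surjects onto $F(B)\by_{G(B)}G(A)$ and identifies this surjection with the quotient by $H=\ker(t_F\ten I\to t_G\ten I)$, whence it is a principal (hence surjective) fibration by \cite{sht} Corollary V.2.7. If you want to keep an explicit horn-filling argument, you must fill horns using this group action (exploiting that a horn, unlike a boundary, omits one face); the reduction to $\partial\Delta^m$ is not available.
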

\begin{proof}
If $X \to Y$ is a surjective fibration in $\bS$, then it follows from the right lifting property for fibrations that the maps $X_n \to Y_n$ are surjective. Therefore, if  $F\xra{\alpha} G$ is smooth, the maps $F_n\xra{\alpha_n} G_n$ are all smooth.

Conversely, assume that $\alpha_n$ is smooth for all $n$. Since every surjection in $\C_{\L}$ is a composition of small extensions, it suffices to show that for every small extension $A \onto B$ in $\C_{\L}$, with kernel $I$, the map $F(A) \xra{\beta} F(B)\by_{G(B)}G(A)$ is a surjective fibration. Now, by left-exactness,
$$
F(A)\by_{F(B)}F(A)=F(A\by_BA)\cong F(A \by (k\oplus I\eps))=F(A) \by t_F\ten I,
$$ 
where $\eps^2=0$, so $F(A)$ has a faithful action by the additive group $t_F \ten I$, the quotient being isomorphic to the image of $F(A) \to F(B)$. The same formulae hold for $G$, and if we let $H=\ker(t_F \ten I \to t_G\ten I)$, we see that $F(A)/H$ is isomorphic to $F(B)\by_{G(B)}G(A)$, since $F(A)$ maps onto this, by hypothesis. Therefore, by \cite{sht} Corollary V.2.7, $\beta$ is a surjective fibration, so $\alpha$ is smooth, as required.
\end{proof}

\begin{proposition}\label{ctodt1} If a morphism $\alpha:F\to G $ in $sc\Sp$  is such that the map
$$
\theta: F(A)\to F(B)\by_{G(B)}G(A)
$$
is a surjective fibration  for all acyclic small extensions $A \to B$, then $\underline{\alpha}:\underline{F} \to \underline{G}$ satisfies condition (S1) from Definition \ref{scspqsdef}. 
\end{proposition}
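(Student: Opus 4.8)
The plan is to exploit the description of $\underline F(A)$ as the diagonal of the bisimplicial set $(m,\ell)\mapsto F_m(A^{\Delta^{\ell}})$, where $A^{\Delta^{\ell}}$ denotes the cotensor of $A$ with $\Delta^{\ell}$ in the simplicial model category $s\C_{\L}$ (and similarly for $\underline G$). Recall that (S1) asks that $\underline\theta\colon \underline F(A)\to \underline F(B)\times_{\underline G(B)}\underline G(A)$ be a trivial fibration in $\bS$ for every acyclic surjection $A\to B$ in $s\C_{\L}$. I would establish this by identifying the Reedy matching data of this bisimplicial map in terms of the hypothesis, and then appealing to a realization argument for the diagonal.

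\textbf{Step 1 (promoting the hypothesis).} First I would upgrade the hypothesis so that $\theta\colon F(A)\to F(B)\times_{G(B)}G(A)$ is a surjective fibration not merely for acyclic small extensions but for all acyclic surjections $A\to B$. By Lemma \ref{small} such a surjection factors as a finite composite of acyclic small extensions, and for $A\to B\to C$ one has $\theta_{A/C}=\theta_{A/B}$ followed by the base change of $\theta_{B/C}$ along $F(C)\times_{G(C)}G(A)\to F(C)\times_{G(C)}G(B)$, an identity which uses left-exactness of $F$ and $G$. Since surjective fibrations are closed under composition and base change, the claim follows by induction.

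\textbf{Step 2 (the Reedy matching maps).} Fix an acyclic surjection $A\to B$. Using left-exactness of each $F_m,G_m$ to commute them past the finite limits computing the cotensors $A^{\Delta^{\ell}}$ and the matching objects $A^{\partial\Delta^{\ell}}$, I would identify the $\ell$-direction Reedy relative matching map of the bisimplicial map $(m,\ell)\mapsto\bigl(F_m(A^{\Delta^{\ell}})\to F_m(B^{\Delta^{\ell}})\times_{G_m(B^{\Delta^{\ell}})}G_m(A^{\Delta^{\ell}})\bigr)$ with the map
\[
\theta_{A^{\Delta^{\ell}}/D_{\ell}}\colon F(A^{\Delta^{\ell}})\longrightarrow F(D_{\ell})\times_{G(D_{\ell})}G(A^{\Delta^{\ell}}),\qquad D_{\ell}:=B^{\Delta^{\ell}}\times_{B^{\partial\Delta^{\ell}}}A^{\partial\Delta^{\ell}}.
\]
Here $A^{\Delta^{\ell}}\to D_{\ell}$ is the pullback-corner map of $A\to B$ against the cofibration $\partial\Delta^{\ell}\hookrightarrow\Delta^{\ell}$; since every surjection in $s\C_{\L}$ is a fibration and $A\to B$ is moreover acyclic, axiom SM7 makes $A^{\Delta^{\ell}}\to D_{\ell}$ an acyclic fibration, hence an acyclic surjection, so by Step 1 the map $\theta_{A^{\Delta^{\ell}}/D_{\ell}}$ is a surjective fibration for every $\ell$. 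Thus the displayed bisimplicial map is a Reedy fibration in the $\ell$-direction.

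\textbf{Step 3 (the diagonal, and the main obstacle).} It remains to deduce that the diagonal $\underline\theta$ is a \emph{trivial} fibration. For fixed $m$, the slice $\ell\mapsto F_m(A^{\Delta^{\ell}})$ is, by pro-representability of $F_m$ (Lemma \ref{lexlemma}) together with the fact that $A^{\Delta^{\bullet}}$ is a fibrant cosimplicial resolution of $A$, a model for the derived mapping space out of the pro-representing object, and likewise for the target; since $A\to B$ is a weak equivalence in $s\C_{\L}$, the resulting map of homotopy fibre products is a weak equivalence. So the bisimplicial map of Step 2 is a Reedy fibration that is a levelwise weak equivalence in the transverse direction, and the realization lemma for fibrations then yields that $\underline\theta$ is at once a Kan fibration and a weak equivalence, i.e. a trivial fibration, which is exactly (S1). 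The hard part is precisely this last step — extracting a trivial fibration on the diagonal from the pointwise ``surjective fibration'' given by the hypothesis — and it is here that left-exactness of $F$ and $G$ does the real work: it pins down the fibres of the Reedy matching maps (governed, as in Proposition \ref{smoothchar}, by tangent spaces with coefficients in the relevant acyclic kernels), which supplies the $\pi_*$-Kan type input that the realization lemma for fibrations requires.
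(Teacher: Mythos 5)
Your overall architecture is the same as the paper's: realise $\underline{\theta}$ as the diagonal of the bisimplicial map $F(A^{\Delta^{\bt}})\to F(B^{\Delta^{\bt}})\by_{G(B^{\Delta^{\bt}})}G(A^{\Delta^{\bt}})$, use left-exactness to identify the Reedy relative matching maps with $\theta$ applied to the corner maps $A^{\Delta^{\ell}}\to B^{\Delta^{\ell}}\by_{B^{\pd\Delta^{\ell}}}A^{\pd\Delta^{\ell}}$, and then pass to the diagonal. Your Steps 1 and 2 are essentially the paper's first half (the paper checks directly that the corner map of an acyclic small extension is again an acyclic small extension; your appeal to SM7 should really be phrased dually, since in the paper's model structure on $c\Sp$ a surjection in $s\hat{\C}_{\L}$ is dual to a \emph{cofibration}, not a fibration, but the conclusion you need is correct and your Step 1 makes the upgraded hypothesis apply to it).

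The genuine gap is Step 3, and you flag it yourself without closing it. Two things are missing. First, the claim that for fixed $m$ the slice $\ell\mapsto F_m(A^{\Delta^{\ell}})$ models a derived mapping space is unjustified: it is the simplicial mapping space $\HHom_{c\Sp}(\Spf A, F_m)$, whose homotopy invariance in $A$ requires $F_m$ (or at least $\alpha_m\co F_m\to G_m$) to be fibrant, i.e.\ quasi-smooth, in $c\Sp$ --- not a hypothesis, and never derived by you --- so ``the resulting map of homotopy fibre products is a weak equivalence'' does not follow as stated. Second, there is no ``realization lemma'' saying the diagonal of a Reedy fibration is a fibration; the result actually needed (\cite{sht} Lemma IV.4.8, which the paper invokes) requires in addition that each transverse slice $\theta_m$ be a Kan fibration, and you never establish this --- your final sentence only gestures at a $\pi_*$-Kan-type input ``as in Proposition \ref{smoothchar}'' without supplying it. Both missing pieces come from the part of the hypothesis you never exploit, the \emph{surjectivity}: a surjective fibration in $\bS$ is levelwise surjective, so each set-valued level $\alpha_m\co F_m\to G_m$ sends acyclic small extensions to surjections, i.e.\ is quasi-smooth in $c\Sp$; Lemma \ref{settotop} then makes $\underline{\alpha_m}$ quasi-smooth in $sc\Sp$, so each slice $\theta_m$ is a trivial Kan fibration. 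Combined with your Step 2 Reedy fibration, \cite{sht} Lemma IV.4.8 gives that $\diag\theta$ is a fibration, and the levelwise weak equivalence gives that it is a weak equivalence by \cite{sht} Proposition IV.1.7, which is (S1).
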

\begin{proof}
Given $A \in s\C_{\L}$, consider the  bisimplicial sets $ F(A^{\Delta^{\bt}}),  G(A^{\Delta^{\bt}})$. We wish to show that 
$$ 
\theta:F(A^{\Delta^{\bt}})\to   G(A^{\Delta^{\bt}})\by_{G(B^{\Delta^{\bt}})} F(B^{\Delta^{\bt}})
$$ 
is a  diagonal trivial fibration for all acyclic small extensions $A \to B$.

Now, if $A \to B$ is an acyclic  small extension, then $A^L \to B^L\by_{B^K}A^K$ is also an acyclic small extension for all cofibrations $K \to L$ in $\bS$.
Thus $\theta$ is a Reedy fibration, since $\Hom_{\bS}(K, \underline{F_n}(A))= F_n(A^K)$ for all finite simplicial sets $K$, by left-exactness. Moreover, for fixed $m$, $\alpha_m:F_m\to G_m$ is quasi-smooth, for  $F_m,G_m: s\C_{\L} \to \Set$. By Lemma \ref{settotop}, this implies that $\underline{\alpha_m}$ is quasi-smooth, so $\theta_m$ is a Kan fibration. Thus $\theta$ is a Reedy fibration and a horizontal Kan fibration, so \cite{sht} Lemma IV.4.8 implies that $\diag \theta$ is a fibration. 

Finally, the quasi-smoothness of $\underline{\alpha_m}$ implies that $\theta_m$ is a weak equivalence for all $m$. \cite{sht}  Proposition IV.1.7 then implies that $\diag\theta$ is a weak equivalence.
\end{proof}

\begin{example}\label{BG}
If $G:\C_{\L} \to \Gp$ is a smooth left-exact group-valued functor, then the classifying space $B\underline{G}: s\C_{\L} \to \bS$ is smooth, but not a right  Quillen functor for the simplicial model structure.
\end{example}

\subsection{Cohomology and obstructions}\label{cohomology}

\begin{definition}\label{weakdef}
We will say that a morphism $\alpha: F \to G$ of quasi-smooth objects of $sc\Sp$ is a weak equivalence if, for all $A \in s\C_{\L}$, the maps $\pi_i F(A) \to \pi_iG(A)$ are isomorphisms for all $i$.
\end{definition}

\begin{definition}\label{tgtdef}
Given    $F \in sc\Sp$, define the  tangent space functor $\tan F:s\FD\Vect_k \to \bS$, on simplicial  $k$-vector spaces with finite-dimensional normalisation, by $\tan F(V):= F(k\oplus V)$, where the multiplication is given by $V^2=0$. Given a morphism $\alpha: F \to G$ of left-exact functors, define $\tan(F/G):= \ker(\tan F \to \tan G)$.
 
Similarly to  Definition \ref{tdef0}, $(\tan F)(V)$ has a natural vector space structure inherited from $V$. Thus we regard $\tan F$ as a functor
$$
\tan F:s\FD\Vect_k \to s\Vect_k.
$$
\end{definition}

\begin{definition}\label{knlndef}
Let $K^n:=N^{-1}k[-n] \in s\FD\Vect_k$, and $L^n:=N^{-1}(k[-(n+1)]\xra{\id} k[-n])$, noting that $\pi_*K^n= k[-n]$ and $\pi_*L^n=0$. For $V \in s\FD\Vect_k$, let $V[-n]:=V\ten K^n$. 
\end{definition}

\subsubsection{Obstruction maps}
We have the following characterisation of obstruction theory:

\begin{theorem}\label{robs}
If  $\alpha:F \to G$ in $sc\Sp$ is quasi-smooth, then for any small extension $e:I \to A \xra{f} B$ in $s\C_{\L}$, there is a sequence of sets
$$
\pi_0(FA)\xra{f_*} \pi_0(FB\by_{GB}GA) \xra{o_e} \pi_0\tan(F/G)(I[-1]) 
$$  
exact in the sense that the fibre of $o_e$ over $0$ is the image of $f_*$. Moreover,  there is a group action of $\pi_1\tan(F/G)(I[-1])$ on $\pi_0(FA)$ whose orbits are precisely the fibres of $f_*$. 

For any $y \in F_0A$, with $x=f_*y$, the fibre of $FA \to FB\by_{GB}GA$ over $x$ is isomorphic to $\ker(\alpha: \tan F (I) \to \tan G(I))$, and the sequence above 
extends to a long exact sequence
$$\xymatrix@R=0ex{
\cdots  \ar[r]^-{f_*}&\pi_n(FB\by_{GB}GA,x) \ar[r]^-{o_e}& \pi_{n}\tan (F/G) (I[-1]) \ar[r]^-{\pd_e} &\pi_{n-1}(FA,y)\ar[r]^-{f_*}&\cdots\\ 
\cdots \ar[r]^-{f_*}&\pi_1(FB\by_{GB}GA,x) \ar[r]^-{o_e}& \pi_1\tan (F/G) (I[-1])  \ar[r]^-{-*y} &\pi_0(FA).
}
$$
\end{theorem}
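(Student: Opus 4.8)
The plan is to produce, from the small extension $e$, a homotopy fibre sequence
$FA\to FB\by_{GB}GA\to\tan(F/G)(I[-1])$ in $\bS$; the entire statement then falls out of its long exact homotopy sequence, provided the fibre of the first map is identified with $\ker(\tan F(I)\to\tan G(I))$ and provided $\tan(F/G)(I[-1])$ is recognised as a delooping of $\tan(F/G)(I)$. To begin with, since $A\onto B$ is a surjection, condition (Q2) makes $\theta\colon FA\to FB\by_{GB}GA$ a Kan fibration; and since $e$ is a small extension, $A\by_B A\cong A\by_k(k\oplus I)$ with $I^2=0$, so left-exactness of $F$ gives $FA\by_{FB}FA\cong FA\by\tan F(I)$, and cutting this down by the analogous identity for $G$ yields $FA\by_{FB\by_{GB}GA}FA\cong FA\by\tan(F/G)(I)$, compatibly with the first projection. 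Hence the simplicial abelian group $\tan(F/G)(I)$ acts freely on $FA$ over $FB\by_{GB}GA$, the fibre of $\theta$ over $x=f_*y$ is the orbit $y\cdot\tan(F/G)(I)\cong\ker(\tan F(I)\to\tan G(I))$ (this is the fibre assertion), and on $\pi_0$, lifting paths along the fibration $\theta$, the induced action of $\pi_0\tan(F/G)(I)$ on $\pi_0FA$ has orbits precisely the fibres of $f_*$.

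Next, for $V\in s\FD\Vect_k$ the short exact sequence $0\to V\to V\ten L^0\to V[-1]\to 0$ has $V\ten L^0$ acyclic (as $L^0$ is), so turning it into square-zero extensions and invoking left-exactness of $F$, $G$ together with (S1) applied to the acyclic surjection $k\oplus(V\ten L^0)\onto k$ produces a fibration sequence $\tan(F/G)(V)\to *\to\tan(F/G)(V[-1])$. Thus $\pi_n\tan(F/G)(I)\cong\pi_{n+1}\tan(F/G)(I[-1])$ for all $n$; in particular $\pi_1\tan(F/G)(I[-1])\cong\pi_0\tan(F/G)(I)$ is the group of the previous paragraph, and $\Omega\tan(F/G)(I[-1])$ recovers the fibre of $\theta$.

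The construction of the map $o_e^{\sharp}$ is the heart of the matter. Fix a factorisation $\L\to Q\xra{\sim}B$ with $\L\to Q$ a cofibration and $Q\onto B$ an acyclic surjection in $s\hat{\C}_{\L}$; by Lemma \ref{small} and (S1) the comparison map $F(Q)\to FB\by_{GB}G(Q)$ is a trivial fibration, so $Q$ may be used in place of $B$. Pulled back along $Q\onto B$, the extension $e$ is classified by a derivation $\delta\colon Q\to k\oplus I[-1]$ of $\L$-algebras, under which $A$ is the homotopy pullback of $k\to k\oplus I[-1]\xla{\delta}Q$; after replacing $\delta$ by a surjection, $F(\delta)$ is a fibration by (Q2), so the left-exact functor $F$ (extended to $s\hat{\C}_{\L}$ by cofiltered limits) sends this pullback to the strict fibre, and $FA$ becomes weakly equivalent to the homotopy fibre of a map $o_e^{\sharp}\colon FB\by_{GB}GA\to\tan(F/G)(I[-1])$. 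Setting $o_e:=\pi_0(o_e^{\sharp})$, the remaining assertions — exactness of $\pi_0(FA)\xra{f_*}\pi_0(FB\by_{GB}GA)\xra{o_e}\pi_0\tan(F/G)(I[-1])$ at the middle term, the action of $\pi_1\tan(F/G)(I[-1])$ on $\pi_0(FA)$ with orbits the fibres of $f_*$, and the long exact sequence with connecting maps $\pd_e$ — are exactly the long exact homotopy sequence of the fibration sequence $FA\to FB\by_{GB}GA\xra{o_e^{\sharp}}\tan(F/G)(I[-1])$ based at $y$, rewritten via $\pi_n\tan(F/G)(I)\cong\pi_{n+1}\tan(F/G)(I[-1])$; one checks that $o_e$ agrees with the connecting map of $\theta$ on the components in the image of $f_*$, so the two descriptions are consistent.

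The \emph{main obstacle} is the third paragraph: giving a precise meaning to ``$e$ is classified by a genuine derivation $\delta\colon Q\to k\oplus I[-1]$'' once $B$ is replaced by a cofibrant $Q$, checking that $A$ is then the homotopy pullback of $k\to k\oplus I[-1]\xla{\delta}Q$, and arranging all the surjectivity and fibrancy hypotheses — in the \emph{relative} setting, where neither $F$ nor $G$ is assumed quasi-smooth on its own — so that the left-exact $F$ genuinely computes the relevant homotopy pullback via (Q2), and so that the resulting $o_e$ is independent of the choices and compatible with $\theta$. Everything else is routine manipulation of fibration sequences of simplicial sets, using only left-exactness of $F$, $G$ and conditions (S1), (Q2).
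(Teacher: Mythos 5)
Your first two paragraphs are sound and agree with the paper: the torsor identification of the fibre of $\theta\colon FA\to FB\by_{GB}GA$ over $x$ with $\ker(\tan F(I)\to\tan G(I))$, and the delooping $\pi_{n+1}\tan(F/G)(I[-1])\cong\pi_n\tan(F/G)(I)$ (which is exactly Corollary \ref{cohowelldfn}, via the extension $V\to V\ten L^0\to V[-1]$). The genuine gap is your third paragraph, and you flag it yourself: the obstruction map is never actually constructed. You do not show that the extension pulled back to a cofibrant $Q$ is classified by a derivation $\delta\colon Q\to k\oplus I[-1]\eps$, nor that $A$ is recovered as the homotopy pullback along $\delta$, nor how to arrange matters so that (Q2)/(S1) turn this into an honest fibration with target $FB\by_{GB}GA$ (not merely $FB$) and fibre $FA$. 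None of this is routine in the present setting: quasi-smoothness is stated only for surjections in $s\C_{\L}$, whereas $Q$ lives in $s\hat{\C}_{\L}$ and $\delta$ is not a surjection; and in the relative case the compatibility with $G$ needed to land in $\tan(F/G)(I[-1])$ rather than $\tan F(I[-1])$, as well as independence of the choices, must be produced. As written, the central fibration sequence is asserted rather than proved.

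The paper sidesteps all of this with a single explicit construction that does the work of your ``classifying derivation'' while staying inside $s\C_{\L}$: the mapping cone $C=C(A,I):=(A\oplus I\ten L^0\eps)/(e+\eps)I$, with $\eps^2=0$. Then $(f,0)\colon C\to B$ is an \emph{acyclic small extension}, so by (S1) the map $FC\by_{GC}GA\to FB\by_{GB}GA$ is a weak equivalence (a pullback of the trivial fibration $FC\to FB\by_{GB}GC$); and $C\onto k\oplus I[-1]\eps$ is a genuine surjection with strict fibre $A=C\by_{k\oplus I[-1]\eps}k$, so (Q2) together with left-exactness gives an honest fibration $p\colon FC\by_{GC}GA\to\tan(F/G)(I[-1])$ whose fibre over $0$ is $FA$. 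The entire theorem is then the long exact homotopy sequence of $p$, with $o_e=p_*$ read through the weak equivalence above. If you want to complete your argument, replace your $Q$ and $\delta$ by this cone: it supplies the surjection in $s\C_{\L}$ to which the quasi-smoothness hypotheses apply verbatim, and it automatically handles the relative target $FB\by_{GB}GA$.
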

\begin{proof}
Let $C=C(A,I):= (A \oplus I\ten L^0\eps)/(e +\eps)I$ be the mapping cone of $e$, where $\eps^2=0$. Then $(f,0):C \to B$ is a small acyclic surjection, so $FC\by_{GC}GA \to FB\by_{GB}GA$ is a weak equivalence, and thus $\pi_i(FC\by_{GC}GA)  \to \pi_i(FB\by_{GB}GA)$ is an isomorphism for all $i$.

Now,
$
A = C\by_{k \oplus I[-1]\eps}k,
$ 
and since $C\to k \oplus I[-1]\eps$ is surjective, this gives  a fibration
$$
p':FC \to \tan F(I[-1])\by_{\tan G(I[-1])}GC,
$$
which pulls back along $GA \to GC$ to give a fibration
$$
p:FC\by_{GC}GA \to \tan (F/G)(I[-1]),
$$
with fibre $FA$ over $0$.

The result now follows from the long exact sequence of homotopy (\cite{sht} Lemma I.7.3) for the fibration $p$, with the obstruction maps given by $p_*$.   
\end{proof}

\begin{corollary}\label{cohowelldfn}
For $F,G$ as above, there are canonical isomorphisms  $\pd:\pi_{i+1}\tan(F/G)(V[-n-1])\xra{\cong} \pi_i\tan(F/G)(V[n])$ for all $i$ and $V \in s\Vect_k$. Under this isomorphism, the boundary map $\pd_e$ in Theorem \ref{robs} is given by $\pd_e=e_*\circ \pd$, for $e:F(I) \to F(A) $ the fibre over $x$. 
\end{corollary}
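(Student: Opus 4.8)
The plan is to exhibit the desired isomorphism $\pd$ directly from the spaces $L^n$ and $K^n$ of Definition \ref{knlndef}, using that $\tan(F/G)$ sends acyclic small extensions of simplicial finite-dimensional vector spaces to trivial fibrations (which is condition (S1), the defining quasi-smoothness property, specialised to $\tan$ via Definition \ref{tgtdef}). First I would observe that for $V \in s\Vect_k$ there is a small extension of simplicial vector spaces
$$
V[-n-1]\ra V\ten L^n\ra V[-n],
$$
obtained by tensoring $V$ with the defining sequence $K^{n+1}\to L^n\to K^n$ underlying $L^n:=N^{-1}(k[-(n+1)]\xra{\id}k[-n])$; here I use $\pi_*L^n=0$, so $V\ten L^n$ is acyclic, and $(V\ten L^n)\to V[-n]$ is an acyclic small extension with kernel $V[-n-1]$. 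Applying $\tan(F/G)$ and using (S1), the map $\tan(F/G)(V\ten L^n)\to \tan(F/G)(V[-n])$ is a trivial fibration, and its fibre over the basepoint $0$ is $\tan(F/G)(V[-n-1])$; since $\tan(F/G)(V\ten L^n)$ is weakly contractible (being the fibre of a trivial fibration over a point in an object which, by the same token applied to $V\ten L^n\to 0$, is itself weakly equivalent to a point — more carefully, one runs the long exact sequence of the fibration), the long exact homotopy sequence of this fibration gives the connecting isomorphisms
$$
\pd:\pi_{i+1}\tan(F/G)(V[-n-1])\xra{\cong}\pi_i\tan(F/G)(V[n])
$$
for all $i$. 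Iterating (or directly using $V[-n-1]\ra V\ten L^{n}\ra V[-n]$ together with $V[n]=V\ten K^n$ and shifting indices) yields the statement with the target written as $\pi_i\tan(F/G)(V[n])$ rather than $\pi_i\tan(F/G)(V[-n])$; one just needs to match the sign/shift conventions in Definition \ref{knlndef}, where $V[-n]=V\ten K^n$ and $\pi_*K^n=k[-n]$.

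For the second assertion I would unwind the construction of $\pd_e$ in the proof of Theorem \ref{robs}. There, with $C=C(A,I)$ the mapping cone of $e$, one has $A=C\by_{k\oplus I[-1]\eps}k$ and the fibration $p:FC\by_{GC}GA\to\tan(F/G)(I[-1])$ with fibre $FA$ over $0$; the boundary map $\pd_e$ is the connecting map of this fibration. The plan is to compare this with the canonical fibration coming from the small extension $I[-1]\to I\ten L^{-1}\to I[0]=I$ applied above with $V=I$, $n=0$: I claim the inclusion of the fibre $FA\into FC\by_{GC}GA$ factors, up to the weak equivalences already established, through $F(I)\to FA$ composed with the canonical identification. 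Concretely, $C$ receives a map from $A$, and $C$ is built from $A\oplus I\ten L^0\eps$; matching $I\ten L^0$ with (a shift of) $I\ten L^{-1}$ and chasing the pullback squares, the connecting homomorphism $\pd_e$ is seen to be the composite of the canonical $\pd$ with the map $e_*:\pi_i\tan(F/G)(I)=\pi_i F(I)\to\pi_{i}(FA,x)$ induced by $e:F(I)\to F(A)$ (the fibre inclusion over $x$, identified via the last paragraph of Theorem \ref{robs}).

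I expect the main obstacle to be bookkeeping rather than conceptual: namely, keeping the shift conventions of Definition \ref{knlndef} straight (the objects $K^n$, $L^n$, and the operation $V[-n]=V\ten K^n$ all carry signs/degree shifts that must line up, and the statement deliberately writes the target as $V[n]$ where one might naively expect $V[-n]$), and verifying that the identifications of fibres are natural enough that the connecting maps really do agree on the nose, not merely up to an automorphism. In particular one must check that the weak equivalence $FC\by_{GC}GA\to FB\by_{GB}GA$ used in Theorem \ref{robs}, and the contractibility of $\tan(F/G)(V\ten L^n)$, are compatible with the maps relating the mapping-cone fibration to the canonical one; this is where a careful diagram of pullback squares in $sc\Sp$ is needed. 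Everything else follows formally from (S1) and the long exact sequence of a fibration of simplicial sets (\cite{sht} Lemma I.7.3), exactly as in the proof of Theorem \ref{robs}.
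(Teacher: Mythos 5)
Your overall strategy is the same as the paper's: form an acyclic small extension out of $L^n$, use quasi-smoothness to turn it into a fibration of tangent spaces with contractible total space, and read $\pd$ off the long exact homotopy sequence. But the key step as written fails: the sequence you tensor with $V$, namely $K^{n+1}\to L^n\to K^n$, is not a short exact sequence of simplicial vector spaces. Since $NL^n$ is the complex $k[-(n+1)]\xra{\id}k[-n]$, the degree-$n$ part is the subcomplex and the degree-$(n+1)$ part the quotient; any chain map $k[-(n+1)]\to NL^n$ is forced to be zero (its degree-$(n+1)$ component composed with the identity differential must vanish), so there is no monomorphism $K^{n+1}\into L^n$ at all. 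The paper's proof uses the correct orientation, the small extension $V\ten K^n\to V\ten L^n\to V\ten K^{n+1}$: then $\tan(F/G)(V\ten L^n)\to\tan(F/G)(V[-n-1])$ is a fibration with fibre $\tan(F/G)(V[-n])$ and contractible total space, and its connecting map is exactly $\pd:\pi_{i+1}\tan(F/G)(V[-n-1])\xra{\cong}\pi_i\tan(F/G)(V[-n])$. With your orientation the long exact sequence would instead produce $\pi_{i+1}\tan(F/G)(V[-n])\cong\pi_i\tan(F/G)(V[-n-1])$, i.e.\ a map going the wrong way; this cannot be repaired by ``iterating'', and the target $V[n]$ in the statement is simply a typo for $V[-n]$ (note $K^m$ only exists for $m\ge 0$). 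The direction genuinely matters here, because the second assertion needs the specific map $\pd$ landing in $\pi_i\tan(F/G)(I)$ before composing with $e_*$. The same slip recurs in your second part, where you invoke ``$I[-1]\to I\ten L^{-1}\to I[0]$'': $L^{-1}$ is not defined, and the relevant extension is $I\ten K^0\to I\ten L^0\to I\ten K^1$, i.e.\ the case $V=I$, $n=0$ of the corrected sequence.

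For the identity $\pd_e=e_*\circ\pd$ itself, the paper bypasses your mapping-cone diagram chase: it uses the isomorphism $A\by_BA\cong A\by(k\oplus I\eps)$, hence $F(A)\by_{F(B)}F(A)\cong F(A)\by F(I)$, which identifies the fibre of $FA\to FB\by_{GB}GA$ over $x$ with $\tan(F/G)(I)$ via $e$, and then concludes by functoriality of the fibration sequences involved. Your outline (``chasing the pullback squares'' relating the cone fibration of Theorem \ref{robs} to the canonical one) is plausibly the same comparison, but it is left as an unproved claim; once the orientation of the $L^n$-extension is corrected, you should either carry out that comparison explicitly or substitute the paper's short functoriality argument.
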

\begin{proof}
The first statement follows from considering  the small extension $V\ten K^n\to V \ten L^n \to V\ten K^{n+1}$. For the second, 
the isomorphism $A\by_B A \cong A \by (k \oplus I\eps)$ gives an isomorphism $F(A)\by_{F(B)}F(A) \cong F(A) \by F(I)$, the result following by functoriality.
\end{proof}

\begin{definition}\label{cohodef}
For $\alpha:F \to G$ as above, we define  $\H^{j}(F/G)= \pi_i\tan (F/G)(K^n)$ for any $n-i=j$.
Given $V_{\bt} \in s\FD\Vect$, define $H^i(F/G\ten V):=\bigoplus_{n \ge 0} \H^{i+n}(F/G) \ten \pi_n(V)$, for $i \in \Z$. 

If $G=\bt$ (the one-point set), we write $\H^j(F):= \H^j(F/\bt)$.
\end{definition}

\begin{remarks}
This means that we may replace $\pi_{n}\tan (F/G) (I[-1])$ by $\H^{1-n}(F/G \ten I)$ in Theorem \ref{robs}.
To understand how this relates to classical obstruction theories, note that classical deformation functors are of the form $\pi_0F$, with $\pi_1F$ being (outer) automorphisms, and the $\pi_nF(A)$ corresponding to higher homotopies, which vanish for most classical problems when $A \in \C_{\L}$. We are   accustomed to tangent and obstruction spaces arising as $\H^1$ and $\H^2$ of a cohomology theory, rather than $\H^0$ and $\H^1$; essentially this is because $\bS$ models  classifying spaces of (simplicial) groupoids (similarly to Example \ref{BG}), and $\pi_i\bar{W}G=\pi_{i-1}G$.%, so $\H^i(\bar{W}G)=\H^{i+1}(G)$.     
\end{remarks}

\begin{corollary}\label{weak} 
A map $\alpha:F \to G$ of quasi-smooth  $F,G\in sc\Sp$ is a weak equivalence if and only if 
the maps $\H^j(\alpha):\H^j(F) \to\H^j(G)$ are all isomorphisms.
\end{corollary}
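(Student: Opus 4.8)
The plan is to recast everything in terms of the relative tangent functor $\tan(F/G)=\ker(\tan F\to\tan G)$ and reduce the statement to the contractibility of its values. We may assume $\alpha$ itself is quasi-smooth, by factoring it as a trivial cofibration followed by a quasi-smooth map (a trivial cofibration induces isomorphisms on all the homotopy and cohomology groups in sight, so this affects neither hypothesis nor conclusion). The ``only if'' direction is then immediate: the algebra $k\oplus K^n$ lies in $s\C_\L$ by Lemma \ref{cotdef}, and $\tan F(K^n)=F(k\oplus K^n)$, so if $\alpha$ is a weak equivalence then $\pi_i\tan F(K^n)\to\pi_i\tan G(K^n)$ is an isomorphism for all $i,n$, which is exactly the statement that every $\H^j(\alpha)$ is an isomorphism.

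For the converse I would first note that quasi-smoothness of $\alpha$ makes $\tan F(V)\to\tan G(V)$ a Kan fibration with fibre $\tan(F/G)(V)$ for each $V\in s\FD\Vect_k$ (apply (Q2) to $k\oplus V\to k$, using $F(k)=G(k)=\bt$), and a trivial fibration whenever $V$ is acyclic (by (S1)). Running the long exact homotopy sequence for $V=K^n$ converts the hypothesis ``every $\H^j(\alpha)$ is an isomorphism'' into ``$\tan(F/G)(K^n)$ is contractible for all $n$''. I would then bootstrap this to $\tan(F/G)(V)\simeq\bt$ for every $V\in s\FD\Vect_k$ by induction on the length of $NV$: if $V\ne0$, projecting $NV$ onto its top nonzero degree $n$ gives a surjection $V\onto W$ with $W$ a finite direct sum of copies of $K^n$ and with kernel $V'$ of strictly smaller length; as $V'\to k\oplus V\to k\oplus W$ is a small extension, Theorem \ref{robs} (applied to $\alpha$) gives, after restricting to the basepoint of $\tan G(V)$, a fibration sequence $\tan(F/G)(V')\to\tan(F/G)(V)\to\tan(F/G)(W)$; left-exactness makes $\tan(F/G)(W)$ a finite product of copies of $\tan(F/G)(K^n)$, hence contractible, the inductive hypothesis gives $\tan(F/G)(V')\simeq\bt$, and so $\tan(F/G)(V)\simeq\bt$.

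It remains to deduce that $\pi_iF(A)\to\pi_iG(A)$ is an isomorphism for every $A\in s\C_\L$, which I would prove by induction on a factorisation of $A\to k$ into small extensions (Lemma \ref{small}), the case $A=k$ being trivial since $F(k)=G(k)=\bt$. Given a small extension $I\to A\xra{f}B$ with $F(B)\to G(B)$ already a weak equivalence, quasi-smoothness makes $\theta\colon F(A)\to F(B)\by_{G(B)}G(A)$ a fibration whose fibre over any point of its image is $\tan(F/G)(I)\simeq\bt$ (Theorem \ref{robs}); the exact sequence of Theorem \ref{robs} together with $\pi_0\tan(F/G)(I[-1])=\bt$ forces $\pi_0(\theta)$ to be surjective, so every fibre of $\theta$ is contractible and $\theta$ is a weak equivalence. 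Since $G$ is quasi-smooth, $G(A)\to G(B)$ is a fibration, so right-properness of $\bS$ makes the projection $F(B)\by_{G(B)}G(A)\to G(A)$ a weak equivalence; composing with $\theta$ shows $F(A)\to G(A)$ is a weak equivalence, which completes the induction and the proof.

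The step I expect to be the main obstacle is the inductive passage from ``$\tan(F/G)(K^n)$ contractible for all $n$'' to ``$\tan(F/G)(V)$ contractible for all $V$'' — specifically, extracting the fibration sequence $\tan(F/G)(V')\to\tan(F/G)(V)\to\tan(F/G)(W)$ cleanly from Theorem \ref{robs} and handling basepoints correctly. The analogous $\pi_0$-bookkeeping in the final induction, ensuring that $\theta$ is surjective rather than merely a fibration with contractible fibres over its image, is the other place needing care.
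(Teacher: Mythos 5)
Your proof is correct, but it reaches the statement by a different route from the one the paper intends. The corollary is stated without proof because it is meant to be read off from Theorem \ref{robs} taken with the target equal to the point: apply that theorem to $F$ and to $G$ separately along a factorisation of $A\to k$ into small extensions (Lemma \ref{small}), use the identification $\pi_n\tan F(I[-1])\cong \H^{1-n}(F\ten I)$ (and likewise for $G$) to see that the hypothesis makes the tangent terms of the two long exact sequences isomorphic, and run the five lemma (with the usual $\pi_0$ care, using the group-action clause of Theorem \ref{robs}) for the induction step --- all within Section 1. You instead begin by factorising $\alpha$ as a trivial cofibration followed by a quasi-smooth map, which lets you work with the relative tangent $\tan(F/G)$: its contractibility on all of $s\FD\Vect_k$ makes the fibres of $F(A)\to F(B)\by_{G(B)}G(A)$ contractible and renders the $\pi_0$ bookkeeping essentially trivial; your final step can even avoid right properness, since (Q2) applied to $B\to k$ shows $\alpha_B$ itself is a trivial fibration, whose pullback to $G(A)$ is again one. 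The price of the reduction is that it invokes Theorem \ref{scspmodel} and Corollary \ref{homrep} (equivalently the easy half of Corollary \ref{weakchar}), which occur later in the paper; this is not circular --- none of those results uses the present corollary --- but it inverts the paper's logical order, and it is avoidable via the five-lemma comparison just described. One small repair of attribution: the fibration sequence $\tan(F/G)(V')\to\tan(F/G)(V)\to\tan(F/G)(W)$ comes most directly not from the statement of Theorem \ref{robs} but from pulling the (Q2)-fibration $F(k\oplus V)\to F(k\oplus W)\by_{G(k\oplus W)}G(k\oplus V)$ back over the zero section and using left-exactness; with that phrasing the step you flagged as the main obstacle is immediate.
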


\begin{corollary}\label{cohosmoothchar}
If $\alpha:F \to G$ is  quasi-smooth in $sc\Sp$, then $\alpha$ is smooth if and only if $\H^i(F/G)=0$ for all $i>0$.
\end{corollary}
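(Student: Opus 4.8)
The plan is to convert smoothness into a statement about the obstruction maps of Theorem \ref{robs} and then read the cohomology off them.

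\emph{Reduction.} Since $\alpha$ is quasi-smooth, for every surjection $A\to B$ in $s\C_\L$ the map $F(A)\to F(B)\by_{G(B)}G(A)$ is already a fibration in $\bS$, so being a \emph{surjective} fibration just means being surjective on $\pi_0$. By Lemma \ref{small} every surjection in $s\C_\L$ factors through small extensions, so $\alpha$ is smooth exactly when, for every small extension $e\colon I\to A\xra{f}B$, the map $f_*\colon\pi_0(FA)\to\pi_0(FB\by_{GB}GA)$ is surjective; by the exactness asserted in Theorem \ref{robs} (the fibre of $o_e$ over $0$ is the image of $f_*$), this is equivalent to the obstruction map $o_e\colon\pi_0(FB\by_{GB}GA)\to\pi_0\tan(F/G)(I[-1])$ being identically zero. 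So it suffices to show that ``$o_e\equiv 0$ for all small extensions $e$'' is equivalent to ``$\H^i(F/G)=0$ for all $i>0$''.

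\emph{Sufficiency.} If $\H^i(F/G)=0$ for all $i>0$, then for any small extension with kernel $I$ the target $\pi_0\tan(F/G)(I[-1])$ is, by the remark following Theorem \ref{robs} together with Definition \ref{cohodef}, equal to $\H^1(F/G\ten I)=\bigoplus_{n\ge 0}\H^{1+n}(F/G)\ten\pi_n(I)$, which vanishes since every index $1+n$ is $\ge 1$. Hence $o_e$ is automatically zero and $\alpha$ is smooth.

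\emph{Necessity.} Conversely, assume $\alpha$ is smooth and fix $i>0$; I want $\H^i(F/G)=0$. Put $I:=K^{i-1}$ (so $I=k$ when $i=1$). Since $\pi_n(K^{i-1})$ is $k$ for $n=i-1$ and $0$ otherwise, the target of the obstruction map of \emph{any} small extension with kernel $I$ is $\H^1(F/G\ten K^{i-1})=\H^i(F/G)$, so it suffices to produce one small extension $e$ with kernel $I$ for which $o_e$ is surjective; combined with $o_e\equiv 0$ (from smoothness, via the reduction step) this forces $\H^i(F/G)=0$. I would take $e\colon I\to k\oplus I\to k$ and apply the mapping-cone construction of the proof of Theorem \ref{robs}: writing $C=C(k\oplus I,I)$, one gets a surjection $C\to k\oplus I[-1]\eps$ in $s\C_\L$ with $k\oplus I=C\by_{k\oplus I[-1]\eps}k$, together with the fibration $p\colon FC\by_{GC}G(k\oplus I)\to\tan(F/G)(I[-1])$ which realises $o_e$ on $\pi_0$. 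Now smoothness, in the form of condition (S2) for the surjection $C\to k\oplus I[-1]\eps$, makes $p'\colon FC\to\tan F(I[-1])\by_{\tan G(I[-1])}GC$ a surjective fibration; chasing a class $v\in\tan(F/G)(I[-1])\subseteq\tan F(I[-1])$ through the two pullbacks that produce $p$ from $p'$ — using that its image in $\tan G(I[-1])$ is the basepoint and that $G(k)=\ast$ by left-exactness, so the required compatible lift to $FC\by_{GC}G(k\oplus I)$ exists — shows that $p$, hence $o_e$, is surjective on $\pi_0$.

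\emph{Main obstacle.} The delicate point is this last step: realising $\H^i(F/G)$ as the image of an obstruction map. Quasi-smoothness alone only makes $p$ a fibration, which need not be $\pi_0$-surjective; it is the mapping cone, coupled with condition (S2) (which is precisely where smoothness genuinely enters beyond quasi-smoothness), that upgrades $p$ to a surjective fibration, and the basepoint must be tracked with some care through the pullback squares relating $p$ and $p'$.
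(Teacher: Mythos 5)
Your proof is correct, and its second half takes a genuinely different (and longer) route than the paper's. The direction ``$\H^i(F/G)=0$ for $i>0$ implies smooth'' is exactly the paper's argument (``the converse follows from Theorem \ref{robs}''), and your reduction of smoothness to $\pi_0$-surjectivity at small extensions is the same standard factorisation step. For the other direction, however, the paper is more direct: it applies smoothness to the single small extension $k\oplus L^{i-1}\eps \to k\oplus K^{i}\eps$ (kernel $K^{i-1}$), notes that the induced map $\tan(F/G)(L^{i-1})\to\tan(F/G)(K^{i})$ of simplicial vector spaces is then levelwise surjective, and reads off $\pi_0\tan(F/G)(K^{i})=\H^{i}(F/G)=0$ from the long exact homotopy sequence together with $\pi_0\tan(F/G)(L^{i-1})=0$ (which comes from (S1), since $k\oplus L^{i-1}\eps\to k$ is an acyclic small extension). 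You instead realise $\H^{i}(F/G)$ as the target of the obstruction map $o_e$ for $e\colon K^{i-1}\to k\oplus K^{i-1}\eps\to k$ and use smoothness twice: once to force $o_e\equiv 0$, and once, via (S2) applied to the cone surjection $C\to k\oplus I[-1]\eps$, to make $o_e$ surjective on $\pi_0$. Your chase does go through; the only steps left implicit are that a fibration in $\bS$ which is surjective on $\pi_0$ is surjective on vertices (needed both to lift the point $(v,c)$ exactly along $p'$ and in the factorisation-into-small-extensions reduction) and that $\pi_0\tan(F/G)(K^{i-1}\ten K^{1})\cong\H^{i}(F/G)$, which the remark after Theorem \ref{robs} licenses. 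Morally both proofs apply (S2) to a surjection with acyclic source lying over $k\oplus K^{i}\eps$; the paper extracts the vanishing directly from the long exact sequence of simplicial vector spaces, which is what your obstruction-map surjectivity argument re-derives by hand, so your route is sound but buys nothing extra here beyond making the obstruction-theoretic interpretation explicit.
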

\begin{proof}
If $\alpha$ is smooth, then $\tan F(L^n) \to \tan F(K^{n+1})\by_{\tan G(K^{n+1})}\tan G(L^n)$ is surjective for all $n$. Since $\pi_0 \tan (F/G)(L^n)=0$, the long exact sequence of homology then gives $\pi_0\tan (F/G)(K^{n+1})=0$, so $H^i(F/G)=0$ for all $i>0$. The converse follows from Theorem \ref{robs}.
\end{proof}

\subsubsection{Properties of cohomology}\label{relativecoho}

\begin{definition}
Let $\Spf: ((s\hat{\C}_{\L})^{\Delta})^{\op} \to sc\Sp$ be the equivalence given by extending Definition \ref{spfdef} to simplicial diagrams. 
\end{definition}

\begin{definition}
Given a  map  $\alpha:F \to G$ between  $F, G \in sc\Sp$, for $F=\Spf S, G=\Spf R$, define the cotangent space by
$$
\cot(S/R):= \m(S)/(\m(S)^2 + S\cdot\m(R) ):\bS \to s\widehat{\FD\Vect}.
$$
\end{definition}

\begin{definition}\label{qsvect} We say that a left-exact functor $T: s\FD\Vect_k \to \bS$  is quasi-smooth if   it maps  acyclic surjections to trivial fibrations and surjections to fibrations.
\end{definition}

Standard properties of simplicial complexes then give:
\begin{lemma}
If   $\alpha$ as above  is quasi-smooth, then $\cot (S/R): \bS \to s\widehat{\FD\Vect}_k$ is quasi-smooth, in the sense that its left adjoint is so.
\end{lemma}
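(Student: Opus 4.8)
The plan is to reduce the assertion to a statement about the left adjoint of $\cot(S/R)$, which is the functor $V \mapsto \cot(S/R)\otimes V$ (more precisely, the functor on $s\hat{\C}_{\L}$-valued data dual to $\cot$ under the adjunction between square-zero extensions and cotangent complexes). The key point is that for a simplicial complex of $k$-vector spaces, ``quasi-smooth'' as in Definition \ref{qsvect} is purely a homological condition: a surjection of such complexes is automatically split in each degree, and the relevant fibration/trivial-fibration conditions translate, via Dold--Kan, into statements about the normalised chain complexes. So I would first record the standard fact (the ``standard properties of simplicial complexes'' alluded to) that a map $U \to V$ in $s\FD\Vect_k$ which is a surjection is a Kan fibration, and is a trivial fibration iff it is moreover a quasi-isomorphism; dually, a map of pro-finite-dimensional simplicial vector spaces that is levelwise surjective is a ``cofibration'' in the sense of Definition \ref{smcldef}, and acyclic iff it is a quasi-isomorphism.

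Next I would unwind what quasi-smoothness of $\alpha$ gives. By hypothesis $\alpha\colon \Spf S \to \Spf R$ is quasi-smooth in $sc\Sp$, so by Definition \ref{scspqsdef} it sends surjections to fibrations and acyclic surjections to trivial fibrations in $\bS$. The link to $\cot(S/R)$ comes from the square-zero extension formalism already used in Proposition \ref{smoothchar} and Theorem \ref{robs}: for $V \in s\FD\Vect_k$, the set (space) of lifts of a map against the trivial square-zero extension $k\oplus(V\otimes L^n) \to k\oplus (V\otimes K^{n+1})$ is controlled by $\tan(F/G)$, hence by $\cot(S/R)$ via the identification $\tan(\Spf S/\Spf R)(V) = \HHom_{s\hat\FD\Vect_k}(\cot(S/R), V)$ (the defining adjunction). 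Running this for the small extensions $V\otimes K^n \to V\otimes L^n \to V\otimes K^{n+1}$ and their acyclic analogues, quasi-smoothness of $\alpha$ forces the induced maps on $\cot(S/R)$ to be surjections in each simplicial degree (from (Q2)) and quasi-isomorphisms onto the relevant pullbacks when the extension is acyclic (from (S1)). By the Dold--Kan dictionary of the first paragraph, this is exactly the assertion that the left adjoint of $\cot(S/R)$ carries surjections to fibrations and acyclic surjections to trivial fibrations, i.e. is quasi-smooth in the sense of Definition \ref{qsvect}.

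The main obstacle I anticipate is bookkeeping the duality carefully: $\cot(S/R)$ is a functor $\bS \to s\widehat{\FD\Vect}_k$ valued in \emph{pro}-finite-dimensional simplicial vector spaces, so ``quasi-smooth'' must be phrased (as the statement does) in terms of its left adjoint $s\FD\Vect_k \to \bS$, and one must check that $\Lim$ (exactness of which was established in the proof of Proposition \ref{smcl}) lets one pass between surjectivity/acyclicity of pro-objects and of their limits without loss. Concretely, I would verify that $\alpha$ quasi-smooth $\Rightarrow$ for every surjection $V \twoheadrightarrow W$ in $s\FD\Vect_k$, the comparison map $\cot(S/R)(?) $ applied appropriately yields a fibration of simplicial sets on taking $\HHom$ into $V \to W$, reducing to the already-proven Lemma \ref{settotop} and the behaviour of $\cot$ under the tensoring $R \otimes K$ of Definition \ref{smcldef}. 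Everything else is a formal consequence of the model structure of Proposition \ref{smcl} together with Lemma \ref{qscofib}. The remaining verifications are routine and I would not grind through them here.
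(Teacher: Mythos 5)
Your eventual identification of the ``left adjoint'' is the right one (it is the tangent functor $V\mapsto\tan(F/G)(V)=\HHom(\cot(S/R),V)$, not $V\mapsto\cot(S/R)\ten V$ as in your opening line), and feeding square-zero extensions into the quasi-smoothness of $\alpha$ is indeed the whole content; the paper treats the lemma as immediate from exactly this sort of standard simplicial algebra. But your central deduction has a genuine gap: quasi-smoothness of the left adjoint quantifies over \emph{all} surjections and acyclic surjections in $s\FD\Vect_k$, whereas you only test $\alpha$ on the particular extensions $V\ten K^n\to V\ten L^n\to V\ten K^{n+1}$ and then assert that ``by the Dold--Kan dictionary this is exactly'' the required property. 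As written that is a non sequitur: the intermediate conditions you claim to extract (``surjections in each simplicial degree'', ``quasi-isomorphisms onto the relevant pullbacks'') are not pinned down, and passing from statements about these special extensions to the lifting property against arbitrary (acyclic) surjections needs an argument --- for instance that every surjection in $s\FD\Vect_k$ factors into ones whose kernels have one-dimensional normalisation, that each such is a pullback of $L^n\onto K^{n+1}$ (and each acyclic elementary step a pullback of $L^n\to 0$, disks being injective), and that the tangent functor, being left exact, preserves these pullbacks while fibrations and trivial fibrations are closed under base change and composition. None of this is in your sketch, and the splitness you invoke ``in each degree'' does not substitute for it: surjections of complexes over a field need not split as complexes, which is exactly why the reduction has content.

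The gap is easy to close, and in fact the special extensions are unnecessary. For any surjection (resp.\ acyclic surjection) $V\onto W$ in $s\FD\Vect_k$, the map $k\oplus V\to k\oplus W$ is a surjection (resp.\ acyclic surjection) in $s\C_{\L}$, so (Q2) (resp.\ (S1)) gives that $F(k\oplus V)\to F(k\oplus W)\by_{G(k\oplus W)}G(k\oplus V)$ is a fibration (resp.\ trivial fibration); pulling back along $\tan(F/G)(W)\to F(k\oplus W)\by_{G(k\oplus W)}G(k\oplus V)$, $x\mapsto (x,0)$ (using $G(k)=\bt$), identifies $\tan(F/G)(V)\to\tan(F/G)(W)$ with a base change of this map, hence itself a fibration (resp.\ trivial fibration). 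Since $\tan(F/G)$ is precisely the left adjoint of $\cot(S/R)$, that is the entire proof, with no Dold--Kan bookkeeping and no discussion of $\Lim$-exactness required.
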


Under the cosimplicial Dold-Kan correspondence, the category of cosimplicial complexes over an abelian category is equivalent to the category of (non-negatively graded) cochain complexes over that category. This correspondence sends $F$ to its conormalisation $(N_cV(\Delta^{\bt}))^n= V(\Delta^n)/V(\L^n)$, where $\L^n$ denotes the $0$th horn of $\Delta^n$ (or $\emptyset$ if $n=0$), the differential being $ d=\sum_i (-1)^i \pd^i$.

\begin{definition}
Given a cosimplicial simplicial complex $V^{\bt}_{\bt}$, define the cochain complex of chain complexes
$$
N V^{\bt}_{\bt}:= N^s N_c V^{\bt}_{\bt}
$$
by making double use of the Dold-Kan correspondence, combining  cosimplicial conormalisation with the simplicial normalisation of Definition \ref{N^s}.   Write $d^s$ for the chain differential, and $d_c$ for the cochain differential. 
\end{definition}

\begin{lemma}\label{cohochar}
If  $\alpha:F \to G$  is quasi-smooth in $sc\Sp$,  then for $n>0$, $\H^n(F/G)$ is dual to $\H_n^{d^s}(N\cot (S^0/R^0))$. For $n\le 0$,  $\H^n(F/G)$ is dual to $\H^{-n}_{d_c}(\H_0^{d^s}(N\cot (S/R)^{\bullet}))$. 
\end{lemma}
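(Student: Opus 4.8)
The plan is to identify $\H^n(F/G)$ with homotopy groups of the tangent functor $\tan(F/G)$ evaluated at the Eilenberg--MacLane objects $K^n$, and then translate these into (co)homology of the cotangent complex via the adjunction between $F$ and its pro-representing object. First I would invoke the previous lemma to reduce to the case where $\alpha$ is pro-represented by a surjection $S \to R$ in $(s\hat{\C}_{\L})^{\Delta}$ with $\cot(S/R)$ quasi-smooth. The key duality is the standard one: for a left-exact functor, $\tan(F/G)(V) = \ker(\Spf S \to \Spf R)(k\oplus V)$ is computed by pairing $V$ against $\cot(S/R)$, so that $\pi_i \tan(F/G)(V) \cong \H^i$ of the appropriate $\Hom$-complex. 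Concretely, for $V \in s\FD\Vect_k$, linear duality gives $\tan(F/G)(V) \simeq \underline{\Hom}(\cot(S/R), V)$ in an appropriate derived sense, because $\m(S)/(\m(S)^2+S\m(R))$ is a pro-finite-dimensional simplicial vector space and $F(k\oplus V)$ picks out linear maps out of it modulo the relations imposed by higher simplicial structure.

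The main work is then bookkeeping across the two Dold--Kan correspondences. For $n>0$, the relevant information lives entirely in the degree-zero cosimplicial level: the obstruction-theoretic content of $K^n$ (with $n>0$) is concentrated in positive simplicial degree, where the cosimplicial matching maps of the quasi-smooth $\alpha$ are surjective enough that only $S^0/R^0$ contributes; thus $\pi_{n}\tan(F/G)(K^0)$, unwound via Corollary \ref{cohowelldfn}'s isomorphism $\pd$, becomes dual to $\H_n^{d^s}(N\cot(S^0/R^0))$. For $n \le 0$, one cannot discard the cosimplicial direction: here $\H^n(F/G) = \pi_i\tan(F/G)(K^m)$ with $m - i = n$ and $m \ge \max(i,0)$, and the homotopy spectral sequence of the bisimplicial object $\tan(F/G)(K^{\bullet})$ — or equivalently the filtration by cosimplicial skeleta — degenerates because $\H_0^{d^s}(N\cot(S/R)^{\bullet})$ is the only surviving chain-homology in the range, leaving the cochain differential $d_c$ to compute the remaining index $-n$. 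So $\H^n(F/G)$ is dual to $\H^{-n}_{d_c}(\H_0^{d^s}(N\cot(S/R)^{\bullet}))$, as claimed.

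I would carry this out in the following order: (1) set up the duality $\tan(F/G)(V) \simeq \underline{\Hom}(\cot(S/R),V)$ and check it is compatible with the vector-space structures and with the boundary map $\pd$ of Corollary \ref{cohowelldfn}; (2) use the cosimplicial Dold--Kan correspondence to replace $\cot(S/R)$ by its conormalisation $N_c\cot(S/R)$, a cochain complex of simplicial vector spaces, and then the simplicial Dold--Kan to get the double complex $N\cot(S/R)$; (3) compute $\pi_*$ of the $\Hom$ via a (hyper)cohomology spectral sequence whose $E_2$-page is $\H^{p}_{d_c}\H_q^{d^s}(N\cot(S/R))$; (4) observe that quasi-smoothness forces $\cot(S/R)$ to be (pro-)free-ish so that for $q>0$ only $q = n$ matters when $n>0$, and that $\H_q^{d^s}$ for $q<0$ vanishes, pinning down which $E_2$-entries survive; (5) read off the two cases. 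The hard part will be step (4): making precise exactly how quasi-smoothness of $\alpha$ (acyclic surjections to trivial fibrations, surjections to fibrations) controls the chain homology $\H_q^{d^s}(N\cot(S^0/R^0))$ versus the full bicomplex, i.e. justifying the clean vanishing that collapses the spectral sequence into a single edge in each of the two ranges $n>0$ and $n\le 0$. Everything else is a careful but routine translation through Dold--Kan.
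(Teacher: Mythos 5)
Your steps (1)--(2) (the duality $\tan(F/G)(K^n)_i \cong \Hom(N^s\cot(S^i/R^i),k[-n])$ and the passage through the two Dold--Kan correspondences to the bicomplex $N\cot(S/R)$) coincide with the paper's setup, but the engine you propose for the actual computation --- a hypercohomology spectral sequence with $E_2=\H^p_{d_c}\H_q^{d^s}(N\cot(S/R))$ that ``degenerates'' --- is exactly where the argument fails, and you have flagged that step yourself as the one you cannot justify. Two concrete problems. First, for fixed $K^n$ the group $\pi_i\tan(F/G)(K^n)$ is, by direct inspection of the duality above, the $\H^i_{d_c}$ of a single quotient cochain complex, namely the dual of $\bigl(N\cot(S/R)(\Delta^{\bullet})\bigr)_n\big/ d^s\bigl(N\cot(S/R)(\Delta^{\bullet})\bigr)_{n+1}$; it is not naturally the abutment of a bicomplex spectral sequence. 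To make it one you would first have to identify $\H^n(F/G)$ with the cohomology of a total complex of $Nt(F/G)$ --- but that is Proposition \ref{totcoho}, which the paper deduces \emph{from} Lemma \ref{cohochar}, so your route is circular unless you reprove that identification independently (with its own fourth-quadrant convergence issues). Second, even granting the totalization, the two filtration spectral sequences of this fourth-quadrant bicomplex do not both converge to the same totalization, and naive degeneration is impossible: quasi-smoothness makes every row $N_c\cot(S/R)(\Delta^{\bullet})_i$ exact, so the row-first spectral sequence already vanishes at $E_1$ while $\H^*(F/G)$ is generally nonzero. Relatedly, the vanishing pattern in your step (4) is not what quasi-smoothness gives: it yields (a) exactness of each row and (b) concentration of $\H_*^{d^s}$ in chain degree $0$ only in cochain degrees $n>0$; the cochain-degree-$0$ column is precisely where the answer for $n>0$ lives, so it cannot be discarded by any collapse.

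The missing idea is the paper's direct argument on the quotient complex: with $Z_n^{\bullet},B_n^{\bullet},H_n^{\bullet}$ the $d^s$-cycles, boundaries and homology of $NV(\Delta^{\bullet})$ (where $V=\cot(S/R)$), there is a short exact sequence $0 \to H_n^{\bullet} \to (NV(\Delta^{\bullet})_n)/B_n^{\bullet} \to B_{n-1}^{\bullet}\to 0$. Exactness of the rows gives $\H^0(B_{n-1}^{\bullet})=0$, and concentration of $H_n^{\bullet}$ in cosimplicial degree $0$ for $n>0$ then identifies $\H^0\bigl((NV)_n/B_n^{\bullet}\bigr)$ with $(H_n^{\bullet})^0=\H_n^{d^s}(N\cot(S^0/R^0))$, which dualizes to the $n>0$ statement; for $n\le 0$ one has $B_{-1}=0$, so $(NV)_0/B_0^{\bullet}=\H_0^{d^s}(NV^{\bullet})$ and its $d_c$-cohomology in degree $-n$ gives the second statement. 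Supplying this short-exact-sequence argument, together with the precise translation of the two quasi-smoothness conditions into (a) and (b), is the content your proposal defers, so as it stands the proof is incomplete.
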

\begin{proof}
Write $V:=\cot (S/R)$, and so $V(\Delta^{\bt}):= \cot (S(\Delta^{\bt})/R(\Delta^{\bt}))$.

The first condition of quasi-smoothness is that $V(\pd\Delta^n) \to V(\Delta^n)$ is injective for all $n$; this is  equivalent to saying that $\H^n(  N_c V(\Delta^{\bt})  )=0 \in s\widehat{\FD\Vect}_k$ for all $n$. The second condition is that $V(\L^n) \to V(\Delta^n)$ is quasi-trivial in $s\FD\Vect_k$ for $n>0$; this is  equivalent to saying that $\pi_i(N_cV(\Delta^{\bt}) )^n=0$ for all $i>0$ and $n>0$.

We may use the Dold-Kan equivalence again, and consider $NV(\Delta^{\bt}):=N^s N_c V(\Delta^{\bt})$, which is a cochain complex of chain complexes. Now, the simplicial complex $\tan (F/G)(K^n)$ is given by
$$
\tan (F/G) (K^n)_i= \Hom_{dg\widehat{\FD\Vect}_k}(N_sV(\Delta^i), k[-n]),
$$
where $dg\widehat{\FD\Vect}_k$ is the category of pro-finite-dimensional non-negatively graded chain complexes over $k$.
Thus the chain complex $N_s\tan (F/G)(K^n)$ is dual to the cochain complex $(N V(\Delta^{\bt})_n)/(d^sNV(\Delta^{\bt})_{n+1})$, where $d^s$ denotes the chain differential. 

If we write 
\begin{eqnarray*}
Z_n^{\bullet}&:=&\ker(d^s:  NV(\Delta^{\bt})_n \to NV(\Delta^{\bt})_{n-1})\\
B_n^{\bullet}&:=&\im(d^s:  NV(\Delta^{\bt})_{n+1} \to NV(\Delta^{\bt})_{n})\\
H_n^{\bullet}&:=&Z_n^{\bullet}/B_n^{\bullet},
\end{eqnarray*}
there is then a short exact sequence $0 \to H_n^{\bullet}\to (NV(\Delta^{\bt})_n)/B_n^{\bullet} \xra{d_s} B_{n-1}^{\bullet} \to 0$. The first condition of quasi-smoothness implies that $NV(\Delta^{\bt})_{n-1}$ is acyclic, while the second implies that $H_n^{\bullet}$ is concentrated in degree zero for $n>0$. From the former, we deduce that $\H^0(B_{n-1}^{\bullet})=0$, the latter then giving an isomorphism $\H^0((NV(\Delta^{\bt})_n)/B_n^{\bullet})\cong (H_n^{\bullet})^0$, as required.
\end{proof}

\begin{definition}\label{ntanfdef}
Define $t(F/G)$ to be the dual of $\cot(S/R)$; this is a cosimplicial complex of simplicial complexes over $k$, by Lemma \ref{profd}. Let $N_ct(F/G)$ be the cosimplicial normalisation of $t(F/G)$, and observe that this is a cochain complex of simplicial complexes, dual to $N^s\cot(S/R)$. Let $Nt(F/G):= N^sN_ct(F/G)$, the binormalised tangent complex. This is dual to $N_cN^s\cot(S/R)$.

Let $t(F):=t(F/\bt)$, and define the total complex 
$$
(\Tot N t(F))^n:=\bigoplus_{a-b=n} (N\tan t(F))^a_b,
$$
with coboundary operator given by $d_c\pm d^s$.
\end{definition}

\begin{lemma}
$t(F/G)$ is related to $\tan(F/G)$ by the formula
$$
(N_ct(F/G))^n= \tan(F/G)(L^n),
$$
for $L^n$ as in Definition \ref{knlndef}.
\end{lemma}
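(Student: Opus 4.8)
The plan is to unwind the definitions on both sides of the claimed identity $(N_ct(F/G))^n = \tan(F/G)(L^n)$ and match them degreewise. Recall that $t(F/G)$ is defined as the dual of $\cot(S/R)$, and that $N_c$ denotes cosimplicial conormalisation, sending a cosimplicial object $V^{\bt}$ to $(N_cV)^n = V(\Delta^n)/V(\L^n)$ (where $\L^n$ is the $0$th horn of $\Delta^n$). Dually, for the simplicial complex $\cot(S/R): \bS \to s\widehat{\FD\Vect}_k$, normalisation on the pro-finite-dimensional side corresponds under Lemma \ref{profd} to conormalisation on the dual side, so I expect $(N_ct(F/G))^n$ to be the dual of $\cot(S(\Delta^n)/R(\Delta^n))\big/\cot(S(\L^n)/R(\L^n))$, equivalently the dual of the conormalisation of the cosimplicial complex $\cot(S(\Delta^{\bt})/R(\Delta^{\bt}))$ in degree $n$.

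First I would compute $\tan(F/G)(L^n)$ directly. By Definition \ref{tgtdef}, $\tan(F/G)(V) = \ker(\tan F(V) \to \tan G(V))$ where $\tan F(V) = F(k\oplus V)$ with $V^2 = 0$; and $F = \Spf S$ means $F(A) = \Hom_{s\hat{\C}_{\L}}(S, A)$ (extended simplicially). The key point is the adjunction/universal property: a map $S \to k\oplus V$ over $k$ (with $V^2=0$) killing $\m(R)$ is the same as a continuous $k$-linear map $\cot(S/R) \to V$, i.e.\ an element of the dual pairing. Since $L^n = N^{-1}(k[-(n+1)] \xra{\id} k[-n]) \in s\FD\Vect_k$, evaluating $\tan(F/G)$ at $L^n$ and taking into account the simplicial structure of $F \in sc\Sp$ (via $F_n(A^{\Delta^n})$ as in Definition \ref{underline}, or more directly via the cosimplicial structure $S(\Delta^{\bt})$), I would identify $\tan(F/G)(L^n)$ with $\Hom_{dg}(N_s\cot(S/R), L^n)$-type data, and then use that $\Hom$ into $N^{-1}(k[-(n+1)]\to k[-n])$ picks out exactly the conormalised degree-$n$ piece of the dual — this is the same mechanism already used in the proof of Lemma \ref{cohochar}, where $\tan(F/G)(K^n)_i = \Hom_{dg\widehat{\FD\Vect}_k}(N_sV(\Delta^i), k[-n])$.

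The cleanest route is to reduce to a statement about the cosimplicial complex $W^{\bt} := \cot(S(\Delta^{\bt})/R(\Delta^{\bt}))$ and its dual: I want to show that evaluating the dual functor "$\tan(F/G)$ against $L^n$" reproduces $(W(\Delta^n)/W(\L^n))^{\vee}$, i.e.\ the conormalisation. This follows because $L^n$ is, up to the Dold-Kan equivalence, the object representing "the cochains in cosimplicial degree $n$" after conormalising: pairing a cosimplicial vector space against the cosimplicial object corresponding to $L^n$ computes the $n$-th conormalised term, by the standard adjunction between $N_c$ and its inverse together with the acyclicity properties of $L^n$ ($\pi_*L^n = 0$). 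So the proof is: (i) rewrite $\tan(F/G)(L^n)$ using $F = \Spf S$ and the definition of $\cot(S/R)$ as a pairing; (ii) use the cosimplicial structure to express this as a $\Hom$ from $\cot(S(\Delta^{\bt})/R(\Delta^{\bt}))$; (iii) invoke Lemma \ref{profd} to transfer normalisation to conormalisation on the dual side; (iv) match with $(N_ct(F/G))^n$ by Definition \ref{ntanfdef}.

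The main obstacle I anticipate is bookkeeping the two Dold-Kan correspondences simultaneously and getting the variance right: $\cot(S/R)$ lives in \emph{simplicial} pro-finite-dimensional vector spaces indexed \emph{cosimplicially} by $\bS$ (via $\Delta^{\bt}$), while $t(F/G)$ is its dual, a \emph{cosimplicial} complex of \emph{simplicial} complexes, and $N_c$ acts on the cosimplicial direction. I need to be careful that "conormalisation of the dual" equals "dual of normalisation" for cosimplicial/simplicial objects in $\widehat{\FD\Vect}_k$ — this is exactly what Lemma \ref{profd} plus Definition \ref{N^s} guarantee, but the duality reverses the direction of the complex, so one must check that $\L^n \subset \Delta^n$ (the $0$th horn, used in $N_c$) is dual to the relevant quotient map in $L^n = N^{-1}(k[-(n+1)] \to k[-n])$. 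Once this combinatorial identification is pinned down, the rest is a formal unwinding, since the representability of $\tan F$ by $\cot(S/R)$ is already built into the definitions.
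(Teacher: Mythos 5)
Your intended mechanism is, at its core, the paper's: the entire proof is the observation that for $W \in s\widehat{\FD\Vect}$ one has $\Hom_{s\widehat{\FD\Vect}}(W,L^n)\cong \Hom_{\widehat{\FD\Vect}}(N^s(W)_n,k)$, applied levelwise with $W=\cot(S^i/R^i)$ — exactly the device you cite from the proof of Lemma \ref{cohochar}. The gap is in your identification of the left-hand side, and the ``check'' you defer to the end is the step that would fail. You take $(N_ct(F/G))^n$ to be the dual of $\cot(S(\Delta^n)/R(\Delta^n))/\cot(S(\L^n)/R(\L^n))$, i.e.\ conormalisation in the external $\Delta^{\bt}$-direction. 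But dualising swaps the two directions: the cosimplicial direction of $t(F/G)$, the one $N_c$ acts on in Definition \ref{ntanfdef}, is dual to the \emph{internal} simplicial direction of $\cot(S/R)$ (the simplicial structure of each ring $S^i$), while the $\Delta^{\bt}$-direction dualises to the \emph{simplicial} direction of $t(F/G)$. Definition \ref{ntanfdef} states this explicitly: $N_ct(F/G)$ is dual to $N^s\cot(S/R)$, not to the conormalisation of $\cot(S(\Delta^{\bt})/R(\Delta^{\bt}))$. Your proposed object even has the wrong shape: it would be a cosimplicial vector space, whereas both $(N_ct(F/G))^n$ and $\tan(F/G)(L^n)$ are simplicial vector spaces, the simplicial index $i$ coming from the untouched $\Delta^{\bt}$-direction.

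Consequently the combinatorial identification you say must be ``pinned down'' — that the $0$th horn $\L^n\subset\Delta^n$ is dual to the quotient map inside $L^n=N^{-1}(k[-(n+1)]\to k[-n])$ — is not there to be found: $k\oplus L^n\eps$ is an object of $s\C_{\L}$, so $L^n$ pairs against the internal simplicial structure of $\cot(S^i/R^i)$, and the $\Delta^{\bt}$-direction plays no role in this lemma beyond surviving as the residual grading. Once you drop the paragraph-one identification and use Definition \ref{ntanfdef} as stated, your steps (i)--(iv) collapse to the paper's one line:
$$
\tan(F/G)(L^n)_i=\Hom_{s\widehat{\FD\Vect}}(\cot(S^i/R^i),L^n)\cong \Hom_{\widehat{\FD\Vect}}\bigl(N^s(\cot(S^i/R^i))_n,k\bigr)=(N_ct(F/G))^n_i,
$$
the middle isomorphism holding because a chain map from $N^s\cot(S^i/R^i)$ to $(k[-(n+1)]\xra{\id}k[-n])$ is freely determined by its degree-$n$ component.
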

\begin{proof}
This is just the observation that for any $V\in s\widehat{\FD\Vect}$, 
$$
\Hom_{s\widehat{\FD\Vect}}(V, L^n) \cong \Hom_{\widehat{\FD\Vect}}( N_s(V)_n, k),
$$
applied to $V= \cot (S/R)^i$ for all $i$.
\end{proof}

\begin{proposition}\label{totcoho}
There are natural isomorphisms of cohomology groups 
$$
\H^n(F/G)\cong \H^n(\Tot N t(F/G)).
$$ 
\end{proposition}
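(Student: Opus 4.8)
The plan is to compute $\H^n(\Tot Nt(F/G))$ via the spectral sequence of the double complex $Nt(F/G)$ and to match the outcome, degree by degree, with the description of $\H^n(F/G)$ obtained in Lemma \ref{cohochar}. Write $V:=\cot(S/R)$ and let $V(\Delta^{\bt})$ be its evaluation on the cosimplicial simplicial set $\Delta^{\bt}$, so that (Definition \ref{ntanfdef}) $Nt(F/G)$ is the levelwise $k$-dual of the binormalisation $NV(\Delta^{\bt})=N^sN_cV(\Delta^{\bt})$: a cochain complex in the cosimplicial direction (differential $d_c$) of chain complexes in the simplicial direction (differential $d^s$). The proof of Lemma \ref{cohochar} records two consequences of quasi-smoothness of $\alpha$. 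The first condition makes $N_cV(\Delta^{\bt})$, hence $NV(\Delta^{\bt})$, acyclic in the $d_c$-direction; the same phenomenon appears on the dual side, since by the Lemma preceding this Proposition $(N_ct(F/G))^a=\tan(F/G)(L^a)$, and condition (S1) of Definition \ref{scspqsdef}, applied to the acyclic square-zero extension $k\oplus L^a\onto k$ (using that $F(k)$ and $G(k)$ are points), shows $\tan(F/G)(L^a)$ is contractible, so the columns of $Nt(F/G)$ are exact. The second condition makes $\H_b^{d^s}(NV(\Delta^{\bt}))^a=\pi_b((N_cV(\Delta^{\bt}))^a)$ vanish whenever $a>0$ and $b>0$, so the $d^s$-homology of $NV(\Delta^{\bt})$ is concentrated on the bottom row ($b=0$) and the left-hand column ($a=0$).

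I would then run the spectral sequence of the total complex of $NV(\Delta^{\bt})$ that first takes $d^s$-homology. Its $E_1$-page occupies this ``L-shape''; the $d_1$-differential along the bottom row is induced by $d_c$, so $E_2$ along that row is $\H^*_{d_c}(\pi_0N_cV(\Delta^{\bt}))$, while the left-hand column contributes $\pi_*\cot(S^0/R^0)$. A check by total degree then shows that in each total degree $n$ only one of the two families can contribute — the left-hand column when $n>0$, the bottom row when $n\le 0$ — and that the remaining corner terms and all higher differentials are annihilated by the same two vanishing statements. Hence the spectral sequence degenerates, and on dualising one reads off that $\H^n(\Tot Nt(F/G))$ is the dual of $\H^{d^s}_n(N\cot(S^0/R^0))$ for $n>0$ and of $\H^{-n}_{d_c}(\H_0^{d^s}(N\cot(S/R)^{\bt}))$ for $n\le 0$; by Lemma \ref{cohochar} these are exactly $\H^n(F/G)$.

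The step I expect to demand the most care is the comparison between $\Tot Nt(F/G)$ and the termwise dual of $\Tot NV(\Delta^{\bt})$. Because total degrees are assembled using $\bigoplus$, the double complex $Nt(F/G)$ is unbounded in both directions, and its two ``stupid'' filtrations both have vanishing $E_1$ (rows and columns exact, by dualising the $d_c$-acyclicity); so the non-triviality of $\Tot Nt(F/G)$ stems entirely from the failure of the $\bigoplus$-filtration to be complete. Concretely, $\Tot Nt(F/G)$ is only a subcomplex of the termwise dual $(\Tot NV(\Delta^{\bt}))^{\vee}$, and one must verify that this inclusion is a quasi-isomorphism: here the $d_c$-acyclicity of $NV(\Delta^{\bt})$ is used a second time, forcing the quotient complex — assembled from the ``infinite tails'' $\prod/\bigoplus$ — to be acyclic, after which the degeneration computed above (valid for the termwise dual) transfers to $\Tot Nt(F/G)$ itself. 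The essential input throughout is the delooping relation $\pi_i\tan(F/G)(K^a)\cong\pi_{i+1}\tan(F/G)(K^{a+1})$ underlying Corollary \ref{cohowelldfn}.
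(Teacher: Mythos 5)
You have correctly isolated where the difficulty lives (the $\bigoplus$/$\prod$ discrepancy in $\Tot Nt(F/G)$), but your resolution of it is exactly backwards, and this is a genuine gap. The acyclicity you invoke --- each column $(Nt(F/G))^a_{\bt}=N^s\tan(F/G)(L^a)$ is exact, by your (S1) argument, equivalently the $d_c$-acyclicity of $NV(\Delta^{\bt})$ --- is precisely the hypothesis of the acyclic assembly lemma (\cite{W} 2.7.3) for a half-plane double complex with exact columns, and what that lemma kills is the \emph{product} total complex: $\Tot^{\Pi}Nt(F/G)\cong(\Tot^{\oplus}NV(\Delta^{\bt}))^{\vee}$ is acyclic. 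Consequently the quotient by $\Tot^{\oplus}Nt(F/G)$ is not acyclic; on the contrary, all of $\H^*(F/G)$ sits in those ``infinite tails'', and the inclusion $\Tot^{\oplus}Nt(F/G)\into\Tot^{\Pi}Nt(F/G)$ is a quasi-isomorphism only when $\H^*(F/G)=0$ (if both the ambient complex and the quotient were acyclic, the Proposition would assert that $\H^*(F/G)$ always vanishes). Correspondingly, on the cotangent side $\Tot^{\oplus}NV(\Delta^{\bt})$ is itself acyclic, so the $d^s$-first spectral sequence you run does not converge to it: the nonzero L-shaped $E_1$ is exactly the measure of the convergence failure, not a computation of the abutment, so the ``degeneration valid for the termwise dual'' computes nothing about $\Tot^{\oplus}Nt(F/G)$. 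A smaller error in the same paragraph: it is not true that both stupid filtrations have vanishing $E_1$ --- only the columns of $Nt(F/G)$ are exact; the bottom row has cohomology dual to $\pi_*\cot(S^0/R^0)$ (this is where $\H^{>0}(F/G)$ comes from), and the rows $b>0$ may have nonzero $\H^0$.

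The workable route is the opposite filtration, applied directly to $\Tot^{\oplus}Nt(F/G)$: filter by the chain (simplicial) degree $b$. In each total degree this filtration is bounded below and exhaustive, hence its spectral sequence $E_2^{a,-b}=\H_b(\H^a(Nt(F/G)))$ converges (this is the role of the ``fourth quadrant'' remark). The second quasi-smoothness condition concentrates $E_2$ on the L-shape $\{b=0\}\cup\{a=0\}$, each total degree meets the L in a single spot, so there are no further differentials or extension problems; the paper packages the resulting identification via the subcomplex $W$ with $W^n=(Nt(F/G))^n_0$ for $n\ge0$ and $W^n=Z^0_{d_c}(Nt(F/G))_{-n}$ for $n<0$, whose cohomology is exactly the description of $\H^n(F/G)$ in Lemma \ref{cohochar}. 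Note that the column exactness you rely on never enters the computation of the $\bigoplus$-total complex at all --- it is used only inside Lemma \ref{cohochar} --- whereas your argument asks it to perform the one task it provably cannot.
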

\begin{proof}
Consider the spectral sequence 
$$
E_2^{a,-b}=\H_b(\H^a(Nt (F/G)))\implies \H^{a-b}(\Tot Nt (F/G)).
$$
This spectral sequence converges (coming from  a fourth quadrant double complex in the terminology of \cite{W} p.142). If we set
$$
W^n:= \left\{\begin{matrix} (Nt (F/G))^n_0 & n \ge 0 \\  Z^0_{d_c}(Nt (F/G))_{-n} & n < 0, \end{matrix} \right.
$$
then the map $ W^{\bullet} \to(\Tot N t (F/G))^{\bullet}$ gives an isomorphism on spectral sequences, and hence on cohomology (since both spectral sequences are strongly convergent). Finally, Lemma \ref{cohochar} implies that the cohomology of $W$ is just the cohomology of $(F/G)$.
\end{proof}

The following is immediate.
\begin{lemma}
If $X,Y,Z:s\C_{\L} \to \bS$ are left-exact, and   $X \xra{\alpha} Y$ is a quasi-smooth map, with $\beta:Z \to Y$ any map,  set $T:=X\by_YZ $. Then $T \to Z$ is quasi-smooth, and  there is  an isomorphism
$$
\H^*(T/Z) \cong \H^*(X/Y).
$$ 
\end{lemma}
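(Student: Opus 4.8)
The plan is to read both assertions straight off the definitions, using only that evaluation $F\mapsto F(A)$ preserves finite limits and that (trivial) fibrations in $\bS$ are stable under base change.

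First, $T=X\by_YZ$ is again left exact, being a finite limit of left-exact functors, so $T\in sc\Sp$. For quasi-smoothness of the projection $p\colon T\to Z$ I would fix a surjection $A\to B$ in $s\C_{\L}$ and rewrite the associated matching map. Since evaluation at $A$ (and at $B$) preserves finite limits,
$$
T(B)\by_{Z(B)}Z(A)=\bigl(X(B)\by_{Y(B)}Z(B)\bigr)\by_{Z(B)}Z(A)\cong X(B)\by_{Y(B)}Z(A),
$$
where on the right we take the fibre product along the composite $Z(A)\to Z(B)\to Y(B)$, which by naturality of $\beta$ equals $Z(A)\to Y(A)\to Y(B)$; likewise $T(A)=X(A)\by_{Y(A)}Z(A)$. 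Inserting $Y(A)$ then exhibits the matching map
$$
T(A)\lra T(B)\by_{Z(B)}Z(A)
$$
as the base change, along $\beta(A)\colon Z(A)\to Y(A)$, of
$$
X(A)\lra X(B)\by_{Y(B)}Y(A).
$$
By quasi-smoothness of $\alpha$ the latter is a fibration in $\bS$, and a trivial fibration when $A\to B$ is acyclic (conditions (Q2), (S1) of Definition \ref{scspqsdef}); both classes are stable under base change, so $p$ satisfies (Q2) and (S1), i.e. is quasi-smooth.

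For the cohomology statement I would pass to tangent functors. Because $V\mapsto k\oplus V$ is fixed and $X,Y,Z$ are left exact, $\tan T=\tan X\by_{\tan Y}\tan Z$ as functors valued in simplicial $k$-vector spaces, whence
$$
\tan(T/Z)=\ker\bigl(\tan T\to\tan Z\bigr)=\ker\bigl(\tan X\to\tan Y\bigr)=\tan(X/Y),
$$
the middle equality being the elementary fact that in a pullback square the fibre of one projection — taken over the zero section, all the maps being pointed by $V=0$ — coincides with the fibre of the opposite arrow. Feeding this into Definition \ref{cohodef}, namely $\H^n(F/G)=\pi_i\tan(F/G)(K^m)$ whenever $m-i=n$, gives $\H^*(T/Z)\cong\H^*(X/Y)$ verbatim.

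I do not expect a genuine obstacle: the only points needing a moment's care are the bookkeeping in the identification $T(B)\by_{Z(B)}Z(A)\cong X(B)\by_{Y(B)}Z(A)$, and checking that the base points (zero sections) of $\tan X,\tan Y,\tan Z$ are compatible so that the kernels match up. Both become routine once left-exactness is applied systematically, which is presumably why the authors call the lemma immediate.
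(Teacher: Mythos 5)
Your argument is correct and is exactly the routine verification the paper has in mind — the paper states the lemma with no proof ("The following is immediate"), and your two observations (the matching map for $T\to Z$ is the base change along $Z(A)\to Y(A)$ of the matching map for $X\to Y$, and $\tan(T/Z)=\ker(\tan X\by_{\tan Y}\tan Z\to \tan Z)=\tan(X/Y)$ fed into Definition \ref{cohodef}) are precisely the details being suppressed. No gaps.
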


\begin{proposition}\label{longexact}
Let $X,Y, Z:s\C_{\L}\to \bS$ be left-exact functors, with  $X \xra{\alpha} Y$ and $Y \xra{\beta} Z$ quasi-smooth.  There is then a long exact sequence
$$
\ldots \xra{\pd} \H^j(X/Y) \to \H^j(X/Z) \to \H^j(Y/Z) \xra{\pd} \H^{j+1}(X/Y) \to \H^{j+1}(X/Z) \to \ldots
$$
\end{proposition}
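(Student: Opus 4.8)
The plan is to realise all three cohomology theories as the homotopy groups of a single Kan fibration sequence of tangent spaces, and then read off the long exact sequence of that fibration. I would begin by checking that the composite $\beta\alpha:X\to Z$ is itself quasi-smooth, so that $\H^*(X/Z)$ is even defined (Definition \ref{cohodef}): given an (acyclic) surjection $A\to B$ in $s\C_{\L}$, one factors
$$
X(A)\to X(B)\by_{Y(B)}Y(A)\to X(B)\by_{Z(B)}Z(A),
$$
the first arrow being a (trivial) fibration by quasi-smoothness of $\alpha$, and the second being the base change along $X(B)\to Y(B)$ of the (trivial) fibration $Y(A)\to Y(B)\by_{Z(B)}Z(A)$ supplied by quasi-smoothness of $\beta$.

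Next, for a fixed $V\in s\FD\Vect_k$ I would use that $k$ is terminal in $s\C_{\L}$, so left-exactness forces $X(k)=Y(k)=Z(k)=*$; applying condition (Q2) for $\alpha$ to the surjection $k\oplus V\to k$ then shows that $\tan X(V)\to\tan Y(V)$ is a Kan fibration of simplicial vector spaces with fibre over $0$ the kernel $\tan(X/Y)(V)$. Pulling this fibration back along the inclusion $\tan(Y/Z)(V)=\ker(\tan Y(V)\to\tan Z(V))\hookrightarrow\tan Y(V)$, and noting that $X\to Z$ factors through $Y$, so that $\ker(X\to Y)\subseteq\ker(X\to Z)$, one identifies the total space of the pullback with $\tan(X/Z)(V)$ and its fibre over $0$ with $\tan(X/Y)(V)$. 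This produces a fibration sequence
$$
\tan(X/Y)(V)\to\tan(X/Z)(V)\to\tan(Y/Z)(V)
$$
of simplicial abelian groups, natural in $V$.

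Then I would set $V=K^n$ and apply the long exact homotopy sequence of this fibration (\cite{sht} Lemma I.7.3); since all three terms are simplicial abelian groups based at $0$, the whole sequence is one of abelian groups. By Definition \ref{cohodef} one has $\pi_i\tan(W/W')(K^n)=\H^{n-i}(W/W')$, so, writing $j=n-i$, the homotopy sequence reads
$$
\cdots\to\H^j(X/Y)\to\H^j(X/Z)\to\H^j(Y/Z)\xra{\pd}\H^{j+1}(X/Y)\to\cdots,
$$
running over all $j\le n$ and terminating at the arrow $\H^n(X/Z)\to\H^n(Y/Z)$. Letting $n$ grow and using Corollary \ref{cohowelldfn} --- applied to the small extensions relating $V\ten K^n$, $V\ten L^n$ and $V\ten K^{n+1}$ --- to identify the truncated sequences for consecutive values of $n$, these glue into the desired doubly-infinite long exact sequence.

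The step I expect to be the main obstacle is precisely this last gluing: one must verify that the connecting homomorphisms $\pd$ of the homotopy sequences attached to different $n$ agree under the reindexing isomorphisms $\pi_{i+1}\tan(W/W')(K^{n+1})\cong\pi_i\tan(W/W')(K^n)$ of Corollary \ref{cohowelldfn}. This ought to follow formally from the naturality in $V$ of the fibration sequence above --- so that its boundary maps are compatible with the comparison maps among the $K^n$ and $L^n$ --- but it is the point that has to be written out with care rather than simply asserted.
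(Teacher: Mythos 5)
Your argument is correct, but it takes a genuinely different route from the paper's. The paper's own proof is a two-line reduction to homological algebra: using $t(X/Y)=\ker(t(X)\to t(Y))$ it forms a short exact sequence of tangent bicomplexes $0\to Nt(X/Y)\to Nt(X/Z)\to Nt(Y/Z)\to 0$ (Definition \ref{ntanfdef}), and then Proposition \ref{totcoho}, which identifies $\H^*(-/-)$ with the cohomology of the total complexes, gives the long exact sequence in all degrees at once, with no splicing. You instead stay at the level of the functors: quasi-smoothness applied to the surjection $k\oplus V\to k$ makes $\tan X(V)\to \tan Y(V)$ a Kan fibration, and pulling back along $\tan(Y/Z)(V)\into \tan Y(V)$ produces the fibration sequence $\tan(X/Y)(V)\to\tan(X/Z)(V)\to\tan(Y/Z)(V)$; taking $V=K^n$, the homotopy long exact sequence plus the shift isomorphisms of Corollary \ref{cohowelldfn} give the statement. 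This is closer in spirit to the proof of Theorem \ref{robs}, and it buys you two things: you never need pro-representability, the cotangent bicomplex, or Proposition \ref{totcoho}, and because you use a fibration sequence rather than a short exact sequence you never need surjectivity of $\tan(X/Z)\to\tan(Y/Z)$. The price is the splicing step you flag, but that is not a real gap: the shift isomorphism of Corollary \ref{cohowelldfn} is itself the connecting map of the fibration in the coefficient variable coming from $K^n\to L^n\to K^{n+1}$, so compatibility with the maps $\H^j(X/Y)\to\H^j(X/Z)\to\H^j(Y/Z)$ is just naturality of connecting maps with respect to the induced maps of coefficient fibration sequences, while compatibility with the boundary map is the standard (anti)commutation of the two connecting homomorphisms attached to a commuting square of fibrations, a sign that does not affect exactness; alternatively, since all the spaces involved are simplicial vector spaces, Dold--Kan normalisation turns the whole picture into chain complexes and the splicing becomes classical. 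Your preliminary check that $\beta\alpha$ is quasi-smooth (so that $\H^*(X/Z)$ is defined) is correct and worth keeping, as the paper leaves it implicit.
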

\begin{proof}
Since $t(X/Y)=\ker(\alpha:t (X) \to t (Y))$, we have a short exact sequence of bicomplexes
$$
0 \to Nt(X/Y) \to Nt(X/Z) \to Nt(Y/Z) \to 0,
$$
giving the required long exact sequence.
\end{proof}

\begin{lemma}\label{underlinecoho}
For a map $F\xra{\alpha} G$ of left-exact functors $F,G:s\C_{\L} \to \bS$, the relative tangent space $t(\underline{F}/\underline{G} )$ is given by the simplicial cosimplicial complex
$$
 t (\underline{F}/\underline{G})_n^i= (t(F/G)^i_n)^{\Delta^n_i}.
$$
In particular, $\H^i(t (\underline{F}/\underline{G}))\cong \H^i(t(F/G)) \in s\Vect_k$ for all $i$.
\end{lemma}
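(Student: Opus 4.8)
The plan is to pro-represent $\underline F$ and $\underline G$ explicitly, to read off their relative cotangent space from the simplicial structure of Definition~\ref{smcldef}, to dualise, and then to deduce the cohomology comparison from the contractibility of $\Delta^i$. Write $F = \Spf S$ with $S \in (s\hat{\C}_{\L})^{\Delta}$, so that $S^i \in s\hat{\C}_{\L}$ pro-represents $A \mapsto F(A)_i$ and $F_i(A) = \Hom_{s\hat{\C}_{\L}}(S^i,A)$, and likewise $G = \Spf R$; here $S^i = (S^i_n)_n$ is a simplicial object of $\hat{\C}_{\L}$. By Definition~\ref{underline}, $\underline F(A)_i = F_i(A^{\Delta^i}) = \Hom_{s\hat{\C}_{\L}}(S^i, A^{\Delta^i})$, with $A^{\Delta^i}$ the cotensor of $A$ by the simplicial set $\Delta^i$. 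First I would use the tensor--cotensor adjunction of the simplicial category $s\hat{\C}_{\L}$ (equivalently, the observation in Definition~\ref{underline} that $\underline F = \HHom(R,-)$ when $F = \Hom_{s\hat{\C}_{\L}}(R,-)$) to rewrite this as $\Hom_{s\hat{\C}_{\L}}(S^i \ten \Delta^i, A)$ for the tensoring $\ten$; by the explicit formulae of Definition~\ref{smcldef}, $(S^i\ten\Delta^i)_n$ is the coproduct in $\hat{\C}_{\L}$ of copies of $S^i_n$ indexed by the appropriate simplices of $\Delta^i$. Hence $\underline F = \Spf\underline S$ and $\underline G = \Spf\underline R$ with $\underline S^i := S^i\ten\Delta^i$, $\underline R^i := R^i\ten\Delta^i$, and $\underline\alpha$ induced by $R\to S$.

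Next I would compute the relative cotangent space. The functor $\cot(-/-)$ is additive and carries a (finite) coproduct of $\L$-algebras to the direct sum of the relative cotangent spaces, hence commutes with tensoring by a finite simplicial set; thus $\cot(\underline S^i/\underline R^i)_n \cong \cot(S^i/R^i)_n \ten_k k[\Delta^n_i]$. Dualising by Lemma~\ref{profd} (the dual of the free $k$-space on a finite set $T$ being $k^T$) yields the asserted formula $t(\underline F/\underline G)^i_n = (t(F/G)^i_n)^{\Delta^n_i}$. Here one must check that the cosimplicial structure in $i$ obtained in this way is the diagonal of the cosimplicial structure inherited from $S$ together with the one coming from the cosimplicial simplicial set $\Delta^{\bullet}$, and that this coincides with the cosimplicial structure intrinsic to $t(\underline F/\underline G)$.

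For the last assertion, fix the simplicial degree $n$ and let $C$ be the bicosimplicial $k$-vector space $(i,j)\mapsto (t(F/G)^i_n)^{\Delta^n_j}$, whose diagonal is $t(\underline F/\underline G)^{\bullet}_n$. Since $\Delta^n$ is contractible, the normalised chain complex of the simplicial vector space $k[\Delta^n_{\bullet}]$ is quasi-isomorphic to $k$ in degree $0$; applying the exact functor $\Hom_k(-,W)$ shows that the conormalisation of the cosimplicial vector space $j\mapsto W^{\Delta^n_j}$ is quasi-isomorphic to $W$ in degree $0$, for every $W$. Hence in the first-quadrant spectral sequence of the double complex obtained by conormalising $C$ in both directions one has $E_1^{i,j}=0$ for $j\neq0$ and $E_1^{i,0}=t(F/G)^i_n$, so it collapses and $\H^{*}(\Tot C)\cong\H^{*}(t(F/G))_n$. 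By the diagonal-versus-total comparison (the cosimplicial analogue of \cite{sht} Prop.~IV.1.7), $\H^{*}(\diag C)\cong\H^{*}(\Tot C)$; letting $n$ vary gives $\H^i(t(\underline F/\underline G))\cong\H^i(t(F/G))$ in $s\Vect_k$.

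The step I expect to be the main obstacle is the bookkeeping in the first two paragraphs: correctly matching the two cosimplicial directions when identifying $\underline S^i$ and its relative cotangent space, verifying that all identifications are natural in every variable, and confirming that they really exhibit $t(\underline F/\underline G)$ as the diagonal of the bicosimplicial object above. Once that diagonal description is secured, the contractibility input and the spectral-sequence collapse in the last paragraph are routine.
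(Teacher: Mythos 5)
Your proposal is correct and takes essentially the same route as the paper: you identify $(\underline{F})_n$ as pro-represented by $S^n\ten\Delta^n$ and read off the relative cotangent/tangent space, and you derive the cohomology comparison from the contractibility of $\Delta^n$ — the paper packages this last step as the single cosimplicial quasi-isomorphism $t(F/G)^{\bt}_n \to (t(F/G)^{\bt}_n)^{\Delta^n_{\bt}}$, dual to the weak equivalence $V^{\vee}\ten \Delta^n \to V^{\vee}$ of simplicial complexes, whereas you prove the equivalent fact via the bicosimplicial diagonal, the collapsing spectral sequence and cosimplicial Eilenberg--Zilber. The only caveat is the index bookkeeping you yourself flag: with your labelling ($i$ the outer index) the exponent in the cotangent formula is $\Delta^i_n$ (the $n$-simplices of $\Delta^i$), and the two indices transpose when you dualise, since in $t(F/G)^i_n$ the superscript is the inner (cosimplicial) direction — once this is sorted, the argument goes through exactly as in the paper.
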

\begin{proof}
If  $F$ is represented by $R$, then $(\underline{F})_n$ is represented by $R^n\ten \Delta^n$; the description of $t (\underline{F}/\underline{G})$ follows immediately. 

For all $n$ and all cosimplicial complexes $V$, the morphisms $V^i \to (V^i)^{\Delta^n_i}$ fit together to form a cosimplicial quasi-isomorphism, dual to the quasi-isomorphism $V^{\vee}\ten \Delta^n \to V^{\vee}$ of simplicial complexes. This gives the isomorphism on cohomology.  
\end{proof}

\begin{proposition}\label{ctodtnew}
If a morphism $F\xra{\alpha} G$ of left-exact functors $F,G:s\C_{\L} \to \bS$ is such that the maps
$$
\theta: F(A)\to F(B)\by_{G(B)}G(A)
$$
 are surjective fibrations  for all acyclic small extensions $A \to B$, then $\underline{\alpha}:\underline{F} \to \underline{G}$ is quasi-smooth (resp. smooth) if and only if the groups $\H^i(t(F/G))$ are constant simplicial complexes (resp. $0$) for all $i>0$. This is equivalent to saying that $\theta$ is a fibration (resp. surjective fibration) for all small extensions $A \to B$.
\end{proposition}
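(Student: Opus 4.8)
The plan is to dispose of condition (S1) using Proposition \ref{ctodt1}, reduce everything to small extensions by Lemma \ref{small}, and then read off the remaining fibration conditions from the tangent complex, using Lemma \ref{underlinecoho} to pass between $\underline\theta$ and $\theta$. Since the hypothesis on $\theta$ is precisely that of Proposition \ref{ctodt1}, that proposition shows $\underline\alpha$ satisfies condition (S1) of Definition \ref{scspqsdef}; hence $\underline\alpha$ is quasi-smooth if and only if it satisfies (Q2), and smooth if and only if it satisfies (S2). Conditions (Q2) and (S2) only require that maps $\underline F(A)\to\underline F(B)\by_{\underline G(B)}\underline G(A)$ be fibrations (resp. surjective fibrations), and such requirements are stable under composition and base change, so Lemma \ref{small} reduces them to: $\underline\theta$ is a fibration (resp. surjective fibration) for every small extension $e\colon I\to A\to B$ in $s\C_{\L}$. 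The same reduction applies to the condition ``$\theta$ is a fibration (resp. surjective fibration) for all small extensions'', so it suffices to show that these two conditions and the cohomological condition are all equivalent.

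Now fix a small extension $e\colon I\to A\to B$. By left-exactness one has
$$
F(A)\by_{F(B)\by_{G(B)}G(A)}F(A)\;\cong\;F(A)\by\tan(F/G)(I),
$$
so $\theta$ is a torsor onto its image under the simplicial abelian group $\tan(F/G)(I)$, and similarly $\underline\theta$ under $\tan(\underline F/\underline G)(I)$. Following the proof of Theorem \ref{robs}, put $C=C(A,I)$, so that $C\to B$ is an acyclic small extension --- whence the hypothesis, via Proposition \ref{ctodt1}, forces $\underline F C\to\underline F B\by_{\underline G B}\underline G C$ to be a trivial fibration --- and $A=C\by_{k\oplus I[-1]\eps}k$. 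Splicing these together, $\underline\theta$ is a fibration for every small extension exactly when each resulting homotopy sequence prolongs to the long exact sequence of Theorem \ref{robs} with all its maps defined; taking $I$ of the form $V\ten K^{n}$ and applying Corollary \ref{cohowelldfn} and Definition \ref{cohodef}, this is the vanishing of $\H^{i}(\underline F/\underline G)$ in positive simplicial degrees for $i>0$, i.e. $\H^{i}(t(\underline F/\underline G))$ is a constant simplicial complex for $i>0$. The identical argument with $F$ in place of $\underline F$ characterises ``$\theta$ a fibration for all small extensions'' by the constancy of $\H^{i}(t(F/G))$, and Lemma \ref{underlinecoho} identifies $\H^{i}(t(\underline F/\underline G))\cong\H^{i}(t(F/G))$, giving the quasi-smooth case. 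The smooth case then follows from the quasi-smooth case together with Corollary \ref{cohosmoothchar}: once $\underline\alpha$ is known quasi-smooth, $\H^{i}(t(\underline F/\underline G))$ is already constant, so it vanishes precisely when $\H^{i}(\underline F/\underline G)=0$.

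The step I expect to be the real obstacle is the second paragraph: without full quasi-smoothness of $\alpha$, the fibration $p'$ appearing in the proof of Theorem \ref{robs} --- which comes from the non-acyclic surjection $C\to k\oplus I[-1]\eps$ --- is not handed to us, so one cannot simply quote that theorem. Instead one must run the mapping-cone argument carrying ``$\theta$ (equivalently $\underline\theta$) is a fibration for all small extensions'' as a standing hypothesis, and check that it both forces and is forced by the cohomological vanishing. Everything else is a repackaging of Proposition \ref{ctodt1}, Lemma \ref{small}, and standard simplicial homotopy theory (the diagonal and Reedy-fibration facts already used in the proof of Proposition \ref{ctodt1}).
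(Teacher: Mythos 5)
Your opening and closing moves are fine and agree with the paper: Proposition \ref{ctodt1} disposes of (S1), the reduction of (Q2)/(S2) to small extensions via Lemma \ref{small} is legitimate, and once the quasi-smooth case is known, the smooth case does follow from Corollary \ref{cohosmoothchar} together with the identification of $\H^i(\underline{F}/\underline{G})$ with the constant value of $\H^i(t(F/G))$. But the heart of the proposition --- the equivalence of ``$\underline{\theta}$ is a fibration for every small extension'' with ``$\H^i(t(F/G))$ is a constant simplicial complex for $i>0$'' --- is not actually proved in your second paragraph, and you concede as much in your third. The criterion you offer, that ``each resulting homotopy sequence prolongs to the long exact sequence of Theorem \ref{robs} with all its maps defined,'' is not a characterisation of being a Kan fibration: fibrancy is a lifting property and cannot be read off from homotopy groups or from the existence of exact sequences. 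Worse, the fibration $p'$ (coming from the non-acyclic surjection $C\to k\oplus I[-1]\eps$) that makes Theorem \ref{robs} run is exactly the kind of statement in question, so the mapping-cone argument cannot be quoted, and saying one must ``check that it both forces and is forced by the cohomological vanishing'' is a restatement of the proposition, not an argument. There is also a circularity in invoking $\H^i(\underline{F}/\underline{G})$ mid-proof: Definition \ref{cohodef} only defines these groups for quasi-smooth maps, which is what you are trying to establish; the statement is phrased in terms of $\H^i(t(F/G))$ precisely to avoid this, and the passage between the two needs Lemma \ref{cohochar}, available only after quasi-smoothness is known.

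The paper closes this gap by a different mechanism, which you would need to reproduce: condition (Q2) is unwound through the horn-lifting criterion, i.e.\ for each trivial cofibration $K\into L$ in $\bS$ (resp.\ also $\emptyset\to\bt$ for (S2)) one must show that the matching map $M_L\underline{F}\to M_K\underline{F}\by_{M_K\underline{G}}M_L\underline{G}$ is smooth as a map of set-valued functors on $s\C_{\L}$. Proposition \ref{smoothchar} (your torsor observation is its levelwise ingredient) and Corollary \ref{cohosmoothchar} convert this into vanishing of the relative tangent cohomology in positive degrees, which unwinds to the statement that $d\co t(\underline{F}/\underline{G})^{i-1}\to \z^i(t(\underline{F}/\underline{G}))$ is a (surjective) fibration for all $i>0$, i.e.\ that $\H^i(t(\underline{F}/\underline{G}))$ is constant (resp.\ $0$); Lemma \ref{underlinecoho} then transfers this to $\H^i(t(F/G))$, and running the same unwinding with $F$ in place of $\underline{F}$ gives the final equivalence with $\theta$ being a fibration (resp.\ surjective fibration) on small extensions. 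Your ``identical argument with $F$ in place of $\underline{F}$'' and the restriction to kernels of the form $V\ten K^n$ would also need this machinery to be made precise; as written, the central equivalence is asserted rather than established.
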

\begin{proof}
By Proposition \ref{ctodt1}, we know that $\underline{\alpha}$ satisfies (S1), so we only need to prove (Q2) (resp. (S2)). Given a simplicial set $K$, write $M_KX:= \Hom_{\bS}(K,X)$, for $X \in \bS$. For any trivial cofibration $K \into L$ between simplicial sets (resp. any such cofibration and $\emptyset \to \bt$), we must demonstrate that 
$$
M_L\underline{F} \to M_K\underline{F}\by_{M_K\underline{G}}M_L\underline{G}
$$
is smooth. By Lemma \ref{smoothchar} and Corollary \ref{cohosmoothchar}, this is equivalent to showing that 
$$
\H^i(t(M_L\underline{F}/ M_K\underline{F}\by_{M_K\underline{G}}M_L\underline{G}))=0
$$
for all $i>0$.

This, in turn, is equivalent to saying that $d:t(\underline{F}/\underline{G})^{i-1} \to \z^i(t(\underline{F}/\underline{G}))$ is a fibration (resp. surjective fibration) for all $i>0$, or that $\H^i(t(\underline{F}/\underline{G}))$ is a constant simplicial complex (resp. $0$). By Lemma \ref{underlinecoho}, this is equivalent to asking that $\H^i(t(F/G))$ be a constant simplicial complex (resp. $0$), as required.

Finally, observe that these conditions are equivalent to asking that 
$$
\H^i(t(M_LF/ M_KF\by_{M_KG}M_LG))=0
$$
for all such $K \into L$, which is the same as saying that $\theta$ is a fibration (resp. surjective fibration) for all small extensions $A \to B$.
\end{proof}

\section{Model structures}\label{model}

\subsection{Cosimplicial spaces}

\begin{definition}
Define $I_{\Sp}$ to be the class of morphisms $f:X \to Y$ in $c\Sp$ for which either $f$ is dual to a small extension in $s\C_{\L}$, or both $X,Y \in \Sp$. Define  $J_{\Sp}$ to consist of those $f$ dual to acyclic small extensions in $s\C_{\L}$.
\end{definition}

\begin{remark}
Observe that the set of isomorphism classes in $\C_{\L}$ is small (since all local Artinian rings are quotients of finitely generated polynomial rings). We may therefore replace $I_{\Sp}, J_{\Sp}$ by small subsets, justifying the use of the small object argument which follows.
\end{remark}

\begin{lemma}
The model category $c\Sp$ is cofibrantly generated, with $I_{\Sp}$ the generating cofibrations, and $J_{\Sp}$ the generating trivial cofibrations.
\end{lemma}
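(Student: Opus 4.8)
The plan is to apply the standard recognition principle for cofibrantly generated model categories: since $c\Sp$ already carries a model structure by Proposition \ref{smcl}, it suffices to check three things — that the domains of $I_{\Sp}$ and $J_{\Sp}$ are small, that $J_{\Sp}$-inj is the class of fibrations, and that $I_{\Sp}$-inj is the class of trivial fibrations. Granting these, $I_{\Sp}$-cof $=$ the maps with the left lifting property against all trivial fibrations $=$ the cofibrations, and likewise $J_{\Sp}$-cof $=$ the trivial cofibrations, which is exactly the claim. For smallness, the preceding remark shows $\C_{\L}$ has only a set of isomorphism classes, hence so does $s\C_{\L}$ by the finiteness in Lemma \ref{cotdef}, so the small extensions and acyclic small extensions form sets up to isomorphism; the ``$\Sp$-part'' of $I_{\Sp}$ may be replaced by a set generating the discrete model structure on the locally presentable category $\Sp$ (e.g.\ $\{\emptyset\to G\}\cup\{G\sqcup G\to G\}$ with $G$ ranging over a strong generator), which has the same class of injectives as all of the morphisms of $\Sp$; and the domains occurring are finitely presented in $c\Sp$, since colimits there are cofiltered limits in $s\hat{\C}_{\L}$ and any morphism from a cofiltered inverse system into an Artinian object factors through a finite stage.

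For $J_{\Sp}$-injectives: since $\Spf A$, for $A\in s\C_{\L}$, represents the evaluation functor $X\mapsto X(A)$ on $c\Sp$, the Yoneda lemma shows that $f\colon\Spf S\to\Spf R$ has the RLP against the dual $\Spf B\to\Spf A$ of a small extension $A\onto B$ exactly when $F(A)\to F(B)\times_{G(B)}G(A)$ is surjective (where $F=\Hom_{s\hat{\C}_{\L}}(S,-)$ and $G=\Hom_{s\hat{\C}_{\L}}(R,-)$) — equivalently when $f^{\sharp}\colon R\to S$ has the LLP against $A\to B$. Hence $J_{\Sp}$-inj is precisely the class of quasi-smooth maps, which by Lemma \ref{qscofib} is the class of fibrations. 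The same unwinding, applied to all small extensions and combined with Lemma \ref{small}, shows that $f$ has the RLP against the duals-of-small-extensions part of $I_{\Sp}$ iff $f^{\sharp}$ has the LLP against every surjection in $s\C_{\L}$, i.e.\ iff $f$ is smooth.

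For $I_{\Sp}$-injectives: every map in $I_{\Sp}$ is a cofibration — duals of surjections are cofibrations by the observation after Definition \ref{smcldef}, and any morphism between objects of $\Sp$ is a cofibration because constant simplicial objects have vanishing normalisation in positive degrees, so condition (1) of Definition \ref{smcldef} is vacuous — so the trivial fibrations, which lift against all cofibrations, lie in $I_{\Sp}$-inj. Conversely, suppose $f$ has the RLP against $I_{\Sp}$. By the above $f$ is smooth, hence quasi-smooth, hence a fibration, and by Corollary \ref{cohosmoothchar} (cf.\ Remark \ref{aqsmoothchar}) the relative cotangent space $\cot(S/R)$ is concentrated in degree $0$. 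Unwinding the RLP against the $\Sp$-part of $I_{\Sp}$ through the adjunction between $\Sp$ and $c\Sp$ shows that $\Spf\pi_0 S\to\Spf\pi_0 R$ has the RLP against all morphisms of $\Sp$, hence is an isomorphism; thus $\pi_0 f^{\sharp}$ is an isomorphism and $\pi_0\cot(S/R)=\Omega_{\pi_0 S/\pi_0 R}=0$, so $\cot(S/R)$ is acyclic. Since $R$ and $S$ are (pro-)complete, a map inducing an isomorphism on $\pi_0$ with acyclic relative cotangent complex is acyclic; hence $f$ is a weak equivalence, and therefore a trivial fibration.

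The step I expect to be the main obstacle is this last deduction — that smoothness plus the single condition ``$\pi_0 f^{\sharp}$ is an isomorphism'' already forces $f^{\sharp}$ to be acyclic, equivalently that the two families comprising $I_{\Sp}$ (duals of small extensions, which control the relative cotangent complex, and morphisms of $\Sp$, which control $\pi_0$) jointly detect the trivial fibrations and not merely the smooth fibrations. In effect this re-derives the cofibrant generation already built into the Bousfield-style construction used to prove Proposition \ref{smcl}, and one could alternatively deduce the lemma by tracing through that construction and translating its generators via the Dold--Kan correspondence.
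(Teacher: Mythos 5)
Your route is genuinely different from the paper's. You verify the recognition criterion by identifying the injectives: the identification $J_{\Sp}\text{-inj}=\{\text{quasi-smooth maps}\}=\{\text{fibrations}\}$ via Lemma \ref{qscofib} is correct, as is the observation that the small-extension part of $I_{\Sp}$ detects smoothness and the $\Sp$-part forces $\pi_0\uleft{f}^{\sharp}$ to be an isomorphism (RLP against all maps of $\Sp$, in particular against itself). The paper instead never computes $I_{\Sp}$-inj at all: it shows directly that every cofibration is an $I_{\Sp}$-cell complex, by factoring $R\to S$ through $S\by_{\pi_0S}\pi_0R$ (a pushout of the $\Sp$-part) and then using Lemma \ref{small} to write the remaining surjection as a composite of small extensions, and likewise for acyclic surjections and $J_{\Sp}$; given the model structure of Proposition \ref{smcl}, the generation statement then follows formally, with no cotangent or completeness input.

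The genuine gap in your argument is exactly the step you flag: the assertion that a smooth map $f\co\Spf S\to\Spf R$ with $\pi_0 f^{\sharp}$ an isomorphism (equivalently, with acyclic relative cotangent space) is a weak equivalence, i.e.\ that $f^{\sharp}\co R\to S$ is acyclic. This is true, but it is not available as a citation: Remark \ref{aqsmoothchar} is an unproved remark in the opposite direction, and Corollary \ref{cohosmoothchar} (with Lemma \ref{settotop}) only gets you as far as $\pi_i\cot(S/R)=0$ for all $i\ge 0$; passing from acyclicity of the cotangent space back to acyclicity of $f^{\sharp}$ is a substantive claim. Filling it requires an argument of the type the paper uses later (Lemma \ref{dunit}, Theorem \ref{totequiv}, \S\ref{adams}): first show that smoothness makes $S$ levelwise a power series algebra over $R$, then filter by the ideals $\m(S)^i+S\cdot\m(R)$, identify the graded pieces with symmetric powers of $\cot(S/R)$ (tensored with graded pieces of $R$), invoke Dold's theorem \cite{dold} that $\Symm$ preserves acyclicity, and conclude by completeness of the filtration in each Artinian quotient. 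None of this is in your proposal, so as written the key inclusion $I_{\Sp}\text{-inj}\subseteq\{\text{weak equivalences}\}$ is unproven; supplying it would make your proof correct but noticeably longer than the paper's, which is designed precisely to avoid this computation.
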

\begin{proof}
First note that elements of $I_{\Sp}$ are clearly cofibrations, and similarly for $J_{\Sp}$. Given a fibration $R \to S$ in $s\hat{\C}_{\L}$, note that $\pi_0R \to \pi_0S$ is in $I_{\Sp}$, so $S\by_{\pi_0S}\pi_0R \to S$ is in the class $I_{\Sp}$-cell, and that $R\to  S\by_{\pi_0S}\pi_0R$ is surjective.  Lemma \ref{small} now implies that $R \to S$ is in $I_{\Sp}$-cell. Likewise, Lemma \ref{small} implies that acyclic surjections are precisely $J_{\Sp}$-cell complexes.
\end{proof}

\subsection{Simplicial cosimplicial spaces}\label{defcat}

\begin{definition}
Given $X \in \Sp$, with $X=\Spf R$, write $O(X):=R \in \hat{\C}_{\L}$.
\end{definition}

\begin{definition}
Given $X\in sc\Sp$, and $K \in \bS$, define $X\otimes K \in sc\Sp$ by  
$$
O(X\otimes K)_n^i:= \overbrace{O(X)^i_n\by_k  O(X)^i_n \by_k \ldots\by_k  O(X)^i_n }^{K_i}.
$$

Given $X \in sc\Sp$, $K \in \bS$, we define $X^K$ by $X^K(A):= (X(A))^K$, for $A \in s\C_{\L}$.
\end{definition}

\begin{definition}\label{Xpdef}%%%have corrected this
Given a quasi-smooth map $E \xra{p} B$ in $sc\Sp$, and a morphism $X\to B$  in $sc\Sp$, define $[X,E]_B$ to be the coequaliser
$$
\xymatrix@1{\Hom_{sc\Sp\da B}(X,E^{\Delta^1}\by_{B^{\Delta^1}}B) \ar@<.5ex>[r]\ar@<-.5ex>[r] &\Hom_{sc\Sp\da B}(X,E) \ar[r] &[X,E]_B},
$$ 
which can be thought of as maps from $X$ to $E$ over $B$, modulo fibrewise homotopy equivalences over $B$. 
\end{definition}
Note that for $f:X \to B$, we have $[X,E]_B \cong [X,f^*E]_X$, where $f^*E= E\by_BX$.

\begin{definition}\label{defcatdef} Given  a map $f:X \to Y$ in the category $sc\Sp$, say that $f$ is:  
\begin{enumerate}
\item a geometric cofibration if the corresponding morphism $(f^{\sharp})^n_i: O(Y)^n_i \to O(X)^n_i$ is surjective for all $i,n\ge 0$;

\item a geometric weak equivalence if for all quasi-smooth maps $p:E\to Y$, 
$$
f^*:[Y,E]_Y \to [X,E]_Y
$$
is an isomorphism;%%%corrected this

\item  
a geometric fibration if $f$ is quasi-smooth.
\end{enumerate}
\end{definition}

\begin{definition}\label{abuse}
Given categories $\C, \cD$ and classes $P, Q$ of morphisms in $\C, \cD$ respectively, we will systematically abuse terminology by saying that a natural transformation $F \to G$ of functors $F,G: \C \to \cD$ ``maps $P$ to $Q$'' if for all morphisms $f:A \to B$ in $P$, the morphism
$$
F(A) \to G(A)\by_{G(B)}F(B)
$$
is in $Q$. Note that when $G$ is the constant functor to the final object of $\cD$, this amounts to saying that $F$ maps the class $P$ to the class $Q$.
\end{definition}

\begin{lemma}\label{snexists}
If $f:X \to Y$ is quasi-smooth in $sc\Sp$, with the map 
$$
\theta:X(A) \to X(B)\by_{Y(B)}Y(A)
$$
 a weak equivalence in $\bS$ for all small extensions $A \to B$ in $s\C_{\L}$, then $f$ has a section in $sc\Sp$.
\end{lemma}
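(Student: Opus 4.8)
\textbf{Step 1: strengthening the hypothesis.}
I would first observe that the hypothesis forces $\theta\colon X(A)\to X(B)\by_{Y(B)}Y(A)$ to be a \emph{trivial} Kan fibration for \emph{every} surjection $A\onto B$ in $s\C_{\L}$. Indeed, for a small extension $\theta$ is a Kan fibration by condition (Q2) of Definition \ref{scspqsdef}, hence a trivial fibration once it is assumed to be a weak equivalence; and an arbitrary surjection is a finite composite of small extensions (Lemma \ref{small}), while for $A\onto A'\onto B$ the map $\theta$ for $A\onto B$ is the composite of the one for $A\onto A'$ with the base change along $Y(A)\to Y(A')$ of the one for $A'\onto B$. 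Since trivial Kan fibrations are stable under composition and base change, the claim follows. Taking $B=k$ and using that $X$ and $Y$ are left-exact (so $X(k)=Y(k)=\ast$), it follows in particular that $f_{A}\colon X(A)\to Y(A)$ is a trivial Kan fibration for all $A\in s\C_{\L}$; since $X$ and $Y$ carry cofiltered limits in $s\hat{\C}_{\L}$ to limits of simplicial sets and trivial Kan fibrations are closed under limits, the same holds for all $A\in s\hat{\C}_{\L}$, and more generally $\theta$ is a trivial Kan fibration for every surjection in $s\hat{\C}_{\L}$.

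\textbf{Step 2: eliminating coherence.}
Under the equivalence $\Spf\colon((s\hat{\C}_{\L})^{\Delta})^{\op}\to sc\Sp$, write $X=\Spf S$, $Y=\Spf R$ with $S,R$ cosimplicial objects of $s\hat{\C}_{\L}$, and let $\phi\colon R\to S$ be the cosimplicial map corresponding to $f$. A section of $f$ in $sc\Sp$ is then exactly a retraction $\psi\colon S\to R$ of $\phi$ in $(s\hat{\C}_{\L})^{\Delta}$, i.e.\ a map of cosimplicial diagrams with $\psi\circ\phi=\id_{R}$. This is the essential simplification: building a section of a map of \emph{functors} on $s\C_{\L}$ directly would require a compatible (in particular $\Aut$-equivariant) choice over all objects and morphisms of $s\C_{\L}$, whereas producing $\psi$ is just the construction of one morphism of cosimplicial diagrams.

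\textbf{Step 3: the Reedy induction.}
I would construct the components $\psi^{n}\colon S^{n}\to R^{n}$ by induction on $n$, maintaining compatibility with all cofaces and codegeneracies and the identity $\psi^{n}\circ\phi^{n}=\id_{R^{n}}$. Given $\psi^{<n}$: the cofaces of $R$ prescribe $\psi^{n}$ on the $n$-th latching object $L^{n}S$, the codegeneracies of $R$ prescribe the image of $\psi^{n}$ in the matching object $M^{n-1}R$, and the retraction condition prescribes its composite with $\phi^{n}$. The remaining freedom amounts to a lifting problem against the cofibration $\partial\Delta^{n}\hookrightarrow\Delta^{n}$ for the relative matching map of $f$ at stage $n$; by Step 1 (together with stability of trivial Kan fibrations under the finite limits that define the matching objects, and the surjectivity of the relevant matching maps encoded in quasi-smoothness of $f$ in $sc\Sp$) this relative matching map is a trivial Kan fibration, so the required lift exists and the induction proceeds. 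Applying $\Spf$ to the resulting $\psi$ gives the section.

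\textbf{Expected main difficulty.}
Steps 1 and 2 are formal. The real work is Step 3: correctly setting up the latching and matching objects for cosimplicial objects of $s\hat{\C}_{\L}$, verifying that the data prescribed at stage $n$ by $\psi^{<n}$ are mutually compatible so that they genuinely glue to a map out of $\partial\Delta^{n}$, and checking that the matching maps of $R$ and $S$ involved are surjections (the Reedy-fibrancy bookkeeping already built into the notion of a quasi-smooth morphism of $sc\Sp$) so that Step 1 applies. Once this bookkeeping is in place, the existence of each lift, and hence of the section, is immediate.
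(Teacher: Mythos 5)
Your proposal is essentially the paper's own argument: the paper likewise reformulates the hypothesis as saying that the relative simplicial matching maps $X_n \to Y_n\by_{M_nY}M_nX$ are trivial fibrations in $c\Sp$, and then builds the section degree by degree by lifting the latching cofibration $L_nY \to Y_n$ against them. Your Steps 2--3 are just the dual (pro-representing cosimplicial) phrasing of that Reedy induction, and Step 1 is a harmless strengthening, so apart from the bookkeeping you flag (e.g.\ the relative matching map is a trivial fibration in $c\Sp$, detected on small extensions via the $\pd\Delta^n\into\Delta^n$ adjunction, rather than literally a trivial Kan fibration) this is the same proof.
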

\begin{proof}
The conditions state that $X\to Y$ maps small extensions in $s\C_{\L}$ to trivial fibrations in $\bS$, or equivalently that the simplicial matching maps
$$
X_n \to Y_n \by_{M_n Y} M_nX
$$ 
are trivial fibrations in $c\Sp$ for all $n$.

We construct the section inductively on $n$. Assume that there are compatible sections $Y_i \to X_i$ for all $i < n$. In particular, this gives $M_nY \to M_nX$. Now consider the commutative diagram
$$
\xymatrix{
L_nY \ar[r] \ar[d] & X_n \ar[d]\\
Y_n \ar[r] \ar@{-->}[ur] &  Y_n\by_{M_n Y}M_nX,}
$$
in $c\Sp$; the left-hand side is a cofibration, and the right-hand side a trivial fibration, so the dashed arrow exists.
\end{proof}

\begin{lemma}\label{smallworks}
A quasi-smooth map $f:X \to Y$ is a geometric weak equivalence in $sc\Sp$ if and only if for all small extensions $A \to B$ in $s\C_{\L}$, the map
$$
\theta:X(A) \to X(B)\by_{Y(B)}Y(A)
$$
is a weak equivalence in $\bS$.
\end{lemma}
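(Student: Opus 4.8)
The plan is to prove the two implications separately. In both, the engine is the existence result Lemma~\ref{snexists}, together with the fibrewise path object $X^{\Delta^1}\by_{Y^{\Delta^1}}Y$ and the coequaliser description of $[-,-]_B$ in Definition~\ref{Xpdef}.

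\emph{The ``if'' direction.} Suppose $\theta$ is a weak equivalence in $\bS$ for every small extension $A\to B$. Because $f$ is quasi-smooth, $\theta$ is always a fibration, so here it is a trivial fibration; thus $f$ meets the hypothesis of Lemma~\ref{snexists} and has a section $\sigma\co Y\to X$, $f\sigma=\id_Y$. The real work is to upgrade this to $\sigma f\simeq\id_X$ \emph{over $Y$}. To do so I would observe that
$$
(\ev_0,\ev_1)\co X^{\Delta^1}\by_{Y^{\Delta^1}}Y\lra X\by_Y X
$$
is the base change, along $(\id_X,\sigma f)\co X\to X\by_Y X$, of the relative matching map $X^{\Delta^1}\to(X\by X)\by_{Y\by Y}Y^{\Delta^1}$ of the cofibration $\pd\Delta^1\into\Delta^1$ against $f$; an $\mathrm{SM7}$-type calculation (which, since finite limits and cotensors by finite simplicial sets commute with evaluation at $A$ and with forming $\theta$, reduces verbatim to the fact that in $\bS$ the pushout-product of a trivial fibration with a cofibration is a trivial fibration) shows this matching map, hence its base change, again sends small extensions to trivial fibrations. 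Lemma~\ref{snexists} then furnishes a section of that base change, i.e.\ a homotopy $H\co X\to X^{\Delta^1}\by_{Y^{\Delta^1}}Y$ over $Y$ joining $\id_X$ to $\sigma f$. Now for any quasi-smooth $p\co E\to Y$ we get maps $f^*\co[Y,E]_Y\to[X,E]_Y$ and $\sigma^*\co[X,E]_Y\to[Y,E]_Y$ with $\sigma^*f^*=(f\sigma)^*=\id$; composing $H$ with a map $g\co X\to E$ over $Y$ gives $g\simeq g\sigma f$ over $Y$, so $f^*\sigma^*=(\sigma f)^*=\id$ as well. Hence $f^*$ is bijective for every such $E$, i.e.\ $f$ is a geometric weak equivalence.

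\emph{The ``only if'' direction.} Suppose $f$ is a geometric weak equivalence. Testing the defining property against the quasi-smooth map $p=f\co X\to Y$ itself, the bijection $f^*\co[Y,X]_Y\to[X,X]_Y$ produces $\sigma\co Y\to X$ over $Y$ (so $f\sigma=\id_Y$) with $f^*[\sigma]=[\id_X]$, i.e.\ a fibrewise homotopy $H\co\sigma f\simeq\id_X$ over $Y$. Fix a small extension $A\to B$. The evaluation functor $Z\mapsto Z(A)$ commutes with $(-)^{\Delta^1}$, so it carries $H$ to fibrewise homotopies over $Y(A)$ and $Y(B)$; I would then take
$$
\tau\co X(B)\by_{Y(B)}Y(A)\lra X(A),\qquad \tau(x',y):=\sigma(A)(y),
$$
and check directly that $\tau$ is a two-sided homotopy inverse of $\theta$: one computes $\tau\theta=(\sigma f)(A)\simeq\id_{X(A)}$ via $H(A)$, and $\theta\tau(x',y)=\bigl((\sigma f)(B)(x'),\,y\bigr)$, which $H(B)$ joins to $(x',y)$ \emph{within} $X(B)\by_{Y(B)}Y(A)$ precisely because $H(B)$ is constant on the $Y(B)$-coordinate. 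So $\theta$ is a homotopy equivalence, in particular a weak equivalence.

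\emph{Main obstacle.} The step I expect to cause trouble is the $\mathrm{SM7}$-type closure property invoked in the ``if'' direction --- that ``sends small extensions to trivial fibrations'' is inherited by the relative path construction $X^{\Delta^1}\by_{Y^{\Delta^1}}Y\to X\by_Y X$. I would settle this by pushing it through the matching-object reformulation of $\theta$ used in the proof of Lemma~\ref{snexists}, ultimately invoking that $c\Sp$ is a simplicial model category (Proposition~\ref{smcl}); the remainder is diagram-chasing with the coequaliser of Definition~\ref{Xpdef}.
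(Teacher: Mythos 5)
Your proof is correct and takes essentially the same route as the paper: for the ``if'' direction, Lemma \ref{snexists} applied both to $f$ and to the fibrewise path-object map $X^{\Delta^1}\by_{Y^{\Delta^1}}Y \to X\by_Y X$ to produce a section and a fibrewise homotopy $\id_X \simeq \sigma f$, and for the ``only if'' direction, the section-plus-homotopy data extracted from $f^*\colon [Y,X]_Y \cong [X,X]_Y$ making each $\theta$ a homotopy equivalence. The SM7-type closure property you flag as the main obstacle is precisely the point the paper passes over with ``also satisfies the hypotheses of the lemma'', and your reduction to the pushout-product axiom in $\bS$ settles it correctly (modulo a slight misstatement: the displayed map is the pullback of the relative matching map along the constant-path inclusion, not along $(\id_X,\sigma f)$, which instead yields the object of homotopies over $X$).
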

\begin{proof} 
%%%corrected version:
If $f:X \to Y$ is a geometric weak equivalence, then  $f^*:[Y,X]_Y \to [X,X]_Y$ must be an isomorphism.
Thus the identity map $\id: X \to X$ in $[X,X]_Y$ must lift to $[Y,X]_Y$, giving a section $s:Y \to X$ of $f$, and a homotopy $h:X \to X^{\Delta^1}\by_{Y^{\Delta^1}}Y$ between $\id$ and $sf$. For all small extensions $A \to B$, these data make the fibration $\theta$ into a deformation retract, and hence a weak equivalence.

Conversely, if $\theta$ is a weak equivalence for all small extensions, then 
$f$ has a section $s$ by Lemma \ref{snexists}. Thus $f^*:[Y,E]_Y\to [X,E]_Y$ has a retract $s^*$ for all quasi-smooth morphisms $p:E \to Y$, so is injective.
But note that $X^{\Delta^1}\by_{Y^{\Delta^1}}Y \to X\by_YX$ also satisfies the hypotheses of the lemma, so must have a section, giving a homotopy $h$ as above. 

Given $a: X \to E$ over $Y$, by functoriality of $(-)^{\Delta^1}$, we obtain a map 
\[
 a^{\Delta^1}: X^{\Delta^1}\by_{Y^{\Delta^1}}Y \to E^{\Delta^1}\by_{Y^{\Delta^1}}Y.   
\]
Composing with $h$ gives $X \to E^{\Delta^1}\by_{Y^{\Delta^1}}Y$ over $Y$, giving a homotopy between $a$ and $f^*s^*a$, so showing that
\[
f^*s^*:  [X,E]_Y \to [X,E]_Y
\]
is the identity.
\end{proof}

\begin{definition}\label{idef}
Define $I$ to be the set of morphisms in $sc\Sp$ of the form
$$
(X \otimes \Delta^n)\cup_{(X \otimes \pd \Delta^n)}(Y \otimes \pd \Delta^n) \to Y \otimes \Delta^n,
$$
for $n \ge 0$, and $X \into Y $ in $c\Sp$  dual to a small extension in $s\C_{\L}$.
\end{definition}

\begin{definition}\label{jdef}
Define $J$ to be the set of morphisms in $sc\Sp$ of the forms:
\begin{enumerate}
\item[($J_1$)] 
$
(X \otimes \Delta^n)\cup_{(X \otimes \pd \Delta^n)}(Y \otimes \pd \Delta^n) \to Y \otimes \Delta^n,
$
for $n \ge 0$, and $X \into Y $ in $c\Sp$  dual to an acyclic small extension in $s\C_{\L}$;

\item[($J_2$)] $
(X \otimes \Delta^n)\cup_{(X \otimes  \L^n_k)}(Y \otimes \L_k^n) \to Y \otimes \Delta^n,
$
for $n \ge k \ge 0$, and $X \into Y $ in $c\Sp$  dual to a small extension in $s\C_{\L}$.
\end{enumerate}
\end{definition}

\begin{lemma}\label{spsmall}
Every geometric cofibration in $sc\Sp$ is a relative $I$-cell complex, i.e. a transfinite composition of pushouts of elements of $I$.
\end{lemma}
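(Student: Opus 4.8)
The plan is to run the standard skeletal filtration for a map of simplicial objects, using the identification $sc\Sp\cong(c\Sp)^{\Delta^{\op}}$, and to reduce the claim to a statement about relative latching maps in $c\Sp$ that can be settled by Lemma~\ref{small}. For $X\in sc\Sp$ write $L_nX\in c\Sp$ for the $n$th latching object, and for a map $f\colon X\to Y$ set $L_n(f):=X_n\cup_{L_nX}L_nY$, with relative latching map $\ell_n(f)\colon L_n(f)\to Y_n$ in $c\Sp$. Recall that there is then a tower $X=\sk_{-1}f\to\sk_0f\to\sk_1f\to\cdots$ with $\LLim_n\sk_nf=Y$, in which each $\sk_{n-1}f\to\sk_nf$ is a pushout of
$$
(L_n(f)\otimes\Delta^n)\cup_{(L_n(f)\otimes\pd\Delta^n)}(Y_n\otimes\pd\Delta^n)\lra Y_n\otimes\Delta^n .
$$
Thus it suffices to prove (a) that for a geometric cofibration $f$ each $\ell_n(f)$ is dual (under $c\Sp\simeq(s\hat\C_{\L})^{\op}$) to a surjection in $s\hat\C_{\L}$, and (b) that for any such surjection the displayed map is a transfinite composition of pushouts of elements of $I$ (Definition~\ref{idef}); combining (a), (b) and the tower then exhibits $f$ as a relative $I$-cell complex.

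For (a), observe that pushouts in $sc\Sp$ are dual to fibre products in $s\hat\C_{\L}$, so $O(\ell_n(f))$ is the map $O(Y_n)\to O(X_n)\times_{O(L_nX)}O(L_nY)$, which we must show is surjective in every internal simplicial degree. By Definition~\ref{defcatdef} the map $O(Y_n)\to O(X_n)$ is surjective in each degree; given a compatible element of the fibre product, lift its $O(X_n)$-component to $O(Y_n)$, and note that the resulting discrepancy in $O(L_nY)$ lies in $\ker\bigl(O(L_nY)\to O(L_nX)\bigr)$. Since $O(L_nX)=M^n\bigl(O(X)^{\bt}\bigr)$ and likewise for $Y$, this kernel is the matching object $M^n$ of the cosimplicial module $\ker\bigl(O(Y)^{\bt}\to O(X)^{\bt}\bigr)$, and the matching map $\ker(O(Y)^n\to O(X)^n)\to M^n\ker(O(Y)^{\bt}\to O(X)^{\bt})$ is surjective (the dual of the splitting of latching maps of simplicial modules). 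Hence the discrepancy may be lifted into $\ker(O(Y_n)\to O(X_n))$ and subtracted off, and $O(\ell_n(f))$ is a genuine ring surjection.

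For (b), reduce the map in $s\hat\C_{\L}$ to a level map (Definition~\ref{levelmap}) and apply Lemma~\ref{small} level by level; this factors the surjection dual to $\ell_n(f)$ as a (possibly transfinite) composition, so that $L_n(f)=Z_0\to Z_1\to\cdots$ in $c\Sp$ with $\LLim_\beta Z_\beta=Y_n$ and each $Z_\beta\to Z_{\beta+1}$ dual to a small extension in $s\C_{\L}$. The functors $(-)\otimes\Delta^n$ and $(-)\otimes\pd\Delta^n$ preserve pushouts and transfinite compositions, so the standard computation of a pushout-product over a composite shows that $(L_n(f)\otimes\Delta^n)\cup_{(L_n(f)\otimes\pd\Delta^n)}(Y_n\otimes\pd\Delta^n)\to Y_n\otimes\Delta^n$ is a transfinite composition of pushouts of the maps $(Z_\beta\otimes\Delta^n)\cup_{(Z_\beta\otimes\pd\Delta^n)}(Z_{\beta+1}\otimes\pd\Delta^n)\to Z_{\beta+1}\otimes\Delta^n$, each of which lies in $I$.

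The step to watch is (a): a map that is levelwise a cofibration in $c\Sp$ need not be a relative $I$-cell complex, and it is precisely the extra strength in the definition of a geometric cofibration --- surjectivity of the $O(-)^n_i$ in \emph{every} bidegree, including $i=0$ --- together with the splitting of matching maps of cosimplicial modules, that makes the relative latching maps honest surjections and hence factorisable by Lemma~\ref{small}. The rest is the usual, if tedious, bookkeeping of the skeletal filtration together with passage to level maps.
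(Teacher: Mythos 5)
Your proposal is correct and takes essentially the same route as the paper's proof: the skeletal filtration with attaching maps given by the pushout--product of the relative latching map $L_n(f)\to Y_n$, the observation that this latching map is a closed immersion precisely because a geometric cofibration is surjective on $O(-)^n_i$ in every bidegree, and the factorisation of closed immersions into (transfinite compositions of) small extensions via Lemma \ref{small} so that the resulting pushout--products are relative $I$-cells. Your matching-object/discrepancy argument for step (a) is simply an element-level expansion of the paper's splitting $O(X)^n\cong O(L_nX)\oplus N_c^n(O(X))$, so the two arguments coincide in substance.
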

\begin{proof}
Since every closed immersion in $c\Sp$ is a composition of small extensions,
$$
(X \otimes \Delta^n)\cup_{(X \otimes \pd \Delta^n)}(Y \otimes \pd \Delta^n) \to Y \otimes \Delta^n
$$
is a relative $I$-cell for all $X\into Y$ in $c\Sp$.

Take a geometric cofibration $f:X \to Y$ in $sc\Sp$, and 
consider the pushout diagram (in $c\Sp$)
$$
\xymatrix{
 (Y_n\otimes \pd \Delta^n)\cup_{(L_n(f)\otimes \pd \Delta^n)}(L_n(f) \otimes \Delta^n)        \ar[r] \ar[d] &   \sk^X_{n-1}Y      \ar[d]\\
     Y_n \otimes \Delta^n    \ar[r]  &  \sk^X_nY    }
$$
of \cite{sht} Proposition VII.1.9.
Since $Y= \varinjlim \sk^X_nY$, it suffices to show that
$$
(Y_n\otimes \pd \Delta^n)\cup_{(L_n(f)\otimes \pd \Delta^n)}(L_n(f) \otimes \Delta^n) \to Y_n \otimes \Delta^n
$$
is a relative $I$-cell. 

This, in turn, will follow if $L_n(f) \to Y_n$ is a  closed immersion in $c\Sp$. Now, 
$$
O(X)^n\cong O(L_nX) \oplus N_c^n(O(X)),
$$ 
and similarly for $Y$. Since $O(L_nf)=O(X)^n\by_{O(L_nX)}O(L_nY)$, we just require that $N_cO(Y) \to N_cO(X)$ be surjective, which is equivalent to $O(Y) \to O(X)$ being surjective, i.e. to $f$ being a geometric cofibration.
\end{proof}

\begin{lemma*}
Given a pushout $U \to V$ of a morphism in $J$, and a quasi-smooth map $f:X \to U$, there exists a quasi-smooth map $g:Y \to V$  and an isomorphism
$$
\phi:X \cong Y\by_VU
$$
over $U$. The pair $(Y,\phi)$ is unique up to unique isomorphism over $U$.
\end{lemma*}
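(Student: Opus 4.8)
The plan is to reduce to the case where $U \to V$ is itself a morphism in $J$, and then to regard such a morphism as a controlled cell attachment to which the obstruction theory of Theorem~\ref{robs} and Proposition~\ref{longexact} applies. Write the given pushout as a square with $A \to B$ in $J$ on the left and $U \to V$ on the right, the top map being $A \to U$. Since pullbacks of quasi-smooth maps are quasi-smooth (the lemma above), $X_A := X \by_U A$ is quasi-smooth over $A$. Granting that one can produce a quasi-smooth $X_B \to B$ with an isomorphism $X_A \cong X_B \by_B A$ over $A$, unique up to unique isomorphism over $A$, I would set $Y := X \cup_{X_A} X_B$ with its canonical map to $V = U \cup_A B$. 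Applying $O$, which turns these pushouts into fibre products of pro-Artinian algebras, a direct computation using $X_B \by_B A \cong X_A$ gives $Y \by_V U \cong X$ and $Y \by_V B \cong X_B$; quasi-smoothness of $Y \to V$, and uniqueness of $(Y,\phi)$ over $U$, then follow from the corresponding statements for $X_B \to B$ over $A$, by testing quasi-smoothness against small extensions and decomposing the test objects along the pushout. So it is enough to solve the extension-and-uniqueness problem over a single $J$-morphism $A \to B$.

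For existence over $B$: a morphism of $J$ is a pushout-product $j \hat\otimes i$, with $j : P \into Q$ dual to a small extension in $s\C_\L$ and $i$ either a boundary inclusion $\pd\Delta^n \into \Delta^n$ (type $J_1$, with $j$ dual to an \emph{acyclic} small extension) or a horn inclusion $\L^n_k \into \Delta^n$ (type $J_2$). Using the skeletal filtration of $\Delta^n$, the map $A \to B$ is a finite composite of elementary attachments of the shape $(P\otimes\Delta^m)\cup_{P\otimes\pd\Delta^m}(Q\otimes\pd\Delta^m) \to Q\otimes\Delta^m$, and extending a given quasi-smooth map along one of these is governed by the long exact sequences of \S\ref{cohomology}: the extensions of the boundary restriction over the top cell form an obstruction sequence controlled by $\H^*(\cot(Q/P))$ tensored with the reduced cochain complex of $(\Delta^m,\pd\Delta^m)$. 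For $J_1$, $\cot(Q/P)$ is acyclic, so the obstruction vanishes and the extension is canonical. For $J_2$ the individual attachments are not acyclic, but their composite is taken over the anodyne inclusion $\L^n_k \into \Delta^n$, so the total obstruction lies in the cohomology of the acyclic relative cochain complex $\tilde{C}^{\bullet}(\Delta^n,\L^n_k)$ and again vanishes, with a canonical choice of filler. In either case the resulting $X_B \to B$ is quasi-smooth, since at each stage only condition (S1) of Definition~\ref{scspqsdef} is invoked and this is stable under the constructions involved.

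Uniqueness over $B$ should then be formal. Given two solutions $(X_B,\psi)$ and $(X_B',\psi')$, the functor $\Iso_B(X_B,X_B') \to B$ of fibrewise isomorphisms is again quasi-smooth, and $\psi' \circ \psi^{-1}$ is a section of it over $A$; extending this section over $B$ is the same attachment-lifting problem as above and produces the isomorphism $X_B \cong X_B'$ over $B$, while applying the argument to a fibrewise path object of $\Iso_B(X_B,X_B')$ shows any two such extensions agree, so the isomorphism is unique. Unwinding the reduction then yields $Y \cong Y'$ over $V$ and its uniqueness.

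The main obstacle is the $J_2$ case: a horn inclusion is not a deformation retract in a way compatible with a non-split small extension, so there is no honest retraction $B \to A$ along which to pull $X_A$ back, and one must attach the two extra cells of $\Delta^n$ over $\L^n_k$ successively, with neither step individually obstruction-free; the vanishing of the combined obstruction is exactly the input that $\L^n_k \into \Delta^n$ is anodyne, and making this precise requires a careful bookkeeping of the relative cochain complexes. A secondary point needing care is that every gluing, both in the reduction and in the cell-by-cell construction, must produce an honest isomorphism rather than merely a weak equivalence, which is why the argument is run explicitly through the skeletal filtration rather than abstractly through the model structure, which at this stage has not yet been constructed.
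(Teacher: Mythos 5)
There is a genuine gap, and it sits exactly where you flag it: the $J_2$ case. Your cell-by-cell scheme attaches the missing faces of $\Delta^n$ over $\L^n_k$ one at a time, and you concede that the individual attachments are obstructed; but then there is no ``total obstruction'' class to speak of until you have proved that the stagewise obstructions assemble into a single class in the cohomology of the relative complex of $(\Delta^n,\L^n_k)$ tensored with the kernel of the small extension. If the first attachment is obstructed for a given choice, you cannot even form the next stage, so one must argue that lifts can be \emph{chosen} at earlier stages so as to kill later obstructions -- and organising that is precisely a total-complex (or spectral sequence) obstruction argument, not bookkeeping. This is what the paper does in one step: writing $M=\ker(O(V)\to O(U))$ and $R$ for the algebra of the fibre of $X$, it invokes the deformation theory of Reedy fibrations to place the obstruction to lifting $f$ in $\H^2\bigl(X/U\,\hat{\ten}\,\Tot^{\Pi}N(M\hat{\ten}R)\bigr)$ and the lifts (when unobstructed) in the corresponding $\H^1$; since a pushout of a morphism in $J$ has $\H_*(\Tot^{\Pi}NM)=0$ (this covers $J_1$ and $J_2$ uniformly -- in the $J_2$ case the acyclicity comes from the pair $(\Delta^n,\L^n_k)$, in the $J_1$ case from the acyclic small extension), the K\"unneth formula kills both groups at once, giving existence and the uniqueness statement simultaneously. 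Your proposal never produces the group in which the combined obstruction lives, so the central vanishing is asserted rather than proved.

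Two secondary points would also need repair even if the $J_2$ step were fixed. First, your reduction to a single $J$-morphism requires the honest isomorphism $Y\by_V U\cong X$ for $Y:=X\cup_{X_A}X_B$; dually this is a base-change statement for fibre products of pro-Artinian algebras ($O(Y)\hat{\ten}_{O(V)}O(U)\cong O(X)$), which is not formal and is avoided entirely by the paper, which works directly over $U\to V$ using only the acyclicity of $M$. Second, quasi-smoothness of the lifted map and uniqueness are not automatic: the paper first obtains a Reedy fibration lift and then argues separately that any quasi-smooth deformation of a smooth map in $c\Sp$ is smooth, so the partial matching maps of the lift are smooth and condition (Q2) holds; and uniqueness comes from the vanishing of the same $\H^1$, not from an auxiliary $\Iso_B$ functor whose quasi-smoothness you have not established.
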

\begin{proof}
Write $V= \Spf A$, $U=\Spf B$ and $X\by_U\Spf k = \Spf R$; set $M= \ker (A \to B)$.   Since $f$ is quasi-smooth, it is a Reedy fibration. Adapting  \cite{stacks2} 
Proposition \ref{stacks-deform1} to pro-Artinian rings, we see that the obstruction to lifting $f$ to a Reedy fibration $g: Y \to V$ lies in
$$
\H^2(X/U \hat{\ten} \Tot^{\Pi} N(M\hat{\ten}R)),
$$
with notation as in Lemma  \ref{tgtarb}. 
If the obstruction is zero, then the same proposition gives that the isomorphism class of liftings is  isomorphic to
$$
 \H^1(X/U \hat{\ten} \Tot^{\Pi} N(M\hat{\ten}R)) 
$$

Since $U \to V$ is a pushout  of a morphism in $J$, it follows that $\H_*(\Tot^{\Pi} NM)=0$ (as this is true  for morphisms in $J$). By the %Eilenberg--Zilber theorem and 
K\"unneth formula, 
$$
\H_*\Tot^{\Pi} N(M\hat{\ten}R))\cong \H_*(\Tot^{\Pi} NM)\hat{\ten} H_*( \Tot^{\Pi}NR), 
$$
so $\H^n(X/U \hat{\ten} \Tot^{\Pi} N(M\hat{\ten}R))=0$ for all $n$. 

Thus $f$ has a unique lift to a Reedy fibration $g: T \to V$.
By looking at cohomology, we see that any quasi-smooth deformation of a smooth map in $c\Sp$ is necessarily smooth. Thus the partial matching maps $Y_n \to M_{\L^n_k}Y\by_{M_{\L^n_k}V}V_n$ are smooth in $c\Sp$, so $g$ is quasi-smooth in $sc\Sp$.
 \end{proof}

\begin{lemma*}
Take a diagram $X \xra{f} Y \xra{g} Z$ in $sc\Sp$. If any two of $f,g,gf$ are geometric weak equivalences, then so is the third.
\end{lemma*}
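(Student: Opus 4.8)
The plan is to verify the three implications of the $2$-out-of-$3$ property separately. Write the diagram as $X\xra{f}Y\xra{g}Z$. Two of the three cases are formal: for any quasi-smooth $E\to Z$ one has $g^*E\to Y$ quasi-smooth, $(gf)^*E=f^*g^*E$, and $(gf)^*=f^*\circ g^*$ on homotopy classes (using the identity $[X,E]_B\cong[X,h^*E]_X$ after Definition \ref{Xpdef}). Hence if $f$ and $g$ are geometric weak equivalences then $(gf)^*$ is a composite of bijections, so $gf$ is one; and if $f$ and $gf$ are geometric weak equivalences then, since $f^*\colon[Y,g^*E]_Y\to[X,f^*g^*E]_X$ is a bijection ($f$ a geometric weak equivalence, $g^*E$ quasi-smooth over $Y$) and $(gf)^*$ is a bijection, $g^*$ is a bijection, so $g$ is a geometric weak equivalence. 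In particular composites of geometric weak equivalences are geometric weak equivalences. The remaining case — $g$ and $gf$ geometric weak equivalences $\Rightarrow$ $f$ geometric weak equivalence — is the genuine difficulty, because the quasi-smooth test objects detecting whether $f\colon X\to Y$ is a weak equivalence live over $Y$, whereas the hypotheses only control test objects over $Z$.

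To handle it I will first record an objectwise criterion. By Lemma \ref{small} every $A\in s\C_{\L}$ is reached from $k$ by finitely many small extensions, and $F(k)$ is a point for left-exact $F$ ($k$ being terminal in $s\C_{\L}$). Factoring $F(A)\to F(B)\by_{G(B)}G(A)\to G(A)$ for a small extension $A\to B$ — the first map a Kan fibration by condition (Q2) of Definition \ref{scspqsdef}, the second a base change of $F(B)\to G(B)$ — and inducting on the length of $A$ shows that for quasi-smooth $f\colon X\to Y$ the map $X(A)\to Y(A)$ is a Kan fibration for every $A$. Combined with Lemma \ref{smallworks} and the same induction this gives: a quasi-smooth map is a geometric weak equivalence if and only if $X(A)\to Y(A)$ is a weak equivalence of simplicial sets for every $A$. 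Since composites of quasi-smooth maps are quasi-smooth, applying this criterion and $2$-out-of-$3$ in $\bS$ at each $A$ proves the full $2$-out-of-$3$ property for geometric weak equivalences between quasi-smooth maps.

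Now for the hard case, with $g$ and $gf$ geometric weak equivalences. Using the small object argument with $J$ (legitimate by the remark before Lemma \ref{spsmall}), factor $f=p_1\circ i_1$ with $i_1$ a relative $J$-cell complex and $p_1$ quasi-smooth; by the preceding lemma relative $J$-cell complexes are geometric weak equivalences. Since $gf=(gp_1)\circ i_1$, the formal cases above give that $gp_1$ is a geometric weak equivalence, and if $p_1$ is one then $f=p_1 i_1$ is one by composition; so it suffices to treat the case where $f$ is quasi-smooth. Assume then $f\colon X\to Y$ quasi-smooth. Factor $g=p_2\circ i_2$ and $i_2 f=p_3\circ i_3$, with $i_2,i_3$ relative $J$-cell complexes (hence geometric weak equivalences) and $p_2,p_3$ quasi-smooth. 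From $g=p_2 i_2$ and $gf=(p_2p_3)\circ i_3$, the formal cases give that $p_2$ and $p_2p_3$ are geometric weak equivalences. Since $p_2$, $p_3$, $p_2p_3$ are all quasi-smooth, the $2$-out-of-$3$ property for quasi-smooth maps proved above yields that $p_3$ is a geometric weak equivalence, whence $i_2 f=p_3 i_3$ is too. Finally, $i_2\colon Y\to Y'$ is a relative $J$-cell complex, so by the preceding lemma (applied iteratively along its cell structure) every quasi-smooth $E\to Y$ is isomorphic over $Y$ to $i_2^*E'$ for a quasi-smooth $E'\to Y'$, and $i_2^*\colon[Y',E']_{Y'}\to[Y,i_2^*E']_Y=[Y,E]_Y$ is a bijection since $i_2$ is a geometric weak equivalence. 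As $(i_2 f)^*=f^*\circ i_2^*$ and both $(i_2 f)^*$ and $i_2^*$ are bijections, $f^*\colon[Y,E]_Y\to[X,f^*E]_X$ is a bijection; since $E$ was arbitrary, $f$ is a geometric weak equivalence.

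The step I expect to be the real obstacle is the last one: transporting an arbitrary quasi-smooth test object $E\to Y$ to one detected by the hypotheses. This is precisely what forces the use of the small object argument (to replace $g$, up to geometric weak equivalence, by a quasi-smooth map) and of the preceding lemma (to lift quasi-smooth objects along the $J$-cell $i_2$); everything else is either formal or a diagram chase in $\bS$.
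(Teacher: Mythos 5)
Your argument is correct, and its endgame coincides with the paper's: both you and the paper factorise $g$ by the small object argument into a relative $J$-cell followed by a quasi-smooth map, transport an arbitrary quasi-smooth test object $E\to Y$ along the $J$-cell piece using the unnamed lemma on lifting quasi-smooth maps along pushouts of $J$, and then read off bijectivity of $f^*$ from the hypotheses via $[X,E]_B\cong[X,f^*E]_X$. Where you genuinely diverge is the treatment of the quasi-smooth intermediate case. The paper proves directly that the conclusion holds when $g$ is quasi-smooth, by pulling the test object back to $E\by_ZY$ and using the section-and-homotopy data from the proof of Lemma \ref{smallworks} to identify $[Y,E\by_ZY]_Y$ with $[Y,E]_Y$, and then applies this to $X\xra{g'f}Z'\xra{g''}Z$ after a single factorisation of $g$. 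You instead prove an objectwise criterion (a quasi-smooth map is a geometric weak equivalence if and only if it is a weak equivalence in $\bS$ on every $A\in s\C_{\L}$, obtained from Lemma \ref{smallworks} by induction along a chain of small extensions from $A$ down to $k$), deduce two-out-of-three among quasi-smooth maps from two-out-of-three in $\bS$, and pay for it with a second factorisation (of $i_2f$). Your criterion is correct -- it anticipates Corollary \ref{weakchar} without circularity -- so this is a legitimate alternative: it is more self-contained, whereas the paper's transfer along $E\by_ZY$ is shorter but leans on the internals of the proof of Lemma \ref{smallworks}. Two blemishes, neither fatal: the opening reduction to $f$ quasi-smooth is never used afterwards (everything from the double factorisation onwards only needs $p_2,p_3$ quasi-smooth); and the assertion that relative $J$-cell complexes are geometric weak equivalences does not follow from the preceding lemma as you claim -- it is the standard argument that quasi-smooth maps have the right lifting property against $J$, hence against relative $J$-cells (surjectivity on $[-,E]$), with injectivity obtained by lifting against the path-object fibration $E^{\Delta^1}\by_{V^{\Delta^1}}V\to E\by_VE$. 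Since the paper itself invokes this fact without proof (it is declared immediate in the proof of Theorem \ref{scspmodel}), this is a misattribution rather than a gap, but it should be stated as a separate step rather than hung on that lemma.
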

\begin{proof}
If $f$ and one of $g,gf$ are geometric weak equivalences, then this is immediate. This leaves the case where $gf$ and $g$ are assumed geometric weak equivalences and we wish to show that $f$ is also.

Take a quasi-smooth map $p:E \to Y$; we need to show that $[Y,E]_Y \to [X,E]_Y$ is an isomorphism. If $g$ is quasi-smooth, then $gp$ is also quasi-smooth, and by hypothesis we have isomorphisms $[X,E]_Z \cong [Z,E]_Z \cong [Y,E]_Z$. Now, $g$ satisfies the conditions of Lemma \ref{smallworks}, 
hence so does $E\by_ZY \to E$. From the proof of Lemma \ref{smallworks}, 
it thus follows that $[Y,E\by_ZY]_Y \cong [Y,E]_Y$ and similarly for $X$, so the isomorphism above becomes $[X,E]_Y \cong [Y,E]_Y$, as required.

If $g$ is now any geometric weak equivalence, note that we may use the small object argument to factorise $g$ as $Y \xra{g'} Z' \xra{g''} Z$, where $g'$ is a relative  $J$-cell and $g''$ is in $J$-inj (i.e. quasi-smooth). Applying the result of the first paragraph to $g, g', g''$, we see that $g''$ must be a geometric weak equivalence. Applying the second paragraph to the diagram $ X \xra{g'f} Z' \xra{g''} Z  $, we deduce that $g'f$ must also be a geometric weak equivalence. Replacing $g$ by $g'$, we have therefore reduced to the case where $g$ is a relative  $J$-cell. We may apply the lemma above inductively to obtain a quasi-smooth map $\tilde{p}:\tilde{E} \to Z$ with $g^*\tilde{E}\cong E$. By hypothesis, we have isomorphisms $[X,\tilde{E}]_Z \cong [Z,\tilde{E}]_Z \cong [Y,\tilde{E}]_Z  $, or equivalently
$
  [X,E]_Y \cong  [Y,E]_Y,       
$
as required.
\end{proof}

\begin{theorem}\label{scspmodel}
There is a simplicial model structure, the ``geometric model structure'' on $sc\Sp$ with the cofibrations, fibrations and weak equivalences of Definition \ref{defcatdef}. 
It is cofibrantly generated, with $I$ the generating cofibrations, and $J$ the generating trivial cofibrations.
\end{theorem}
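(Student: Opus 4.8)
The plan is to invoke the recognition theorem of Kan for cofibrantly generated model categories (see, e.g., \cite{sht}): one must check that $sc\Sp$ is bicomplete, that $I$ and $J$ permit the small object argument, that the class $W$ of geometric weak equivalences is closed under retracts and satisfies two-out-of-three, that $J\text{-cell}\subseteq W\cap I\text{-cof}$, and that $I\text{-inj}=W\cap J\text{-inj}$ (this last equality furnishes both of Kan's remaining compatibility conditions). Bicompleteness holds because $sc\Sp=(c\Sp)^{\Delta^{\op}}$ and $c\Sp$ is a model category by Proposition~\ref{smcl}; smallness of $I$ and $J$ follows, just as for $c\Sp$, from the fact that the relevant Artinian algebras form a set and the generating domains are built from finitely many small extensions, so that $I$ and $J$ may be replaced by small sets. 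The class $W$ satisfies two-out-of-three by the lemma just proved, and is closed under retracts: if $f$ is a retract of some $g\in W$, with retract diagram $X\to X'\to X$ over $Y\to Y'\to Y$, then for each quasi-smooth $E\to Y$, pulling $E$ back along $Y'\to Y$ and along $Y\to Y'$ (whose composite is the identity) exhibits $[Y,E]_{Y}$ as a retract of $[Y',E\by_{Y}Y']_{Y'}$, and similarly for $X$, so $f^{*}$ is a retract of the isomorphism $g^{*}$, hence itself an isomorphism.

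Next I would identify the two lifting classes. Since $O$ carries colimits in $sc\Sp$ to limits of diagrams of pro-Artinian rings, each morphism in $J$ is a geometric cofibration, so $J\subseteq I\text{-cof}$ by Lemma~\ref{spsmall}, and in particular $I\text{-inj}\subseteq J\text{-inj}$. A direct computation with the generators and the simplicial mapping spaces --- matching-object conditions in the cosimplicial direction and Kan-fibration conditions in the simplicial direction, via Lemma~\ref{qscofib} --- then identifies $J\text{-inj}$ with the class of quasi-smooth maps (the $J_{2}$ generators encoding condition (Q2), the $J_{1}$ generators condition (S1)), and identifies $I\text{-inj}$ with those quasi-smooth maps $f\colon X\to Y$ for which, moreover, $X(A)\to X(B)\by_{Y(B)}Y(A)$ is a trivial fibration in $\bS$ for \emph{every} small extension $A\to B$ in $s\C_{\L}$. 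By Lemma~\ref{smallworks}, for quasi-smooth $f$ this last condition is equivalent to $f$ being a geometric weak equivalence, so $I\text{-inj}=W\cap J\text{-inj}$.

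The crux is $J\text{-cell}\subseteq W\cap I\text{-cof}$: containment in $I\text{-cof}$ is formal from $J\subseteq I\text{-cof}$, so the real point is that every relative $J$-cell complex $g\colon Y\to Z$ is a geometric weak equivalence. Such a $g$ has the left lifting property against every fibration, i.e. every map in $J\text{-inj}$. Given a quasi-smooth $p\colon E\to Y$, I would iterate the lemma on extending quasi-smooth maps along a pushout of a morphism in $J$ over the cell presentation of $g$, obtaining a quasi-smooth $\tilde p\colon\tilde E\to Z$ with an isomorphism $g^{*}\tilde E\cong E$ over $Y$. Then $[Z,\tilde E]_{Z}\to[Y,\tilde E]_{Z}$ is surjective (lift sections of $\tilde p$ against $g$) and injective up to fibrewise homotopy over $Z$ (lift homotopies against $g$, using that the path fibration $\tilde E^{\Delta^{1}}\by_{Z^{\Delta^{1}}}Z\to\tilde E\by_{Z}\tilde E$ is again quasi-smooth by the pushout-product behaviour of $\otimes$ on the generators); since $[Y,\tilde E]_{Z}\cong[Y,E]_{Y}$ this gives $g\in W$. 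I expect this to be the main obstacle --- it is where the extension lemma, Lemma~\ref{snexists} and Lemma~\ref{smallworks} are combined, and where one must organise the transfinite application of the extension lemma so that $\tilde E$ is produced coherently along the whole cell structure.

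It then remains to verify the simplicial model axiom. The tensoring $\otimes$ and cotensoring $(-)^{K}$ introduced in Definition~\ref{smcldef} and \S\ref{defcat} already provide the needed adjunctions, and the pushout-product axiom reduces, as usual, to the generators: the pushout-product of a morphism in $I$ with $\pd\Delta^{n}\to\Delta^{n}$ lies in $I\text{-cof}$, and that of a morphism in $J$ with $\pd\Delta^{n}\to\Delta^{n}$, or of a morphism in $I$ with $\L^{n}_{k}\to\Delta^{n}$, lies in $J\text{-cof}$ --- both by the standard simplicial combinatorics that the pushout-product of a boundary (resp. horn) inclusion with a boundary inclusion in $\bS$ is a composite of boundary (resp. horn) inclusions. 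Indeed these are precisely the identities the sets $J_{1}$ and $J_{2}$ were designed to supply. Combining all of this with Kan's recognition theorem gives the geometric model structure, cofibrantly generated by $I$ and $J$.
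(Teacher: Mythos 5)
Your proposal is correct and follows essentially the same route as the paper: the recognition theorem for cofibrantly generated model categories, the two preceding lemmas (two-out-of-three and the extension of quasi-smooth maps along pushouts of $J$), Lemma~\ref{spsmall} to identify $I$-cell with the geometric cofibrations, Lemma~\ref{smallworks} to identify $I$-inj with the geometric trivial fibrations, and verification of the simplicial axiom on generators. The only stylistic difference is in your treatment of $J\text{-cell}\subseteq W$: you extend a quasi-smooth $E\to Y$ to $\tilde E\to Z$ before lifting, whereas the weak-equivalence condition quantifies over quasi-smooth maps with codomain $Z$, for which your section- and homotopy-lifting argument (using only $g\in J\text{-cof}$ and quasi-smoothness of the path fibration) already applies directly, so the detour through the extension lemma is unnecessary but harmless.
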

\begin{proof}
We verify the conditions of \cite{Hovey} Theorem 2.1.19.
\begin{enumerate}
\item by the lemma above,
the class of geometric weak equivalences  has the two out of three property; it is clearly closed under retracts.
\item[(2)--(3).]Note that the domains of $I$ and $J$ are small.
\item[(4).] It follows from Lemma \ref{spsmall} that $I$-cell is the class of geometric cofibrations; note that this is closed under retracts. It is immediate that $J$-cell is contained in the class of geometric trivial cofibrations.
\item[(5)--(6).]By definition, the geometric fibrations are precisely $J$-inj, and Lemma \ref{smallworks} implies that geometric trivial fibrations are precisely $I$-inj.
\end{enumerate}
Finally, it is an easy exercise to verify the simplicial model axiom (SM7a) (\cite{sht} \S II.3): that for any quasi-smooth map $q:X \to Y$, 
$$
X^{\Delta^n} \to X^{\pd \Delta^n}\by_{Y^{\pd \Delta^n}} Y^{\Delta^n} 
$$
is quasi-smooth, and a weak equivalence whenever $q$ is, and that
$$
X^{\Delta^1} \to X^{\{e\}} \by_{Y^{\{e\}}} Y^{\Delta^1}
$$ 
is a quasi-smooth weak equivalence for $e=0,1$.
\end{proof}

\begin{corollary}\label{homrep}
For $X \in sc\Sp$ and $A \in s\C_{\L}$,
$$
X(A) =\underline{\Hom}_{sc\Sp}(\Spf A, X) \in \bS. 
$$
\end{corollary}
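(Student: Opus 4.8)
The plan is to reduce the statement to an instance of the Yoneda lemma, using the simplicial model structure of Theorem~\ref{scspmodel} to strip off the enrichment. Since $sc\Sp$ is a simplicial model category, the cotensor adjunction gives, for each $n$,
$$
\underline{\Hom}_{sc\Sp}(\Spf A, X)_n \;=\; \Hom_{sc\Sp}(\Spf A,\, X^{\Delta^n}),
$$
and, directly from the definition of the cotensor, $X^{\Delta^n}(B) = X(B)^{\Delta^n}$ for all $B \in s\C_{\L}$ (which also shows $X^{\Delta^n}$ is again left-exact, hence an object of $sc\Sp$, as $(-)^{\Delta^n}\colon\bS\to\bS$ preserves limits). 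So it suffices to prove the unenriched identity $\Hom_{sc\Sp}(\Spf A, Y) = Y(A)_0$ for an arbitrary $Y \in sc\Sp$, and then to specialise to $Y = X^{\Delta^n}$, for which the right-hand side becomes $(X(A)^{\Delta^n})_0 = X(A)_n$.

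For the unenriched identity I would first record that, under the identification of $sc\Sp$ with the category of left-exact functors $s\C_{\L}\to\bS$ (via Proposition~\ref{cSp} and the extension of the equivalence $\Spf$ of Definition~\ref{spfdef} to simplicial diagrams), the object $\Spf A$ — arising from the constant cosimplicial object on $A \in s\C_{\L} \subset s\hat{\C}_{\L}$ — is precisely the functor $B \mapsto \Hom_{s\C_{\L}}(A,B)$, valued in \emph{discrete} simplicial sets. A morphism $\Spf A \to Y$ in $sc\Sp$ is then a natural transformation of $\bS$-valued functors whose source is discretely valued; since a map of simplicial sets out of a discrete one is the same as a map of its set of vertices, this datum is equivalent to a natural transformation $\Hom_{s\C_{\L}}(A,-) \to Y(-)_0$ of $\Set$-valued functors on $s\C_{\L}$. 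The ordinary Yoneda lemma identifies such transformations with $Y(A)_0$, which is the claim.

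Combining the two steps yields natural bijections $\underline{\Hom}_{sc\Sp}(\Spf A, X)_n \cong X(A)_n$, and a final check that these respect the simplicial operators in $n$ — immediate, since every identification used is natural in its arguments — upgrades them to the asserted isomorphism of simplicial sets. The argument carries essentially no homotopical content; the only point needing genuine care is the bookkeeping in the first step, namely confirming that the simplicial structure on $\underline{\Hom}_{sc\Sp}$ supplied by Theorem~\ref{scspmodel} is indeed the one adjoint to the cotensor $X^K(A) = X(A)^K$ (equivalently, that the $X\otimes\Delta^n$ defined via $O$ is its left adjoint), and that $\Spf A$ carries no simplicial structure beyond its discrete values. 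Once these are pinned down, nothing further is required.
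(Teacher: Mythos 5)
Your argument is correct, and it is essentially the reasoning the paper leaves implicit: Corollary \ref{homrep} is stated without proof, as an immediate consequence of the simplicial structure asserted in Theorem \ref{scspmodel}, the definition $X^K(A)=X(A)^K$ of the cotensor, and the Yoneda lemma applied to the discretely-valued representable functor $\Spf A$. Your reduction $\underline{\Hom}_{sc\Sp}(\Spf A,X)_n=\Hom_{sc\Sp}(\Spf A,X^{\Delta^n})\cong X^{\Delta^n}(A)_0=X(A)_n$ is exactly this route, so nothing further is needed.
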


\begin{corollary} \label{weakchar}
A morphism $f:X \to Y$ between quasi-smooth objects is a geometric weak equivalence if and only if it is a weak equivalence in the sense of Definition \ref{weakdef}. By Corollary \ref{weak}, this is equivalent to $\H^i(f):\H^i(X)\to\H^i(Y)$ being an isomorphism for all $i \in \Z$. 
\end{corollary}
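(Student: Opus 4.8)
The plan is to pass between the homotopy-theoretic description of geometric weak equivalences (via $[-,-]_Y$) and the pointwise description of Definition \ref{weakdef} using the simplicial model structure of Theorem \ref{scspmodel}. The key inputs are: first, by Corollary \ref{homrep} we have $X(A)=\underline{\Hom}_{sc\Sp}(\Spf A,X)$; second, $\Spf A$ is cofibrant, because the map $\bt\to\Spf A$ (note $\bt=\Spf k$ is the initial object of $sc\Sp$, since every left-exact $X$ satisfies $X(k)=\bt$) is a geometric cofibration, as $A^n_i\to k$ is surjective for all $i,n$; and third, both $X$ and $Y$, being quasi-smooth, are fibrant, and a composite of geometric fibrations onto a fibrant object is fibrant.

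For the forward implication, suppose $f:X\to Y$ is a geometric weak equivalence. Applying the simplicial model axiom (SM7) to the cofibration $\bt\to\Spf A$ shows that the functor $X\mapsto\underline{\Hom}_{sc\Sp}(\Spf A,X)=X(A)$ carries fibrations to Kan fibrations and trivial fibrations to trivial Kan fibrations; by Ken Brown's lemma it therefore carries weak equivalences between fibrant objects to weak equivalences of simplicial sets. Hence $X(A)\to Y(A)$ is a weak equivalence for every $A\in s\C_{\L}$, which is precisely the condition of Definition \ref{weakdef}.

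For the converse, assume $X(A)\to Y(A)$ is a weak equivalence for all $A$. Factor $f$ as $X\xra{j}X'\xra{q}Y$ with $j$ a trivial cofibration and $q$ a geometric fibration; then $X'$ is fibrant, $j$ is a geometric weak equivalence between fibrant objects, and $q$ is quasi-smooth. By the forward implication applied to $j$, the map $j(A):X(A)\to X'(A)$ is a weak equivalence for all $A$, so two-out-of-three gives that $q(A):X'(A)\to Y(A)$ is a weak equivalence for all $A$. Since $q$ is quasi-smooth, Lemma \ref{smallworks} reduces us to showing that $\theta:X'(A)\to X'(B)\by_{Y(B)}Y(A)$ is a weak equivalence for every small extension $A\to B$. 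Now $Y(A)\to Y(B)$ is a Kan fibration as $Y$ is quasi-smooth, and $X'(B)\to Y(B)$ is $q(B)$, a weak equivalence, so right-properness of $\bS$ makes the projection $X'(B)\by_{Y(B)}Y(A)\to Y(A)$ a weak equivalence; as the composite $X'(A)\xra{\theta}X'(B)\by_{Y(B)}Y(A)\to Y(A)$ equals $q(A)$, two-out-of-three gives that $\theta$ is a weak equivalence. Thus $q$, and hence $f=qj$, is a geometric weak equivalence. The final assertion of the corollary is then immediate from Corollary \ref{weak}.

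The one genuinely delicate point is precisely that ``geometric weak equivalence'' is defined homotopically rather than pointwise, so Lemma \ref{smallworks} is not directly available for a general $f$ between fibrant objects; the factorisation through the quasi-smooth map $q$ repairs this, while the forward implication (resting on the homotopical good behaviour of $\underline{\Hom}(\Spf A,-)$ on fibrant objects) is what lets pointwise information be transported across that factorisation. Everything else is a routine application of right-properness and two-out-of-three in $\bS$.
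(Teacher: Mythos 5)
Your proof is correct, and your forward direction is essentially the paper's: the paper just cites Corollary \ref{homrep}, leaving implicit exactly what you spell out (cofibrancy of $\Spf A$, SM7 for $\underline{\Hom}(\Spf A,-)$, and Ken Brown's lemma between fibrant objects). Your converse, however, takes a genuinely different route. The paper argues for an arbitrary $U=\Spf A \in sc\Sp$, writing $\underline{\Hom}(U,X)$ as the limit $\{x\in\prod_n X(A^n)^{\Delta^n}\,:\,\ldots\}$, asserting that the levelwise weak equivalences $X(A^n)\to Y(A^n)$ of fibrant simplicial sets induce a weak equivalence $\underline{\Hom}(U,X)\to\underline{\Hom}(U,Y)$, and then concluding via $\Hom_{\Ho(sc\Sp)}(U,-)=\pi_0\underline{\Hom}(U,-)$ and Yoneda that $f$ is an isomorphism in $\Ho(sc\Sp)$, hence a geometric weak equivalence. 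You instead factor $f$ as a trivial cofibration $j$ followed by a quasi-smooth $q$, transport the pointwise hypothesis across $j$ by the forward direction plus two-out-of-three in $\bS$, and then verify the small-extension criterion of Lemma \ref{smallworks} for $q$ using right properness. What your route buys is that it avoids the paper's rather terse homotopy-invariance claim for the limit defining $\underline{\Hom}(U,-)$ (which really needs quasi-smoothness of $X,Y$ to make the relevant matching maps fibrations), quoting instead only results already established; what the paper's route buys is directness, needing neither the factorisation nor properness, and exhibiting $f$ as an isomorphism on all represented homotopy functors at once. Two small points: your notation $\bt=\Spf k$ clashes with the paper's use of $\bt$ for the constant one-point functor $\Spf\L$ (they agree only when $\L=k$), though what you actually use -- that $\Spf k$ is initial and $\underline{\Hom}(\Spf k,X)=X(k)=\bt$ -- is correct; and right properness can be dispensed with, since $B\to k$ is a surjection, so $q(B)$ is itself a fibration, hence a trivial fibration, whose pullback along $Y(A)\to Y(B)$ is again one.
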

\begin{proof}
If $f$ is a geometric weak equivalence, then Corollary \ref{homrep} implies that it must be a 
 weak equivalence in the sense of Definition \ref{weakdef}.

Given $U \in sc\Sp$, write $U=\Spf A$, for $A \in cs\hat{\C}_{\L}$.
Then
$$
\underline{\Hom}(U, X) = \{ x \in \prod_{n \in \N_0} X(A^n)^{\Delta^n}\,:\, \pd^i_Ax_n= (\pd^i)^*x_{n+1},\,\sigma^i_Ax_n= (\sigma^i)^*x_{n+1}\}.
$$

If $f$ is a weak equivalence in the sense of Definition \ref{weakdef}, then the maps $f:X(A^n) \to Y(A^n)$ are weak equivalences between fibrant simplicial sets for all $n$; it follows that
$$
f_*:\underline{\Hom}(U, X)\to \underline{\Hom}(U, Y)
$$
must also be a weak equivalence between fibrant simplicial sets. Since 
$$
\Hom_{\Ho(sc\Sp)}(U,X)= \pi_0\underline{\Hom}(U, X)
$$
and $U$ was arbitrary, $f$ must be a geometric weak equivalence.
\end{proof}

Lemma \ref{settotop} now implies:
\begin{lemma}\label{underlineq}
The functor from $c\Sp$ to $sc\Sp$ given by $X \mapsto \underline{X}$ is simplicial right Quillen. 
\end{lemma}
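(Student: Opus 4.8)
The plan is to exhibit $\underline{(-)}\colon c\Sp\to sc\Sp$ as a right adjoint that preserves fibrations, trivial fibrations, and cotensors by simplicial sets; the last point makes the adjunction simplicial, so these facts together give a simplicial right Quillen functor. That $\underline{(-)}$ preserves limits is immediate from the formula $\underline{F}(A)_n=F_n(A^{\Delta^n})$ of Definition \ref{underline}: limits in $c\Sp$ and $sc\Sp$ are computed objectwise and $A\mapsto A^{\Delta^n}$ preserves finite limits. Since the isomorphism classes of $\C_{\L}$ form a set (cf. the smallness remarks in Section \ref{model}), a routine solution-set argument then produces a left adjoint. Compatibility with cotensors is the identification $\underline{X^{K}}\cong(\underline{X})^{K}$, which again follows levelwise from the definitions, so the left adjoint is enriched and the adjunction is simplicial; it therefore remains only to check that $\underline{(-)}$ preserves fibrations and trivial fibrations, plus the axiom (SM7a).

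Preservation of fibrations is where Lemma \ref{settotop} does the work directly: the fibrations of $c\Sp$ are exactly the quasi-smooth maps (Lemma \ref{qscofib}), the (geometric) fibrations of $sc\Sp$ are the quasi-smooth maps by construction (Definition \ref{defcatdef}), and Lemma \ref{settotop} is precisely the statement that $\alpha$ is quasi-smooth in $c\Sp$ iff $\underline{\alpha}$ is quasi-smooth in $sc\Sp$. The simplicial axiom (SM7a) for $\underline{(-)}$ then reduces, via $\underline{X^{K}}\cong(\underline{X})^{K}$, to (SM7a) in $c\Sp$ (Proposition \ref{smcl}) together with Lemma \ref{settotop} applied to the corner maps involved.

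For trivial fibrations I would argue as follows. Let $\alpha\colon F\to G$ be a trivial fibration in $c\Sp$, with $F=\Spf S$, $G=\Spf R$. Then $\alpha$ is quasi-smooth and $\alpha^{\sharp}\colon R\to S$ is acyclic, so the relative cotangent complex $\cot(S/R)$ is acyclic in all degrees, whence $\H^{*}(F/G)=0$ by Lemma \ref{cohochar}. Now $\underline{\alpha}$ is quasi-smooth by the previous paragraph, and $\H^{*}(\underline{F}/\underline{G})\cong\H^{*}(F/G)=0$ by Lemma \ref{underlinecoho}; by Theorem \ref{robs} (equivalently Corollaries \ref{weak} and \ref{weakchar}) this forces, for every small extension $A\to B$ in $s\C_{\L}$, the fibration $\underline{F}(A)\to \underline{F}(B)\by_{\underline{G}(B)}\underline{G}(A)$ to be a weak equivalence of simplicial sets. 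Since by Lemma \ref{smallworks} a quasi-smooth map in $sc\Sp$ is a geometric weak equivalence exactly when this condition holds on all small extensions, $\underline{\alpha}$ is a quasi-smooth geometric weak equivalence, i.e. a trivial fibration. This completes the verification that $\underline{(-)}$ is simplicial right Quillen.

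The step I expect to be the main obstacle is the trivial-fibration case: "weak equivalence" in $c\Sp$ is the ring-theoretic condition that $\alpha^{\sharp}$ be acyclic, whereas "geometric weak equivalence" in $sc\Sp$ is the homotopical condition of Definition \ref{defcatdef}, and bridging them requires the cohomological translation (Lemmas \ref{cohochar} and \ref{underlinecoho}) together with the characterisation of geometric weak equivalences among quasi-smooth maps by their behaviour on small extensions (Lemma \ref{smallworks}, Corollary \ref{weak}). Everything else — existence of the left adjoint, preservation of fibrations, compatibility with cotensors, and (SM7a) — is formal once Lemma \ref{settotop} is available.
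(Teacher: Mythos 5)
Your argument is correct, and its core input is the same as the paper's: the paper proves this lemma in one line, saying it follows from Lemma \ref{settotop} (read against the definitions, fibrations in $c\Sp$ and geometric fibrations in $sc\Sp$ are both exactly the quasi-smooth maps, so \ref{settotop} gives preservation of fibrations at once; the adjoint/enrichment points and the trivial-fibration case are left implicit). What you do differently is to spell out the part the paper suppresses: for a trivial fibration $\alpha$ in $c\Sp$ you pass through $\H^{*}(\underline F/\underline G)=0$ and then use Theorem \ref{robs} together with Lemma \ref{smallworks} to conclude that $\underline\alpha$ is a quasi-smooth geometric weak equivalence, hence a geometric trivial fibration. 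That route is sound and is the honest content behind the paper's ``now implies''.

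Two small points of care in that added argument. First, Lemma \ref{cohochar} is stated for quasi-smooth maps in $sc\Sp$, and the constant embedding of $\alpha$ into $sc\Sp$ need not be quasi-smooth; the clean statement is that $\H^{*}(\underline F/\underline G)$ is computed from $\pi_{*}\cot(S/R)$ via Lemma \ref{underlinecoho} together with Proposition \ref{totcoho} (applied to the genuinely quasi-smooth $\underline\alpha$), rather than by citing \ref{cohochar} for $F/G$ itself. Second, the assertion ``$\alpha$ quasi-smooth and $\alpha^{\sharp}$ acyclic, so $\cot(S/R)$ is acyclic'' is true but is doing real work and deserves a sentence: by Lemma \ref{qscofib} quasi-smoothness means each $R_{n}\to S_{n}$ is formally smooth, so $\cot(S/R)$ computes the Andr\'e--Quillen homology $D_{*}(S/R;k)$ (the dictionary of Remark \ref{aqsmoothchar}), and this vanishes because $R\to S$ is a weak equivalence. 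With those justifications inserted, your proof is a complete and faithful expansion of the paper's.
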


\subsubsection{Representing cohomology}\label{srepcoho}

\begin{definition}\label{eilmac}
For $n\ge 0$ define $K(n) \in sc\Sp$ to be the object $\Spf (k \oplus K^n\eps) \in c\Sp$, for $K^n$ as defined in \S \ref{cohomology}. For $n\le 0$,  define $K(n) \in sc\Sp$ to be 
$$
 (\Spf k[\eps] \otimes{\Delta^{-n}})\cup_{(\Spf k[\eps]\otimes{\pd \Delta^{-n}})} \Spf k \in s\Sp.
$$
\end{definition}

\begin{definition}%%%referred to Xpdef.
Given $Z \in sc\Sp$ and  $X, Y \in sc\Sp\da Z$, define $[X,Y]_Z:=\Hom_{\Ho(cs\Sp\da Z)}(X,Y)$. Note that when $Y \to Z$ is a fibration, this agrees with Definition \ref{Xpdef}.
\end{definition}

\begin{lemma}\label{repcoho}
For $X\to Z$ quasi-smooth, $\H^n(X/Z)=[K(n), X]_Z$.
\end{lemma}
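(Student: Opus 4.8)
The plan is to express $[K(n),X]_Z=\Hom_{\Ho(sc\Sp\da Z)}(K(n),X)$ as $\pi_0$ of an honest simplicial mapping space in the slice category $sc\Sp\da Z$, and then to identify that space with $\tan(X/Z)(K^n)$ when $n\ge 0$ and with an iterated loop space of $\tan(X/Z)(K^0)$ when $n\le 0$. First I would do the preliminary bookkeeping: $\Spf k$ is the initial object of $sc\Sp$ (every $R\in cs\hat\C_\L$ maps uniquely to the constant object $k$), so $(\Spf k\to Z)$ is initial in $sc\Sp\da Z$, and $X$ is fibrant there because $\alpha:X\to Z$ is quasi-smooth, hence a fibration (Theorem \ref{scspmodel}). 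I would then check that $K(n)$, with its canonical structure map to $Z$ factoring through $\Spf k$, is cofibrant over $Z$: for $n\ge 0$ the map $\Spf k\to K(n)$ is dual to the surjection $k\oplus K^n\eps\to k$, so is a geometric cofibration; for $n\le 0$ it is the cobase change of $K(0)\otimes\pd\Delta^{-n}\to K(0)\otimes\Delta^{-n}$, which is a geometric cofibration by (SM7) applied to $\Spf k\to K(0)$ and $\pd\Delta^{-n}\into\Delta^{-n}$. Hence $[K(n),X]_Z=\pi_0\,\underline{\Hom}_{sc\Sp\da Z}(K(n),X)$.

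For $n\ge 0$: by Corollary \ref{homrep}, $\underline{\Hom}_{sc\Sp}(K(n),Y)=Y(k\oplus K^n\eps)$ for every $Y$; taking the fibre over the structure map, which is the base point $Z(k)\to Z(k\oplus K^n\eps)$, identifies $\underline{\Hom}_{sc\Sp\da Z}(K(n),X)$ with $\ker\bigl(X(k\oplus K^n\eps)\to Z(k\oplus K^n\eps)\bigr)=\tan(X/Z)(K^n)$, in the notation of Definition \ref{tgtdef}. Since $k\oplus K^n\eps\to k$ is a small extension, condition (Q2) of quasi-smoothness forces $X(k\oplus K^n\eps)\to X(k)\times_{Z(k)}Z(k\oplus K^n\eps)=Z(k\oplus K^n\eps)$ to be a Kan fibration, so $\tan(X/Z)(K^n)$ is a Kan complex, and $\pi_0\,\tan(X/Z)(K^n)=\H^n(X/Z)$ by Definition \ref{cohodef} (writing $\H^j=\pi_i\,\tan(\cdot)(K^m)$ for $m-i=j$, and taking $m=n$, $i=0$). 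This settles the case $n\ge 0$.

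For $n\le 0$, put $S:=\tan(X/Z)(K^0)=\underline{\Hom}_{sc\Sp\da Z}(K(0),X)$; this is a Kan complex by the argument just given applied to the small extension $k[\eps]/\eps^2\to k$. Applying $\underline{\Hom}_{sc\Sp\da Z}(-,X)$ to the defining pushout $K(n)=(K(0)\otimes\Delta^{-n})\cup_{K(0)\otimes\pd\Delta^{-n}}\Spf k$ turns it into a pullback, and the simplicial identities $\underline{\Hom}_{sc\Sp\da Z}(K(0)\otimes L,X)=S^L$ for $L\in\bS$ and $\underline{\Hom}_{sc\Sp\da Z}(\Spf k,X)=\Delta^0$ show that $\underline{\Hom}_{sc\Sp\da Z}(K(n),X)$ is the fibre of the Kan fibration $S^{\Delta^{-n}}\to S^{\pd\Delta^{-n}}$ over the constant map at the base point of $S$. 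This fibre is the simplicial set of based maps $(\Delta^{-n},\pd\Delta^{-n})\to S$, whose set of path components is $\pi_{-n}(S)$; by Definition \ref{cohodef} (now $m=0$, $i=-n$) this equals $\H^n(X/Z)$, finishing the proof.

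The step needing the most care is the middle move in each case: one must verify that the relevant tangent simplicial set is a Kan complex — the only place where quasi-smoothness of $\alpha$ is really used — and keep careful track of base points through the initial object $\Spf k$, so that $\pi_0$ of the fibre of $S^{\Delta^{-n}}\to S^{\pd\Delta^{-n}}$ is genuinely the homotopy group $\pi_{-n}(S)$ and not a coarser quotient.
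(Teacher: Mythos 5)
Your proof is correct and takes essentially the same route as the paper: since $K(n)$ is cofibrant and $X\to Z$ a fibration, $[K(n),X]_Z$ is computed by $\pi_0$ of the fibrewise mapping space, which is then identified with $\pi_0\tan(X/Z)(K^n)$ for $n\ge 0$ and with $\pi_{-n}$ of the fibre $F(k[\eps])$ for $n\le 0$, i.e.\ with $\H^n(X/Z)$. The extra verifications you include (cofibrancy of $K(n)$ over $Z$, Kan-ness of the tangent fibres, and the explicit pushout/based-mapping-space argument in the $n\le 0$ case) are exactly the details the paper's ``similar argument'' leaves implicit.
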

\begin{proof}
Since $X$ is fibrant in $cs\Sp \da Z$, and $ K(n)$ cofibrant, with $X \to X^{\Delta^1}\by_{Z^{\Delta^1}}Z\to X\by_Z X$ a path object, we have a coequaliser diagram
$$
\xymatrix@1{\Hom( K(n),X^{\Delta^1}\by_{Z^{\Delta^1}}Z)_Z \ar@<.5ex>[r]\ar@<-.5ex>[r] &\Hom( K(n),X)_Z \ar[r] &[ K(n), X]_Z}.
$$

For $n \ge 0$, this is just
$$
\xymatrix@1{F_1(K(n)) \ar@<.5ex>[r]\ar@<-.5ex>[r] & F_0(K(n)) \ar[r] &[ K(n), X]_Z},
$$
for $F$ the fibre of $X \to Z$ over the initial object. Thus
$$
[ K(n), X]_Z =\pi_0 (F(K(n)))= \H^n(X/Z).
$$

For $n\le 0$ a similar argument gives 
$$
[ K(n), X]_Z =\pi_{-n} (F(k[\eps])= \H^n(X/Z).
$$
\end{proof}

\begin{definition}\label{generalcoho}
Given any morphism $f:X \to Z$, we define $\H^n(X/Z):= [K(n),X]_Z$, or equivalently $\H^n(X,Z):=\H^n(\hat{X}/Z)$, for $X \xra{i} \hat{X} \xra{p} Z$ a factorisation of $f$ with $i$ a geometric trivial cofibration, and $p$ a geometric fibration. It follows from Lemma \ref{repcoho} that this is well-defined.
\end{definition}

\subsubsection{Comparison with the Reedy model structure}\label{reedy}

\begin{definition}
Define $I_R$ to be the set of morphisms in $sc\Sp$ of the form
$$
(X \otimes \Delta^n)\cup_{(X \otimes \pd \Delta^n)}(Y \otimes \pd \Delta^n) \to Y \otimes \Delta^n,
$$
for $n \ge 0$, and $X \into Y $ in $I_{\Sp}$ (i.e. a morphism in $c\Sp$ either dual to a small extension in $s\C_{\L}$, or an arbitrary map in $\Sp$).
\end{definition}

\begin{definition}
Define $J_R$ to be the set of morphisms in $sc\Sp$ of the form
 $$
(X \otimes \Delta^n)\cup_{(X \otimes \pd \Delta^n)}(Y \otimes \pd \Delta^n) \to Y \otimes \Delta^n,
$$
for $n \ge 0$ and $X \into Y $ in $J_{\Sp}$ (i.e. a morphism  in $c\Sp$  dual to an acyclic small extension in $s\C_{\L}$).
\end{definition}

\begin{definition}
Recall that the model structure on $c\Sp$ gives rise to a Reedy model structure on $sc\Sp$, for which $I_R$ is the class of generating cofibrations, and $J_R$ the class of generating trivial cofibrations. 
\end{definition}

\begin{lemma}\label{reedycomp}
Every Reedy trivial cofibration is a geometric trivial cofibration, and every Reedy trivial fibration is a geometric trivial fibration. Thus every Reedy weak equivalence is a geometric weak equivalence. Conversely, every  geometric fibration (resp. cofibration) is a Reedy fibration (resp. cofibration).
\end{lemma}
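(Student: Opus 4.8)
The plan is to derive the whole lemma formally from the cofibrantly generated presentations of the two model structures, using nothing beyond lifting-property bookkeeping. Recall that the geometric model structure on $sc\Sp$ is cofibrantly generated by $(I,J)$ (Theorem~\ref{scspmodel}), and the Reedy model structure by $(I_R,J_R)$. The first step is to record the two inclusions of generating classes
$$
I\subseteq I_R, \qquad J_R\subseteq J .
$$
The inclusion $I\subseteq I_R$ holds because every morphism of $c\Sp$ dual to a small extension in $s\C_\L$ lies in $I_{\Sp}$; and $J_R$ is literally the subclass $(J_1)$ occurring in the decomposition $J=(J_1)\cup(J_2)$, so $J_R\subseteq J$. (As noted in the text, one may first replace all four classes by small subsets before invoking the small object argument; this does not disturb the inclusions.)

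Next I would run the lifting-property consequences. From $I\subseteq I_R$: every relative $I$-cell complex is a relative $I_R$-cell complex, so since the geometric cofibrations are precisely the relative $I$-cell complexes (Lemma~\ref{spsmall}) they are in particular Reedy cofibrations; and $I_R$-inj $\subseteq$ $I$-inj, so since the geometric trivial fibrations are exactly $I$-inj (Lemma~\ref{smallworks}) every Reedy trivial fibration ($=I_R$-inj) is a geometric trivial fibration. From $J_R\subseteq J$: the geometric fibrations are $J$-inj and the Reedy fibrations are $J_R$-inj, so every geometric fibration is a Reedy fibration; moreover $J$-inj $\subseteq$ $J_R$-inj, so a map with the left lifting property against all Reedy fibrations also has the left lifting property against all geometric fibrations, and since in a cofibrantly generated model structure the trivial cofibrations are exactly the maps with the left lifting property against the fibrations, every Reedy trivial cofibration is a geometric trivial cofibration.

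For the remaining assertion --- that Reedy weak equivalences are geometric weak equivalences --- I would factor a given Reedy weak equivalence $f$ as $f=p\circ i$ in the Reedy structure with $i$ a cofibration and $p$ a trivial fibration; by two-out-of-three for Reedy weak equivalences $i$ is then a Reedy trivial cofibration. By the two facts just established $i$ is a geometric trivial cofibration and $p$ a geometric trivial fibration, so both are geometric weak equivalences, and two-out-of-three for geometric weak equivalences (the lemma immediately preceding Theorem~\ref{scspmodel}) yields that $f$ is a geometric weak equivalence.

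I do not expect a real obstacle here: the argument is entirely formal once the inclusions $I\subseteq I_R$ and $J_R\subseteq J$ are available, which is exactly what the choice of generating sets in Definitions~\ref{idef} and~\ref{jdef} and in \S\ref{reedy} was set up to make transparent. The only things to keep straight are the directions of the various lifting-property inclusions (fewer generating cofibrations gives more trivial fibrations; fewer generating trivial cofibrations gives more trivial cofibrations) and the correct identifications of the geometric (trivial) (co)fibrations with $I$-cell, $I$-inj and $J$-inj coming from Lemmas~\ref{spsmall} and~\ref{smallworks} and the proof of Theorem~\ref{scspmodel}.
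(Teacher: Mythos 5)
Your proposal is correct and follows essentially the same route as the paper, whose entire proof is the observation that $J_R=J_1\subset J$ (hence $J_R$-cof $\subset J$-cof) and $I\subset I_R$ (hence $I_R$-inj $\subset I$-inj), with the remaining assertions left as the same formal consequences you spell out. Your version merely makes explicit the lifting-property bookkeeping and the factorisation-plus-two-out-of-three argument for the weak-equivalence statement, which the paper leaves implicit.
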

\begin{proof}
Observe that $J_R =J_1\subset J$, so $J_R$-cof $\subset J$-cof, and that $I \subset I_R$, so $I_R$-inj $\subset I$-inj.
\end{proof}

\begin{lemma}\label{levelwiseqs}
 Let $X \in sc\Sp$ be levelwise quasi-smooth, in the sense that each $X_n \in c\Sp$ is quasi-smooth. Then the canonical map $X \to \underline{X}$ is a geometric weak equivalence.
\end{lemma}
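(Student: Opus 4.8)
I would prove this by comparing $X$ and $\underline{X}$ through the cohomology criterion of Corollary \ref{weakchar}, after first reducing to the case where both objects are fibrant (geometrically quasi-smooth) in $sc\Sp$. The key point is that $\underline{X}$ is already quasi-smooth as soon as each $X_n$ is quasi-smooth in $c\Sp$: this is essentially Lemma \ref{settotop} applied levelwise, combined with the fact (Proposition \ref{ctodt1}) that passing from $X_n$ to $\underline{X_n}$ preserves condition (S1), and that the simplicial-matching-map description of quasi-smoothness in $sc\Sp$ (Lemma \ref{qscofib}, Definition \ref{scspqsdef}) is controlled by the $X_n$ being smooth in $c\Sp$. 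So $\underline{X}$ is a fibrant replacement candidate, and it suffices to show the natural map $X \to \underline{X}$ induces isomorphisms on $\H^i$ for all $i \in \Z$, or, more precisely, that it is a geometric weak equivalence in the sense of Definition \ref{defcatdef}.

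First I would unwind $\underline{X}$: by Definition \ref{underline}, $\underline{X}(A)_n = X_n(A^{\Delta^n})$, and by the key property of $A^{\Delta_n}$ established in the proof of Proposition \ref{cSp}, evaluating $X_n$ on $A^{\Delta^n}$ recovers $(X_n)^n(A)$ in the cosimplicial direction. The natural transformation $X \to \underline{X}$ is, levelwise in the simplicial direction, the canonical map $X_n \to \underline{X_n}$ of Definition \ref{underline}, which by Lemma \ref{settotop} / Lemma \ref{underlineq} is a (right Quillen, hence fibrant-preserving) map that becomes a weak equivalence on representable objects. The cleanest route is then to use the cohomology computation: by Lemma \ref{underlinecoho}, $\H^i(t(\underline{X_n}/\underline{G})) \cong \H^i(t(X_n/G))$, and more relevantly the total-complex description of $\H^n$ from Proposition \ref{totcoho} shows that the binormalised tangent complex of $\underline{X}$ is quasi-isomorphic to that of $X$, because in each cosimplicial degree $i$ the map $t(X)^i_n \to (t(X)^i_n)^{\Delta^n_i}$ is part of a cosimplicial quasi-isomorphism (exactly the statement used in the proof of Lemma \ref{underlinecoho}). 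Hence $\H^n(X) \cong \H^n(\underline{X})$ for all $n$.

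Having both objects quasi-smooth and the cohomology iso in hand, Corollary \ref{weakchar} (via Corollary \ref{weak}) concludes that $X \to \underline{X}$ is a geometric weak equivalence. I should be slightly careful that Corollary \ref{weakchar} is stated for maps between quasi-smooth objects, so the genuine content to nail down is that $\underline{X}$ is quasi-smooth when $X$ is levelwise quasi-smooth: I expect this to be the main obstacle, and it is handled by checking (S1) and (Q2) of Definition \ref{scspqsdef} directly. For (S1), given an acyclic surjection $A \to B$, factor it into acyclic small extensions (Lemma \ref{small}) and run the argument of Proposition \ref{ctodt1} with $F = X_m$, $G = \bt$ for each $m$; since each $X_m$ is quasi-smooth in $c\Sp$, the map $X_m(A) \to X_m(B)$ is a surjective fibration for acyclic small extensions (Lemma \ref{qscofib}), so $\underline{X}$ inherits (S1). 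For (Q2), run the analogous Reedy-fibration-plus-horizontal-Kan-fibration argument of Proposition \ref{ctodt1} (using \cite{sht} Lemma IV.4.8), now for arbitrary (not necessarily acyclic) small extensions, where $X_m(A) \to X_m(B)$ is smooth in $c\Sp$ because $X_m$ is quasi-smooth; this gives that the diagonal is a fibration. An alternative, possibly shorter, endgame: once $\underline{X}$ is known quasi-smooth, one can avoid Corollary \ref{weakchar} and instead invoke Lemma \ref{levelwiseqs}'s natural companion — that a map which is a weak equivalence of fibrant simplicial sets on every representable $\Spf A$ is a geometric weak equivalence — which follows from $X(A) \simeq \underline{X}(A)$ for all $A \in s\C_{\L}$; and $X(A) \to \underline{X}(A)$ is a weak equivalence because, writing it out, $\underline{X}(A) = \mathrm{diag}\, X(A^{\Delta^{\bt}})$ and the cosimplicial resolution $A \to A^{\Delta^{\bt}}$ induces a weak equivalence on diagonals by the same cofinality/Reedy argument used for Proposition \ref{ctodt1}. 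Either way, the structural input is that "underline" is a fibrant replacement in the geometric model structure when applied to levelwise-fibrant objects.
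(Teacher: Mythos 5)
Your proposal has a genuine gap, in fact two. First, the cohomological endgame via Corollaries \ref{weak} and \ref{weakchar} only applies to maps between quasi-smooth objects of $sc\Sp$, and a levelwise quasi-smooth $X$ is in general \emph{not} quasi-smooth in the sense of Definition \ref{scspqsdef}: for instance, if $X \in c\Sp \subset sc\Sp$ is constant and quasi-smooth in $c\Sp$, then condition (S1) would require $X(A) \to X(B)$ to be a trivial fibration of discrete simplicial sets, i.e.\ a bijection, for every acyclic surjection, which quasi-smoothness in $c\Sp$ does not give. Your ``reduction to the case where both objects are fibrant'' is therefore illegitimate --- producing the fibrant approximation is exactly what the lemma (together with the one following it) is for --- and for the non-fibrant $X$ the groups $\H^*(X)$ are only defined via a fibrant replacement (Definition \ref{generalcoho}), so the tangent-complex comparison (Proposition \ref{totcoho}, Lemma \ref{underlinecoho}) says nothing about $\H^*(X)$ versus $\H^*(\underline{X})$ without circularity. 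Second, your alternative endgame rests on a false claim: $X(A) \to \underline{X}(A)$ is \emph{not} a weak equivalence in $\bS$ for all $A$; with $X$ constant as above, $X(A)$ is discrete while $\underline{X}(A) = \HHom(O(X),A)$ typically has nontrivial higher homotopy --- geometric weak equivalences out of non-fibrant objects are simply not detected by evaluation. (Relatedly, your assertion that $\underline{X}$ is quasi-smooth whenever $X$ is levelwise quasi-smooth is not covered by Lemma \ref{settotop} or Proposition \ref{ctodt1}: the Reedy-fibration half of the bisimplicial argument needs control of the matching maps of $X$ in the simplicial direction, which the levelwise hypothesis does not provide; the paper only asserts fibrancy of $\underline{X}$ for constant quasi-smooth $X \in c\Sp$.)

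The intended argument is much shorter and never needs $X$ or $\underline{X}$ to be fibrant in $sc\Sp$: at simplicial level $n$ the map $X \to \underline{X}$ is $X_n \to X_n^{\Delta^n}$ in the simplicial model category $c\Sp$ of Definition \ref{smcldef}; since $X_n$ is fibrant (quasi-smooth) there, this is a weak equivalence in $c\Sp$ (it is a section of the trivial fibration $X_n^{\Delta^n} \to X_n$). Hence $X \to \underline{X}$ is a levelwise, i.e.\ Reedy, weak equivalence, and Lemma \ref{reedycomp} states that every Reedy weak equivalence is a geometric weak equivalence.
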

\begin{proof}
At simplicial level $n$, this map is just $f_n:X_n \to X_n^{\Delta^n}$ in $c\Sp$, in the notation of the simplicial model structure of Definition \ref{smcldef}. Since $X$ is fibrant in $c\Sp$, $f_n$ is a weak equivalence in $c\Sp$, so $f$ is a Reedy weak equivalence. 
\end{proof}

\begin{lemma}
For all quasi-smooth $X \in c\Sp$, the canonical map $X \to \underline{X}$ is a fibrant approximation of $X$ in the geometric model structure on $sc\Sp$.
\end{lemma}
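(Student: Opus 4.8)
The plan is to deduce the statement by combining Lemmas \ref{underlineq} and \ref{levelwiseqs} with the identification of fibrant objects in $c\Sp$.

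First I would note that ``$X$ quasi-smooth'' means the map from $X$ to the terminal object of $c\Sp$ is quasi-smooth, hence a fibration by Lemma \ref{qscofib}; thus $X$ is a fibrant object of $c\Sp$. Since the functor $X \mapsto \underline{X}$ is simplicial right Quillen from $c\Sp$ to $sc\Sp$ (Lemma \ref{underlineq}), and a right adjoint preserves terminal objects and fibrations, it preserves fibrant objects; hence $\underline{X}$ is fibrant for the geometric model structure on $sc\Sp$.

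Next, viewing $X$ as a constant simplicial object of $sc\Sp$, each level $X_n$ equals $X$ and so is quasi-smooth, i.e. $X$ is levelwise quasi-smooth in the sense of Lemma \ref{levelwiseqs}; that lemma then gives that the canonical map $X \to \underline{X}$ is a geometric weak equivalence. Combined with the fibrancy of $\underline{X}$, this already exhibits $X \to \underline{X}$ as a geometric weak equivalence with fibrant target, which is what is needed for a fibrant approximation.

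To upgrade this to a genuine fibrant replacement, I would additionally check that $X \to \underline{X}$ is a geometric cofibration. By Definition \ref{defcatdef}(1) this asks that $O(\underline{X})^n_i \to O(X)^n_i$ be surjective for all $i,n \ge 0$; as recorded in the proof of Lemma \ref{levelwiseqs}, at simplicial level $n$ this map is $X \to X^{\Delta^n}$ in $c\Sp$, which on coordinate rings is the fold map $O(X)\otimes \Delta^n \to O(X)\otimes\Delta^0 = O(X)$ induced by $\Delta^n \to \Delta^0$, and in each degree $i$ this is the iterated multiplication $O(X)_i^{\otimes(\Delta^n)_i} \to O(X)_i$, which is (split) surjective for augmented local $k$-algebras. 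The only step requiring any care is this last identification of the level-wise map with $X \to X^{\Delta^n}$ and the surjectivity of the fold; everything else is formal once Lemmas \ref{qscofib}, \ref{underlineq} and \ref{levelwiseqs} are in hand.
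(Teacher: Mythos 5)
Your argument is correct and takes essentially the same route as the paper: the paper gets fibrancy of $\underline{X}$ from Lemma \ref{settotop} (of which Lemma \ref{underlineq} is just the Quillen-functor repackaging you invoke) and cites Lemma \ref{levelwiseqs} for the geometric weak equivalence. Your additional check that $X \to \underline{X}$ is a geometric cofibration is not needed for a fibrant approximation (only a weak equivalence to a fibrant object is required), though it is correct and shows the map is in fact a trivial cofibration.
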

\begin{proof}
By Lemma \ref{settotop}, we already know that $\underline{X}$ is quasi-smooth, and we have just seen that $f:X \to \underline{X}$ is a geometric weak equivalence. 
\end{proof}

\subsection{Homotopy representability}\label{snschless}

\begin{definition}\label{schless}
Define the category $\cS$ to consist of functors $F: s\C_{\L}\to \bS$ satisfying the following conditions:
\begin{enumerate}

\item[(A0)] $F(k)$ is contractible.

\item[(A1)] For all small extensions $A \onto B$ in $s\C_{\L}$, and maps $C \to B$ in $s\C_{\L}$, the 
map 
$F(A\by_BC) \to F(A)\by_{F(B)}^hF(C)$ is a  
weak equivalence, where $\by^h$ denotes homotopy fibre product.

\item[(A2)] For all acyclic small extensions $A \onto B$ in $s\C_{\L}$, the map $F(A) \to F(B)$ is a weak equivalence.
\end{enumerate}

Say that a natural transformation $\eta:F \to G$ between such functors is a weak equivalence if the maps $F(A) \to G(A)$ are weak equivalences for all $A\in s\C_{\L}$, and let $\Ho(\cS)$ be the category obtained by formally inverting all weak equivalences in $\cS$.
\end{definition}

\begin{remark}\label{byh}
We may apply the long exact sequence of homotopy  to describe the homotopy groups of homotopy fibre products. 
If $f:X \to Z$, $g:Y \to Z$ in $\bS$ and $P=X\by_Z^hY$,   the map $\theta:\pi_0(P)\to \pi_0(X)\by_{\pi_0(Z)}\pi_0(Y)$ is surjective. Moreover, $\pi_1(Z,*)$ acts transitively on the fibres of $\theta$ over $* \in \pi_0Z$.%, with kernel $ f(\pi_1(X,v))g(\pi_1(Y,v))$.

Take $v \in \pi_0(P)$ over $*$. Then 
there is
a connecting homomorphism $\pd:\pi_n(Z,*)\to \pi_{n-1}(P,v)$ for all $n\ge 1$, 
giving a long exact sequence
$$
\ldots \xra{\pd} \pi_n(P,v)\to \pi_n(X,v)\by \pi_n(Y,v)\xra{f\cdot g^{-1}} \pi_n(Z,*)\xra{\pd}\pi_{n-1}(P,v)  \ldots .
$$
\end{remark}

The following can be regarded as an analogue of Schlessinger's theorem (\cite{Sch} Theorem 2.11), or as a Brown-type representability theorem with (A1) the Mayer-Vietoris condition. 
\begin{theorem}\label{schrep}
There is a canonical equivalence between the geometric homotopy category $\Ho(sc\Sp)$ and the category $\Ho(\cS)$.
\end{theorem}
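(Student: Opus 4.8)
The plan is to construct a canonical functor $\phi\colon \Ho(sc\Sp)\to \Ho(\cS)$ and then to show it is essentially surjective and fully faithful. To build $\phi$ I would first check that the underlying functor $s\C_{\L}\to\bS$ of any quasi-smooth (equivalently fibrant, by Definition \ref{defcatdef}) object $X\in sc\Sp$ lies in $\cS$: condition (A0) holds because left-exact functors preserve the terminal object, so $X(k)=\ast$; condition (A1) holds because quasi-smoothness, via (Q2) with target $\bt$, makes $X(A)\to X(B)$ a Kan fibration for every surjection $A\onto B$, so the pullback $X(A\times_B C)=X(A)\times_{X(B)}X(C)$, which is strict by left-exactness, computes the homotopy fibre product; and condition (A2) holds because (S1) makes $X(A)\to X(B)$ a trivial fibration for every acyclic small extension. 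By Corollaries \ref{homrep} and \ref{weakchar}, a geometric weak equivalence between quasi-smooth objects is precisely an objectwise weak equivalence, hence a weak equivalence in $\cS$; composing the forgetful functor with functorial fibrant replacement in the geometric model structure (Theorem \ref{scspmodel}) therefore yields $\phi$.

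For essential surjectivity, given $F\in\cS$, I would construct a quasi-smooth $X\in sc\Sp$ together with an objectwise weak equivalence $\eta\colon X\to F$ by a Brown-type cell-attachment argument. Conditions (A0)--(A2) allow the obstruction calculus of Theorem \ref{robs} to be carried out for $F$ itself, with strict fibre sequences replaced by homotopy fibre sequences and Remark \ref{byh} supplying the long exact sequences; in particular $F$ carries a well-defined tangent datum $\tan F(V):=F(k\oplus V)$, cohomology groups $\H^*(F)$ and obstruction maps, as in \S\ref{cohomology}. One then builds $X$ inductively, attaching cells $K(n)$ as in \S\ref{srepcoho} (using $\H^n(X/Z)=[K(n),X]_Z$, Lemma \ref{repcoho}) so as to arrange first that $\eta$ is an isomorphism on all cohomology groups, and then, descending through the tower of small extensions (every surjection factoring through small extensions by Lemma \ref{small}) and applying the five lemma to the long exact sequences of Theorem \ref{robs} for $X$ and for $F$, that $\eta_A$ is a weak equivalence for every $A$. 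Then $\phi(X)\simeq F$ in $\Ho(\cS)$.

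For full faithfulness, note that for quasi-smooth $X,Y$ Corollary \ref{homrep} together with the simplicial geometric model structure (every object of $sc\Sp$ being cofibrant, since each augmentation $O(X)^n_i\onto k$ is surjective) identifies $\Hom_{\Ho(sc\Sp)}(X,Y)$ with $\pi_0\underline{\Hom}_{sc\Sp}(X,Y)$. On the other side I would verify that $\cS$ is a category of fibrant objects, the fibrations being the evident ``quasi-smooth surjections'' (maps $\alpha\colon F\to G$ for which $F(A)\to F(B)\times^{h}_{G(B)}G(A)$ is a surjective fibration on small extensions), so that $\Ho(\cS)$ admits a calculus of fractions and every morphism $X\to Y$ in $\Ho(\cS)$ is a roof $X\xla{\sim}W\to Y$. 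Applying the essential-surjectivity step to rigidify $W$ replaces the roof by one in which $W$ is quasi-smooth; the backward leg is then a geometric weak equivalence by Corollary \ref{weakchar}, so the roof determines a morphism in $\Ho(sc\Sp)$, giving fullness, and the same device applied to (the roofs presenting) homotopies gives faithfulness. Hence $\phi$ is an equivalence.

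The main obstacle is the Brown-type construction in the essential-surjectivity step: turning an abstract $F\in\cS$ into a genuinely left-exact functor, and verifying that the comparison $\eta$ is an honest objectwise weak equivalence rather than merely an isomorphism on tangent and obstruction groups, requires carefully controlling the cell-attachment induction over the possibly infinite tower of small extensions, with the same delicacy as the acyclic case of Lemma \ref{small} and as Schlessinger's original hull construction.
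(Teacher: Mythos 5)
Your first direction is fine and is the same as the paper's: by Corollary \ref{homrep} the functor you describe is exactly $A \mapsto \underline{\Hom}(\Spf A,X)$, and Corollary \ref{weakchar} makes it descend to $\Ho(sc\Sp)\to\Ho(\cS)$. The gaps are in the converse direction, which is the substance of the theorem. For essential surjectivity you propose attaching cells $K(n)$ ``over $F$'', but you never say what a cell attachment against $F$ means: $F$ is not left-exact and is defined only on $s\C_{\L}$, while the object $X$ you are building is pro-Artinian, so producing the universal natural transformation $\eta\colon X\to F$ at each stage requires evaluating $F$ coherently on pro-objects and on objects such as $\Spf A\ten K$ for arbitrary $K\in\bS$; the naive $\mathrm{Nat}(X,F)$ is a strict limit of values of $F$ and is not homotopy-invariant. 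This is exactly the step the paper's proof is devoted to: it first extends $F$ to $s\hat{\C}_{\L}$ by $\holim$, then builds by induction on simplicial skeleta (via homotopy pullbacks) a homotopy-invariant extension $\overline{F}\colon\Ho(sc\Sp)^{\opp}\to\Ho(\bS)$ with $\overline{F}(U\ten K)\simeq\overline{F}(U)^{\bR K}$, and only then obtains a representing object by applying Heller's representability theorem to $\pi_0\overline{F}$. You flag this as ``the main obstacle'' but supply no mechanism for it, so the essential-surjectivity step is not actually carried out.

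The full-faithfulness argument contains a concrete error: $\cS$ with your ``quasi-smooth surjections'' is not a category of fibrant objects. Conditions (A0)--(A2) are purely homotopy-invariance conditions: an object $F\in\cS$ need not take values in Kan complexes, and for a small extension $A\onto B$ the map $F(A)\to F(B)$ need only fit in a homotopy-cartesian square, not be a (surjective) fibration. Hence not every object of $\cS$ is fibrant for your class, the objectwise $F^{\Delta^1}$ is not a path object unless $F$ is objectwise fibrant, and the claim that every morphism of $\Ho(\cS)$ is a single roof $X\xla{\sim}W\to Y$ is unjustified; fullness and faithfulness do not follow. Note that the paper never proves full faithfulness separately: it comes for free because the converse construction $F\mapsto(\text{object representing }\pi_0\overline{F})$ is functorial in $F$, and the two functors are checked to be quasi-inverse via the identification $[K,F(A)]=\pi_0\overline{F}(\Spf A\ten K)$. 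To rescue your route you would need either to equip (a fibrant replacement of) $\cS$ with an actual fibration-category or model structure, or to prove that an objectwise weak equivalence from a quasi-smooth left-exact functor to $G\in\cS$ induces a bijection on homotopy classes of maps out of quasi-smooth left-exact sources --- a representability statement of essentially the same strength as the theorem itself.
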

\begin{proof}
Given a quasi-smooth object $X \in sc\Sp$, observe that the functor $\theta(X)$ on $s\C_{\L}$ given  by $A \mapsto \underline{\Hom}(\Spf A,X)$ satisfies (A0)--(A2), and that Corollary \ref{weakchar} implies that this construction descends to a functor $\theta:\Ho(sc\Sp) \to \Ho(\cS)$.

Conversely, given $F \in \cS$, we first extend $F$ to $s\hat{\C}_{\L}$: any $A \in s\hat{\C}_{\L}$ is isomorphic to an inverse system $\{A_{\alpha}\}$ indexed by a totally ordered set, with all transition maps surjective in $s\C_{\L}$, and we set $ F(A):= \holim_{\alpha} F(A_{\alpha})$. 

For $K \in \bS$, the endofunctor $X \mapsto X^K$ of $\bS$ is right Quillen; choose an associated derived right Quillen functor $X \mapsto X^{\bR K}$ (given by $(X^f)^K$, for $X^f$ a fibrant replacement of $X$).
We wish to define a functor $\overline{F}: (sc\Sp)^{\opp}\to \bS$ satisfying  $\overline{F}(U\ten K):= \overline{F}(U)^{\bR K}$ and  preserving  homotopy colimits. 

Given $U \in sc\Sp$, we now consider the simplicial skeleta
$$
U= \varinjlim \sk_nU,
$$
where $\sk_0= U_0 \in c\Sp \subset sc\Sp$, and $\sk_nU$ is given by  the pushout
$$
\xymatrix{
    \sk_{n-1}U     \ar[r] \ar[d] &     \sk_{n}U    \ar[d]\\
   {\Delta^n}\ten L_n U\by_{{\pd\Delta^n}\ten L_nU}\pd\Delta^n\ten U_n      \ar[r]  &  \Delta^n\ten U_n.}
$$

We may therefore define $\overline{F}(\sk_n U)$ inductively as the homotopy pullback
$$
\xymatrix{
     \overline{F}(\sk_n U)    \ar[r] \ar[d] &  \overline{F}(\sk_{n-1}U )      \ar[d]\\
   F(L_nU)^{\bR\Delta^n}\by_{F(L_nU)^{\bR\pd \Delta^n}}F(U_n)^{\bR\pd\Delta^n}      \ar[r]  & F(U_n)^{\bR\Delta^n}.}
$$

Now, it is straightforward to see that $\overline{F}$ maps morphisms in $J$ to weak equivalences, so it maps all trivial cofibrations to weak equivalences by Theorem \ref{scspmodel}. Given a weak equivalence $f:A \to B$ in $cs\hat{\C}_{\L}$, observe that the object $A\by_{f,B, \ev_0}B^{\Delta^1}$, dual to the  mapping cylinder, is equipped with
 trivial fibrations to both $A$ and $B$.
Hence  $\overline{F}$ descends to a functor $\overline{F}:\Ho(sc\Sp)^{\opp}\to \Ho(\bS)$. It is also easy to see that $\overline{F}$ preserves all homotopy limits.

Therefore the functor $\pi_0\overline{F}:\Ho(sc\Sp)^{\opp}\to \Set$ is half-exact in the sense of \cite{heller}, and  $\Ho(sc\Sp)$ satisfies the conditions of Heller's Theorem (\cite{heller} Theorem 1.3), so $\pi_0\overline{F}$ is representable, and we have defined a functor  $\cS\to \Ho(sc\Sp)$. If $\eta: F\to G$ is a weak equivalence in $\cS$, then  $\overline{F}(A) \to \overline{G}(A)$ is a weak equivalence for all $A$, so our functor descends to a functor $\Ho(\cS) \to \Ho(sc\Sp)$.

To see that these functors form a quasi-inverse pair, note that, for $K \in \bS$, $[K,F(A)]= \pi_0(\overline{F}(A)^{\bR K})= \pi_0(\overline{F}(\Spf A \ten K))$. Conversely, it is immediate that for a quasi-smooth $X \in sc\Sp$,  $\overline{\theta(X)}=\underline{\Hom}(-,X)$, so $\pi_0\overline{\theta(X)}=\Hom_{\Ho(sc\Sp)}(-,X)$.
\end{proof}

\begin{remark}\label{sgpd}
Since the homotopy categories of simplicial groupoids and simplicial sets are Quillen-equivalent (\cite{sht} Corollary V.7.11), this recovers the conception of extended deformation functors as taking values in simplicial groupoids.
\end{remark}

\subsection{Minimal models}\label{minimal}

\begin{definition}\label{dgstuff}
Given an abelian category $\cA$, let $dg\cA$ be the category of  non-negatively graded chain complexes in  $\cA$, and $dg_{\Z}\cA$  the category of  $\Z$-graded chain complexes in  $\cA$. Let $DG\cA$ be the category of  non-negatively graded cochain complexes in  $\cA$.
\end{definition}

\begin{definition}\label{totprod}
Define the total complex  functor $\Tot^{\Pi}:DGdg\widehat{\FD\Vect}_k \to  dg_{\Z}\widehat{\FD\Vect}_k$ by
$$
(\Tot^{\Pi} V)_n := \prod_{a-b=n} V^b_a,
$$
with differential $d=d_c +(-1)^b d^s$.
\end{definition}

\begin{definition}\label{tot*}
Let $\Tot^{\Pi*}: dg_{\Z} \widehat{\FD\Vect} \to DGdg\widehat{\FD\Vect}$  be left adjoint to $\Tot^{\Pi}$. Explicitly
$$
\Tot^{\Pi*}(V)^b_a=\left\{ \begin{matrix} V_{a-b} \oplus V_{a-b+1} & b>0 \\V_{a} & b=0,   \end{matrix} \right.
$$
with differentials $d_c(v,w)=(0, v)$, $d^s(v,w)= \pm(dv,v-dw)$.
\end{definition}

\begin{definition}
Say that a quasi-smooth object $R$ of $cs\hat{\C}_{\L}$ is \emph{minimal} if the cochain chain complex $N\cot R$ is of the form $ \Tot^{\Pi*}(V_*)$, for a $\Z$-graded vector space $V_*$ (regarded as a chain complex with zero differential). 
\end{definition}

Every cochain complex over a field is homotopy-equivalent to its cohomology. This has the following trivial corollary, which we regard as the analogous statement for chain complexes of cochain complexes:

\begin{lemma}\label{dcoch}
Let $ \ldots \xra{\delta} V_2 \xra{\delta} V_1 \xra{\delta} V_0  $ be a chain complex of cochain complexes. Then $V_{\bullet}$ is levelwise homotopy-equivalent to the chain complex 
$$
h_n^i(V) := \H^i(\delta V_{n+1})\oplus \H^i(V_n/\delta V_{n+1})
$$
of cochain complexes, with  $\delta(v,w)=(\delta w, 0)$, and $d(v,w)=(\pd w, 0) $, for $\pd: \H^i(V_n/\delta V_{n+1})\to\H^{i+1}(\delta V_{n+1})$ the boundary map associated to the short exact sequence $0 \to \delta V_{n+1}\to V^n\to V^n/\delta V^{n+1}\to 0$. 
\end{lemma}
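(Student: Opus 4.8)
The plan is to reduce to the known statement about cochain complexes by a change of presentation. For each $n$ put $W_n := \delta V_{n+1}$, a subcochain complex of $V_n$ (it is the image of the cochain map $\delta$), and $Q_n := V_n/\delta V_{n+1}$, giving short exact sequences $0\to W_n \to V_n \to Q_n \to 0$ of cochain complexes. Since $\delta^2=0$, the differential $\delta\colon V_{n+1}\to V_n$ kills $W_{n+1}$ and has image $W_n$, so it descends to a surjective cochain map $\bar\delta_n\colon Q_{n+1}\onto W_n$, and the original $\delta$ is recovered as $V_{n+1}\onto Q_{n+1}\xra{\bar\delta_n}W_n\into V_n$. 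As $k$ is a field, I would pick a graded-linear section of each $V_n\onto Q_n$; this presents $V_n$, as a cochain complex, as the mapping cone $\cone(\phi_n\colon Q_n[-1]\to W_n)$ of a degree-one cochain map $\phi_n$ whose class on cohomology is the connecting homomorphism $\pd$ of the exact sequence, and in these coordinates $\delta\colon V_{n+1}\to V_n$ is $(w,q)\mapsto(\bar\delta_n q,0)$. Unravelling the definition, $h_\bullet(V)$ is exactly the chain complex whose $n$-th term is the cochain complex $\cone(\pd_n\colon\H^\bullet(Q_n)[-1]\to\H^\bullet(W_n))$ (both summands carrying zero cochain differential), with chain differential $(v,w)\mapsto(\H(\bar\delta_n)w,0)$; so the task is to replace each $W_n,Q_n$ by its cohomology compatibly with all the structure.

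I would do this in two steps. Using that a cochain complex over a field is a homotopy retract of its cohomology, fix cochain maps $p^Q_n\colon Q_n\to\H^\bullet(Q_n)$ and $j^Q_n\colon\H^\bullet(Q_n)\to Q_n$ with $p^Q_n j^Q_n=\id$ and $j^Q_n p^Q_n\simeq\id$, and similarly $p^W_n,j^W_n$. Step A: let $V'_\bullet$ be the chain complex of cochain complexes with $n$-th term $W_n\oplus\H^\bullet(Q_n)$, built from the cochain maps $\phi_n j^Q_n$ (extension datum) and $\bar\delta_n j^Q_{n+1}$ (chain differential); then $(w,a)\mapsto(w,j^Q_n a)$ is a strict morphism $F_\bullet\colon V'_\bullet\to V_\bullet$ of chain complexes of cochain complexes — it commutes with $\delta$ on the nose because $V'_\bullet$ was defined with $j^Q$ already pre-composed into its structure maps — and each $F_n$ is the map of mapping cones induced by a strictly commuting square with vertical arrows $j^Q_n$ (a cochain homotopy equivalence) and $\id$, hence itself a cochain homotopy equivalence. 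Step B: let $V''_\bullet$ have $n$-th term $\H^\bullet(W_n)\oplus\H^\bullet(Q_n)$, built from $p^W_n\phi_n j^Q_n$ and $p^W_n\bar\delta_n j^Q_{n+1}$; then $(w,a)\mapsto(p^W_n w,a)$ is a strict morphism $G_\bullet\colon V'_\bullet\to V''_\bullet$, again a levelwise cochain homotopy equivalence by functoriality of the mapping cone. Since $p^W_n\phi_n j^Q_n=\H(\phi_n)=\pd_n$ and $p^W_n\bar\delta_n j^Q_{n+1}=\H(\bar\delta_n)$, we have $V''_\bullet=h_\bullet(V)$ on the nose, and the span $V_\bullet\xla{F_\bullet}V'_\bullet\xra{G_\bullet}h_\bullet(V)$ of levelwise homotopy equivalences is the desired comparison. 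The checks that $V'_\bullet,V''_\bullet$ are well-defined and that $F_\bullet,G_\bullet$ respect both differentials all reduce to the facts that $j$ lands in $d$-cocycles and $p$ kills $d$-coboundaries; the only slightly less formal input is that the mapping cone of a strictly commuting square of cochain maps with homotopy-equivalence vertical arrows is a homotopy equivalence (five lemma in the homotopy category of cochain complexes, or explicit homotopies), together with the sign and degree bookkeeping identifying $\H(\phi_n)$ with $\pd$.

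The main obstacle, and the reason the conclusion is phrased levelwise rather than as a single morphism, is that the contractions $W_n\simeq\H^\bullet(W_n)$ and $Q_n\simeq\H^\bullet(Q_n)$ do not commute with the structure maps $\bar\delta_n,\phi_n$ strictly, only up to the built-in homotopies; consequently there is in general no strict morphism $V_\bullet\to h_\bullet(V)$ of chain complexes of cochain complexes inducing the equivalence (a homological-perturbation computation does produce a levelwise cochain map $V_n\to h_n$, but it fails to commute with $\delta$ unless the contracting homotopy on $Q_n$ annihilates $d_Q$, which forces $Q_n$ acyclic-plus-cohomology in a split way, i.e. is impossible in general). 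Routing the comparison through the intermediate $V'_\bullet$ — in which one contraction has been pulled back along $j^Q$ and the other pushed forward along $p^W$ — is precisely what makes both legs of the span strict, so the remaining work is entirely bookkeeping.
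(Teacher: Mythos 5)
Your proof is correct, and it is essentially the paper's (unwritten) argument: the paper states this lemma without proof, as a ``trivial corollary'' of the fact that a cochain complex over a field is homotopy-equivalent to its cohomology, and your splitting of $0\to \delta V_{n+1}\to V_n\to V_n/\delta V_{n+1}\to 0$, presentation of $V_n$ as a cone on the extension class, and replacement of the two pieces by their cohomology is exactly that idea carried out in detail. The one genuine addition is the zig-zag through the intermediate object $V'_\bullet$ so that both comparison maps commute strictly with $\delta$ — a worthwhile precision, since a single strict levelwise equivalence $V_\bullet\to h_\bullet(V)$ need not exist, and it is this $\delta$-compatible form of the equivalence that the subsequent application in Lemma \ref{decompcot} actually relies on.
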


\begin{lemma}\label{decompcot}
Given $V\in DGdg\widehat{\FD\Vect}$ quasi-smooth (in the sense of Definition \ref{qsvect}), there exists a decomposition  
$$
V \cong U \oplus \Tot^{\Pi*}(\H_*(\Tot^{\Pi}V)),
$$
of cochain chain complexes, with $\Tot^{\Pi}U$ acyclic.
\end{lemma}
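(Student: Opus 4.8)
The plan is to treat the two differential directions in turn, bootstrapping from the fact that a cochain complex over a field is the direct sum of its cohomology and an acyclic complex, in the two-variable form recorded in Lemma \ref{dcoch}. First I would make the hypothesis explicit: unwinding Definition \ref{qsvect} through the Dold--Kan dictionary of the proof of Lemma \ref{cohochar}, $V$ is quasi-smooth exactly when (a) each row $V^{\bullet}_a$ (the cochain complex obtained by fixing the chain degree $a$, with differential $d_c$) is acyclic, and (b) for each cochain degree $b>0$ the column $V^b_{\bullet}$ (the chain complex with differential $d^s$) has homology concentrated in chain degree $0$. Thus $V$ is levelwise $d_c$-acyclic, and the issue is how the rows are glued by $d^s$.

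I would then apply Lemma \ref{dcoch} to $V$ viewed as the chain complex $\cdots\xra{d^s}V^{\bullet}_2\xra{d^s}V^{\bullet}_1\xra{d^s}V^{\bullet}_0$ of cochain complexes. Since $\H^{\bullet}_{d_c}(d^sV^{\bullet}_{n+1})$ and $\H^{\bullet}_{d_c}(V^{\bullet}_n/d^sV^{\bullet}_{n+1})$ are sub- and quotient-cohomologies of the rows, over a field the levelwise homotopy equivalence of Lemma \ref{dcoch} may be realised as an honest $DGdg\widehat{\FD\Vect}$-decomposition $V\cong h(V)\oplus U$, where $h(V)^i_n=\H^i_{d_c}(d^sV^{\bullet}_{n+1})\oplus\H^i_{d_c}(V^{\bullet}_n/d^sV^{\bullet}_{n+1})$ and $U$ is levelwise $d_c$-acyclic; note that this construction is additive in $V$. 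Now (a) makes the connecting maps $\H^i_{d_c}(V^{\bullet}_n/d^sV^{\bullet}_{n+1})\to\H^{i+1}_{d_c}(d^sV^{\bullet}_{n+1})$ isomorphisms, so, putting $Q^j_n:=\H^j_{d_c}(d^sV^{\bullet}_{n+1})$, we get $h(V)^i_n\cong Q^i_n\oplus Q^{i+1}_n$, in which $d_c$ is the evident family of disc isomorphisms and $d^s$ is carried by maps $\psi_n\colon Q^{i+1}_n\to Q^i_{n-1}$.

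It remains to recognise $h(V)$ as $\Tot^{\Pi*}$ of a graded vector space and to show that $\Tot^\Pi U$ is acyclic. Here condition (b), together with the non-negativity of the bigrading — so that the comparison with $\Tot^{\Pi*}$ can be run diagonal by diagonal — forces the $Q^j_n$ to depend only on $n-j$ and the $\psi_n$ to be isomorphisms, whence $h(V)\cong\Tot^{\Pi*}(W_*)$ for a $\Z$-graded vector space $W_*$ (regarded as a chain complex with zero differential); the same condition, via additivity, leaves $U$ with no ``staircase'' summand, so that $\Tot^\Pi U$ is acyclic. Finally, Lemma \ref{cohochar} computes $\H_*(\Tot^\Pi V)$ out of precisely the data ($d_c$-cohomology of $d^s$-images, resp.\ $d^s$-homology of the $b=0$ column) from which $W_*$ was built, and the explicit formula of Definition \ref{tot*} exhibits a contraction of $\Tot^\Pi\Tot^{\Pi*}(W_*)$ onto $W_*$; combining these with acyclicity of $\Tot^\Pi U$ gives $W_*\cong\H_*(\Tot^\Pi V)$, which is the assertion. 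I expect the main obstacle to be this last step — pinning down the shape of $h(V)$ and the acyclicity of $\Tot^\Pi U$ from (b) — because the distinction between the staircase summands $\Tot^{\Pi*}(k[-m])$, which carry all of $\H_*(\Tot^\Pi V)$, and the $\Tot^\Pi$-acyclic summands is invisible to the individual rows and columns, each of which is acyclic on its own.
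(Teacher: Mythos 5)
Your outline follows the paper's own route: both proofs apply Lemma \ref{dcoch} to split $V\cong \cH(V)\oplus U$ with $\cH(V)^i_n=\H^i(d^sV_{n+1})\oplus\H^i(V_n/d^sV_{n+1})$, and both then identify $\cH(V)$ with a $\Tot^{\Pi*}$ and deduce acyclicity of $\Tot^{\Pi}U$. The divergence, and the gap, is in how that identification is justified. Your claim that condition (b) ``forces the $Q^j_n$ to depend only on $n-j$ and the $\psi_n$ to be isomorphisms'' is not a formal consequence of (a) and (b) holding for a complex of the shape $h(V)$: the truncated staircase with $Q^1_0=k$ and all other $Q^j_n=0$ (i.e.\ $k$ in bidegrees $(0,0)$ and $(1,0)$, $d_c$ an isomorphism, $d^s=0$) satisfies both conditions, has exactly this shape, and is not $\Tot^{\Pi*}$ of anything --- and the literal statement fails at $j=0$, since $Q^0_n=\H^0(d^sV_{n+1})\subseteq\H^0(V_n)=0$ by row-acyclicity while $Q^1_{n+1}$ need not vanish; this $b=0$ edge is precisely where $\Tot^{\Pi*}$ has a single summand. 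So you must use how $h(V)$ sits inside $V$, not just conditions (a),(b) on the summand: for instance the exact sequence $0\to Z_n/d^sV_{n+1}\to V_n/d^sV_{n+1}\xra{d^s} d^sV_n\to 0$, whose first term is concentrated in cochain degree $0$ by (b), gives the missing surjectivity of the maps landing in chain degree $0$. The paper short-circuits all of this by identifying the two summands of $\cH(V)^i_n$ directly with $\H_{n-i+1}(\Tot^{\Pi}V)$ and $\H_{n-i}(\Tot^{\Pi}V)$, so that $W_*=\H_*(\Tot^{\Pi}V)$ from the start.

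The second unproved step is the acyclicity of $\Tot^{\Pi}U$. Saying that $U$ ``has no staircase summand'' is not an argument: $U$ is again quasi-smooth with contractible rows, and, as you yourself observe, exactness of all rows and columns does not imply $\Tot^{\Pi}$-acyclicity (a complete infinite staircase is the standard counterexample); nothing in your sketch rules out such a piece occurring inside $U$, and your later identification $W_*\cong\H_*(\Tot^{\Pi}V)$ presupposes this acyclicity, so it cannot be used to supply it. In the paper the acyclicity comes for free once $\cH(V)\cong\Tot^{\Pi*}(\H_*(\Tot^{\Pi}V))$ is known, because the projection $\Tot^{\Pi}V\to\Tot^{\Pi}\cH(V)$ is then a quasi-isomorphism and $\Tot^{\Pi}V\cong\Tot^{\Pi}\cH(V)\oplus\Tot^{\Pi}U$. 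So the missing ingredient in your proposal is exactly the computation relating the groups $\H^i(d^sV_{n+1})$, $\H^i(V_n/d^sV_{n+1})$ to the homology of the total complex; the diagonal-by-diagonal and no-staircase heuristics you offer in its place would not survive the boundary cases where the real content lies.
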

\begin{proof}
Let $T:=\Tot^{\Pi}V$ and   $W:=\Tot^{\Pi*}(\H_*(T))$. Recall that the conditions for $V$ to be quasi-smooth are that $\H^i(V_n)=0$ for all $i,n \ge 0$, and that $\H_n(V^i)=0$ for all $i,n>0$. 

By Lemma \ref{dcoch} there is a levelwise cochain homotopy equivalence between $V$ and
$$
\cH(V)^i_n:= \H^i(d^sV_{n+1}) \oplus \H^i(V_n/d^sV_{n+1}),
$$
with $d_c(x,y)= (\pd y,0),\, d^s(x,y)=(d^sy,0)$. In particular, this makes $\cH(V)$ a direct summand of $V$.

Since $V$ is quasi-smooth, $\pd:\H^i(V_n/d^sV_{n+1})\to \H^{i+1}(V_n)$ is an isomorphism, and both groups are isomorphic to $\H_{n-i}(T)$. Thus $\cH(V)\cong W$, and $\Tot^{\Pi}U$ is necessarily acyclic, since $\Tot^{\Pi}V\to \Tot^{\Pi}W$ is a quasi-isomorphism.
\end{proof}

\begin{proposition}
Every weak equivalence class in $cs\hat{\C}_{\L}$ has a minimal model, unique up to non-unique isomorphism.
\end{proposition}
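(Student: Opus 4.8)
The plan is to prove existence and uniqueness separately, with the cotangent complex as the main tool. For existence I would prune a quasi--smooth representative of the class, killing off the acyclic part of its cotangent complex; for uniqueness I would show that a weak equivalence between minimal objects is forced to be an isomorphism, in the spirit of the rigidity of minimal Sullivan models.

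\emph{Existence.} Choose a fibrant (i.e.\ quasi--smooth) representative $R \in cs\hat{\C}_{\L}$ of the given weak equivalence class. Set $V := N\cot R \in DGdg\widehat{\FD\Vect}_k$; it is quasi--smooth in the sense of Definition \ref{qsvect}, so by Lemma \ref{decompcot} there is a decomposition $V \cong U \oplus \Tot^{\Pi*}\bigl(\H_*(\Tot^{\Pi}V)\bigr)$ of cochain chain complexes with $\Tot^{\Pi}U$ acyclic. The aim is to realise the projection onto the second summand by a weak equivalence relating $R$ to an object $R^{\min}$ with $N\cot R^{\min}=\Tot^{\Pi*}(\H_*(\Tot^{\Pi}V))$, which is then minimal by construction. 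I would do this by transfinite induction: choose a complete exhaustive filtration of the acyclic bicomplex $U$ by sub-bicomplexes whose successive quotients are ``elementary'', i.e.\ arise from acyclic small extensions in $s\C_{\L}$, and at each stage perform the corresponding elementary reduction of $R$. Dually each reduction is (a retract of) a pushout of a morphism of type $J_1$, hence a trivial cofibration and in particular a geometric weak equivalence; the lemma preceding Theorem \ref{scspmodel} on lifting quasi--smooth maps along pushouts of morphisms in $J$, together with the standard smoothness criterion, shows that the intermediate objects, and the limit $R^{\min}$, remain quasi--smooth, with cotangent complex the remaining summand $\Tot^{\Pi*}(\H_*(\Tot^{\Pi}V))$.

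\emph{Uniqueness.} Let $R_1,R_2$ be minimal models of the same class. Each is quasi--smooth, hence fibrant, and every object of $sc\Sp$ is cofibrant (the initial object is $\Spf k$, and each $O(X)^n_i$ is local with residue field $k$, hence surjects onto $k$), so $\Spf R_1$ and $\Spf R_2$ are bifibrant and weakly equivalent, hence homotopy equivalent. It therefore suffices to prove that every self--weak--equivalence of a minimal object is an automorphism: then a homotopy--inverse pair $f,g$ has $gf$ and $fg$ automorphisms, so $f$ is an isomorphism. Given a self--weak--equivalence $\phi$ of $\Spf R$ with $N\cot R=\Tot^{\Pi*}(W)$, the map $N\cot(\phi^{\sharp})\colon \Tot^{\Pi*}(W)\to\Tot^{\Pi*}(W)$, filtered by cochain degree, acts on the degree--$0$ associated graded $W$ (with zero differential) by the isomorphism supplied by Corollary \ref{weakchar} and Proposition \ref{totcoho}. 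An inductive argument along this filtration (equivalently along the skeletal filtration of $R$), using minimality to annihilate the obstructions to lifting the isomorphism to successive subquotients, upgrades this to an isomorphism $N\cot(\phi^{\sharp})$, whence $\phi^{\sharp}$ is an isomorphism by Nakayama applied to the $\m$--adic and $\mu$--adic filtrations. The resulting isomorphism $R_1\cong R_2$ is non--canonical, depending on the choice of homotopy equivalence.

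\emph{Expected difficulty.} The main obstacle is the transfinite construction in the existence part: organising the acyclic bicomplex $U$ into a well--ordered chain of elementary acyclic extensions compatible with the cochain chain bidegree, and verifying that quasi--smoothness survives both at each stage and in the passage to the limit. The inductive lifting of isomorphisms in the uniqueness part is the other point requiring genuine care; minimality is designed precisely so that the relevant obstruction groups vanish, but checking this (most conveniently via Lemma \ref{dcoch} and Lemma \ref{decompcot}) is where the work lies.
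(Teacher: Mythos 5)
Your uniqueness argument is in substance the paper's: both use that the two minimal models are bifibrant (every object of $sc\Sp$ is cofibrant, quasi-smooth objects are fibrant), so a direct weak equivalence $f$ between them exists; that for a minimal object the cohomology of the total cotangent complex recovers the generating graded space, so $\cot f$ is an isomorphism; and that an isomorphism on cotangent spaces forces an isomorphism of associated graded rings and hence an isomorphism. Your detour through self-weak-equivalences is harmless but unnecessary.

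The existence half, however, has a genuine gap. After invoking Lemma \ref{decompcot} you propose to kill the acyclic summand $U_{\bt}^{\bt}$ by a transfinite chain of ``elementary reductions'' of $R$, each dual to an acyclic small extension. But a reduction of $R$ is a quotient by an ideal of $R$, not of $\cot R$: to perform even one step you must choose actual elements of $R$ lifting the relevant cotangent classes, compatibly with the cosimplicial and simplicial structure maps and differentials, and arrange that the kernel of the quotient (which contains $\m(R)$ times your lifts, not just the lifts themselves) is acyclic in the sense of Lemma \ref{weakext}. Nothing in your outline produces these lifts, and the results you cite do not supply them: the lemma preceding Theorem \ref{scspmodel} extends a quasi-smooth map over the base of a pushout of $J$ (a descent statement, not a statement that quotients preserve quasi-smoothness), and morphisms of type $J_1$ live over Artinian, not pro-Artinian, objects. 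The paper's device is exactly the missing idea: lift $U$ (a bicomplex of $k$-vector spaces) to an acyclic bicomplex $\tilde{U}$ of free pro-Artinian $\L$-modules --- a step that also matters when $\L\neq k$ --- note that $\L[[N^{-1}\tilde{U}]]$ is trivially cofibrant in $cs\hat{\C}_{\L}$, and use the resulting lifting property against the surjection $R\onto k\oplus(\cot R)\eps$ to obtain a single map $f\co\L[[N^{-1}\tilde{U}]]\to R$ realising $U\subset\cot R$; then $S:=R/\langle f(N^{-1}\tilde{U})\rangle$ is levelwise smooth with cotangent space $N^{-1}\Tot^{\Pi*}(\H_*(\Tot^{\Pi}N\cot R))$, hence quasi-smooth and minimal, and $\Spf S\to\Spf R$ is a weak equivalence because it induces an isomorphism on $\H^*$ (Proposition \ref{totcoho}, Corollary \ref{weakchar}). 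Note in particular that quasi-smoothness of the end product comes from this levelwise-smoothness-plus-cotangent-complex argument, not from quasi-smoothness being preserved along your inductive steps; your induction could probably be pushed through, but only by reconstructing this lifting argument piecewise (in the style of Lemma \ref{dunit}), which is precisely what is left unaddressed.
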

\begin{proof}
Choose a quasi-smooth representative $R$ in the weak equivalence class. Working inductively on the cochain degree, we may choose a decomposition 
$$
N\cot R \cong U_{\bt}^{\bt} \oplus \Tot^{\Pi*}(\H_*(\Tot^{\Pi}N\cot R))
$$
of cochain chain complexes over $k$, as in Lemma \ref{decompcot}. Observe that 
 $U_{\bt}^{\bt}$ is quasi-smooth and that $\H_*\Tot^{\Pi}U=0$. 

Since these conditions are equivalent to saying that the rows and columns of $U_{\bt}^{\bt}$ are all acyclic, working inductively we can lift  $U_{\bt}^{\bt}$ to an acyclic cochain chain complex  $\tilde{U}_{\bt}^{\bt}$ of free pro-Artinian $\L$-modules. As  $\L[[N^{-1}\tilde{U}]]$ is then trivially cofibrant in $cs\hat{\C}_{\L}$, the map $\L[[N^{-1}\tilde{U}]] \to k \oplus (\cot R)\eps$ lifts to  a map
$
\L[[N^{-1}\tilde{U}]]\xra{f} R;
$
define $S:=R/\langle f(N^{-1}\tilde{U})\rangle$. $S$ is levelwise smooth, with cotangent space $N^{-1} \Tot^{\Pi*}(\H_*(\Tot^{\Pi}N\cot R))$, so it must be quasi-smooth and minimal. This proves existence.

For uniqueness, observe that if $T$ is another minimal model in the same equivalence class, there must exist a weak equivalence
$$
f:S \to T,
$$
$S$ being cofibrant and $T$ fibrant. By the minimality criterion, $\cot f: \cot S \to \cot T$
must then be an isomorphism. Thus $f^n_i:S^n_i \to T^n_i$ must be an isomorphism for all $i,n$, as the isomorphism on cotangent spaces induces an isomorphism of the associated graded rings.  
\end{proof}

\subsection{Characterising trivial small extensions}

We end this section with a result which will help to give a more concrete description of geometric trivial cofibrations in $sc\Sp$.

\begin{definition}
Given a bounded complex $V  \in dg_{\Z}\FD\Vect_k$ (notation as in Definition \ref{dgstuff}) and $F\to G$ a quasi-smooth morphism in $sc\Sp$, set
$$
\H^n(F/G\ten V) :=\bigoplus_{i-j=n} \H^i(F/G) \ten \H_j(V). 
$$

Given a pro-object $V=\{V_{\alpha}\}  \in dg_{\Z}\widehat{\FD\Vect}_k$, for $V_{\alpha}$ finite-dimensional, set
$\H^n(F/G\hat{\ten} V):= \Lim \H^n(F/G\ten V_{\alpha})$. Note that we then have an isomorphism
$$
\H^n(F/G\hat{\ten} V)\cong \prod_{i \in \Z}\Hom(\H_i(V)^{\vee}, \H^{n+i}(F/G)).
$$
\end{definition}

\begin{lemma}\label{tgtarb}
Given $V  \in cs\widehat{\FD\Vect_k}$,  and $X \to Z$ quasi-smooth in $sc\Sp$, there is a canonical isomorphism 
$$
\pi_0\underline{\Hom}(\Spf (k \oplus V\eps), X)_Z \cong \H^0(X/Z\hat{\ten} \Tot^{\Pi} NV),
$$
for $\Tot^{\Pi}$ as in Definition \ref{totprod}.
\end{lemma}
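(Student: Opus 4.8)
The plan is to compute $\underline{\Hom}(\Spf(k\oplus V\eps),X)_Z$ explicitly as a totalisation, identify its normalised chain complex with the total complex defining the right-hand side, and then read off $\H^0$ using Proposition \ref{totcoho} and the Künneth formula.

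First I would reduce to the case $Z=\Spf k$: the structure map $\Spf(k\oplus V\eps)\to Z$ factors through $\Spf k$, so base change along the resulting point identifies $\pi_0\underline{\Hom}(\Spf(k\oplus V\eps),X)_Z$ with $\pi_0\underline{\Hom}(\Spf(k\oplus V\eps),F)_{\Spf k}$ for $F:=X\by_Z\Spf k$ (again quasi-smooth), while $\H^*(X/Z)\cong\H^*(F/\Spf k)$ by the lemma preceding Proposition \ref{longexact}. So I may assume $Z=\Spf k$; then $F(k)$ is contractible, and the subscript ``$\Spf k$'' merely records passage to the fibre over the basepoint. Regarding $k\oplus V\eps\in cs\hat{\C}_{\L}$ as the simplicial object $[q]\mapsto k\oplus V(\Delta^q)\eps$ of $s\hat{\C}_{\L}$, and combining its coend presentation $\Spf(k\oplus V\eps)\cong\int^{[q]}\Spf(k\oplus V(\Delta^q)\eps)\ten\Delta^q$ with Corollary \ref{homrep}, I get a natural isomorphism
$$
\underline{\Hom}(\Spf(k\oplus V\eps),F)_{\Spf k}\;\cong\;\Tot_{\Delta}\bigl([q]\mapsto \tan F(V(\Delta^q))\bigr),
$$
the totalisation of the cosimplicial space obtained by applying the tangent functor of Definition \ref{tgtdef} (extended to pro-finite-dimensional inputs) along the cosimplicial direction of $V$. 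Over a field every cosimplicial vector space is Reedy fibrant, and $\tan F$ is left exact and carries surjections to fibrations (since $F\to\Spf k$ is quasi-smooth), so this cosimplicial simplicial abelian group is Reedy fibrant and its $\Tot_\Delta$ computes $\holim_{\Delta}$.

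The crux is to identify the normalised chain complex of this totalisation. Because $\cot(k\oplus W\eps/k)=W$, the argument of Proposition \ref{totcoho} — the two-fold use of Dold--Kan from Definition \ref{ntanfdef}, now carried out with $W$-coefficients — gives a natural quasi-isomorphism $N\tan F(W)\simeq\Tot^{\Pi}\bigl(Nt(F/\Spf k)\ten NW\bigr)$ for simplicial finite-dimensional $W$. Feeding this into the totalisation above, and using the generalised Eilenberg--Zilber theorem together with the fact that the simplicial and cosimplicial normalisations act on independent directions, I would obtain
$$
N\bigl(\underline{\Hom}(\Spf(k\oplus V\eps),F)_{\Spf k}\bigr)\;\simeq\;\Tot^{\Pi}\bigl(Nt(F/\Spf k)\ten NV\bigr),
$$
the total complex (with products taken in the cosimplicial directions) of the tensor product of the binormalised tangent complex with the binormalisation $NV=N^sN_cV$. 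This step is essentially Proposition \ref{totcoho} ``with coefficients in $V$'', and is where I expect the main work to lie: it demands careful control of both the chain (simplicial) and cochain (cosimplicial) differentials, of the signs in the iterated Dold--Kan equivalences, and of the passage from $\Tot_\Delta$ to $\holim_\Delta$.

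Finally I would take $\H_0$: by the Künneth formula over $k$ and Proposition \ref{totcoho} (which identifies $\H^a(\Tot Nt(F/\Spf k))$ with $\H^a(F/\Spf k)$),
$$
\pi_0\underline{\Hom}(\Spf(k\oplus V\eps),F)_{\Spf k}\;\cong\;\bigoplus_a \H^a(F/\Spf k)\ten\H_a(\Tot^{\Pi}NV),
$$
which is $\H^0(F/\Spf k\hat{\ten}\Tot^{\Pi}NV)$ by the definition of $\hat{\ten}$ preceding the lemma (the ambient $\varprojlim$ being trivial for $V$ finite dimensional). For a general pro-object $V=\{V_\alpha\}$ I would run this for each $V_\alpha$: then $\Spf(k\oplus V\eps)=\varinjlim_\alpha\Spf(k\oplus V_\alpha\eps)$, so the mapping spaces form a tower of Kan fibrations (each $\Spf k\to\Spf(k\oplus V_\alpha\eps)$ being a cofibration and $F$ fibrant), and taking $\{V_\alpha\}$ strict makes the relevant $\varprojlim^1$-terms vanish by Mittag--Leffler, so that normalising and then applying $\H_0$ commutes with $\varprojlim_\alpha$ — matching the limit in the definition of $\hat{\ten}$. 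Replacing $\H^*(F/\Spf k)$ by $\H^*(X/Z)$ then gives the assertion.
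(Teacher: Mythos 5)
Your overall strategy is close to the paper's: the paper also studies $Y(A)=\underline{\Hom}(\Spf A,X)_Z$, decomposes $V$ along its cosimplicial direction (by coskeleta rather than your skeletal coend), and compares the resulting homotopy spectral sequence of the tower of fibrations $Y(\cosk_nV)$, with fibres $\Omega^nY(N^n_cV)$, against the spectral sequence of the double complex $NV$; your reduction to $Z=\Spf k$ and the final K\"unneth bookkeeping are fine. But the pivotal chain-level claim you rely on is false as stated, and that is where the gap lies. The asserted quasi-isomorphism $N\tan F(W)\simeq\Tot^{\Pi}\bigl(Nt(F/\Spf k)\ten NW\bigr)$ cannot hold: the left-hand side is the normalisation of a simplicial vector space, hence concentrated in non-negative degrees, while the right-hand side has $\H_{-m}\cong\bigoplus_a\H^{a+m}(F)\ten\H_a(W)$, which is nonzero whenever $F$ has nontrivial obstruction groups (take $W=k$, so the left side has no $\H_{-1}$ but the right side has $\H_{-1}\cong\H^1(F)$). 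The same objection applies to the totalised identification $N\bigl(\underline{\Hom}(\Spf(k\oplus V\eps),F)\bigr)\simeq\Tot^{\Pi}(Nt(F)\ten NV)$. The correct statement is an identification with the good truncation $\tau_{\ge 0}$ of that total complex, and while $\tau_{\ge0}$ does not change $\H_0$, proving this truncated identification --- equivalently, proving that $\pi_0$ of the cosimplicial totalisation of $[q]\mapsto\tan F(V^q)$ agrees with $\H_0$ of the full two-sided total complex --- is precisely the content of the lemma. That is the step you flag as ``where I expect the main work to lie,'' and it is exactly what the paper supplies: the tower of fibrations indexed by cosimplicial coskeleta, the convergence of its homotopy spectral sequence (which uses boundedness of $N_cV$), and the identification of its $E_1$-page and differentials with those of the double complex $NV$. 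Without that argument, writing down the K\"unneth formula for the (incorrectly identified) total complex does not prove the statement, even though the formula you land on is the right one.

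Two smaller points. First, in the reduction to pro-objects the paper writes $V=\Lim V_\alpha$ with each $V_\alpha$ of \emph{bounded binormalisation}; levelwise finite-dimensionality alone does not bound the cosimplicial direction, and boundedness is what makes the spectral sequence (or your $\Tot$) converge, so your reduction should be organised around that. Second, your appeal to Mittag--Leffler for the $\lim^1$-term is not justified merely by strictness of $\{V_\alpha\}$: surjectivity of $V_\alpha\to V_\beta$ does not by itself give surjectivity on the relevant $\pi_1$'s; one should instead use that quasi-smoothness of $X\to Z$ makes the transition maps of the tower of mapping spaces fibrations of simplicial vector spaces, or argue as the paper does directly with $\pi_0Y(V)=\Lim\pi_0Y(V_\alpha)$.
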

\begin{proof}
First assume that $V  \in cs\FD\Vect_k$, with bounded binormalisation $NV=N^sN_cV$.

Given $W \in s\FD\Vect_k$ and $K \in \bS$, define $(K,W)\in cs\FD\Vect_k$ by $(K,W)^n:= W^{K_n}$. We may now express $V$ in terms of cosimplicial coskeleta by
$$
V= \Lim \cosk_nV,
$$
with $\cosk_0V= V^0 \in s\FD\Vect_k$, and $\cosk_nV$ given by the pullback
$$
\xymatrix{
     \cosk_nV    \ar[r] \ar[d] &   \cosk_{n-1}V      \ar[d]\\
   (\Delta^n,V^n)      \ar[r]  &  ({\Delta^n},M^nV)\by_{({\pd\Delta^n},M^nV)}({\pd\Delta^n},V^n).    }
$$
Since $N^n_cV=\ker(V^n \to M^nV)$, the kernel of $\cosk_nV \to \cosk_{n-1}V$ is thus $(S^n,N^n_cV):=  \ker((\Delta^n, N^n_cV) \to ({\pd\Delta^n},N^n_cV))$. 

If we write $Y(A):= \underline{\Hom}(\Spf A, X)_Z \in \bS$ for $A \in cs\hat{\C}_k$, then $Y(\cosk_nV)$ forms a tower of fibrations, with fibres $\Omega^nY(N^n_cV):= \ker (Y(N^n_cV)^{\Delta^n}\to Y(N^n_cV)^{\pd\Delta^n})$. This gives us a spectral sequence
$$
E_1^{n,m}=\pi_{m-n}\Omega^nY(N^n_cV) \abuts \pi_{m-n}Y(V),
$$
which converges since $N_c^nV=0$ for $n \gg 0$.

There are canonical isomorphisms $\pi_{m-n}\Omega^nY(N^n_cV)\cong \pi_{m}Y(N^n_cV) \cong \H^{-m}(X/Z \ten N^n_cV)$. Calculation of  the differentials  shows that this spectral sequence is isomorphic to the spectral sequence
$$
E_1^{n,m}= \H^{-m}(X/Z \ten N^n_cV) \abuts \H^{n-m}(X/Z \ten \Tot NV),
$$
associated to the double complex $NV$. 

Thus
$$
\pi_0Y(V)\cong \H^0(X/Z\ten \Tot NV).
$$
For the general case, write $V=\Lim V_{\alpha}$, for  $V_{\alpha}  \in cs\FD\Vect_k$ with bounded binormalisation. Then
$$
\pi_0Y(V)=\Lim \pi_0Y(V_{\alpha})\cong \Lim\H^0(X/Z\ten \Tot NV_{\alpha})= \H^0(X/Z\hat{\ten} \Tot^{\Pi} NV),
$$
as required.
\end{proof}

\begin{definition}
Define a small extension  in $cs\hat{\C}_{\L}$ to be a surjection $A \to B$  with kernel $I$, such that $\m(A)\cdot I=0$. 
\end{definition}

\begin{lemma}\label{weakext}
A small extension $f:A \to B$ in $cs\hat{\C}_{\L}$, with kernel $I$, is a weak equivalence if and only if $\H_*(\Tot^{\Pi} NI)=0$.
\end{lemma}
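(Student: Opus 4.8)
The plan is to transport the statement to $sc\Sp$ via the equivalence $\Spf\colon(cs\hat{\C}_{\L})^{\op}\to sc\Sp$. Since $f$ is surjective, $\iota:=\Spf f\colon\Spf B\to\Spf A$ is a geometric cofibration, so $f$ is a weak equivalence precisely when $\iota$ is a geometric trivial cofibration, equivalently when $\iota$ has the left lifting property against every quasi-smooth map. I will prove this holds if and only if $\H_*(\Tot^\Pi NI)=0$. The key point throughout is that $\m(A)\cdot I=0$ makes $I$ a cosimplicial simplicial $k$-module, so that all the coefficient computations reduce to Künneth over $k$.

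For the ``if'' direction, assume $\H_*(\Tot^\Pi NI)=0$, and let $p\colon E\to Y$ be quasi-smooth with a commutative square having edges $\Spf B\to E$ and $\Spf A\to Y$. Pulling $E$ back along $\Spf A\to Y$ (which preserves quasi-smoothness, by base-change stability of quasi-smooth maps) reduces us to $Y=\Spf A$: I must extend a section of a quasi-smooth $p\colon E\to\Spf A$ from $\Spf B$ to $\Spf A$ across the square-zero closed immersion $\iota$. Adapting \cite{stacks} Proposition \ref{stacks-deform1} to pro-Artinian rings, exactly as in the proof of the (unlabelled) lemma on deforming quasi-smooth maps that precedes Theorem \ref{scspmodel}, the obstruction to such an extension lies in $\H^2\bigl(E/\Spf A\,\hat{\ten}\,\Tot^\Pi N(I\hat{\ten}R)\bigr)$, where $R=O(E\by_{\Spf A}\Spf k)$, and the indeterminacy among extensions in the corresponding $\H^1$. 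By the Künneth formula $\H_*\Tot^\Pi N(I\hat{\ten}R)\cong\H_*(\Tot^\Pi NI)\hat{\ten}\H_*(\Tot^\Pi NR)=0$, and since $\H^n(F/G\hat{\ten}V)$ depends only on $\H_*(V)$, all these groups vanish. Hence the lift exists, $\iota$ has the left lifting property against quasi-smooth maps, and $f$ is a weak equivalence.

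For the ``only if'' direction, assume $\iota$ is a geometric trivial cofibration. Cobase-changing $\iota$ along itself, the two coprojections $\Spf A\rightrightarrows\Spf A\cup_{\Spf B}\Spf A$ are geometric trivial cofibrations; since $\m(A)\cdot I=0$ one checks that $A\by_B A\cong A\oplus I\eps$ as a $k$-linear square-zero extension, so $\Spf A\cup_{\Spf B}\Spf A\cong\Spf(A\oplus I\eps)$, and two-out-of-three shows the canonical map $j\colon\Spf(A\oplus I\eps)\to\Spf A$ is a geometric weak equivalence. Now $A\oplus I\eps\cong A\by_k(k\oplus I\eps)$, so $j$ is a base change of the quasi-smooth map $\Spf(k\oplus I\eps)\to\Spf k$ and is itself quasi-smooth; moreover $\cot\bigl((A\oplus I\eps)/A\bigr)\cong I$. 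By Proposition \ref{totcoho} (cf.\ Lemma \ref{cohochar}) the relative cohomology $\H^n(\Spf(A\oplus I\eps)/\Spf A)$ is therefore dual to $\H_{-n}(\Tot^\Pi NI)$. But a quasi-smooth weak equivalence has vanishing relative cohomology in every degree: Corollary \ref{cohosmoothchar} gives this for $n>0$, and for $n\le 0$ it follows from the long exact sequences of Theorem \ref{robs} together with Corollary \ref{weak}. Hence $\H_*(\Tot^\Pi NI)=0$.

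The main obstacle is the deformation-theoretic input of the ``if'' direction: identifying the obstruction to extending a section of a quasi-smooth map over a square-zero extension of pro-Artinian cosimplicial simplicial algebras with a cohomology class with coefficients in $\Tot^\Pi N(I\hat{\ten}R)$. This is, however, the same pro-Artinian adaptation of \cite{stacks} already invoked for deforming quasi-smooth maps, so granting it the rest is formal. Minor points still requiring care are the quasi-smoothness of the trivial square-zero extension $\Spf(k\oplus I\eps)\to\Spf k$ (which should follow from the definitions underlying Lemma \ref{tgtarb}) and the precise degree conventions in $\cot((A\oplus I\eps)/A)\cong I$ feeding into Proposition \ref{totcoho}.
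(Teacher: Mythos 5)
Both halves of your argument have a genuine gap, and in each case the missing ingredient is precisely what the paper's own proof supplies. For the ``if'' direction, the obstruction theory you invoke is for a different lifting problem: the unlabelled lemma preceding Theorem \ref{scspmodel} deforms the \emph{total space} of a quasi-smooth map along a pushout of a morphism of $J$, which is why its obstruction lives in $\H^2(X/U\hat{\ten}\Tot^{\Pi}N(M\hat{\ten}R))$. What you need is an obstruction theory for extending a \emph{section} of a quasi-smooth $E \to \Spf A$ across an arbitrary small extension $A \to B$ in $cs\hat{\C}_{\L}$ (not assumed to come from $J$); the relevant group is $\H^1(E/\Spf A\,\hat{\ten}\,\Tot^{\Pi}NI)$ --- one degree lower and with no factor of $R$ --- and no statement proved so far in the paper gives it. It is exactly to manufacture this that the paper forms the mapping cone $C(A,I)$, obtains a fibration sequence of $\underline{\Hom}$-spaces over an arbitrary quasi-smooth $X\to Z$, and identifies the base via Lemma \ref{tgtarb} as $\H^1(X/Z\hat{\ten}\Tot^{\Pi}NI)$. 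Since your hypothesis kills all such groups, the wrong degree and coefficients do not endanger the conclusion, but the step you call ``granting it the rest is formal'' is the whole content of this direction and is not covered by the result you cite.

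For the ``only if'' direction, the point you flag as minor is in fact false in general: $\Spf(k\oplus I\eps)\to \Spf k$ is quasi-smooth only when $I$ itself satisfies the quasi-smoothness conditions on objects of $cs\widehat{\FD\Vect}$ (cf.\ Definition \ref{qsvect} and the analysis in Lemma \ref{cohochar}), and the kernel of an arbitrary small extension need not --- e.g.\ $k\oplus V\eps \to k$ is a small extension for any $V$, including ones with non-vanishing cosimplicial cohomology of $N_cV_i$. Hence your map $j\colon \Spf(A\oplus I\eps)\to\Spf A$ need not be quasi-smooth, so Proposition \ref{totcoho}/Lemma \ref{cohochar} cannot be applied to it; after a fibrant replacement the cotangent space is no longer visibly $I$, so identifying $\H^*(\Spf(A\oplus I\eps)/\Spf A)$ with the dual of $\H_*(\Tot^{\Pi}NI)$ again requires an argument such as Lemma \ref{tgtarb}. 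The paper sidesteps this entirely: necessity is obtained by observing that the condition $\H_*(\Tot^{\Pi}N(\ker))=0$ holds for the morphisms of $J$ (Definition \ref{jdef}) and that every trivial cofibration is a relative $J$-cell. So your outline is repairable, but only by importing the cone-plus-Lemma-\ref{tgtarb} computation that constitutes the paper's proof.
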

\begin{proof}
Taking the cone $C$ of  $I \to A$ as in Theorem \ref{robs} and a quasi-smooth morphism $X \to Z$, we get a fibration sequence
$$
\underline{\Hom}(\Spf A, X)_Z \to \underline{\Hom}(\Spf C, X)_Z\to \underline{\Hom}(\Spf (k \oplus I[-1]\eps), X)_Z,
$$
with $\underline{\Hom}(\Spf C, X)_Z \to\underline{\Hom}(\Spf B, X)_Z$ a weak equivalence.

Now, $\Hom(\Spf A, X)_Z \to \Hom(\Spf A, X)_Z$ is surjective if and only if 
the fibration $\underline{\Hom}(\Spf A, X)_Z \to \underline{\Hom}(\Spf B, X)_Z$  is surjective on $\pi_0$. The long exact sequence associated to a fibration implies that this automatically occurs whenever
$$
\pi_0\underline{\Hom}(\Spf (k \oplus I[-1]\eps), X)_Z=0.
$$
By Lemma \ref{tgtarb}, this is isomorphic to $\H^1(X/Z \hat{\ten}\Tot^{\Pi} NI)=0$, so the condition is sufficient. 

For necessity, observe that the condition is satisfied by morphisms in $J$ (as in Definition \ref{jdef}), and  recall that every weak equivalence is a relative  $J$-cell.
\end{proof}

\section{Other formulations of derived deformation theory}\label{back}

\subsection{Manetti's deformation functors}

The results in this section all come from \cite{Man2}.

\subsubsection{DGLAs}

\begin{definition}\label{dgzsp}
Define $dg_{\Z}\C_{\L}$ to be the category of Artinian local differential $\Z$-graded  graded-commutative  $\L$-algebras with residue field $k$.  Let  $dg_{\Z}\hat{\C}_{\L}$ be the category of pro-objects of $dg_{\Z}\C_{\L}$. Denote the opposite category $(dg_{\Z}\hat{\C}_{\L})^{\opp}$ by $DG_{\Z}\Sp$. Given $R \in dg_{\Z}\hat{\C}_{\L}$, let $\Spf R \in DG_{\Z}\Sp $ denote the corresponding object in the opposite category. 
\end{definition}

\begin{remark}
The category $dg_{\Z}\C_{k}$ is equivalent to the category $\mathbf{C}$ of \cite{Man2}, with $A \in dg_{\Z}\C_{k}$ corresponding to $C \in \mathbf{C}$ given by $C_n:= \m(A)_{-n}$.
\end{remark}

\begin{definition}
Define a surjective map $f: A \to B$ in $dg_{\Z}\C_{\L}$ to be a small extension if it is surjective   with kernel $V$, such that $\m(A)\cdot V=0$.
\end{definition}

For the rest of this section, assume that $\L=k$, a  field   of characteristic $0$.

\begin{definition} As in \cite{Man2} Definition 2.14, a DGLA over $k$  is a  graded vector space $L=\bigoplus_{i \in \Z} L^i$ over $k$, equipped with operators $[,]:L \by L \ra L$ bilinear and $d:L \ra L$ linear,  satisfying:

\begin{enumerate}
\item $[L^i,L^j] \subset L^{i+j}.$

\item $[a,b]+(-1)^{\bar{a}\bar{b}}[b,a]=0$.

\item $(-1)^{\bar{c}\bar{a}}[a,[b,c]]+ (-1)^{\bar{a}\bar{b}}[b,[c,a]]+ (-1)^{\bar{b}\bar{c}}[c,[a,b]]=0$.

\item $d(L^i) \subset L^{i+1}$.

\item $d \circ d =0.$

\item $d[a,b] = [da,b] +(-1)^{\bar{a}}[a,db]$
\end{enumerate}

Here $\bar{a}$ denotes the degree of $a$, mod $ 2$, for $a$ homogeneous.
\end{definition}

\begin{definition}\label{dglacat}
Define $DG_{\Z}\LA$ to be the category of differential $\Z$-graded Lie algebras $L^{\bt}$ over $k$ 
\end{definition}

\begin{definition}\label{mcdef}
Given a DGLA $L$ over $k$, the Maurer-Cartan functor $\mc(L): dg_{\Z}\C_{k}\to \Set$ is defined by
$$
\mc(L)(A):= \{\omega \in \bigoplus_n L^{n+1}\ten \m(A)_n \,|\, d\omega + \half[\omega,\omega]=0 \in  \bigoplus_n L^{n+2}\ten \m(A)_n\}.
$$
 where $\m(A)$ is the maximal ideal of $A$.

There is a gauge action of the group $\exp( \bigoplus_n L^{n}\ten \m(A)_n)$ on $\mc(L)(A)$; denote the quotient set by $\ddef(L)(A)$.
\end{definition}

\begin{definition}\label{mandef}
A functor
$F:dg_{\Z}\C_{\L}\to \Set$ is  a ``predeformation functor" in the sense of \cite{Man2} Definition 2.1 if:
\begin{enumerate}
\item[(A0)] $F(k)=\bt$.

\item[(A1)] For all small extensions $A \onto B$, and morphisms $C \to B$ in $dg_{\Z}\C_{\L} $, the  map 
$$
F(A\by_BC)\to F(A)\by_{F(B)}F(C)
$$ is surjective. It is an isomorphism whenever $B \simeq k$.

\item[(A2')] For all acyclic small extensions $A \onto B$ in $dg_{\Z}\C_{\L}$, the map $F(A) \to F(B)$ is a surjection.
\end{enumerate}

It is a ``deformation functor'' if in addition

\begin{enumerate}
\item[(A2)] For all acyclic small extensions $A \onto B$ in $dg_{\Z}\C_{\L}$, the map $F(A) \to F(B)$ is an isomorphism.
\end{enumerate}
\end{definition}

\begin{lemma}
For any predeformation functor $F$, there exists a deformation functor $F^+$, and a natural transformation $F \to F^+$, universal among transformations from $F$ to deformation functors. 
\end{lemma}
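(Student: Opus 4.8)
The plan is to construct $F^{+}$ as the reflection of $F$ into the full subcategory of deformation functors inside the predeformation functors; that is, to exhibit a left adjoint to the inclusion. The starting point is that, for a predeformation functor, the only way (A2) can fail is injectivity: by (A2'), for an acyclic small extension $A\onto B$ the map $F(A)\to F(B)$ is already surjective. Write $h^{A}:=\Hom_{dg_{\Z}\C_{\L}}(A,-)$ for the representable functor, and for an acyclic small extension $e\colon A\onto B$ let $\nabla_{e}\colon h^{A}\amalg_{h^{B}}h^{A}\to h^{A}$ be the associated fold map. Since $\Hom(h^{A},F)=F(A)$ and $\Hom(h^{A}\amalg_{h^{B}}h^{A},F)=F(A)\by_{F(B)}F(A)$, orthogonality of $F$ to $\nabla_{e}$ says precisely that the diagonal $F(A)\to F(A)\by_{F(B)}F(A)$ is bijective, i.e. that $F(A)\to F(B)$ is injective. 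So a predeformation functor is a deformation functor if and only if it is orthogonal to every $\nabla_{e}$.

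The category $dg_{\Z}\C_{\L}$ is essentially small (as already noted for $\C_{\L}$, local Artinian algebras form a set up to isomorphism, and the graded case is the same), so $\mathrm{Fun}(dg_{\Z}\C_{\L},\Set)$ is locally presentable, and the class of objects orthogonal to the small set $\{\nabla_{e}\}$ is a reflective subcategory, since orthogonality classes for a set of morphisms in a locally presentable category are reflective. Let $L$ be the reflector. It may be realised by the small object argument as a transfinite composite $F=F_{0}\to F_{1}\to\ldots\to F_{\beta}\to\ldots$ of pushouts of coproducts of the $\nabla_{e}$ (the $\nabla_{e}$ being split epimorphisms, $S$-injectivity already coincides with $S$-orthogonality, so no fold-completion is needed); concretely each step coequalises, at each object $A$, the pairs in $F_{\beta}(A)$ that have become equal in $F_{\beta}(B)$ for some acyclic small extension $A\onto B$, and the process stabilises after a bounded number of steps for cardinality reasons. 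Set $F^{+}:=LF$, with unit $F\to F^{+}$.

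It remains to check that $F^{+}$ is again a predeformation functor: then, being orthogonal to every $\nabla_{e}$, it is a genuine deformation functor, and the universal property is automatic, since any $\eta\colon F\to D$ with $D$ a deformation functor respects every relation imposed in forming $LF$ (as $D$ is orthogonal to each $\nabla_{e}$) and hence factors uniquely through $F\to F^{+}$. Condition (A0) is clear: $\Hom_{dg_{\Z}\C_{\L}}(A,k)$ is a single point for each $A$, so $\nabla_{e}$ is an isomorphism at $k$ and $F^{+}(k)=F(k)=\bt$. Condition (A2') is easy: each map $F_{\beta}\to F_{\beta+1}$ is objectwise surjective (a pushout of a coproduct of fold maps), hence so is $F\to F^{+}$, and an objectwise-surjective map out of a functor satisfying (A2') again satisfies (A2').

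The main obstacle is condition (A1): the maps $F^{+}(A\by_{B}C)\to F^{+}(A)\by_{F^{+}(B)}F^{+}(C)$ must be surjective for all small extensions $A\onto B$ and all $C\to B$. Because $F^{+}=\varinjlim_{\beta}F_{\beta}$ is a filtered colimit, and filtered colimits in $\Set$ commute with the fibre products occurring here and preserve surjectivity, this reduces to showing that each successor step preserves (A1). But that step is an objectwise quotient by a generated equivalence relation, and such quotients do not in general commute with fibre products, so one cannot argue formally here: one must track the relations using the explicit structure of $dg_{\Z}\C_{\L}$, reducing (as in \cite{Man2}) to elementary acyclic small extensions (kernel the complex $k\xrightarrow{\id}k$) and checking compatibility with (A1) by hand — equivalently, organising the transfinite construction so that each step only identifies pairs along a single acyclic small extension at a time. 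This book-keeping is the one point requiring genuine care; the rest is formal.
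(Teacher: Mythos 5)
The paper itself does not prove this lemma: its ``proof'' is a citation of \cite{Man2} Theorem 2.8, where $F^+$ is built as an explicit objectwise quotient of $F$ and the heart of the argument is a hands-on verification, using structural lemmas about acyclic small extensions in $dg_{\Z}\C_{\L}$, that this quotient still satisfies (A1). Your soft-categorical skeleton is fine as far as it goes: deformation functors are exactly the predeformation functors orthogonal to the fold maps $\nabla_e$, the orthogonality class of a small set of maps in the (locally presentable) functor category is reflective, the unit $F\to LF$ is objectwise surjective, and (A0), (A2') do pass to $LF$; moreover any map from $F$ to a deformation functor factors uniquely through $LF$.

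The gap is that the lemma asserts $F^+$ \emph{is} a deformation functor, and for that you must prove that $LF$ satisfies (A1) — which you explicitly do not do, acknowledging that objectwise quotients need not commute with the fibre products in (A1) and deferring the ``book-keeping'' to the explicit structure of $dg_{\Z}\C_{\L}$ ``as in \cite{Man2}''. That deferred step is not routine detail; it is precisely the mathematical content of Manetti's Theorem 2.8 (his proof factors acyclic small extensions into elementary ones and runs a lifting argument showing that the identifications imposed over $A\by_B C$ map onto those imposed over $A$ and $C$, so that surjectivity in (A1) survives the quotient). Reorganising the transfinite construction ``one extension at a time'' does not by itself resolve this: identifications made at one object create new pairs to be identified at others, and the compatibility of each such step with (A1) is exactly where the work lies. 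As written, you have only produced an object $LF$ initial among maps from $F$ to $\{\nabla_e\}$-orthogonal functors; if $LF$ failed (A1) it would not be a deformation functor and the statement would not follow. So the decisive step of the lemma is missing.
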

\begin{proof}
\cite{Man2} Theorem 2.8. 
\end{proof}

\begin{lemma}
For any DGLA $L$, $\mc(L)$ is a predeformation functor, $\ddef(L)$ is a deformation functor, and $\ddef(L)\cong \mc(L)^+$.
\end{lemma}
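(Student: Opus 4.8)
The plan is to take the three assertions in order: first verify directly that $\mc(L)$ is a predeformation functor, then invoke the lemma above to produce $\mc(L)^+$, then verify that $\ddef(L)$ is a deformation functor, and finally identify it with $\mc(L)^+$ by a universal-property argument.

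\emph{$\mc(L)$ is a predeformation functor.} Axiom (A0) holds because $\m(k)=0$, so $\mc(L)(k)=\{0\}$. For (A1), the key point is that $L\ten_k-$ is exact; since a surjection $A\onto B$ gives $\m(A\by_BC)=\m(A)\by_{\m(B)}\m(C)$, we get $L\ten\m(A\by_BC)\cong(L\ten\m(A))\by_{L\ten\m(B)}(L\ten\m(C))$ as DGLAs, and because the Maurer--Cartan condition is imposed elementwise the map $\mc(L)(A\by_BC)\to\mc(L)(A)\by_{\mc(L)(B)}\mc(L)(C)$ is in fact a bijection -- in particular an isomorphism when $B\simeq k$. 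For (A2'), let $I\to A\onto B$ be an acyclic small extension and $\beta\in\mc(L)(B)$; lift $\beta$ along $L\ten\m(A)\onto L\ten\m(B)$ to $\tilde\beta$. The defect $h:=d\tilde\beta+\half[\tilde\beta,\tilde\beta]$ lies in $L\ten I$, and the standard identity $dh=[h,\tilde\beta]$ together with $\m(A)\cdot I=0$ shows $dh=0$. Since $I$ is an acyclic complex of vector spaces, the K\"unneth formula gives that $L\ten_k I$ is acyclic, so $h=d\xi$ for some $\xi\in L\ten I$, and then $\tilde\beta-\xi\in\mc(L)(A)$ lifts $\beta$, the correction terms vanishing because $I\cdot I\subseteq\m(A)\cdot I=0$.

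\emph{$\ddef(L)$ is a deformation functor, and $\ddef(L)\cong\mc(L)^+$.} By the lemma above $\mc(L)^+$ exists and is a deformation functor. For $\ddef(L)$: (A0) is clear; for the surjectivity in (A1) one lifts representatives of classes over $A$ and $C$ agreeing over $B$, corrects one of them by a lift of the gauge transformation equating them over $B$ (using that $A\mapsto\exp(\bigoplus_nL^n\ten\m(A)_n)$ takes surjections to surjections), and glues using the bijectivity established above; the surjectivity in (A2) is inherited from (A2') of $\mc(L)$, while its injectivity follows because two Maurer--Cartan elements over $A$ that become gauge equivalent over $B$ can, after correction by a lift of that gauge, be assumed to agree mod $I$, whence their difference is a $d$-cocycle in $L\ten I$, hence a coboundary $d\zeta$, and $e^{\pm\zeta}$ (whose gauge action on $\m(A)$-valued elements reduces to a translation by $d\zeta$ since $\m(A)\cdot I=0$) carries one to the other; the isomorphism clause of (A1) when $B\simeq k$ is the corresponding statement for gauge stabilisers, which reduces, via a twisted-acyclicity argument, to the case $B=k$, where $\m(A\by_kC)=\m(A)\oplus\m(C)$ splits everything. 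Granting this, $\ddef(L)$ is a deformation functor, so the tautological $\mc(L)\to\ddef(L)$ factors uniquely as $\mc(L)\xra{u}\mc(L)^+\xra{w}\ddef(L)$. Conversely, the crucial fact is that gauge equivalence is invisible to any deformation functor $G$: given $\phi\colon\mc(L)\to G$ and gauge-equivalent $\omega_0,\omega_1\in\mc(L)(A)$, produce a homotopy $\Omega\in\mc(L)(A\ten_k\Omega^\bullet(\Delta^1))$ with endpoints $\omega_0,\omega_1$; both endpoint evaluations $A\ten_k\Omega^\bullet(\Delta^1)\to A$ are surjections with acyclic kernel, hence compositions of acyclic small extensions, so $G$ sends them to bijections and $\phi(\omega_0)=\phi(\omega_1)$. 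Applying this to $\mc(L)^+$ shows $u$ is constant on gauge orbits and so factors through the quotient $\mc(L)\onto\ddef(L)$, giving $v\colon\ddef(L)\to\mc(L)^+$; since $u$ is universal and $q\colon\mc(L)\onto\ddef(L)$ is a pointwise surjection, this forces $vw=\id$ and $wv=\id$, so $\ddef(L)\cong\mc(L)^+$.

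\emph{Main obstacle.} The delicate points are the isomorphism statements -- the $B\simeq k$ clause of (A1) and the vanishing of gauge stabilisers it requires -- and the construction of the homotopy $\Omega$ witnessing invisibility of gauge equivalence: there $A\ten_k\Omega^\bullet(\Delta^1)$ is not Artinian, so one must either replace it by a suitable finite-dimensional DG quotient to which $\Omega$ descends (possible since $\m(A)$ is nilpotent, bounding the powers of the coordinate that occur) or extend the functors to pro-Artinian DG algebras. The rest is the obstruction-vanishing computation behind (A2'). All of this is carried out in \cite{Man2}.
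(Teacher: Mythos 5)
Your proposal is correct in substance and takes the same route as the paper, whose entire proof is the citation of \cite{Man2} (Lemma 2.17, Theorem 2.19, Corollary 3.4): your verifications of (A0), (A1), (A2') for $\mc(L)$ and of the deformation-functor axioms for $\ddef(L)$, and the universal-property identification of $\ddef(L)$ with $\mc(L)^+$ via invisibility of gauge equivalence, are exactly Manetti's arguments, and the technical points you defer are indeed carried out there.

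One caveat: the two repairs you float in your ``main obstacle'' paragraph are both off, and the correct device is a third (closely related) one. No finite-dimensional quotient of $A\ten k[t,dt]$ in $dg_{\Z}\C_k$ can carry both endpoint evaluations: every morphism out of a local Artinian DG algebra with residue field $k$ is automatically local, so factoring $e_0$ through such a quotient forces the image of $t$ into the maximal ideal, hence to be nilpotent, while factoring $e_1$ through it would force that image to map to $1$ in $A$. Nor is $A\ten k[t,dt]$ pro-Artinian; the relevant algebra, $k\oplus\m(A)\ten k[t,dt]$ (which contains the coefficients of $\Omega=e^{ta}*\omega_0$, both evaluations onto $A$, and the constant section), is instead a filtered \emph{union} of finite-dimensional local DG subalgebras, since $\m(A)^N=0$ kills all long products. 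So the fix is ind-, not pro-: either extend the functors along such filtered colimits (this is what \cite{Man2} does), or choose a single finite-dimensional subalgebra large enough that the restricted kernels of $e_0,e_1$ remain acyclic. Note also that bijectivity of $G(e_0)$ and $G(e_1)$ alone does not yield $\phi(\omega_0)=\phi(\omega_1)$; you need the common constant section $c\colon A\to A\ten k[t,dt]$, which gives $G(e_0)=G(c)^{-1}=G(e_1)$. None of this affects your conclusion---these are precisely the details supplied by the cited reference, which is all the paper itself offers---but as literally stated your escape routes would not assemble into a proof.
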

\begin{proof}
\cite{Man2}  Lemma 2.17,  Theorem 2.19 and Corollary 3.4.
\end{proof}

\subsubsection{SHLAs}

\begin{definition}
A  graded coalgebra is a $\Z$-graded vector space $C$ equipped with a (graded-)cocommutative coassociative comultiplication $C\to C\ten C$. A dg coalgebra is a graded coalgebra equipped with a square-zero degree $1$ codifferential $d$, compatible with the comultiplication (making $d$ into a coderivation).
\end{definition}

\begin{definition}
Let $\Gamma^nV$ be the $S_n$-invariants (with respect to the usual graded convention) of the tensor power $V^{\ten n}$. Thus $\Gamma^nV \cong S^nV$, the $S_n$-covariants, since we are working in characteristic $0$. 

Given a graded vector space $V$, define $C(V)$ to be the cofree (ind-conilpotent) graded coalgebra $C(V):= \bigoplus_{n>0} \Gamma^nV$ given by the graded symmetric powers of $V$. A comultiplication is defined on $F(V):=\bigoplus_{n>0} V^{\ten n}$ 
by mapping 
$v_1\ten v_2\ldots \ten v_n \in V^{\ten n}$ to $\sum_{i=1}^{n-1} (v_1\ten \ldots \ten v_i) \ten (v_{i+1}\ten \ldots \ten v_n) \in  F(V)\ten F(V)$. The restriction of this comultiplication to $C(V)$ is cocommutative.
\end{definition}

\begin{remark}
A coalgebra $C$ is conilpotent if the iterated comultiplication $\Delta_n :C \to C^{\ten n}$ is $0$ for $n \gg 0$. A coalgebra is ind-conilpotent if it is the union of its conilpotent subcoalgebras. The functor $V \mapsto C(V)$ is right adjoint to the forgetful functor from graded  ind-conilpotent coalgebras to graded vector spaces.

In \cite{Kon}, $V \mapsto C(V)$ is referred to as the cofree coalgebra functor. This is misleading, since it is not right adjoint to the   forgetful functor from all  coalgebras to vector spaces. This right adjoint (the true cofree coalgebra functor) is very difficult to describe explicitly (see \cite{Sweedler}), but will not concern us here. 
\end{remark}

\begin{definition}\label{linfty}
An $L_{\infty}$ structure on a $\Z$-graded vector space $V$ is a codifferential $d$ on the graded coalgebra $C(V[1])$, making $C(V[1])$ into a dg coalgebra. The space $V$ together with its $L_{\infty}$ structure is called an $L_{\infty}$-algebra.

We will follow \cite{Kon} in saying that
an SHLA is a dg coalgebra whose underlying graded coalgebra is isomorphic to $C(V)$, for some $V$. Thus an $L_{\infty}$-algebra is a choice of co-ordinates on an SHLA.
\end{definition}

\begin{lemma}\label{dglainfty}
Given a DGLA $L$, there is a natural $L_{\infty}$ structure on $L$.
\end{lemma}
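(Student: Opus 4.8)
The plan is to construct the codifferential on $C(L[1])$ directly from the two operations $d$ and $[,]$, and to read off $Q^2=0$ and functoriality from the axioms. Recall (a standard consequence of the cofreeness of $C(V)$ noted in the Remark before Definition \ref{linfty}) that a coderivation $Q\colon C(V)\to C(V)$ over the identity is uniquely determined by its corestriction $\pi_V\circ Q\colon C(V)\to V$, and conversely any graded linear map $C(V)\to V$ extends to such a coderivation. Applying this with $V=L[1]$, I would let $Q$ be the coderivation whose corestriction is $q_1+q_2$, where $q_1\colon\Gamma^1(L[1])=L[1]\to L[1]$ is the transport of $d$ through the shift $s\colon L\to L[1]$, and $q_2\colon\Gamma^2(L[1])\to L[1]$ is the transport of the bracket through the d\'ecalage isomorphism identifying $\Gamma^2(L[1])$ with a degree shift of $\wedge^2 L$ (in characteristic $0$, $\Gamma^n=S^n$). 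All other arity components of the corestriction are taken to be zero, so $q_n=0$ for $n\ge 3$, and $Q$ has degree $1$.

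To see that $(C(L[1]),Q)$ is a dg coalgebra I would check $Q\circ Q=0$. Since $Q$ is an odd coderivation, $Q\circ Q=\tfrac12[Q,Q]$ is again a coderivation, hence vanishes as soon as its corestriction $\pi_{L[1]}\circ Q^2\colon C(L[1])\to L[1]$ does. This corestriction splits by arity, with contributions only on $\Gamma^1$, $\Gamma^2$ and $\Gamma^3$ (higher arities contribute nothing because $q_n=0$ for $n\ge 3$). The $\Gamma^1$ contribution is, up to sign, $d\circ d$, which vanishes by axiom (5); the $\Gamma^2$ contribution is, after undoing the shifts, the expression $d[a,b]-[da,b]-(-1)^{\bar{a}}[a,db]$, which vanishes by axiom (6); and the $\Gamma^3$ contribution is exactly the graded Jacobi identity, axiom (3). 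Thus $Q^2=0$, and by Definition \ref{linfty} the data $(L,Q)$ is an $L_{\infty}$-algebra.

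For naturality, given a morphism $f\colon L\to M$ in $DG_{\Z}\LA$ I would use functoriality of $V\mapsto C(V)$ on underlying graded vector spaces to obtain the coalgebra map $C(f[1])\colon C(L[1])\to C(M[1])$, and check it intertwines $Q_L$ and $Q_M$. Both $C(f[1])\circ Q_L$ and $Q_M\circ C(f[1])$ are $C(f[1])$-coderivations from $C(L[1])$ to $C(M[1])$, so by the same cofreeness argument it suffices to compare their corestrictions to $M[1]$; these agree on $\Gamma^1$ because $f$ commutes with the differentials and on $\Gamma^2$ because $f$ respects the brackets (and vanish in higher arity). Hence $C(f[1])$ is a morphism of dg coalgebras, and $L\mapsto(C(L[1]),Q)$ is a functor from $DG_{\Z}\LA$ to $L_{\infty}$-algebras lifting the identity on graded vector spaces.

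The only real work is sign bookkeeping: fixing the Koszul signs in the d\'ecalage isomorphism $\Gamma^n(L[1])\cong(\wedge^nL)[n]$ so that the three arity components of $\pi_{L[1]}\circ Q^2$ reproduce axioms (5), (6) and (3) precisely, rather than up to an unwanted sign. This is the classical shift that converts the graded-antisymmetric Jacobi identity of a Lie algebra into the square-zero condition for a coderivation, and it is the one point that requires care.
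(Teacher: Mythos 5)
Your proposal is correct and follows essentially the same route as the paper: the paper also defines the coderivation on $C(L[1])$ by prescribing its corestriction to cogenerators (the differential in arity one, the bracket in arity two, zero above), leaving the square-zero check implicit. Your extra verification that $Q^2=0$ reduces, arity by arity, to axioms (5), (6) and (3), and the naturality check via cofreeness, are just the standard details the paper omits.
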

\begin{proof}
On cogenerators $L[1]$, define  the coderivation on $C(L[1])$ to be the map $d_C:C(L[1])\to L[1]$ given by
$$
d_C( v_1\ten v_2\ldots \ten v_n) = \left\{\begin{matrix} dv_1 & n=1 \\ [v_1, v_2] & n=2 \\ 0 & n>2. \end{matrix} \right.
$$
\end{proof}

\begin{remark}\label{profd2}
Any SHLA $C:=C(W)$ can be written as a filtered direct limit $C(W)= \LLim_m\bigoplus_{0<n\le m} \Gamma^nW$ of subcoalgebras, and these subcoalgebras are conilpotent (this is what it means for $C$ to be ind-conilpotent). Now, every coalgebra is the union of its finite-dimensional subcoalgebras, so we can express $C$ as a filtered direct limit of finite-dimensional conilpotent coalgebras. Therefore the dual $C^{\vee}$ is a filtered inverse limit of finite-dimensional nilpotent commutative algebras without unit, so we may regard $k \oplus C^{\vee}$ as an object of $dg_{\Z}\hat{\C}_k$.

In fact, Lemma \ref{profd} implies that this construction gives a contravariant equivalence between $dg_{\Z}\hat{\C}_k$ and the category of (not necessarily cofree) conilpotent dg coalgebras. Explicitly, the quasi-inverse sends $A\in dg_{\Z} \hat{\C}_k$ to $\m(A)^{\vee}$, where $(\{V_{\alpha}\}_{\alpha\in I})^{\vee}:= \LLim_I (V_{\alpha})^{\vee}$ (the continuous dual).
\end{remark}

\begin{definition}
Given an $L_{\infty}$-algebra $V$, write $C:=C(V[1])$ with its dg coalgebra structure, and   define $\mc(V):dg_{\Z}\C_{k}\to \Set$ by
$$
\mc(V)(A):= \Hom_{dg_{\Z}\hat{\C}_k}(k\oplus C^{\vee}, A).
$$
\end{definition}
In \cite{Man2} \S 5, this definition is phrased in the opposite category (as $\Hom(\m(A)^{\vee}, C)$).

 By \cite{Man2} Proposition 4.5, $\mc(V)$ is a predeformation functor. Note that if $V$ is the $L_{\infty}$-structure associated to a DGLA $L$, then $\mc(V) \cong \mc(L)$. 

\begin{definition}
Given an $L_{\infty}$-algebra $V$, define $\ddef(V):dg_{\Z}\C_{k}\to \Set$ by $\ddef(V):= \mc(V)^+$.
\end{definition}

\subsection{Hinich's formal stacks}

We begin with some  properties of SHLAs from \cite{Kon}.
\begin{definition}
Given a dg coalgebra (for example an SHLA) $C$, define $\tan(C)$ to be the kernel of the comultiplication $\Delta: C \to C\ten C$. Note that this is a cochain complex, and that if $C$ is an $L_{\infty}$-structure on a graded vector space $V$, then $\tan(C)\cong V$. If the $L_{\infty}$-structure comes from a DGLA $L$, then $\tan(C)$ is just the cochain complex underlying $L$.
\end{definition}

\begin{definition}\label{tqis}
Say that a morphism $f:C \to D$ of SHLAs is a tangent quasi-isomorphism if the associated map $\tan(f): \tan(C) \to \tan(D)$ is a quasi-isomorphism of cochain complexes. Note that this is a stronger condition than $f$ being a quasi-isomorphism.
\end{definition}

\begin{definition}
Define $dg\C_{\L}$ to be the category of Artinian local differential $\N_0$-graded  graded-commutative  $\L$-algebras with residue field $k$.
\end{definition}

\begin{remark}
In \cite{hinstack}, the category $dg\C_{\L}$ is denoted by $dg\Art_{\L}^{\le 0}$, with $A \in dg\C_{\L}$ corresponding to $C \in dg\Art_{\L}^{\le 0}$ given by $C^n:= A_{-n}$.
\end{remark}

Now let $\L=k$, a field of characteristic $0$.

\begin{definition}
Let  $DG_{\Z}\mathrm{CU}_k$ be the category of ind-conilpotent cocommutative counital  $\Z$-graded DG coalgebras, denoted by $\mathrm{dgcu}(k)$ in   \cite{hinstack} 2.1.2.
\end{definition}
 Beware that  Hinich uses the somewhat misleading terminology ``unital'' to mean ``ind-conilpotent counital''. Quillen uses the  term ``connected'' for the same concept.

\begin{definition}\label{clqdef}
Define a functor $\C_q: DG_{\Z}\LA \to DG_{\Z}\mathrm{CU}_k$  by $L \mapsto k \oplus C(L[1])$, as in Lemma \ref{dglainfty}. This functor has a left adjoint $\cL_q$. 
 In \cite{hinstack} \S 2.2, these functors are denoted by $\C$ and $\cL$, respectively.
\end{definition}

\begin{lemma}\label{dglamodel}
The category $DG_{\Z}\LA$  has a cofibrantly generated closed model category structure, in which a map $f: L^{\bt}\to M^{\bt}$ is a fibration if it is surjective, and a weak equivalence if $\H^*(f):\H^*(L^{\bt})\to \H^*(M^{\bt})$ is a weak equivalence. 
\end{lemma}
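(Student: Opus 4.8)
The plan is to invoke the transfer-of-model-structure machinery along the free/forgetful adjunction between $DG_{\Z}\LA$ and the category $DG_{\Z}\Vect_k$ of $\Z$-graded cochain complexes over $k$. The category $DG_{\Z}\Vect_k$ carries its standard projective model structure, which is cofibrantly generated: the generating cofibrations $I$ are the maps $S^{n-1}\to D^n$ (inclusions of the sphere into the disc complex) together with $0\to S^n$, and the generating trivial cofibrations $J$ are the maps $0\to D^n$. The forgetful functor $U:DG_{\Z}\LA \to DG_{\Z}\Vect_k$ has a left adjoint, the free graded Lie algebra functor $\mathrm{Fr}$. We declare $f$ in $DG_{\Z}\LA$ to be a fibration (resp. weak equivalence) precisely when $U(f)$ is, which — since fibrations in $DG_{\Z}\Vect_k$ are the surjections and weak equivalences are the quasi-isomorphisms — is exactly the statement in the lemma.

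The key steps, in order, are as follows. First, recall Kan's transfer theorem (as in Hirschhorn or Goerss--Schade): given a cofibrantly generated model category $\mathcal{M}$ and an adjunction $\mathrm{Fr}\dashv U$ to a category $\mathcal{N}$ with all small limits and colimits, the transferred classes define a cofibrantly generated model structure on $\mathcal{N}$ provided (a) $\mathrm{Fr}(I)$ and $\mathrm{Fr}(J)$ permit the small object argument, and (b) every relative $\mathrm{Fr}(J)$-cell complex is a weak equivalence (equivalently, $U$ takes $\mathrm{Fr}(J)$-cell complexes to weak equivalences). Second, I would check that $DG_{\Z}\LA$ is complete and cocomplete: limits are computed underlying, and colimits exist because $DG_{\Z}\LA$ is the category of algebras over the Lie operad, which preserves filtered colimits and reflexive coequalizers; the smallness hypothesis (a) is then automatic since everything is built from $k$-vector spaces. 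Third — and this is the substantive point — I would verify the acyclicity condition (b): a pushout of $\mathrm{Fr}(0\to D^n)$ along a map $\mathrm{Fr}(D^n[\text{shifted}])\to L$ adjoins a free generator in degree $n$ together with its boundary, i.e. $L \to L\amalg \mathrm{Fr}(D^n)$; since we are in characteristic $0$, the Poincar\'e--Birkhoff--Witt filtration on the universal enveloping algebra (equivalently the explicit description of $L\amalg \mathrm{Fr}(W)$ via free Lie algebras on $L\oplus W$) shows the underlying complex of this pushout is a direct sum of copies of $L$ tensored with pieces of $D^n$, each of which is contractible, so the inclusion is a quasi-isomorphism; transfinite composition preserves this. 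Finally, I would note cofibrant generation is part of the transfer theorem's output, with generating (trivial) cofibrations $\mathrm{Fr}(I)$ and $\mathrm{Fr}(J)$.

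The main obstacle is step three, the acyclicity of $\mathrm{Fr}(J)$-cell complexes. Everything else is formal category-theoretic bookkeeping, but the acyclicity genuinely uses char $0$ (it is where the PBW theorem, or the absence of divided-power subtleties, enters) and one must handle the pushout along an arbitrary map $\mathrm{Fr}(D^n[-]) \to L$ rather than just along $0 \to L$. The cleanest route is to exhibit an explicit contracting homotopy on the underlying complex of $L \to L\amalg\mathrm{Fr}(D^n)$ built from the contracting homotopy on $D^n$, using the PBW-type decomposition of the coproduct of Lie algebras; alternatively one cites the analogous computation for $L_\infty$- or dg-Lie algebras in Hinich's paper \cite{hinstack} directly, since the lemma as stated is exactly his result and we are free to quote it.
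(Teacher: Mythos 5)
Your proposal is correct and is essentially the paper's own argument: the paper proves this lemma by a one-line appeal to Hirschhorn's transfer theorem (Theorem 11.3.2) applied to the forgetful functor from DGLAs to cochain complexes, i.e.\ exactly the free/forgetful transfer you describe, with the acyclicity verification (your step three, valid in characteristic $0$) left implicit in the citation.
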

\begin{proof}
Apply \cite{Hirschhorn} Theorem 11.3.2 to the forgetful functor from DGLAs to cochain complexes.
\end{proof}

\begin{lemma}\label{hinmodel}
There is a model structure on $DG_{\Z}\mathrm{CU}_k$ in which $f: C \to D$ is:
\begin{enumerate}
\item a cofibration if the maps $f^n:C^n \to D^n$  are all injective;
\item a weak equivalence if $\cL_q(f)$ is a quasi-isomorphism.
\end{enumerate}

The functor $\cL: DG_{\Z}\mathrm{CU}_k\to DG_{\Z}\LA$ is then a left Quillen equivalence.
\end{lemma}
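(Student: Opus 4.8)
The plan is to follow Hinich's construction in \cite{hinstack}, taking the three classes as in the statement — cofibrations the degreewise injections, weak equivalences the maps $f$ with $\cL_q(f)$ a quasi-isomorphism, fibrations the maps with the right lifting property (RLP) against all trivial cofibrations — and verifying the closed model axioms directly. The transfer recipe used for Lemma \ref{dglamodel} does not apply, since here the weak equivalences and cofibrations are detected by the \emph{left} adjoint $\cL_q$; worse, the cofree-coalgebra functor being a \emph{right} adjoint to the forgetful functor $U$ to $\Z$-graded complexes means that lifting problems in $DG_{\Z}\mathrm{CU}_k$ do not reduce along $U$ to lifting problems in complexes. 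First I would record that $DG_{\Z}\mathrm{CU}_k$ is complete and cocomplete — colimits are created by $U$, and completeness follows from local presentability, every object being the filtered union of its finite-dimensional subcoalgebras. Then MC2 and closure of weak equivalences under retracts are immediate because $\cL_q$ is a functor and quasi-isomorphisms have both properties; closure of cofibrations under retracts is the same assertion for injections; fibrations are a right-lifting class. So MC1--MC3 are routine.

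The one external input is the classical bar--cobar (Koszul--duality) statement that the counit $\varepsilon_L\co\cL_q\C_q(L)\to L$ is a quasi-isomorphism for every DGLA $L$: over a field of characteristic $0$, $\C_q(L)=k\oplus C(L[1])$ is the Chevalley--Eilenberg coalgebra of $L$, and $\cL_q$ applied to it recovers $L$ up to quasi-isomorphism (\cite{Kon}). Granting this, the Quillen-equivalence assertion is formal. Every object of $DG_{\Z}\mathrm{CU}_k$ is cofibrant (the initial object injects into it) and every object of $DG_{\Z}\LA$ is fibrant (every map to the zero DGLA is surjective). The functor $\cL_q$ preserves weak equivalences by definition, and preserves cofibrations: an injection $C\hookrightarrow D$ induces an inclusion of free graded Lie algebras on underlying objects, and, filtering the extra generators by the conilpotency filtration of $D$, exhibits $\cL_q(D)$ as a semifree extension of $\cL_q(C)$; hence $\cL_q\dashv\C_q$ is a Quillen pair. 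It is a Quillen equivalence because $\C_q$ reflects weak equivalences between fibrant objects — if $\C_q(g)$ is a weak equivalence then $\cL_q\C_q(g)$ is a quasi-isomorphism, so $g$ is too, by naturality of $\varepsilon$ and the fact that $\varepsilon$ is a quasi-isomorphism — and because the derived unit of any $C$ is $\eta_C\co C\to\C_q\cL_q(C)$, which is a weak equivalence since $\cL_q(\eta_C)$ is a one-sided inverse of the quasi-isomorphism $\varepsilon_{\cL_q C}$.

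The real work, and the step I expect to be the main obstacle, is the factorization axiom MC5 together with the half of MC4 not true by definition of ``fibration''. The plan is to construct factorizations by hand in the style of \cite{hinstack}, using the adjunction $\cL_q\dashv\C_q$ rather than the small object argument: to factor $f\co C\to D$ one interposes the canonical resolution $\C_q\cL_q$ together with elementary acyclic coalgebra extensions built from acyclic ``disk'' complexes, running an induction along the conilpotency filtration to control the resulting lifting property. The delicate points are (i) showing that the ``trivial cofibration'' factor really is a weak equivalence — here one uses that $\cL_q$ preserves colimits and that the $\cL_q$-image of each elementary piece is a trivial cofibration of DGLAs, so that its transfinite composites remain quasi-isomorphisms — and (ii), the genuinely hard point, identifying the maps with RLP against the cofibration factor as exactly the surjective weak equivalences; since adjointness along $U$ is unavailable, this demands a direct analysis of lifting against the elementary cofibrations, again organised by the conilpotency filtration and using the characterisation of weak equivalences through $\cL_q$. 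Once MC5 holds, the outstanding lifting in MC4 follows by the standard retract argument, completing the proof.
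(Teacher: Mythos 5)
The paper does not actually prove this lemma: its entire proof is the citation ``\cite{hinstack} Theorems 3.1 and 3.2'', i.e.\ the statement is Hinich's result, quoted. Measured against a genuine proof (Hinich's), the outer layer of your proposal is correct and follows the same lines: the observation that the transfer argument of Lemma \ref{dglamodel} is unavailable, MC1--MC3, the fact that every object of $DG_{\Z}\mathrm{CU}_k$ is cofibrant and every DGLA fibrant, that $\cL_q$ preserves weak equivalences by definition, and the deduction of the Quillen equivalence from the statement that the counit $\cL_q\C_q(L)\to L$ is a quasi-isomorphism (via the triangle identity and naturality) are all as in \cite{hinstack}.

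The genuine gap is that the actual content of the lemma --- the existence of the model structure, i.e.\ MC5 and the half of MC4 not true by definition of ``fibration'' --- is only announced as a plan. That is precisely Hinich's Theorem 3.1, whose proof occupies a substantial part of \cite{hinstack} (explicit factorizations built from the adjunction, a filtration analysis of acyclic cofibrations, and the counit quasi-isomorphism as input), and nothing in your sketch substitutes for it; in particular your target for step (ii), that the maps with RLP against cofibrations should be ``exactly the surjective weak equivalences'', is an unproved intermediate claim that Hinich neither establishes nor uses, so the plan is aimed at a characterization you would still have to justify. A smaller but real issue: your argument that $\cL_q$ preserves cofibrations filters ``the extra generators by the conilpotency filtration of $D$'', but objects of $DG_{\Z}\mathrm{CU}_k$ are arbitrary counital cocommutative coalgebras and need not be conilpotent (the coradical/conilpotency filtration need not be exhaustive --- e.g.\ a span of several group-like elements), so even the Quillen-pair step requires Hinich's more careful treatment rather than the verbatim semifree-extension argument. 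As written, your proposal proves the lemma only modulo the very theorem being asserted; either carry out the factorization and lifting analysis in detail, or do as the paper does and simply invoke \cite{hinstack} Theorems 3.1 and 3.2.
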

\begin{proof}
\cite{hinstack} Theorems 3.1 and 3.2.
 \end{proof}

\begin{definition}\label{hinnerve}
Given a DGLA $L$, recall from \cite{hinstack} Definition 8.1.1 that  the simplicial nerve $\Sigma(L): dg\C_k \to \bS$ is defined  by 
$$
\Sigma(L)(A)_n := \mc(L\ten \sA_n)(A),
$$
where $\sA_n$ is  is the algebra of polynomial differential forms on the standard $n$-simplex $\Delta^n$ (denoted $\Omega_n$ in \cite{hinstack}).
\end{definition}

Now, as in \cite{Hovey} \S 5, for any pair $X,Y$ of objects in a model category $\C$, there is a derived function complex $\bR\Map_{\C}(X, Y) \in \bS$, defined up to weak equivalence. If $\C$ is a simplicial model category, with $X$ is cofibrant and $Y$ fibrant, then 
$$
\bR\Map_{\C}(X,Y) \simeq \HHom_{\C}(X,Y).
$$ 
In general model categories, it suffices to take a cofibrant replacement $\tilde{X}$ for $X$
 and a fibrant resolution $\hat{Y}_{\bt}$ for $Y$ in the Reedy category of simplicial diagrams in $\C$, then to set
 $$
 \bR\Map_{\C}(X,Y)_n:= \Hom_{\C}(\tilde{X}, \hat{Y}_n).
 $$ 
Quillen equivalences of model categories induce weak equivalences on derived function complexes (by applying the associated derived functors to $X$ and $Y$). 

\begin{proposition}\label{hinnerve2}
If $A \in dg\C_k$ and $X \in DG_{\Z}\mathrm{CU}_k$, then
$$
\bR \Map_{DG_{\Z}\mathrm{CU}_k}(A^{\vee}, X) \simeq \Sigma(\cL_q(X)) \in \bS.
$$
\end{proposition}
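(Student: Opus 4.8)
The plan is to recognise Hinich's simplicial nerve $\Sigma$ as a simplicial $\Hom$-set in $DG_{\Z}\LA$ which computes a derived mapping space, and then to transport that description across the left Quillen equivalence $\cL_q$ of Lemma \ref{hinmodel}.

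First I would record the basic adjunction identity. For a DGLA $M$ and $A \in dg_{\Z}\hat{\C}_k$, the description of $\mc$ used in \cite{Man2} \S5, together with the contravariant equivalence of Remark \ref{profd2}, gives $\mc(M)(A) \cong \Hom_{DG_{\Z}\mathrm{CU}_k}(A^{\vee}, \C_q(M))$, where $A^{\vee}= k\oplus\m(A)^{\vee}$; the adjunction $\cL_q \dashv \C_q$ then rewrites this as
$$
\mc(M)(A) \cong \Hom_{DG_{\Z}\LA}(\cL_q(A^{\vee}), M),
$$
naturally in $M$. Since $[n]\mapsto\sA_n$ is a simplicial object of commutative DG algebras, $M\ten\sA_{\bt}$ is a simplicial object of $DG_{\Z}\LA$, and applying the identity levelwise identifies $\Sigma(L)(A)$ with the simplicial set $\Hom_{DG_{\Z}\LA}(\cL_q(A^{\vee}), L\ten\sA_{\bt})$.

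Next I would check that $L\ten\sA_{\bt}$, with the levelwise map $L\to L\ten\sA_n$ induced by the unit $k\to\sA_n$, is a fibrant resolution of $L$ in the Reedy model structure on simplicial objects of $DG_{\Z}\LA$ (using the model structure of Lemma \ref{dglamodel}, in which every object is fibrant and fibrations are surjections). For Reedy fibrancy, $-\ten_k L$ is exact, hence commutes with the finite limits defining the matching objects, so the $n$th matching map of $L\ten\sA_{\bt}$ is $L\ten(\sA_n\to M_n\sA)$, where $\sA_n\to M_n\sA$ is the restriction of polynomial de Rham forms from $\Delta^n$ to $\pd\Delta^n$; this is surjective, so the matching maps are surjective, i.e.\ fibrations. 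For the weak-equivalence condition, $\H^*(\sA_n)=k$ concentrated in degree $0$, so $k\to\sA_n$ is a quasi-isomorphism, and by the K\"unneth formula so is $L\to L\ten\sA_n$ for each $n$; moreover $L\ten\sA_0=L$.

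Finally, $A^{\vee}$ is cofibrant in $DG_{\Z}\mathrm{CU}_k$ (the cofibrations being the levelwise monomorphisms, and $k\to A^{\vee}$ is one), so $\cL_q(A^{\vee})$ is cofibrant in $DG_{\Z}\LA$ as $\cL_q$ is left Quillen. By the recipe for derived function complexes recalled before the statement, the last two paragraphs exhibit $\Sigma(L)(A)$ as a model for $\bR\Map_{DG_{\Z}\LA}(\cL_q(A^{\vee}), L)$; setting $L=\cL_q(X)$ gives $\Sigma(\cL_q(X))(A)\simeq\bR\Map_{DG_{\Z}\LA}(\cL_q(A^{\vee}),\cL_q(X))$. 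Since $\cL_q$ is a left Quillen \emph{equivalence} and $X$, like $A^{\vee}$, is cofibrant, it induces a weak equivalence $\bR\Map_{DG_{\Z}\LA}(\cL_q(A^{\vee}),\cL_q(X))\simeq\bR\Map_{DG_{\Z}\mathrm{CU}_k}(A^{\vee}, X)$ on derived function complexes, and combining these two equivalences proves the claim. The only step with genuine content is the Reedy fibrancy of $L\ten\sA_{\bt}$, which reduces to the surjectivity of the boundary-restriction maps $\sA_n\to M_n\sA$ — the standard extendability property of polynomial differential forms; the rest is formal manipulation of the adjunction $\cL_q\dashv\C_q$ and of the behaviour of Quillen equivalences on mapping spaces.
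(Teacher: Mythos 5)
Your proof is correct and takes essentially the same route as the paper's: both rest on the adjunction $\cL_q\dashv\C_q$, the key fact that $[n]\mapsto L\ten\sA_n$ is a Reedy fibrant simplicial resolution in $DG_{\Z}\LA$, cofibrancy of $A^{\vee}$, and the Quillen equivalence of Lemma \ref{hinmodel}. The only (cosmetic) difference is that the paper pushes the resolution through $\C_q$ to get a fibrant simplicial resolution of $X$ and computes $\bR\Map$ on the coalgebra side, whereas you compute $\bR\Map_{DG_{\Z}\LA}(\cL_q(A^{\vee}),\cL_q(X))$ and then transport it across the Quillen equivalence; you also verify the Reedy fibrancy of $L\ten\sA_{\bt}$ explicitly, which the paper simply cites from Hinich.
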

\begin{proof}
This is \cite{hinstack} Proposition 8.1.2. First observe that all objects of  $DG_{\Z}\mathrm{CU}_k$ are cofibrant, and all objects of $DG_{\Z}\mathrm{CU}_k$ fibrant, so $\cL_q, \C_q$ are equivalent to the associated derived functors $\bR\C, \bL\cL$.

The key fact is that $[n] \mapsto L \ten \sA_n$ is a Reedy fibrant simplicial resolution for $L$ in $DG_{\Z}\LA$, so $[n] \mapsto \C_q (\cL_qX \ten \sA_n)$ is a Reedy fibrant simplicial resolution for $\C_q(\cL_q(X))$. By the observation above, $\C_q(\cL_q(X)) \simeq \bR\C_q(\bL\cL_q(X))$,  which in turn is equivalent to $X$, by the Quillen equivalence of Lemma \ref{hinmodel}. 

Therefore $[n] \mapsto \C_q (\cL_qX \ten \sA_n)$ is a Reedy fibrant simplicial resolution for $X$, so
$$
\Sigma(\cL_q(X))(A)_n =\Hom_{ DG_{\Z}\mathrm{CU}_k}(A^{\vee}, \C_q(\cL_q(X)\ten \sA_n)) 
$$
means that $\Sigma(\cL_q(X))(A) \simeq \bR \Map_{DG_{\Z}\mathrm{CU}_k}(A^{\vee}, X)$.
\end{proof}

\subsection{Global derived stacks}

In \cite{hag2} and \cite{lurie},  derived stacks are defined, with a view to modelling (global) derived moduli.

Given a ring $S$, a geometric $D^{-}$-stack (\cite{hag2}) or a derived stack (\cite{lurie}) over $S$ is a functor
$$
F:s\Alg_S \to \bS
$$
on simplicial $S$-algebras, satisfying many additional conditions. A morphism $F \to G$ of geometric $D^{-}$-stacks is a weak equivalence if it induces weak equivalences $F(A) \to G(A)$ in $\bS$ for all $A \in s\Alg_S$.

A sketch of the definition of geometric $D^{-}$-stacks (\cite{hag2} Definition 1.3.3.1) follows. 

For a  simplicial $S$-algebra $R$, the functor 
\begin{eqnarray*}
\bR\underline{\Spec}R: s\Alg_S &\to& \bS\\
A &\mapsto& \bR\HHom_{s\Alg_S}(R,A)
\end{eqnarray*}
is a   quasi-compact $0$-geometric $D^{-}$-stack.

An arbitrary $0$-geometric $D^{-}$-stack is a disjoint union of quasi-compact $0$-geometric $D^{-}$-stacks, where the disjoint union is taken not in the category of presheaves, but in a subcategory of $\infty$-sheaves, relative to a certain model structure.

$n$-geometric $D^{-}$-stacks are then defined inductively by saying that $F$ is  $n$-geometric  if 
\begin{enumerate}
\item
there exists a homotopy-smooth covering  $p:U \to F$ from a  $0$-geometric $D^{-}$-stack $U$, and
\item the diagonal $F \to F\by F$ is representable by $(n-1)$-geometric $D^{-}$-stacks.
\end{enumerate}

If we take $U$ to be quasi-compact at each stage in the definition above, then we obtain the definition of a strongly quasi-compact $n$-geometric $D^{-}$-stack.

Beware that the derived $n$-stacks of \cite{lurie} are defined slightly differently, taking $0$-stacks to be derived analogues of algebraic spaces, rather than disjoint unions of affine schemes. Thus every $n$-geometric $D^-$-stack is a derived $n$-stack in Lurie's sense. Conversely, every  derived $n$-stack in Lurie's sense is $(n+2)$-geometric.

\begin{definition}\label{toenstalk}
Given an $n$-geometric $D^{-}$-stack $F$ over $S$, take a point $x: \Spec k \to F$ for a field $k$, such that the composition $s: \Spec k \to \Spec S$ is a closed point. Let $\L$ be the formal completion of $S$ at $s$, and define the formal neighbourhood 
$$
F_x: s\hat{\C}_{\L} \to \bS
$$
by
$$
F_x(A):= F(A)\by^h_{F(k)}\{x\}.
$$
\end{definition}

\begin{proposition}\label{toencomp}
A formal neighbourhood $F_x$ of an $n$-geometric $D^-$-stack $F$ at a point $x$  is representable by an object of $\Ho(sc\Sp)$.
\end{proposition}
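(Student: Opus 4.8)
The plan is to reduce the statement to the homotopy--representability theorem. By Theorem~\ref{schrep} the category $\Ho(sc\Sp)$ is equivalent to $\Ho(\cS)$, so it suffices to show that the restriction of $F_x$ along the inclusion $s\C_\L \hookrightarrow s\hat\C_\L$ lies in $\cS$, i.e.\ satisfies axioms (A0)--(A2) of Definition~\ref{schless}, and then to match the resulting object with $F_x$ on all of $s\hat\C_\L$. First note that every $A \in s\C_\L$ is a simplicial $S$-algebra via the structure map $S \to \L \to A$ (which factors through an Artinian quotient of $S$), so $F(A)$, hence $F_x(A) = F(A)\by^h_{F(k)}\{x\}$, is defined and functorial. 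Axiom (A0) is immediate: $F_x(k) = F(k)\by^h_{F(k)}\{x\}$ is a homotopy pullback along the identity of $F(k)$, hence weakly equivalent to $\{x\}$, which is contractible. For (A2), an acyclic small extension $A \onto B$ in $s\C_\L$ is a weak equivalence of simplicial rings; since geometric $D^-$-stacks send weak equivalences of simplicial algebras to weak equivalences, $F(A) \to F(B)$ is one, and as homotopy fibre products preserve weak equivalences, so is $F_x(A) \to F_x(B)$.

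The substantive point is (A1). Let $A \onto B$ be a small extension with kernel $I$; then $\m(A)\cdot I = 0$ and $I \subseteq \m(A)$, so $I^2 = 0$ and $A \to B$ is a square-zero extension. For any $C \to B$ in $s\C_\L$ (note $A\by_B C \in s\C_\L$), the key input is that geometric $D^-$-stacks are infinitesimally cohesive, i.e.\ the square with vertices $F(A\by_B C), F(A), F(C), F(B)$ is homotopy cartesian. For the affine stacks $\bR\underline{\Spec}\,R$ this holds because a surjection of simplicial rings is a fibration, so $A\by_B C$ already computes the homotopy fibre product and $\bR\HHom_{s\Alg_S}(R,-)$ preserves it; for $n$-geometric stacks it then follows by induction on the level of geometricity, descending the property along a homotopy-smooth atlas and its \v{C}ech nerve (equivalently, one invokes the existence of obstruction theories for geometric $D^-$-stacks in \cite{hag2}). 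Granting this, $F(A\by_B C) \to F(A)\by^h_{F(B)}F(C)$ is a weak equivalence. Since $F(A\by_B C), F(A), F(B), F(C)$ all map compatibly to $F(k)$, forming $(-)\by^h_{F(k)}\{x\}$ commutes with the iterated homotopy pullback, so
\[
F_x(A)\by^h_{F_x(B)}F_x(C)\ \simeq\ \bigl(F(A)\by^h_{F(B)}F(C)\bigr)\by^h_{F(k)}\{x\},
\]
and composing with $F_x(A\by_B C) = F(A\by_B C)\by^h_{F(k)}\{x\} \to \bigl(F(A)\by^h_{F(B)}F(C)\bigr)\by^h_{F(k)}\{x\}$ establishes (A1).

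Thus $F_x|_{s\C_\L} \in \cS$, so by Theorem~\ref{schrep} there is a quasi-smooth $X \in sc\Sp$ with $F_x(A) \simeq \underline{\Hom}_{sc\Sp}(\Spf A, X)$ for all $A \in s\C_\L$ (Corollary~\ref{homrep}). To extend the identification to $A \in s\hat\C_\L$, write $A = \Lim_\alpha A_\alpha$ as a cofiltered limit of objects of $s\C_\L$ with surjective transition maps; the right-hand side is the homotopy limit $\holim_\alpha \underline{\Hom}(\Spf A_\alpha, X)$ by the construction in the proof of Theorem~\ref{schrep}, and the left-hand side is $\holim_\alpha F_x(A_\alpha)$ since $F$ is continuous for such surjective cofiltered limits of Artinian algebras (equivalently, by the way $F_x$ is defined on pro-objects in Definition~\ref{toenstalk}). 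Hence $F_x$ is representable by $X \in \Ho(sc\Sp)$.

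The main obstacle is axiom (A1): proving that geometric $D^-$-stacks are infinitesimally cohesive. The affine case is routine, but propagating the homotopy-cartesian property through the inductive definition of $n$-geometric stacks — showing it descends along a smooth covering and its \v{C}ech nerve — is the technical heart, and is where one must lean on the obstruction-theory results of \cite{hag2}. A secondary, bookkeeping-level issue is matching the two extensions to pro-Artinian algebras, which hinges on $F$ commuting with surjective cofiltered limits of Artinian rings.
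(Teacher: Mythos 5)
Your overall strategy is the same as the paper's: reduce via Theorem \ref{schrep} to checking that $F_x$ satisfies (A0)--(A2), obtain (A2) from the fact that $D^-$-stacks preserve weak equivalences, and isolate (A1) (infinitesimal cohesiveness for an arbitrary map $C \to B$, not just the case $C=A$) as the crux; your affine case is handled correctly and is the natural starting point.

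The gap is in the inductive step of (A1). The results you propose to invoke --- the obstruction-theory statements of \cite{hag2} and \cite{lurie} Proposition 5.3.7 --- only establish the homotopy-cartesian property in the special case where the two maps to the base of the square-zero extension coincide (the ``$B=A$'' case, i.e.\ obstruction theories/inf-cartesian squares for derived square-zero extensions); they do not directly give (A1) for an arbitrary $C \to B$, which is exactly what is needed here. So ``descending the property along a smooth atlas and its \v{C}ech nerve'' cannot simply be cited; the lifting argument has to be supplied. The paper does this by adapting the proof of \cite{lurie} Proposition 5.3.7: one reduces (A1) for an $n$-geometric $F$ to showing that for any homotopy-smooth surjection $U \to X$ of $n$-geometric $D^-$-stacks the map $U(A)\by^h_{U(B)}U(C) \to X(A)\by^h_{X(B)}X(C)$ is surjective; a point of $X(C)$ is lifted to $U(C)$ after replacing $A\by_B C$ by a homotopy-\'etale algebra over it, and the remaining surjectivity of $U(A)\by^h_{U(B)}U(C) \to X(A)\by^h_{X(B)}U(C)$ is obtained by pulling back the surjection supplied by homotopy-smoothness of $U \to X$. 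Without some such argument your induction does not close, even though you correctly identify it as the technical heart. A secondary point: your final paragraph on extending the identification to $s\hat{\C}_{\L}$ is not needed (representability in $\Ho(sc\Sp)$ only concerns the values on $s\C_{\L}$), and the claim that a general $D^-$-stack commutes with surjective cofiltered limits of Artinian simplicial rings is not justified --- the paper's convention, as in the proof of Theorem \ref{schrep} and the remark following the proposition, is simply to extend by homotopy limits, so no such continuity of $F$ is required.
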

\begin{proof} First observe that Corollary \ref{weakchar} ensures that the notions of weak equivalence for $D^-$-stacks and $sc\Sp$ are compatible. By Theorem \ref{schrep}, it will  suffice to show that $F_x \in \cS$. $D^-$-stacks  automatically preserve weak equivalences, so $F_x$ satisfies (A2). 

It therefore  suffices to prove (A1): that for any square-zero extension $A \to C$ and any morphism $B \to C$ 
in $s\Alg_S$, the map
$$
F(A\by_CB) \to F(A)\by^h_{F(C)}F(B)
$$
is a weak equivalence in $\bS$. This is very similar to \cite{lurie} Proposition 5.3.7, which proves this for the case $B=A$. Adapting the proof of that proposition, it suffices to show that for any homotopy-smooth surjective map  $U \to X$ of $n$-geometric $D^-$-stacks, the map
$$
U(A)\by_{U(C)}^h U(B) \to X(A)\by_{X(C)}^h X(B)
$$ 
is surjective. Moreover, the argument of \cite{lurie} Proposition 5.3.7 allows us to replace $A\by_CB$ with a homotopy  \'etale algebra over it, giving a local lift of a point $x \in X(B)$ to $u \in U(B)$. The problem then reduces to showing that
$$
U(A)\by_{U(C)}^h U(B) \to X(A)\by_{X(C)}^h U(B)
$$ 
is surjective, but this follows from pulling back the surjection
$$
U(A) \to U(C)\by_{X(C)}^hX(B)
$$ 
given by the homotopy-smoothness of $U \to X$.
\end{proof}

\begin{remark}
Corollary \ref{toencomp} has a partial converse, in the sense that a quasi-smooth $X \in sc\Sp$ with  $\H^i(X)=0$ for $i<-n$ satisfies the formal criteria for representability by a derived $n$-stack, namely \cite{lurie} Theorem 7.5.1 (2),(3),(4),(5): Boundedness of $\H^*(X)$ implies (2) ($n$-truncation); (A2) implies (3) (\'etale sheaf), since a map in $s\C_{\L}$ is \'etale in the sense of [ibid.] only if it is a weak equivalence; (A1) implies (4) (cohesiveness); (5) (nilcompleteness) follows from our formula for extending  $F$ from $s\C_{\L}$ to $s\hat{\C}_{\L}$. Of the other conditions, (1) and (7) are concerned with finiteness, while (6) is a global property, describing effectiveness of formal deformations.  
\end{remark}

\begin{remark}
In fact, there is now a global version of Proposition \ref{toencomp}. Every geometric $D^{-}$-stack can be represented by a   simplicial complex of disjoint unions of cosimplicial affine schemes (\cite{stacks2} Theorem \ref{stacks-relstrict}). Moreover, every  strongly quasi-compact geometric $D^{-}$-stack can be represented by a cosimplicial simplicial affine scheme.  
\end{remark}

\begin{remark}\label{dgtoen}
If $S$ is of characteristic $0$, then there is an alternative, equivalent, formulation of $n$-geometric $D^{-}$-stacks as functors $F:dg\Alg_S \to \bS$ on (non-negatively graded) chain algebras. That this is equivalent makes use of the Quillen equivalence between $dg\Alg_S$ and $s\Alg_S$ from \cite{QRat}. The proof runs along the same lines as Theorem \ref{nequiv}.
\end{remark}

\section{Comparison with SHLAs}\label{alternative}

From now, on assume that the residue   field $k$ is  of characteristic $0$.

\subsection{Pro-Artinian chain algebras}
\begin{definition}
  Let  $dg\hat{\C}_{\L}$ be the category of pro-objects of $dg\C_{\L}$. Write $DG\Sp:= (dg\hat{\C}_{\L})^{\op}$; this is equivalent to the category of left-exact set-valued functors on  $dg\C_{\L}$. Given $R \in dg\hat{\C}_{\L}$, let $\Spf R \in DG\Sp $ denote the corresponding object in the opposite category. 
\end{definition}

\begin{definition}\label{dgmcldef} In the category $dg\hat{\C}_{\L}$, we say that $R\to S$ is:
\begin{enumerate}
\item
 a fibration if $R_i \to S_i$ is surjective for all $i>0$;
\item
a weak equivalence if it is   acyclic (i.e. a quasi-isomorphism);  
\item  
a cofibration if it has the LLP with respect to all acyclic fibrations; these maps are also called quasi-smooth.
\end{enumerate}
\end{definition}

Observe that every surjection $A \onto B$ in $dg\hat{\C}_{\L}$ is a fibration.

\begin{proposition}\label{dgmcl}
With the classes of morphisms  given above, $dg\hat{\C}_{\L}$ is a model category.
\end{proposition}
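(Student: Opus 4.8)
The plan is to establish the model category axioms for $dg\hat{\C}_{\L}$ by transferring a known model structure along an adjunction, exactly in the spirit of the proof of Proposition~\ref{smcl} and Lemma~\ref{dglamodel}. The natural source is the model structure on the category of pro-objects in non-negatively graded chain complexes of pro-Artinian $\L$-modules (or equivalently, by duality as in Lemma~\ref{profd}, on a suitable category of cochain complexes of $\L$-modules), where fibrations are the degreewise-surjective maps in positive degrees, weak equivalences are the quasi-isomorphisms, and cofibrations are determined by the lifting property. One then observes that the forgetful functor $dg\hat{\C}_{\L} \to \pro(\text{chain complexes of pro-Artinian }\L\text{-modules})$, sending $A = \{A_s\}$ to $\{\m(A_s)\}$, has a left adjoint given by a completed free graded-commutative-algebra functor $V \mapsto \L[[V]]$ (the pro-Artinian completion of the free construction). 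As in the proof of Proposition~\ref{smcl}, the key technical input is that $\Lim$ over a cofiltered index category is exact on $\pro(\cM)$ and reflects isomorphisms, so that the notions of weak equivalence and fibration defined levelwise coincide with those seen after passing to the associated topological/limit ring.

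The concrete steps, in order, are as follows. First, record that $dg\hat{\C}_{\L} = \pro(dg\C_{\L})$ has all small limits and colimits: limits are computed as in any pro-category, and finite colimits exist in $dg\C_{\L}$ (pushouts are completed tensor products over the base, which remain Artinian by Lemma~\ref{cotdef}), so filtered colimits and hence all colimits exist in the pro-category. Second, verify the two-out-of-three and retract axioms for weak equivalences; these are immediate since quasi-isomorphism is detected on homotopy (homology) groups, which by the exactness of $\Lim$ may be computed compatibly on the strict pro-representatives. Third, produce the factorisations: here the analogue of Lemma~\ref{small} is needed --- every surjection in $dg\hat{\C}_{\L}$ factors as a (possibly transfinite) composition of small extensions, and every acyclic surjection factors through acyclic small extensions, the proof being the same induction on the length of the normalised kernel, now in the chain-complex rather than simplicial setting. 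With this in hand, the small object argument applied to the sets of small extensions and acyclic small extensions produces the (cofibration, trivial fibration) and (trivial cofibration, fibration) factorisations, as in the proof of the cofibrant generation of $c\Sp$. Fourth, check the lifting axioms: one half holds by the definition of cofibration as the class with the LLP against acyclic fibrations; the other half (trivial cofibrations lift against fibrations) reduces, via the factorisations and the retract argument, to showing that acyclic surjections have the RLP against cofibrations, which follows because acyclic small extensions are, up to the cone construction of Theorem~\ref{robs}, built from the shift complexes $L^n$-type pieces that are trivially cofibrant.

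The main obstacle I anticipate is the careful handling of the pro-category: one must ensure that the model structure on $dg\hat{\C}_{\L}$ is genuinely well-defined and not merely a pro-category model structure of Isaksen type (which would have a different, strict/levelwise flavour). The resolution is to work throughout with strict (or saturated) pro-objects via Lemma~\ref{lexlemma}, reducing every morphism to a level map by the Artin--Mazur result cited in Definition~\ref{levelmap}, and then to invoke exactness of $\Lim$ on $\pro(\cM)$ precisely as in Proposition~\ref{smcl} so that all the relevant homotopical data can be read off from the limit ring $\uleft{A}$. A secondary subtlety is the factorisation analogue of Lemma~\ref{small} in the $\Z_{\geq 0}$-graded chain setting: the induction there uses that $\m(A)^n = 0$ and that $N(\m(A)\cdot I)$ has strictly smaller length, and one must confirm that the "least degree in which $I_n \neq 0$" argument goes through verbatim for chain complexes, which it does since that argument never used the simplicial structure beyond the normalisation functor. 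Once these pro-categorical points are pinned down, the verification of \cite{Hovey} Theorem 2.1.19 (or the classical Quillen axioms) is routine.
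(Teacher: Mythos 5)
There is a genuine gap, and it sits exactly where the real content of the proposition lies: the $(\text{trivial cofibration}, \text{fibration})$ factorisation and the lifting of trivial cofibrations against fibrations. The (co)small object argument applied to the set of small extensions factors a map of $dg\hat{\C}_{\L}$ as a map with the LLP against all small extensions followed by an inverse limit of pullbacks of small extensions; the second factor is indeed a fibration, but nothing makes the first factor a weak equivalence, and in general it is not: the map $\L \to \L[[x]]$, with $x$ a degree-zero generator, has the LLP against every small extension (any element of the nilpotent maximal ideal of an Artinian level serves as the image of $x$, and $d$ vanishes in degree $0$), yet it is not a quasi-isomorphism. So maps with the LLP against (acyclic) small extensions are merely cofibrations/smooth maps, not trivial cofibrations, and neither pairing of your two generating sets produces the second factorisation. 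Your fourth step does not repair this: since an acyclic fibration here is automatically surjective in degree $0$ as well (from $S_0=f(R_0)+dS_1$ and $dS_1=f(dR_1)$), ``acyclic surjections have the RLP against cofibrations'' is literally the definition of cofibration, so the statement you propose to prove is vacuous, while the statement actually needed --- that a cofibration which is a weak equivalence has the LLP against all fibrations, equivalently that the left factor in the second factorisation can be arranged to be acyclic --- is never addressed; the remark about cones and $L^n$-type pieces does not supply it.

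This missing step is precisely what the paper's one-line proof delegates to the argument of Proposition \ref{smcl} (Bousfield's resolution machinery with injective model $A\mapsto \m(\uleft{A})$, together with exactness of $\Lim$ on $\pro(\cM)$), and what the analogous pro-Artinian structures later in the paper handle by explicit path-object and filtration arguments (compare Lemmas \ref{dgpathworks} and \ref{dunit}, and the verification in Theorem \ref{dequiv} that the $P$-projectives are exactly the acyclic $Q$-projectives). If you wish to keep the transfer framing you open with, you must actually carry it out: take as generating trivial cofibrations the completed free extensions $R \to R\,\hat{\ten}\,\L[[D^n]]$ on acyclic disks, prove (using characteristic $0$ and a filtration of the completed symmetric algebra, in the style of Lemma \ref{dunit}) that cobase changes of these are quasi-isomorphisms, and identify fibrations as the maps with the RLP against them; you must also confront smallness, since in a pro-category the natural statement is cosmallness of Artinian objects --- which is why the paper's related structures are fibrantly cogenerated rather than cofibrantly generated, and why the Hirschhorn-style transfer cannot simply be quoted. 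The parts of your sketch that do work (the two-out-of-three and retract axioms via exactness of $\Lim$, the $dg$ analogue of Lemma \ref{small}, and the $(\text{cofibration},\text{trivial fibration})$ factorisation from acyclic small extensions) are fine, but they are not where the difficulty lies.
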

\begin{proof}
As for Proposition \ref{smcl}.
\end{proof}

\begin{definition}
Define a map $A \to B$ in $dg\C_{\L}$ to be a small extension if it is surjective and the kernel $I$ satisfies $I\cdot \m(A)=0$. 
\end{definition}

\begin{lemma}
Every surjection in  $dg\C_{\L}$ can be factored as a composition of small extensions, and every acyclic surjection as a composition of acyclic small extensions.
\end{lemma}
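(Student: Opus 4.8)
The plan is to transcribe the proof of Lemma \ref{small} into the graded setting, working directly with the underlying chain complexes (no normalisation functor $N$ is needed now, since the objects of $dg\C_{\L}$ are already complexes) and inducting on $\dim_k I$, where $I$ is the kernel of the given surjection; this dimension is finite because Artinian local $\L$-algebras with residue field $k$ are finite-dimensional over $k$, and it is the natural replacement for the length $l(NI)$ used in Lemma \ref{small}.

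First I would handle an arbitrary surjection $f\colon A \onto B$ with kernel $I \ne 0$. Nakayama's lemma (valid since $\m(A)$ is nilpotent, $A$ being Artinian) gives $\m(A)\cdot I \subsetneq I$. Then $A/(\m(A)\cdot I) \to B$ is a small extension, its kernel $I/(\m(A)\cdot I)$ being killed by $\m(A/(\m(A)\cdot I))$, while $A \to A/(\m(A)\cdot I)$ has kernel $\m(A)\cdot I$ of strictly smaller dimension, hence factors as a composite of small extensions by the inductive hypothesis; composing gives the result.

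Next, the acyclic case. From the long exact homology sequence of $0 \to I \to A \to B \to 0$, acyclicity of $f$ forces $I$ to be acyclic. Set $W := I/(\m(A)\cdot I)$, a bounded-below complex of $k$-vector spaces. If $d \ne 0$ on $W$, then $W$ admits a nonzero acyclic quotient complex $V$ (the structure theory of complexes over a field: dually, $\langle x, dx\rangle \subset W^{\vee}$ is a two-term acyclic subcomplex whenever $dx \ne 0$). Let $J := \ker(I \to V)$; since $\m(A)\cdot I \subseteq J$, this is a dg ideal of $A$ (a point worth recording, so that $A/J \in dg\C_{\L}$). Then $A/J \to B$ is an acyclic small extension with kernel $V$, and $A \to A/J$ is acyclic — its kernel $J$ sits in $0 \to J \to I \to V \to 0$ with $I$ and $V$ acyclic — and has kernel of dimension $< \dim_k I$, so the induction applies to it. It remains to rule out $d = 0$ on $W$ with $I \ne 0$: the long exact homology sequence of $0 \to \m(A)\cdot I \to I \to W \to 0$, with $I$ acyclic, gives $W_n \cong \H_{n-1}(\m(A)\cdot I)$ for all $n \ge 0$ (with $\H_{-1}=0$); taking $n_0$ least with $I_{n_0}\ne 0$, the complex $\m(A)\cdot I$ vanishes in degrees $< n_0$, so $W_{n_0}=0$, i.e.\ $I_{n_0} = \m(A)_0\cdot I_{n_0}$, whence $I_{n_0}=0$ by Nakayama over the local ring $A_0$ — contradiction.

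The main obstacle is precisely this last step: verifying that the induction in the acyclic case cannot get stuck. It is essential here that the complexes are $\N_0$-graded (bounded below), so that the minimal-degree argument works and $\H_{-1}(\m(A)\cdot I)=0$; the rest is a routine, slightly streamlined, transcription of Lemma \ref{small}.
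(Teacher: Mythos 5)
Your proof is correct and matches the paper, whose entire proof is the observation that the argument of Lemma \ref{small} carries over to $dg\C_{\L}$ — which is exactly what you have done, including the key minimal-degree/Nakayama step ruling out $d=0$ on $I/\m(A)\cdot I$ with $I\neq 0$. The only cosmetic difference is that you peel off a two-term acyclic quotient of $I/\m(A)\cdot I$ at each stage rather than a maximal acyclic quotient as in Lemma \ref{small}, which your induction on $\dim_k I$ handles just as well.
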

\begin{proof}
The proof of Lemma \ref{small} carries over to this context.
\end{proof}

\subsection{Cosimplicial pro-Artinian chain algebras}

\begin{definition}
Define $cdg\hat{\C}_{\L}:=(dg\hat{\C}_{\L})^{\Delta}$ to be the category of cosimplicial pro-Artinian chain algebras. Let $sDG\Sp:=(cdg\hat{\C}_{\L})^{\op}$ be the opposite category, or equivalently the category of left-exact functors from $dg\C_{\L}$ to $\bS$.
\end{definition}

\begin{remark}\label{hinpshf}
If $\L=k$, note that this category is a subcategory of the category of simplicial presheaves on $dg\C_{\L}$ used in \cite{hinstack} \S 8 to model nerves of DGLAs.
\end{remark}

\begin{definition}
Given $X \in sDG\Sp, K \in \bS$, define $X^K$ by $X^K(A):= X(A)^K \in \bS$, for $A \in dg\C_{\L}$.
\end{definition}

\begin{definition}
Say a map $X \to Y$ in $sDG\Sp$ is quasi-smooth if it maps small extensions in $dg\C_{\L}$ to fibrations in $\bS$, and acyclic small extensions to trivial fibrations. 
\end{definition}

\begin{definition}%%%corrected this
Given a quasi-smooth map $E \xra{p} B$ in $sDG\Sp$, and a morphism $X\to B$ in  $sDG\Sp$, define $[X,E]_B$ to be the coequaliser
$$
\xymatrix@1{\Hom_{sDG\Sp\da B}(X,E^{\Delta^1}\by_{B^{\Delta^1}}B) \ar@<.5ex>[r]\ar@<-.5ex>[r] &\Hom_{sDG\Sp\da B}(X,E) \ar[r] &[X,E]_B},
$$ 
similarly to Definition \ref{Xpdef}.
\end{definition}

\begin{definition}\label{dgdefcatdef}%%%corrected this
Given  a map $f:X \to Y$ in the category $sDG\Sp$, with $X=\Spf S, Y=\Spf R$, say that $f$ is:  
\begin{enumerate}
\item
 a geometric cofibration if $(f^{\sharp})^n_i: R^n_i \to S^n_i$ is surjective for all $i,n\ge 0$;
\item a geometric weak equivalence if for all quasi-smooth maps $p:E\to Y$, 
$$
f^*:[Y,E]_Y \to [X,E]_Y
$$
is an isomorphism;

\item  
a geometric fibration if $f$ is quasi-smooth.
\end{enumerate}
\end{definition}

\begin{proposition}\label{sdgmodel}
The category $sDG\Sp$ is a simplicial model category with the geometric model structure. 
\end{proposition}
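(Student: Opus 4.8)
The plan is to reproduce, essentially verbatim, the construction of the geometric model structure on $sc\Sp$ carried out in Theorem~\ref{scspmodel}, replacing $s\C_{\L}$ throughout by $dg\C_{\L}$, $c\Sp$ by $DG\Sp$, and the simplicial model structure of Definition~\ref{smcldef} by the model structure of Proposition~\ref{dgmcl} on $dg\hat{\C}_{\L}$. Concretely, I would first introduce a set $I$ of generating cofibrations and a set $J$ of generating trivial cofibrations as in Definitions~\ref{idef} and~\ref{jdef}: $I$ consists of the maps $(X\ten\Delta^n)\cup_{X\ten\pd\Delta^n}(Y\ten\pd\Delta^n)\to Y\ten\Delta^n$ with $X\into Y$ dual to a small extension in $dg\C_{\L}$, and $J$ of the analogous maps built from acyclic small extensions together with the horn versions $(X\ten\Delta^n)\cup_{X\ten\L^n_k}(Y\ten\L^n_k)\to Y\ten\Delta^n$ with $X\into Y$ dual to a small extension. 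As in the remark preceding Lemma~\ref{spsmall}, the isomorphism classes in $dg\C_{\L}$ form a small set, so $I$ and $J$ may be taken small and the small object argument applies.

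Next I would establish the chain of auxiliary lemmas of \S\ref{defcat}: (i) every geometric cofibration in $sDG\Sp$ is a relative $I$-cell complex, via the skeletal filtration and the fact that $L_n(f)\to Y_n$ is a closed immersion in $DG\Sp$ precisely when $f$ is a geometric cofibration (cf.\ Lemma~\ref{spsmall}); (ii) a quasi-smooth map that sends every small extension to a trivial fibration of simplicial sets admits a section, built by induction on cosimplicial degree using lifting of cofibrations against acyclic fibrations in $DG\Sp$ (cf.\ Lemma~\ref{snexists}); (iii) a quasi-smooth map is a geometric weak equivalence iff it induces a weak equivalence $X(A)\to X(B)\by_{Y(B)}Y(A)$ for every small extension $A\to B$ in $dg\C_{\L}$ (cf.\ Lemma~\ref{smallworks}); (iv) the deformation lemma, that a pushout $U\to V$ of a map in $J$ together with a quasi-smooth $X\to U$ extends to a quasi-smooth $Y\to V$ with $X\cong Y\by_V U$, unique up to unique isomorphism; and (v) two-out-of-three for geometric weak equivalences, which follows from (iii) and (iv) by factoring the relevant map through a relative $J$-cell via the small object argument, word for word as in the $sc\Sp$ case. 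With these in hand, the six conditions of \cite{Hovey} Theorem~2.1.19 are checked exactly as for Theorem~\ref{scspmodel}: two-out-of-three and closure under retracts from (v); smallness of the domains of $I$ and $J$; $I$-cell $=$ geometric cofibrations from (i); $J$-cell $\subseteq$ geometric trivial cofibrations; geometric fibrations $= J$-inj by definition; geometric trivial fibrations $= I$-inj from (iii). Finally (SM7a) is verified as in Theorem~\ref{scspmodel}, since $X\ten K$ and $X^K$ are defined in the same way and the relevant pullback-corner maps of tensor and cotensor by horn and boundary inclusions are again quasi-smooth, and acyclic whenever $q$ is.

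The main obstacle is step (iv), the obstruction-theoretic deformation lemma, since it presupposes the cohomology and obstruction apparatus of \S\ref{cohomology}---in particular the identification in Lemma~\ref{tgtarb} of $\pi_0\underline{\Hom}(\Spf(k\oplus V\eps),X)_Z$ with $\H^0(X/Z\hat{\ten}\Tot^{\Pi}NV)$---in the chain-algebra rather than the simplicial setting. I would first redevelop this machinery for $sDG\Sp$: the formal arguments (mapping cones, the long exact sequences of Theorem~\ref{robs}, the total-complex description of cohomology) transfer verbatim once one replaces the simplicial Dold--Kan normalization by the underlying chain complex of an object of $dg\C_{\L}$ and replaces the cone construction $C(A,I)$ by its chain-level analogue. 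Granting this, step (iv) goes through as before: adapting \cite{stacks} Proposition~\ref{stacks-deform1} to pro-Artinian chain rings places the obstruction to the lift in $\H^2(X/U\hat{\ten}\Tot^{\Pi}N(M\hat{\ten}R))$, with $M$ the kernel of (the pushout of) the map in $J$ and $R=O(X\by_U\Spf k)$; since $M$ is the kernel of a map in $J$ we have $\H_*(\Tot^{\Pi}NM)=0$, so the K\"unneth formula forces all of these obstruction and indeterminacy groups to vanish, and levelwise smoothness of the resulting Reedy-fibrant lift upgrades it to a quasi-smooth map in $sDG\Sp$.
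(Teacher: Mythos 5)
Your proposal is essentially the paper's own proof: the paper simply states that the proof of Theorem \ref{scspmodel} carries over to the $dg$ setting, and your outline spells out exactly what that transfer involves (the analogues of Definitions \ref{idef}--\ref{jdef}, Lemmas \ref{spsmall}, \ref{snexists}, \ref{smallworks}, the deformation and two-out-of-three lemmas, Hovey's Theorem 2.1.19, and SM7a), including the correct observation that the obstruction-theoretic step requires the chain-level analogue of Lemma \ref{tgtarb}. No discrepancy with the paper's approach; your version is just more explicit.
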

\begin{proof}
The proof of  Theorem \ref{scspmodel} carries over to this context.
\end{proof}

\begin{lemma}\label{dgweakext}
Take a surjection $f:A \to B$ in $cdg\hat{\C}_{\L}$ with kernel $I$, such that $\m(A)\cdot I=0$. Then $f$ is a weak equivalence if and only if $\H_*(\Tot^{\Pi} N_cI)=0$.
\end{lemma}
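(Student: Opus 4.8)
The plan is to transcribe the proof of Lemma \ref{weakext} to the cosimplicial chain-algebra setting, the one structural change being that the chain direction of $I$ is already a complex, so the binormalisation $N=N^sN_c$ used there is replaced throughout by the cosimplicial conormalisation $N_c$ alone; in particular $\Tot^{\Pi}N_cI$ lies in $dg_{\Z}\widehat{\FD\Vect}_k$, so that $\H_*(\Tot^{\Pi}N_cI)$ makes sense.

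For sufficiency I would first form the mapping cone $C=(A\oplus I\ten L^0\eps)/(e+\eps)I$ of $e\colon I\to A$ exactly as in Theorem \ref{robs}, so that $(f,0)\colon C\to B$ is a small acyclic surjection while $A\cong C\by_{k\oplus I[-1]\eps}k$ with $C\to k\oplus I[-1]\eps$ surjective. For any quasi-smooth $X\to Z$ in $sDG\Sp$ this yields a fibration sequence
$$
\underline{\Hom}(\Spf A,X)_Z \to \underline{\Hom}(\Spf C,X)_Z \to \underline{\Hom}(\Spf(k\oplus I[-1]\eps),X)_Z
$$
in $\bS$, in which $\underline{\Hom}(\Spf C,X)_Z\to\underline{\Hom}(\Spf B,X)_Z$ is a weak equivalence. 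Since $A\to B$ is a small extension it already induces isomorphisms on the higher homotopy groups of these mapping spaces, so $\underline{\Hom}(\Spf A,X)_Z\to\underline{\Hom}(\Spf B,X)_Z$ is a weak equivalence as soon as it is $\pi_0$-surjective, and the long exact sequence of the fibration shows this is automatic once $\pi_0\underline{\Hom}(\Spf(k\oplus I[-1]\eps),X)_Z=0$. The next step is to identify this $\pi_0$ with $\H^1(X/Z\hat{\ten}\Tot^{\Pi}N_cI)$ via the $sDG\Sp$-analogue of Lemma \ref{tgtarb}; granting $\H_*(\Tot^{\Pi}N_cI)=0$, this vanishes for every quasi-smooth $X\to Z$, and since $A\to B$ is in addition a geometric cofibration, it is then a trivial cofibration, hence a weak equivalence.

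For necessity I would run the relative-cell argument of Lemma \ref{weakext}: the vanishing $\H_*(\Tot^{\Pi}N_cI)=0$ holds for the kernels of the small extensions underlying the generating trivial cofibrations of $sDG\Sp$, and is stable under the pushouts, transfinite compositions and retracts used to build them, so it holds for the kernel of every trivial cofibration; as $A\to B$ is a cofibration which is also a weak equivalence, it is such a trivial cofibration.

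The main obstacle is the $sDG\Sp$-analogue of Lemma \ref{tgtarb}, i.e. the computation $\pi_0\underline{\Hom}(\Spf(k\oplus V\eps),X)_Z\cong\H^0(X/Z\hat{\ten}\Tot^{\Pi}N_cV)$ for $V\in cdg\widehat{\FD\Vect}_k$. I expect it to go through exactly as before, the cosimplicial coskeletal filtration of $V$ producing a tower of fibrations whose spectral sequence is that of the double complex $N_cV$, with the internal chain direction of $V$ now playing directly the role that the simplicial normalisation played in $sc\Sp$; this is essentially the input already required for Proposition \ref{sdgmodel}. The remaining points — existence of the cone in the chain setting, that $(f,0)\colon C\to B$ is an acyclic surjection, and that $\underline{\Hom}(-,X)_Z$ sends acyclic small surjections of rings to weak equivalences of mapping spaces — are routine transcriptions.
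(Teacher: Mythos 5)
Your proposal is correct and matches the paper, whose proof of this lemma is literally that the proof of Lemma \ref{weakext} carries over to the cosimplicial chain setting: your transcription (mapping cone, fibration sequence, the $sDG\Sp$-analogue of Lemma \ref{tgtarb} with $N$ replaced by $N_c$, and the relative $J$-cell argument for necessity) is exactly that argument. The only aside to drop is the claim that a small extension automatically induces isomorphisms on higher homotopy groups of the mapping spaces -- it is neither justified nor needed, since $\pi_0$-surjectivity of the fibrations $\underline{\Hom}(\Spf A,X)_Z\to\underline{\Hom}(\Spf B,X)_Z$ for all quasi-smooth $X\to Z$ already gives $\Spf B\to\Spf A$ the left lifting property against all geometric fibrations, hence makes it a trivial cofibration.
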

\begin{proof}
The proof of  Lemma \ref{weakext} carries over to this context.
\end{proof}

\begin{theorem}\label{dgschrep}
There is a canonical equivalence between the geometric homotopy category $\Ho(sDG\Sp)$ and the homotopy category $\Ho(\cS')$ of functors $F: dg\C_{\L} \to \bS$ satisfying the analogues for $dg\C_{\L}$ of conditions (A0)--(A2) from Definition \ref{schless}.
\end{theorem}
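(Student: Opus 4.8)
The plan is to transcribe the proof of Theorem \ref{schrep}, systematically replacing $s\C_{\L}$ by $dg\C_{\L}$ and invoking Proposition \ref{sdgmodel} in place of Theorem \ref{scspmodel}, Lemma \ref{dgweakext} in place of Lemma \ref{weakext}, and so on. First I would construct a functor $\theta\colon \Ho(sDG\Sp)\to \Ho(\cS')$: to a quasi-smooth $X\in sDG\Sp$ it assigns the functor on $dg\C_{\L}$ given by $A\mapsto \underline{\Hom}(\Spf A, X)$. This satisfies the analogues of (A0)--(A2), since by definition quasi-smoothness sends small extensions to fibrations of simplicial sets and acyclic small extensions to trivial fibrations, and Lemma \ref{dgweakext} identifies precisely which small extensions are weak equivalences --- exactly the input needed for the Mayer--Vietoris condition (A1). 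The analogue of Corollary \ref{weakchar} (proved identically, using that $\Spf A$ is cofibrant and any quasi-smooth $X$ fibrant) shows that geometric weak equivalences between quasi-smooth objects induce levelwise weak equivalences, so $\theta$ descends to homotopy categories.

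Conversely, given $F\in\cS'$, I would extend $F$ to $cdg\hat{\C}_{\L}$ by writing any pro-object as a tower $\{A_\alpha\}$ of surjections in $dg\C_{\L}$ indexed by a totally ordered set and setting $F(A):=\holim_\alpha F(A_\alpha)$. Then I would define $\overline{F}\colon (sDG\Sp)^{\op}\to\bS$ exactly as in the proof of Theorem \ref{schrep}: choosing a derived cotensor $X\mapsto X^{\bR K}$ for $K\in\bS$, requiring $\overline{F}(U\ten K)=\overline{F}(U)^{\bR K}$ and preservation of homotopy colimits, and defining $\overline{F}(\sk_n U)$ by induction along the skeletal filtration of $U$ as a homotopy pullback. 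One checks that $\overline{F}$ takes the generating trivial cofibrations of $sDG\Sp$ to weak equivalences --- hence all trivial cofibrations to weak equivalences, by Proposition \ref{sdgmodel} --- and that it carries weak equivalences in $cdg\hat{\C}_{\L}$ to weak equivalences (via the dual mapping cylinder), so it descends to a homotopy-limit-preserving functor $\Ho(sDG\Sp)^{\op}\to\Ho(\bS)$.

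Then $\pi_0\overline{F}$ is half-exact in the sense of \cite{heller}, so Heller's representability theorem (\cite{heller} Theorem 1.3) shows it is represented by a quasi-smooth object of $sDG\Sp$; this produces a functor $\cS'\to\Ho(sDG\Sp)$ which sends weak equivalences to isomorphisms and so descends to $\Ho(\cS')\to\Ho(sDG\Sp)$. Finally I would verify the two constructions are mutually quasi-inverse, using $[K,F(A)]=\pi_0(\overline{F}(A)^{\bR K})=\pi_0(\overline{F}(\Spf A\ten K))$ on one side and $\overline{\theta(X)}=\underline{\Hom}(-,X)$, hence $\pi_0\overline{\theta(X)}=\Hom_{\Ho(sDG\Sp)}(-,X)$, on the other.

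The main obstacle is confirming half-exactness of $\pi_0\overline{F}$, i.e.\ that $\overline{F}$ turns homotopy pushouts in $sDG\Sp$ into homotopy pullbacks of simplicial sets; this relies on condition (A1) and on the skeletal-filtration presentation of objects of $sDG\Sp$. Both go through because $dg\C_{\L}$ is Artinian and its pro-category enjoys the same exactness properties (Lemma \ref{lexlemma}, together with the analogue of Lemma \ref{small} already recorded) used in the simplicial case. Everything else is routine, since the model-theoretic inputs --- the generating (trivial) cofibrations, the weak-small-extension criterion Lemma \ref{dgweakext}, and the simplicial model structure Proposition \ref{sdgmodel} --- have all already been set up for $sDG\Sp$.
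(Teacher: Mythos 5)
Your proposal is correct and matches the paper exactly: the paper's own proof of this theorem is simply the statement that the proof of Theorem \ref{schrep} carries over to the $dg\C_{\L}$ context, and your write-up is precisely that transfer, with the right substitutions (Proposition \ref{sdgmodel} for Theorem \ref{scspmodel}, Lemma \ref{dgweakext} for Lemma \ref{weakext}, Heller's theorem applied verbatim).
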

\begin{proof}
The proof of   Theorem \ref{schrep} carries over to this context.
\end{proof}

\subsection{Normalisation}

\begin{definition}\label{qrat}
Define the normalisation functor $N: s\C_{\L} \to dg\C_{\L}$ by mapping $A$ to its associated normalised complex $NA$, equipped with the Eilenberg-Zilber shuffle product (as in \cite{QRat}).
\end{definition}

\begin{lemma}\label{nworks}
$N: s\hat{\C}_{\L} \to dg\hat{\C}_{\L}$ is a right Quillen equivalence.
\end{lemma}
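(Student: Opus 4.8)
The plan is to compare the two model structures directly to see that $N$ is right Quillen, and then to upgrade this to a Quillen equivalence by reducing, via cofibrant generators, to Quillen's normalisation equivalence \cite{QRat}.

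First I would unwind the definitions. Under the identification $c\Sp\simeq(s\hat\C_\L)^{\op}$, Definition \ref{smcldef} says that in $s\hat\C_\L$ a map $R\to S$ is a fibration precisely when $N_i(R)\to N_i(S)$ is surjective for all $i>0$, a weak equivalence precisely when it is acyclic, and every object is fibrant (since $N_i(k)=0$ for $i>0$). By Definition \ref{dgmcldef} the same shape of description holds in $dg\hat\C_\L$: all objects are fibrant, fibrations are the surjections in positive chain degrees, and weak equivalences are the quasi-isomorphisms. Since $(NR)_i=N_i(R)$ and $\pi_iA\cong\H_i(NA)$, the functor $N$ sends fibrations to fibrations, sends weak equivalences to weak equivalences, and moreover \emph{reflects} weak equivalences; hence it also sends trivial fibrations to trivial fibrations. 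For a left adjoint: $N\colon s\C_\L\to dg\C_\L$ preserves finite limits (Dold--Kan is an equivalence on the underlying module categories, and finite limits of (pro-)Artinian algebras are computed there), so for each $A\in dg\C_\L$ the functor $B\mapsto\Hom_{dg\C_\L}(A,NB)$ on $s\C_\L$ is left exact and hence pro-representable by Lemma \ref{lexlemma}; this produces a functor $\mathcal N\colon dg\C_\L\to s\hat\C_\L$ with $\Hom_{s\hat\C_\L}(\mathcal N A,B)\cong\Hom_{dg\C_\L}(A,NB)$ for $B\in s\C_\L$, which extends to a left adjoint $\mathcal N\colon dg\hat\C_\L\to s\hat\C_\L$ of $N$. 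Thus $(\mathcal N,N)$ is a Quillen adjunction.

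To see that it is a Quillen equivalence I would use the standard criterion (\cite{Hovey} Corollary 1.3.16): since every object is fibrant and $N$ reflects weak equivalences, it is enough to show that the unit $\eta_X\colon X\to N\mathcal N(X)$ is a weak equivalence for every cofibrant $X\in dg\hat\C_\L$ (with $\mathcal N(X)$ automatically fibrant, no replacement is needed). Since the class of $X$ for which $\eta_X$ is a weak equivalence is closed under retracts, pushouts and transfinite compositions and contains the initial object, the usual argument with the cofibrant generators reduces this to the case of a free pro-Artinian commutative DG $\L$-algebra $R$ on a (pro-)finite-dimensional graded complex $V$. For such $R$, $\mathcal N(R)$ is the free pro-Artinian \emph{simplicial} commutative $\L$-algebra on the Dold--Kan denormalisation $N^{-1}V$, and $\eta_R$ is a quasi-isomorphism precisely by the characteristic-zero computation of the homotopy of free simplicial commutative algebras (free graded-commutative on $\pi_*=\H_*$), which is part of Quillen's theorem \cite{QRat}; note that since $\Char k=0$ the ring $\L$ is a $\Q$-algebra, so \cite{QRat} applies over $\L$.

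The crux is this last step: the left adjoint $\mathcal N$ is \emph{not} denormalisation followed by a free construction in any naive way --- denormalisation does not respect the commutative multiplication --- so obtaining an explicit enough grip on $\mathcal N$ and on $\eta$ to conclude genuinely uses the characteristic-zero input of \cite{QRat}. What then remains is bookkeeping: verifying the reduction from all cofibrant objects to free ones, the compatibility of $\mathcal N$ and of $\eta$ with the pro-structure (so that the homotopy-group computations may be carried out level by level, using exactness of $\Lim$ on strict pro-modules as in the proof of Proposition \ref{smcl}), and the correspondence between the fibrations, weak equivalences and cofibrations of Propositions \ref{smcl} and \ref{dgmcl} under $N$ and $\mathcal N$.
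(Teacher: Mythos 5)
Your overall strategy is the one the paper uses: check directly that $N$ preserves limits, sends fibrations to fibrations and weak equivalences to weak equivalences (and reflects the latter, since $\pi_iA\cong\H_i(NA)$), and then reduce the Quillen-equivalence claim to the statement that the unit $R\to NN^*R$ is a weak equivalence for cofibrant $R$, which ultimately rests on Quillen's characteristic-zero comparison in \cite{QRat}. Your appeal to the Hovey criterion (all objects fibrant plus reflection of weak equivalences) is just a repackaging of the paper's explicit counit/two-out-of-three argument, so up to that point the two proofs coincide.

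The step I would not accept as written is your reduction of the unit statement to free objects ``by the usual argument with the cofibrant generators''. First, the class of objects on which $\eta$ is a weak equivalence is not closed under pushouts in any automatic way; you would need a gluing argument along cofibrations together with an analysis of how $\mathcal N$ and $N$ interact with the relevant colimits. Second, and more seriously, $dg\hat{\C}_{\L}$ is not presented here by generating cofibrations at all: as with the analogous categories later in the paper (cf.\ Theorem \ref{dequiv} and Proposition \ref{dgspmodel}), the natural (co)generators are the small extensions, so cofibrant objects are not retracts of cell complexes obtained by attaching generating cofibrations; they are pro-algebras whose underlying graded algebra is a completed free (power series) algebra, with differential that need not be linear on the generators --- e.g.\ the duals of $L_\infty$-structures as in Proposition \ref{tqiscor} --- so ``free on a complex $V$'' does not exhaust the cofibrant objects, and your induction scheme does not reach them. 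The way to make this step rigorous is the one the paper's citation of \cite{QRat} Theorem I.4.6 stands for, and which it carries out explicitly in the analogous comparisons (Lemma \ref{dunit}, Theorem \ref{totequiv}): filter a cofibrant $R$ by powers of the maximal ideal, identify the graded pieces with (completed) symmetric powers of $\cot R$, use the characteristic-zero fact that symmetric powers preserve quasi-isomorphisms under (de)normalisation, and pass to the limit using exactness of $\Lim$ on strict pro-objects as in Proposition \ref{smcl}. With that substitution your argument matches the paper's; without it the key claim is unsupported.
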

\begin{proof}
It is immediate from the definitions that $N$ is a right Quillen functor, as it preserves limits, takes fibrations to fibrations, and takes weak equivalences to weak equivalences. The argument of \cite{QRat} Theorem I.4.6 shows that the unit $R \to NN^*R$ of the adjunction is a weak equivalence for all cofibrant $R \in dg\hat{\C}_{\L}$. Given an arbitrary element $A \in s\hat{\C}_{\L}$, we need to show that the co-unit $\vareps:N^*\widehat{NA} \to A$ is a weak equivalence, for a cofibrant approximation $\widehat{NA}$ of $NA$. But $\widehat{NA} \to NN^*\widehat{NA} $ is a weak equivalence, so $NN^*\widehat{NA} \to NA$ must be, and hence $\vareps$ is, as $N$ reflects isomorphisms.
\end{proof}

\begin{definition}
Define $\Spf N^*:  sDG\Sp \to sc\Sp$ by mapping $X: dg\C_{\L} \to \bS$ to the composition $X\circ N:s\C_{\L} \to \bS$. Note that this is well-defined, since $N$ is left-exact.
\end{definition}

\begin{theorem}\label{nequiv}
$\Spf N^*:  sDG\Sp \to sc\Sp$ is a right Quillen equivalence.
\end{theorem}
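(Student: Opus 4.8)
The plan is to verify that $\Spf N^*$ is a right Quillen functor and then apply the standard criterion for a Quillen adjunction to be a Quillen equivalence (\cite{Hovey} Corollary 1.3.16): it is enough that $\Spf N^*$ reflect weak equivalences between fibrant objects and that the derived unit be a weak equivalence on cofibrant objects. The left adjoint $F$ exists because $\Spf N^*$ is precomposition with the left-exact functor $N$; moreover on affines Corollary \ref{homrep} gives $\Hom_{sDG\Sp}(F\,\Spf A, Z) = (Z\circ N)(A)_0 = Z(NA)_0$, so that $F(\Spf A) = \Spf(NA)$ for $A \in s\C_{\L}$.

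First, for right-Quillenness: since $N$ is exact (Dold--Kan), it sends a surjection $A \onto B$ in $s\C_{\L}$ with kernel $I$ to a surjection $NA \onto NB$ with kernel $NI$, a complex of $k$-vector spaces, and $\m(NA) = N(\m(A))$. The Eilenberg--Zilber shuffle product on $NA$ factors through $N$ of the multiplication of $A$, so $\m(A)\cdot I = 0$ forces $\m(NA)\cdot NI = 0$, while $\pi_i(-) = \H_i(N(-))$ shows $N$ preserves acyclicity. Hence $N$ carries small extensions to small extensions and acyclic small extensions to acyclic small extensions, so for $p$ quasi-smooth in $sDG\Sp$ the map $p\circ N$ is quasi-smooth in $sc\Sp$; thus $\Spf N^*$ preserves geometric fibrations, and by the $dg$ analogue of Lemma \ref{smallworks} also geometric trivial fibrations.

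Next, reflection of weak equivalences between fibrant objects. For quasi-smooth $X \in sDG\Sp$ and $V \in s\FD\Vect_k$ one has $\tan(\Spf N^* X)(V) = X(k\oplus NV) = \tan(X)(NV)$, so $\tan(\Spf N^* X) = \tan(X)\circ N$. Since $N$ restricts to an equivalence $s\FD\Vect_k \simeq dg\FD\Vect_k$ that preserves quasi-isomorphisms and sends $K^n$ to $k[-n]$, this yields canonical isomorphisms $\H^i(\Spf N^* X) \cong \H^i(X)$, natural in $X$. By Corollary \ref{weakchar} and its analogue from Proposition \ref{sdgmodel}, a map between quasi-smooth objects is a geometric weak equivalence iff it is an isomorphism on every $\H^i$; so $\Spf N^*$ preserves and reflects weak equivalences between fibrant objects. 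Applying the same computation to the cotangent complexes of the affines $\Spf A$ and $\Spf NA$, and using that $N$ preserves cotangent complexes (a consequence of Lemma \ref{nworks}), the derived unit $\Spf A \to \Spf N^*\bigl((\Spf NA)^{\fib}\bigr)$ is an isomorphism on every $\H^i$, hence a weak equivalence.

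The hard part will be promoting this derived-unit statement from affines to all cofibrant objects. Since every cofibrant object of $sc\Sp$ is a retract of an $I$-cell complex (Lemma \ref{spsmall}), built from affines by transfinite composition and pushout along the generating cofibrations of Definition \ref{idef}, and $\bL F$ preserves homotopy colimits while $\Spf N^*$ preserves weak equivalences between fibrant objects, the unit condition will propagate once one knows $\Spf N^*$ is compatible with the relevant homotopy pushouts; establishing this compatibility is the technical heart. A cleaner alternative route passes through the representability Theorems \ref{schrep} and \ref{dgschrep}, under which $\Spf N^*$ corresponds to precomposition $\cS' \to \cS$, $H \mapsto H\circ N$, with candidate homotopy inverse $H \mapsto \bigl(A \mapsto H(\bL N^* A)\bigr)$ computed via cofibrant replacement in $dg\hat{\C}_{\L}$: here Lemma \ref{nworks} guarantees that $\bL N^*$ is an equivalence of homotopy categories, so it preserves homotopy fibre products, which is exactly what keeps the construction inside $\cS'$ (condition (A1)), and the derived unit and counit of $N^* \dashv N$ supply the required natural weak equivalences $H \simeq (H\circ N)\circ \bL N^*$ and so on. Either way, the only real obstacle is transporting the affine-level Quillen equivalence of Lemma \ref{nworks} through the cosimplicial and functor-category constructions, with the homotopy-invariance ((A2)) built into $\cS$ and $\cS'$ keeping the bookkeeping manageable.
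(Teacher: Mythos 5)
Your ``cleaner alternative route'' is exactly the paper's proof: after checking right-Quillenness as you do, the paper invokes Theorems \ref{schrep} and \ref{dgschrep} to identify $\bR\Spf N^*$ with precomposition $N^*:\cS'\to\cS$ and uses Lemma \ref{nworks} to produce the quasi-inverse $H\mapsto H(\bL N^*(-))$, so that part of your proposal is correct and essentially identical in approach. The cell-induction route you sketch first is left incomplete (you concede the compatibility with pushouts is unproved), but this does not matter since the representability argument carries the whole proof.
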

\begin{proof}
$\Spf N^*$ is clearly continuous, so it is a right adjoint. To see that it is a right Quillen functor, just observe that $N$ sends surjections to surjections, and acyclic surjections to acyclic surjections. In order to see that this is a right Quillen equivalence, it suffices to show that the derived functor $\bR\Spf N^*:  \Ho(sDG\Sp) \to \Ho(sc\Sp)$ is an equivalence.

We now observe that Theorems \ref{schrep} and \ref{dgschrep} show that  $\Ho(sc\Sp)$ (resp. $\Ho(sDG\Sp)$) is  equivalent to the homotopy category consisting of those functors from $s\C_{\L}$ (resp. $dg\C_{\L}$) to $\Ho(\bS)$ with $F(k)\simeq\bt$ and for which   
$ 
F(A\by_BC) \to F(A)\by_{F(B)}^h F(C)
$
is a weak equivalence  in $\bS$ whenever $\pi_0A \to \pi_0B$ is surjective. Under these equivalences, $\bR\Spf N^*:  \Ho(sDG\Sp) \to \Ho(sc\Sp)$ corresponds to the functor $ N^*: \cS' \to \cS$ given by $N^* F(A):= F(NA)$.  

Now, the normalisation functor $N: s\C_{\L}\to dg\C_{\L}$ is a right Quillen equivalence, by Lemma \ref{nworks}; denote the derived left adjoint by $\bL N^*$. We may then define a functor $ (\bL N^*)^*:\cS \to \cS'$ by $ (\bL N^*)^*F(A):=F(\bL N^*A)$. This is well-defined because these functors preserve homotopy groups and homotopy limits. Since the functors $N$ and $\bL N^*$ are homotopy inverses, they induce equivalences $\Ho(\cS) \simeq \Ho(\cS')$.

This shows that $\bR\Spf N^*$ yields an equivalence of homotopy categories, as required. 
\end{proof}

\subsection{Pro-Artinian cochain chain algebras and denormalisation}

\begin{definition}
Define $DG dg\C_{\L}$ to be the category of Artinian local  $\N_0\by\N_0$-graded  graded-commutative  $\L$-algebras $A^{\bt}_{\bt}$ with differential of bidegree $(1,-1)$ and  residue field $k$. Let  $DGdg\hat{\C}_{\L}$ be the category of pro-objects of $DGdg\C_{\L}$, and denote its opposite category by $dgDG\Sp$. 
\end{definition}

\begin{definition}\label{nabla}
Define the denormalisation functor $D:DG dg\hat{\C}_{\L}\to cdg\hat{\C}_{\L}$ by
$
D(A):= (N_c^{-1})(A),
$
for $N_c$ the normalisation functor for cochain complexes. The multiplication on $DA$ is then defined using the Eilenberg-Zilber shuffle product $\nabla: D^m(A)\by D^m(A) \to D^m(A)$. 

Explicitly, we first form the denormalised cosimplicial complex as the formal sum 
$$
D^nA:= \bigoplus_{\begin{smallmatrix} m+s=n \\ 1 \le j_1 < \ldots < j_s \le n \end{smallmatrix}} \pd^{j_s}\ldots\pd^{j_1}A^m.
$$
 We then  define the operations $\pd^j$ and $\sigma^i$ using the cosimplicial identities, subject to the conditions that $\sigma^i A =0$ and $\pd^0a= da -\sum_{i=1}^{n+1}(-1)^i \pd^i a$ for all $a \in A^n$.

We now have to define the product $\nabla$ from $D^nA \ten D^nA$ to $D^n A$. Given a finite set  $I$ of strictly positive integers, write $\pd^I= \pd^{i_s}\ldots\pd^{i_1}$, for $I=\{i_1, \ldots i_s\}$, with $1 \le i_1 < \ldots < i_s$. The product is then   defined on the basis by 
$$
(\pd^Ia)\nabla (\pd^J b):= \left\{ \begin{matrix} \pd^{I\cap J}(-1)^{(J\backslash I, I \backslash J)} (a\cdot b) & |a|= |J\backslash I|, |b|= |I\backslash J|,\\ 0 & \text{ otherwise},\end{matrix} \right.
$$
where for disjoint sets $S,T$ of integers, $(-1)^{(S,T)}$ is the sign of the shuffle permutation of $S \sqcup T $ which sends the first $|S|$ elements to $S$ (in order), and the remaining $|T|$ elements to $T$ (in order). 

Beware that this description only works for $0 \notin I \cup J$.

Observe that $D$ is continuous, so has  left adjoint $D^*$.
\end{definition}

\begin{definition}\label{dgdgdefcatdef} Given  a map $f:R \to S$ in the category $ DGdg\hat{\C}_{\L} $, say that $f$ is:  
\begin{enumerate}
\item
 a geometric fibration if $Df$ is a geometric fibration in $cdg\hat{\C}_{\L}$;
\item a geometric weak equivalence if  $Df$ is a geometric weak equivalence in $cdg\hat{\C}_{\L}$;
\item  
a geometric cofibration if it has the left lifting property with respect to all trivial fibrations.
\end{enumerate}
\end{definition}

\begin{definition}
Define a surjective map $f: A \to B$ in $DGdg\C_{\L}$ to be a small extension if it is surjective   with kernel $V$, such that $\m(A)\cdot V=0$.
Define $P$ to be the class of small extensions in $DGdg\C_{\L}$, and $Q\subset P$ to consist of those small extensions for which $\H_*(\Tot^{\Pi}V)=0$.  \end{definition}

\begin{lemma}\label{Dsmallfull} 
Given $A \in DGdg\C_{\L}$, every small extension $DA \to B$ is isomorphic to $Df$, for some small extension $f:A\to C$ in $cdg\C_{\L}$.
\end{lemma}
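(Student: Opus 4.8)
The plan is to realise the kernel of $DA \to B$ as the denormalisation of an ideal of $A$, exploiting that $D$ is exact and compatible with the shuffle product. Write $I := \ker(DA \to B)$; since $\m(DA)\cdot I = 0$, this is a square-zero ideal of $DA$, and in particular a sub-cosimplicial-chain-complex. By construction the underlying cosimplicial chain complex of $DA$ is the cosimplicial denormalisation $N_c^{-1}$ of the cochain chain complex underlying $A$, and $N_c^{-1}$ (cosimplicial Dold--Kan) is an exact equivalence between cochain chain complexes over $k$ and cosimplicial chain complexes over $k$. Hence $I = N_c^{-1}(J)$ for a unique subcomplex $J := N_cI$ of $A$, and $N_c$ likewise identifies $\m(DA)=D(\m(A))$ with $\m(A)$.

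Next I would promote $J$ to a square-zero ideal of $A$. The module map $DA \otimes I \to I$, composed with the Eilenberg--Zilber shuffle $\nabla\colon N_c(DA)\otimes N_c(I) \to N_c(DA\otimes I)$ and the identification $N_c(DA)\cong A$ of bigraded algebras, gives an $A$-action on $J$; by naturality of $\nabla$ the inclusion $J \into A$ is a module map, so $J$ is an ideal of $A$, and the same naturality applied to $\m(DA)\cdot I = 0$ yields $\m(A)\cdot J = 0$. Set $C := A/J$ and let $f\colon A \to C$ be the quotient map, a small extension in $DGdg\C_{\L}$.

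Finally, since $D$ is exact and sends surjections of algebras to quotient maps, applying it to $0 \to J \to A \to C \to 0$ gives $0 \to I \to DA \to DC \to 0$ with $DC = DA/I = B$ as quotient algebras; hence $Df$ is canonically isomorphic to $DA \to B$ over $DA$, and the isomorphism is unique since both targets are the quotient $DA/I$.

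The main obstacle is the compatibility of denormalisation with the multiplicative structure: one must check that $N_c(DA)\cong A$ as $\N_0\times\N_0$-graded algebras, and that $N_c$ carries the ideal $I$ of $DA$ to an ideal of $A$. This is where the explicit shuffle formula of Definition \ref{nabla} enters --- it is precisely the product for which $N_c$ is lax symmetric monoidal and inverts $D$ on algebras --- and reduces to a standard (if slightly tedious) verification from the Eilenberg--Zilber theorem; everything else in the argument is formal bookkeeping with exact functors and square-zero ideals.
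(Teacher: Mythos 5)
Your argument is correct and follows essentially the same route as the paper's proof: pass to $J=N_c I$ via Dold--Kan, use the Eilenberg--Zilber compatibility between the bigraded product on $A$ and the product $\nabla$ on $DA$ to conclude $\m(A)\cdot J=0$ (hence that $J$ is an ideal), and set $C=A/J$, with exactness of $N_c^{-1}$ identifying $DC$ with $B$. The only real difference is that the paper carries out the verification you defer by the explicit cup-product formula $x\cdot a=((\pd^{m+1})^n x)\nabla((\pd^0)^m a)$ for $x\in N_c^m I$, $a\in \m(A^n)$; note also that the structure map you call the ``shuffle'' $N_c(DA)\otimes N_c(I)\to N_c(DA\otimes I)$ is really this Alexander--Whitney/cup-product map (the cosimplicial shuffle goes in the opposite direction, and the lax structure is not symmetric), though this terminological slip does not affect the argument.
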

\begin{proof}
Take an ideal $I \lhd DA$ with $I \nabla D\m(A)=0$ (for $\nabla$ as in Definition \ref{nabla}). It suffices  to show that $(NI) \cdot \m(A)=0$, since this forces $NI$ to be an ideal, and we may set $C=A/NI$. Now, observe that given $x \in N^mI, a \in \m(A^n)$, we have $x\cdot a= ((\pd^{m+1})^nx )\nabla ((\pd^0)^m a)=0 \in A^{m+n}$, as required.
\end{proof} 

\begin{corollary}\label{dfibrnworks}
Given $A \in DGdg\hat{\C}_{\L}$, every  fibration  $DA \to B$ lies in the essential image of $D$. 
\end{corollary}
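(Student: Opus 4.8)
The statement to prove is Corollary \ref{dfibrnworks}: given $A \in DGdg\hat{\C}_{\L}$, every fibration $DA \to B$ in $cdg\hat{\C}_{\L}$ lies in the essential image of the denormalisation functor $D$. The plan is to reduce this to Lemma \ref{Dsmallfull}, which is the corresponding statement for small extensions in $DGdg\C_{\L}$, by combining two devices: the factorisation of an arbitrary surjection (hence fibration in $cdg\hat{\C}_{\L}$, since every fibration $R\to S$ there is, after pulling back along $\pi_0$, a surjection composed with an $I_{\Sp}$-type map) into small extensions, and the passage from Artinian objects to pro-Artinian ones by taking inverse limits. A fibration $DA\to B$ in $cdg\hat{\C}_{\L}$ is, in each cosimplicial degree, a surjection of pro-Artinian chain algebras. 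Writing $B$ as a saturated strict pro-object (Lemma \ref{lexlemma}), we may express $DA \to B$ as a cofiltered inverse limit of surjections $DA \to B_s$ with $B_s \in cdg\C_{\L}$; each $B_s$ is a quotient of $D(A_t)$ for some Artinian truncation $A_t$ of $A$, and $D$ commutes with the relevant finite quotients.

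The key steps, in order, are as follows. First, reduce to the case where $A$ itself is Artinian: since $A = \{A_t\}_{t}$ is a strict pro-object in $DGdg\hat{\C}_{\L}$, the functor $D$ commutes with the cofiltered limit (it is continuous by Definition \ref{nabla}), so $DA = \Lim_t D(A_t)$, and any surjection $DA \to B$ with $B$ Artinian factors through some $D(A_t)$; more generally, for $B$ pro-Artinian we write $B = \Lim_s B_s$ and handle each $B_s$. Second, for $B = B_s$ Artinian and the induced surjection $D(A_t) \to B_s$, factor this surjection as a finite composition of small extensions using the $dg\C_{\L}$-analogue of Lemma \ref{small} (proved immediately after Proposition \ref{dgmcl}); then apply Lemma \ref{Dsmallfull} repeatedly. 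The point is that Lemma \ref{Dsmallfull} produces, from a small extension $DA' \to B'$, a small extension $f': A' \to C'$ in $cdg\C_{\L}$ with $Df' \cong DA' \to B'$; iterating up the tower of small extensions, and checking that at each stage the intermediate object is again in the image of $D$ (so that Lemma \ref{Dsmallfull} applies to the next stage), we obtain a composable chain of small extensions in $DGdg\C_{\L}$ whose denormalisation is the original surjection. Third, reassemble: the surjections $A \to C_s$ in $DGdg\hat{\C}_{\L}$ thus obtained form a compatible inverse system (compatibility follows from the uniqueness clause implicit in Lemma \ref{Dsmallfull}, since $D$ is faithful, being the inverse of $N_c$ on underlying complexes), so their limit $C = \Lim_s C_s \in DGdg\hat{\C}_{\L}$ satisfies $DC \cong \Lim_s D(C_s) = \Lim_s B_s = B$, and the map $A \to C$ denormalises to $DA \to B$.

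The main obstacle I expect is the bookkeeping in the second step: ensuring that when one factors a surjection in $cdg\C_{\L}$ through small extensions, each intermediate term of the factorisation is itself in the essential image of $D$, so that Lemma \ref{Dsmallfull} can be applied inductively rather than only to the bottom layer. The clean way to handle this is to produce the filtration on the $DGdg\C_{\L}$ side first — filter $A$ by powers of its maximal ideal (combined with powers of $\mu$), so that the graded pieces are small extensions, apply $D$ (which is exact on the relevant exact sequences because it is additive and $N_c$-inverse), and observe that the resulting filtration of $DA$ refines any given factorisation of $DA \to B$ through small extensions in $cdg\C_{\L}$, after which one matches up the two filtrations using the essential uniqueness in Lemma \ref{Dsmallfull}. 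A secondary technical point is the passage to pro-objects and saturation, but this is routine given Lemma \ref{lexlemma} and the continuity of $D$; the only care needed is that "surjective" for a map in $cdg\hat{\C}_{\L}$ is tested levelwise (Definition \ref{surjdef} extended cosimplicially) and that $D$ preserves this, which is immediate from the explicit formula for $D^nA$ as a direct sum of copies of the $A^m$.
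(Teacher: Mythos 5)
Your proposal is correct and follows exactly the argument the paper leaves implicit: the paper states this as an immediate consequence of Lemma \ref{Dsmallfull}, the point being that a fibration (levelwise surjection) decomposes pro-level-wise into small extensions, each lifts through $D$ by that lemma with the intermediate quotient $DA/I=D(A/NI)$ again in the image of $D$, and continuity of $D$ lets you pass to the limit. Your extra bookkeeping on pro-objects and on compatibility of the lifted tower is exactly the routine filling-in the paper omits.
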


\begin{lemma}\label{dunit}
Given a cofibration $j:R \to S$ in $cdg\hat{\C}_{\L}$, an object $T \in DGdg\hat{\C}_{\L}$, and a morphism $ R \to DT$, the canonical map
$$
 f :S\ten_RDT \to D(D^*S\ten_{D^*R}T),
$$
where $\ten$ on the right-hand side denotes  graded tensor product,  is a trivial fibration.
\end{lemma}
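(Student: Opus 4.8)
The plan is to rewrite $f$ via the adjunction $D^{*}\dashv D$, reduce to the case of a single generating cofibration, and there identify $f$ with an Eilenberg--Zilber shuffle map. Throughout, ``trivial fibration'' means a map of algebras which is quasi-smooth in the sense of Definition \ref{dgmcldef} and a weak equivalence; recall this class is closed under composition, retracts, transfinite composition, and cobase change along cofibrations (the latter because quasi-smooth maps are the cofibrations of the structures of Propositions \ref{dgmcl} and \ref{sdgmodel}, and trivial cofibrations are stable under cobase change).

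First I would observe that, since $D^{*}$ preserves pushouts, $D^{*}(S\otimes_{R}DT)\cong D^{*}S\otimes_{D^{*}R}D^{*}DT$, and under this identification $f$ is the transpose of $\mathrm{id}_{D^{*}S}\otimes_{D^{*}R}\varepsilon_{T}$, where $\varepsilon\colon D^{*}D\to\mathrm{id}$ is the counit. Because denormalisation is an equivalence on the underlying cosimplicial cochain modules, while the shuffle product on $DA$ both determines and is determined by the multiplication on $A$, the functor $D$ is fully faithful and $\varepsilon$ is an isomorphism; hence $f$ agrees, up to an isomorphism, with the unit $\eta_{S\otimes_{R}DT}\colon S\otimes_{R}DT\to DD^{*}(S\otimes_{R}DT)$. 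I would also record that $D$ is right Quillen, so $D^{*}$ preserves cofibrations.

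Next I would reduce to a generating cofibration. The class of cofibrations $j\colon R\to S$ for which $f$ is a trivial fibration for \emph{every} $R\to DT$ is closed under retracts; it is stable under pushout, since for a pushout $R\to S'=R\otimes_{R_{0}}S_{0}$ of $j_{0}\colon R_{0}\to S_{0}$ one has $S'\otimes_{R}DT\cong S_{0}\otimes_{R_{0}}DT$ and $D^{*}S'\otimes_{D^{*}R}T\cong D^{*}S_{0}\otimes_{D^{*}R_{0}}T$, so $f$ is literally the map associated with $j_{0}$ and the induced $R_{0}\to DT$; it is stable under composition, because for $R\xra{j_{1}}R'\xra{j_{2}}S$ and $T':=D^{*}R'\otimes_{D^{*}R}T$ (with $R'\to DT'$ the transpose of the canonical $D^{*}R'\to T'$) one checks $f_{j_{2}j_{1}}=f_{j_{2}}\circ(S\otimes_{R'}f_{j_{1}})$, in which the first factor is a trivial fibration by the inductive hypothesis and the second is the cobase change of the trivial cofibration $f_{j_{1}}$ along $j_{2}$; and it is stable under transfinite composition, $D$ commuting with the colimits involved since it is built from finite colimits in each cosimplicial degree. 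By the small object argument in $cdg\hat{\C}_{\L}$ (using, as in the remarks of Section \ref{model}, that $cdg\C_{\L}$ has a small set of isomorphism classes), every cofibration is a retract of a transfinite composition of pushouts of generating cofibrations, so it suffices to treat $j=j_{0}$ a generating cofibration.

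For such a $j_{0}$ the algebra $S_{0}$ is obtained from $R_{0}$ by freely adjoining a cell --- a generator in a fixed cosimplicial and chain degree, together (for the generating trivial cofibrations) with a primitive. Then $S_{0}\otimes_{R_{0}}DT$ is a free graded-commutative $DT$-algebra on that cell, while, by the description of the left adjoint of denormalisation on modules, $D^{*}S_{0}$ is the free $DGdg$-algebra over $D^{*}R_{0}$ on the cosimplicial normalisation of the cell, so $D^{*}S_{0}\otimes_{D^{*}R_{0}}T$ is the corresponding free $DGdg$-algebra over $T$. Passing to graded-symmetric powers, $f$ is then identified, summand by summand, with the iterated Eilenberg--Zilber shuffle map $\nabla$ tensored up over $DT$; by the Eilenberg--Zilber theorem, together with exactness of $S_{p}$-invariants in characteristic $0$, this map is a quasi-isomorphism after totalisation, so $f$ is a weak equivalence, and since $\nabla$ admits the Alexander--Whitney map as a natural retraction, $f$ exhibits its target as a deformation retract of its source and is in particular quasi-smooth. (Alternatively, $f$ is a split monomorphism whose complementary summand has acyclic total complex, so Lemma \ref{dgweakext} applies.) Thus $f$ is a trivial fibration, and by the reduction this finishes the proof. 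The main obstacle is precisely this last step: pinning down $D^{*}$ on a free cosimplicial extension so that the comparison map really is (built from) the shuffle map, after which the classical Eilenberg--Zilber and Alexander--Whitney facts do the work; everything preceding it is formal.
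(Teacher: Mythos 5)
There are two genuine gaps here. First, you have inverted the meaning of ``trivial fibration''. In $cdg\hat{\C}_{\L}$ the fibrations are the surjections (duals of the geometric cofibrations of Proposition \ref{sdgmodel}), while the quasi-smooth maps of Definition \ref{dgmcldef} are the \emph{cofibrations}; so your standing convention ``trivial fibration $=$ quasi-smooth $+$ weak equivalence'' describes the trivial \emph{cofibrations}, and the closure property your reduction leans on (stability under cobase change along cofibrations) holds for that class but not for trivial fibrations. Correspondingly, your final claim that $f$ is a split monomorphism exhibiting its target as a deformation retract of its source gets the direction wrong: $f$ is surjective with a large kernel (the levelwise symmetric algebra on $\cot(S/R)$ maps \emph{onto}, not into, the denormalisation of the bigraded symmetric algebra on $N\cot(S/R)$), so Lemma \ref{dgweakext} cannot be applied to $f$ as a monomorphism, and quasi-smoothness of $f$ is neither needed nor true in general. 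What the statement requires is exactly surjectivity (the fibration half) together with weak equivalence.

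Second, the reduction to ``generating cofibrations'' by the small object argument is not available on the algebra side. These pro-Artinian model structures are fibrantly cogenerated, not cofibrantly generated (cf.\ Theorem \ref{dequiv} and Proposition \ref{dgspmodel}): it is the fibrations that are retracts of transfinite compositions of pullbacks of small extensions, whereas the cofibrations are defined by a lifting property and admit no cell presentation (objects of $cdg\hat{\C}_{\L}$ are cosmall rather than small), nor is there any justification for your explicit ``free cell attachment'' description of generating cofibrations on which the last step depends. The paper's proof runs the induction in the opposite, available, direction: filter $S\ten_R DT$ by $F^i = DT\m(S)^i+\m(DT)\m(S)^{i-2}$, identify the associated graded pieces of $f$ with the surjections from $\Symm^i\cot(S/R)\oplus\cdots$ onto $N^{-1}\Symm^iN\cot(S/R)\oplus\cdots$ (this is where the Eilenberg--Zilber-type comparison you have in mind actually enters), and exhibit $f$ as a transfinite composition of pullbacks of acyclic small extensions, which are trivial fibrations by Lemma \ref{weakext}. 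Your opening observation that the counit $D^*D\to\id$ is an isomorphism, so that $f$ is essentially the unit on $S\ten_R DT$, is harmless and consistent with Lemma \ref{Dsmallfull}, but it does not remove either difficulty.
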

\begin{proof}
Take the filtration $F^i (S\ten_RDT)= DT\m(S)^i + \m(DT)\m(S)^{i-2}$  (similarly to \S \ref{adams}), and observe that on the associated graded pieces, we have 
\begin{eqnarray*}
&\Gr^if:\Symm^i \cot (S/R) \oplus \bigoplus_{r=1}^{i-1} (\m(DT)^r/\m(DT)^{r+1})\ten \Symm^{i-1-r}\cot (S/R)\\
 &\to N^{-1}\Symm^i N\cot(S/R)  \oplus \bigoplus_{r=1}^{i-1} (\m(DT)^r/\m(DT)^{r+1})\ten N^{-1}\Symm^{i-1-r}N\cot (S/R),
\end{eqnarray*}
where the tensor product and symmetric functor on the right-hand side  follow the usual graded conventions. These maps are all surjective, so $f$ must be a fibration. 

Note that $\Gr^if$ is also a quasi-isomorphism, in the sense that  $H_*(\Tot^{\Pi}N\ker(\Gr^if))=0$. Now,  $\ker(\Gr^if)$ is the kernel of the small extension
$$
f_i:(S\ten_RDT)/F^{i+1} \to (D(D^*S\ten_{D^*R}T)/F^{i+1})\by_{D(D^*S\ten_{D^*R}T)/F^i}(S\ten_RDT)/F^i,
$$
which is  a trivial fibration by Lemma \ref{weakext}.
 Thus $f$ is a transfinite composition of pullbacks of trivial fibrations, so must be a trivial fibration.
\end{proof}

\begin{theorem}\label{dequiv}
With the structures above, $DGdg\hat{\C}_{\L}$ is a closed model category. It is fibrantly cogenerated, with cogenerating fibrations $P$ and cogenerating trivial fibrations $Q$. Moreover, $D:DG dg\hat{\C}_{\L}\to cdg\hat{\C}_{\L}$ is a right Quillen equivalence.
\end{theorem}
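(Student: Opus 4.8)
The plan is to obtain the model structure on $DGdg\hat{\C}_{\L}$ as one right-induced (cotransferred) from $cdg\hat{\C}_{\L}$ along the right adjoint $D$, i.e.\ to run the dual of the standard transfer argument (the dual of the cofibrant-generation recognition theorem, cf.\ \cite{Hirschhorn} \S11.3 and \cite{Hovey} \S2.1, applied in the category opposite to $sDG\Sp$, which is cofibrantly generated by Proposition \ref{sdgmodel}). By construction, fibrations and weak equivalences in $DGdg\hat{\C}_{\L}$ are detected by $D$; hence $D$ preserves fibrations and trivial fibrations, reflects \emph{all} weak equivalences, and — once the structure is known to exist — is automatically right Quillen. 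So the work splits into (a) producing the model structure with $P,Q$ as cogenerators and (b) the Quillen equivalence.

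For (a), first record that $cdg\hat{\C}_{\L}=(sDG\Sp)^{\op}$ is fibrantly cogenerated, and that its fibrations (resp.\ trivial fibrations) may be cogenerated by the small extensions (resp.\ acyclic small extensions) of $cdg\C_{\L}$, using the $cdg$-analogue of Lemma \ref{small} to factor surjections into small extensions. By Lemma \ref{Dsmallfull}, every small extension of $cdg\C_{\L}$ out of an object $DA$ is $Df$ for a small extension $f$ of $DGdg\C_{\L}$, so the candidate cogenerating fibrations and trivial fibrations of $DGdg\hat{\C}_{\L}$ are precisely $P$ and $Q$. Since all local Artinian algebras are quotients of finitely generated polynomial rings, $DGdg\C_{\L}$ has an essentially small set of isomorphism classes (as in the remark of Section \ref{model}), so $P,Q$ may be taken to be sets and the co-small object argument applies in $DGdg\hat{\C}_{\L}$ (which has all limits, being the pro-category of an Artinian category); Corollary \ref{dfibrnworks} guarantees that the factorisations it produces lie in the essential image of $D$, hence are genuine fibrations/trivial fibrations in the sense of Definition \ref{dgdgdefcatdef}. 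The one substantive hypothesis of the transfer is acyclicity: that relative $Q$-cocell complexes are weak equivalences. For $q\in Q$, the map $Dq$ is a small extension of $cdg\hat{\C}_{\L}$ (cf.\ the proof of Lemma \ref{Dsmallfull}) whose kernel is the denormalisation $N_c^{-1}V$ of $\ker q=V$; as $\H_*(\Tot^{\Pi}N_c N_c^{-1}V)=\H_*(\Tot^{\Pi}V)=0$, Lemma \ref{dgweakext} shows $Dq$ is an acyclic small extension, i.e.\ a trivial fibration of $cdg\hat{\C}_{\L}$. Since $D$ preserves limits and trivial fibrations are stable under pullback and transfinite co-composition, $D$ carries every relative $Q$-cocell complex to a trivial fibration, so such a map is a weak equivalence — this is the exact analogue of the K\"unneth-based acyclicity argument in the lemma preceding Theorem \ref{scspmodel}. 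This establishes that $DGdg\hat{\C}_{\L}$ is a fibrantly cogenerated model category with $P$ the cogenerating fibrations and $Q$ the cogenerating trivial fibrations.

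For (b), since $D$ reflects all weak equivalences it suffices to show the derived unit $X\to\oR D(\oL D^*X)$ is a weak equivalence for cofibrant $X\in cdg\hat{\C}_{\L}$. Both $D^*$ and $D$ preserve initial objects (the former as a left adjoint, the latter by inspection of Definition \ref{nabla}: the denormalisation of $\L$ concentrated in bidegree $(0,0)$ is the constant cosimplicial object $\L$), so applying Lemma \ref{dunit} with $R$ the initial object of $cdg\hat{\C}_{\L}$, $T$ the initial object of $DGdg\hat{\C}_{\L}$, and $R\to DT$ the identity, the canonical map of that lemma becomes exactly the unit $\eta_X\colon X\to DD^*X$; hence $\eta_X$ is a trivial fibration for cofibrant $X$. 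Composing with $D$ applied to a fibrant replacement $D^*X\to(D^*X)^{\mathrm{fib}}$ — a trivial cofibration, hence a weak equivalence of $DGdg\hat{\C}_{\L}$, hence sent by $D$ to a weak equivalence by the very definition of the weak equivalences — shows the derived unit is a weak equivalence. Therefore $D$ is a right Quillen equivalence. (The general form of Lemma \ref{dunit}, for an arbitrary cofibration $R\to S$, is what controls $D^*$ of cofibrations and so also underpins the co-small object argument in step (a).)

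I expect the main obstacle to be the bookkeeping in step (a): verifying that $P$ and $Q$ really cogenerate the intended classes, that the co-small object argument is legitimate in the pro-Artinian setting, and that the factorisations it produces stay inside the essential image of $D$ (precisely the role of Lemma \ref{Dsmallfull} and Corollary \ref{dfibrnworks}), together with the closure properties needed to push the acyclicity argument through transfinite co-compositions — rather than any single new idea, the argument being a denormalised mirror of the construction of Theorem \ref{scspmodel} and the reasoning of Theorem \ref{nequiv}.
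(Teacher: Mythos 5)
Your part (b) is fine (it is essentially the paper's argument: Lemma \ref{dunit} applied to the cofibration $\L \to S$ gives that the unit $S \to DD^*S$ is a trivial fibration for cofibrant $S$, and $D$ reflects all weak equivalences by construction), and your acyclicity check in part (a) — that $D$ sends relative $Q$-cocells to trivial fibrations, hence that they are weak equivalences — is correct and mirrors the lemma preceding Theorem \ref{scspmodel}. The gap is your claim that acyclicity is ``the one substantive hypothesis of the transfer''. That reduction is only valid in the genuine transfer situation, where the (co)generating sets of the new category are obtained by applying an adjoint to the (co)generators of the old one, so that lifting properties transpose across the adjunction and the conditions ``$P$-projectives $\subseteq$ weak equivalences'' and ``weak equivalences $\cap$ $Q$-projectives $\subseteq$ $P$-projectives'' come for free. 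Here that is not the case: $P$ and $Q$ are defined intrinsically as small extensions in $DGdg\C_{\L}$, while weak equivalences and fibrations are detected by the right adjoint $D$; a map with LLP against $P$ is not characterised by any property of its image under $D$ or $D^*$. Consequently the dual of Hovey's recognition theorem still requires you to prove that the class of $P$-projectives equals the intersection of the weak equivalences with the $Q$-projectives — equivalently, that every $P$-projective is a weak equivalence, and that every weak equivalence which is $Q$-projective lifts against all fibrations (the ``trivial cofibration versus fibration'' axiom). Your proposal never addresses this, and it does not follow from Lemmas \ref{Dsmallfull}, \ref{dfibrnworks} or \ref{dgweakext}.

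This identification is exactly what the paper singles out as ``the only non-trivial condition'', and its proof is the real content of the theorem: given a $Q$-projective (resp.\ $P$-projective) $f\colon R \to S$, one factorises $Df$ in $cdg\hat{\C}_{\L}$ as a (trivial) cofibration followed by a (trivial) fibration, transports the second factor back along $D^* \dashv D$ to a surjection $R\ten_{D^*DR}D^*\widetilde{DS} \to S$, and uses the general form of Lemma \ref{dunit} (for an arbitrary cofibration $DR \to \widetilde{DS}$, not just the initial one) to show this comparison is a relative $Q$-cocell, respectively that $D^*$ of the trivial cofibration is a weak equivalence; retract arguments then give both inclusions. You hint in your final parenthesis that the general Lemma \ref{dunit} ``underpins'' step (a), but you never actually run this argument, and without it the co-small object factorisations you construct are not known to have the lifting properties a model structure demands. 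So the missing piece is precisely the double-factorisation/retract argument via Lemma \ref{dunit}, not additional bookkeeping.
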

\begin{proof}
From Corollary \ref{dfibrnworks} and Proposition \ref{sdgmodel}, we know that fibrations and trivial fibrations are relative $P$-cells and relative $Q$-cells, respectively.

We may now apply \cite{Hovey} Theorem 2.1.19 to show we have a closed model category structure. The only non-trivial condition to verify is that the class of $P$-projectives is the intersection of the classes of weak equivalences and of $Q$-projectives.

Since $Q \subset P$, every $P$-projective is $Q$-projective. Given a $Q$-projective $f:R \to S$, take factorisations $DR \xra{i} \widetilde{DS} \xra{p} DS$, $DR \xra{i'} \widetilde{DS}' \xra{p'} DS$   of $Df$ in $cdg\hat{\C}_{\L}$, with $i$ a cofibration, $i'$ a trivial cofibration, $p$ a trivial fibration and $p'$ a  fibration. The adjoint maps $D^*\widetilde{DS} \to S$,  $D^*\widetilde{DS}' \to S$ to $p,p'$ are clearly surjective, as are $q:R\ten_{D^*DR}D^*\widetilde{DS}\to S$, $q':R\ten_{D^*DR}D^*\widetilde{DS}'\to S$.

Observe that by Lemma \ref{dunit},
$$
\widetilde{DS} \to D(R\ten_{D^*DR}D^*\widetilde{DS})
$$
is a weak equivalence, so $Dq$ must be a trivial fibration, hence $q$ is a relative $Q$-cocell. Since $f$ is $Q$-projective, we may therefore choose a section $s$ of $q$ over $R$. 

If $f$ is a weak equivalence, then  $i$ is a trivial cofibration, so $D^*i$ is a $P$-projective, as is $f':R \to R\ten_{D^*DR}D^*\widetilde{DS}$. Since $f$ is a retraction of  $f'$, it must also be a $P$-projective.

Conversely, if $f$ is a $P$-projective, then $q'$ has a section over $R$. Therefore $q'$ is a retraction of $D^*i': D^*DR \to D^*\widetilde{DS}' $. By Lemma \ref{dunit},
 $$
\widetilde{DS}'\ten_{DR}DD^*R \to DD^*\widetilde{DS}'
$$
is a weak equivalence, so $DD^*i'$ (and hence $D^*i'$) must also be (as $i':DR \to \widetilde{DS}'$ is a weak equivalence, so the left-hand side is weakly equivalent to $DD^*R$). Thus $q'$ is a weak equivalence.

We have now established that $DGdg\hat{\C}_{\L}$ is a closed model category, and that $D$ is a right Quillen functor. It remains only to show that $D$ is a right Quillen equivalence. Given $R \in cdg\hat{\C}_{\L}$ cofibrant, Lemma \ref{dunit} implies that $\eta: R \to DD^*R$ is a weak equivalence. Given $S \in  DGdg\hat{\C}_{\L}$, take a cofibrant approximation $q:\widetilde{DS}\to DS$, and consider $\vareps:D^*\widetilde{DS}\to S$. We know that $\widetilde{DS}\to DD^*\widetilde{DS}$ is a weak equivalence, as is $q$, so $D\vareps$ (and hence $\vareps$) must also be a weak equivalence.  
\end{proof}

\begin{remark}\label{ntanfdef2}
Observe that under this correspondence, the Eilenberg-Maclane spaces $K(n)$ of Definition \ref{eilmac} correspond (up to weak equivalence) to the objects $k\oplus k^{[i-n]}_{[i]}\eps \in DGdg\hat{\C}_{\L}$, where $k^{[j]}_{[i]}$ is the bicomplex with $k$ concentrated in degree $(-j,-i)$.

Moreover, observe that, for $R \to S$ cofibrant in $DGdg\hat{\C}_{\L}$, the cotangent complex $\cot(S/R):= \m(S)/(\m(R)+\m(S)^2)$ is quasi-smooth in the sense of Lemma \ref{decompcot}. If we write $t(S/R):=\cot(S/R)^{\vee}$ (making use of Lemma \ref{profd}), then 
$$
\H^*( \Spf D^*S/ \Spf D^*R)\cong \H^*(\Tot t(S/R)),
$$
and this can detect weak equivalences between cofibrant objects.
\end{remark}

\subsection{$\Z$-graded pro-Artinian chain algebras}

We have now reached the stage where we may compare our categories with those arising in \cite{Kon}, \cite{Man2} and \cite{hinstack}. The main comparison will be with the category $DG_{\Z}\Sp$ of Definition \ref{dgzsp}, which will be given a model structure in this section.

\begin{remarks}\label{cfothers}
 Remark \ref{manchar} and Theorem \ref{allequiv}  will imply that objects of  $\Ho(DG_{\Z}\Sp)$ correspond to the geometric 
deformation functors of \cite[\S 7]{Man2}.
 Corollary \ref{manho} will then  show that for any $L_{\infty}$-algebra $V$, $\ddef(V)$ is just the functor on $ \Ho(DG_{\Z}\Sp)$ represented by $\mc(V) \in DG_{\Z}\Sp$.

Dualising an object $A \in dg_{\Z}\hat{\C}_{\L}$ gives a DG coalgebra $A^{\vee}$ (as in Remark \ref{profd2}). If $\L=k$, this allows us to regard $DG_{\Z}\Sp$ as a subcategory of the category $DG\mathrm{CU}(k)$ of DG coalgebras considered in \cite{hinstack}.  Not all DG coalgebras arise in this way, only those which are ind-conilpotent (i.e. unions of conilpotent coalgebras). However,  Corollary \ref{cfhin} will  show that the model categories $DG_{\Z}\Sp$ and  $DG\mathrm{CU}(k)$ are Quillen-equivalent. Proposition \ref{hinnerveprop} will then show that this equivalence is given by Hinich's simplicial nerve functor.

Given an SHLA $C$, the dual $C^{\vee}$ is an object of $dg_{\Z}\hat{\C}_{k}$, and Proposition \ref{tqiscor} will show that this gives an equivalence between $\Ho(DG_{\Z}\Sp)$ and the homotopy category of SHLAs considered in \cite{Kon}.
\end{remarks}

\begin{definition}
Define $P$ to be the class of small extensions in  $dg_{\Z}\C_{\L}$, and $Q\subset P$ to consist of those small extensions for which $\H_*(V)=0$.  
\end{definition}

\begin{remark}
Every surjection in $dg_{\Z}\hat{\C}_{\L}$ is a relative $P$-cocell, but not every acyclic surjection is a relative $Q$-cocell. 
\end{remark}

\begin{definition}\label{dgpathcyl}
Given $A \in dg_{\Z}\C_{\L}$, form the free chain algebra $A[t,dt]$ over $A$, for $t$ of degree $0$. For $i=0,1$, define $\ev_i:A[t,dt]\to A$ by mapping $t$ to $i$, and consider the chain algebra 
$$
D:=A[t,dt]\by_{k[t,dt]}k.
$$ 
Define the path object $\cP A \in dg_{\Z}\hat{\C}_{\L}$ to be the completion of $D$ with respect to the augmentation ideal of the map $(\ev_0,\ev_1):D \to A\by_kA$. Note that there is a canonical map $A \to \cP A$ which is a section of both $\ev_0$ and $\ev_1$.

Observe that the functor $\cP :dg_{\Z}\C_{\L}\to  dg_{\Z}\hat{\C}_{\L}$ is left-exact, and extend it to $dg_{\Z}\hat{\C}_{\L}$ by continuity. Given $R \in  dg_{\Z}\hat{\C}_{\L}$, define the cylinder object $CR$ to pro-represent the functor $A \mapsto \Hom(R, \cP A)$ (noting that this is left-exact).
\end{definition}

\begin{lemma}\label{dgpathworks}
For $A\in dg_{\Z}\hat{\C}_{\L}$, the maps $\ev_i: \cP A \to A$ are relative $Q$-cocells for $i=0,1$, and the map $(\ev_0,\ev_1):\cP A \to A\by_kA$ is a relative $P$-cocell.

For any relative $P$-cocell $A\to B$, the maps $\ev_i:\cP A\by_{\cP B}B \to A$  are relative $Q$-cocells, and  $(\ev_0,\ev_1):\cP A\by_{\cP B}B \to A\by_BA$ is a relative $P$-cocell.

 If $A\to B$ is a relative $Q$-cocell, then so is $(\ev_0,\ev_1):\cP A\by_{\cP B}B \to A\by_BA$.

\end{lemma}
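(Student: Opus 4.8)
The plan is to realise each of the maps in question as a cofiltered limit of its Artinian truncations and then to read off whether it is a relative $Q$-cocell or merely a relative $P$-cocell from the homology of the successive kernels, via the $dg_{\Z}\C_{\L}$-analogue of Lemma~\ref{small}: a surjection (resp.\ acyclic surjection) of objects of $dg_{\Z}\C_{\L}$ is a composition of small extensions (resp.\ of acyclic small extensions, i.e.\ of maps in $Q$). The only piece of genuine input is that, over a field of characteristic $0$, the interval algebra $k[t,dt]$ — free on $t$ in degree $0$, with $d(t^n)=n\,t^{n-1}dt$ — has acyclic augmentation ideal $\overline{k[t,dt]}=\ker(\ev_i)$; indeed its weight filtration (weight $1$ for $t$ and for $dt$) has graded pieces $\mathrm{span}_k\{t^n,t^{n-1}dt\}$, each acyclic for $n\ge1$ because $n$ is invertible. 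By contrast $V:=\ker\bigl(k[t,dt]\xra{(\ev_0,\ev_1)}k\oplus k\bigr)=t(1-t)k[t]\oplus k[t]\,dt$ has homology $k$ in a single degree and is \emph{not} acyclic; this is what forces the two $(\ev_0,\ev_1)$ statements to speak only of $P$-cocells in the absolute case.

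For $\ev_i:\cP A\to A$: the canonical section splits it, so it is surjective, and I would filter $\cP A$ by the powers of the ideal $\mathfrak n_i$ generated by $t$ and $dt$. After checking that $\cP A$ is $\mathfrak n_i$-adically complete and separated (it is a complete local ring, so Krull's intersection theorem applies), one gets $\cP A=\Lim_n\cP A/\mathfrak n_i^{\,n}$ with $\cP A/\mathfrak n_i=A$, all quotients Artinian, and graded pieces $\mathfrak n_i^{\,n}/\mathfrak n_i^{\,n+1}\cong\m(A)\otimes_k\mathrm{span}_k\{t^n,t^{n-1}dt\}$, which are acyclic by the K\"unneth formula and the computation above. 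Hence each transition $\cP A/\mathfrak n_i^{\,n+1}\to\cP A/\mathfrak n_i^{\,n}$ is an acyclic surjection of Artinian algebras, a composition of maps in $Q$, and $\ev_i$ is a relative $Q$-cocell. For $(\ev_0,\ev_1):\cP A\to A\by_kA$, surjectivity is witnessed by the straight-line paths $a+t(a'-a)$ (here $\bar a=\bar a'$ forces $a'-a\in\m(A)$, so this lies in $\cP A$); the defining $J$-adic filtration gives $\cP A=\Lim_n\cP A/J^n$ with $\cP A/J=A\by_kA$, all quotients Artinian, so $(\ev_0,\ev_1)$ is a transfinite composition of small extensions, a relative $P$-cocell. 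No acyclicity is claimed, consistently with the homology of $V$.

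For the relative statements, since $\cP$ is left-exact and sends surjections to surjections, $\cP A\to\cP B$ is surjective with kernel $K$, and $\cP A\by_{\cP B}B$ is the preimage of $B\into\cP B$, so it sits in $0\to K\to\cP A\by_{\cP B}B\to B\to 0$. Writing $I=\ker(A\to B)$, one identifies $K$ as the completion of $I[t,dt]$, with weight-graded pieces $I\otimes_k\mathrm{span}_k\{t^n,t^{n-1}dt\}$ for $n\ge1$; the structure map $\cP A\by_{\cP B}B\to B$ kills $\ker\bigl(\ev_i:\cP A\by_{\cP B}B\to A\bigr)$, which therefore coincides with $\ker(\ev_i\colon K\to I)$ and so has weight-graded pieces $I\otimes_k\mathrm{span}_k\{t^n,t^{n-1}dt\}$ — acyclic by K\"unneth for \emph{any} $I$. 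Filtering as before gives $\ev_i$ as a relative $Q$-cocell. Finally $(\ev_0,\ev_1):\cP A\by_{\cP B}B\to A\by_BA$ is surjective (straight-line paths again), and the $\mathfrak J$-adic filtration by powers of $\mathfrak J:=\ker(\ev_0,\ev_1)$ exhibits it as a relative $P$-cocell; when $A\to B$ is a relative $Q$-cocell the ideal $I$ is acyclic (a $Q$-cocell is a composition of acyclic small extensions, hence a quasi-isomorphism), and since $\cP A\by_{\cP B}B$ is, over $A\by_BA$, a completed symmetric algebra on $\mathfrak J/\mathfrak J^2\cong I\otimes_k V$, the graded pieces $\mathfrak J^n/\mathfrak J^{n+1}\cong\Symm^n_{A\by_BA}(I\otimes_k V)$ are acyclic by K\"unneth, upgrading $(\ev_0,\ev_1)$ to a relative $Q$-cocell.

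The part needing real care is the bookkeeping forced by the completion in the definition of $\cP A$ (and of $\cP A\by_{\cP B}B$): that $\cP A$ is complete and separated for the $\mathfrak n_i$-adic topology and not merely the $J$-adic one; that all the quotients really lie in $dg_{\Z}\C_{\L}$; that $K$ and $\mathfrak J$ are the completed tensor products asserted (i.e.\ that $\cP$ commutes with the ideal and quotient operations used); and that $\cP A\by_{\cP B}B$ has the claimed completed-symmetric-algebra structure over $A\by_BA$, so that $\mathfrak J^n/\mathfrak J^{n+1}=\Symm^n(\mathfrak J/\mathfrak J^2)$. Once these are in hand the homological content collapses to the single acyclicity statement for $\overline{k[t,dt]}$ together with one application of the K\"unneth formula.
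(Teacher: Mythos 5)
Your overall strategy---writing each map as the limit of a tower of Artinian quotients whose transitions factor into acyclic small extensions, with the only homological input being the contractibility of $\ker(\ev_i)\subset k[t,dt]$ plus K\"unneth---is the same as the paper's, but the filtrations you choose do not compute the objects in question, and this is a genuine gap. By definition $\cP A$ is the completion of $D=A[t,dt]\by_{k[t,dt]}k$ at $J=\ker(\ev_0,\ev_1)$, an ideal symmetric in the two endpoints; your $\fn_0$-adic tower (elements of $t$-adic valuation $\ge m$ --- note $t\notin D$, so ``the ideal generated by $t$ and $dt$'' already needs this reinterpretation, and its literal powers would carry coefficients in $\m(A)^m$, not $\m(A)$) is not cofinal in the system of quotients defining $\cP A$. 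Concretely, $\ev_1$ is a map out of $\cP A$, but $\ev_1(at^m)=a\neq 0$ for $0\neq a\in\m(A)$, so $\ev_1$ factors through no quotient $\cP A/\fn_0^{\,m}$; hence $\Lim_m \cP A/\fn_0^{\,m}$ is only the formal germ at $t=0$ and is not $\cP A$ as a pro-object. Completeness and separatedness of $\uleft{\cP A}$ (Krull) do not repair this: what is needed is mutual cofinality of your ideals with the $J$-adic ones, and no ideal of elements of $t$-valuation $\ge m$ is contained in an ideal generated by powers of $t(t-1)$. This is exactly why the paper filters by the ideals $J^n+tJ^{n-1}=t^{n-1}(t-1)^{n-1}(t,dt)$, which interleave with the $J^n$ and whose graded pieces are still two-term acyclic complexes tensored with $\m(A)$; your homological input survives, but the bookkeeping must treat both endpoints simultaneously.

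The same defect, in sharper form, affects your treatment of $(\ev_0,\ev_1):\cP A\by_{\cP B}B\to A\by_BA$. Quotients by powers of $\fJ=\ker(\ev_0,\ev_1)$ are not Artinian (all $t$-degrees survive with coefficients in $I$), so the ``$\fJ$-adic filtration'' does not exhibit a relative $P$-cocell---though for the $P$-statements nothing beyond surjectivity is needed, since every surjection in $dg_{\Z}\hat{\C}_{\L}$ is a relative $P$-cocell. Worse, for the final $Q$-statement one may reduce (as the paper does) to $A\to B$ in $Q$ with kernel $I$; then $\m(A)\cdot I=0$ forces $\fJ^2=0$, so $\cP A\by_{\cP B}B$ is certainly not a completed symmetric algebra on $\fJ/\fJ^2$ over $A\by_BA$, the asserted isomorphism $\fJ^n/\fJ^{n+1}\cong\Symm^n(I\ten_k V)$ fails, and your filtration collapses after one step. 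The correct filtration is again in the $t$-direction: the kernel is (the completion of) $t(t-1)I[t]\oplus I[t]\,dt$, and the paper's chain of ideals $t^{n}(t-1)^{n}I\supset t^{n}(t-1)^{n-1}I\supset t^{n-1}(t-1)^{n-1}I$ yields acyclic small extensions as soon as $I$ is acyclic. (Your appeal to a $dg_{\Z}\C_{\L}$-analogue of Lemma \ref{small} for the Artinian transition maps is harmless---Artinian objects are finite-dimensional, hence bounded, so that argument applies---but it is not where the difficulty lies.)
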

\begin{proof}
We prove the first statement; the second is similar.
It is immediate that $(\ev_0,\ev_1):\cP A \to A\by_kA$ is surjective, hence a relative $P$-cocell.

Write $J$ for the kernel of $(\ev_0,\ev_1):B\to A\by_kA$, and observe that the ideal $J^n+tJ^{n-1}= t^{n-1}(t-1)^{n-1}(t,dt)$. Thus the
quotients $\cP _n:=D/(J^n+tJ^{n-1})$ have the property that $\cP _{n+1} \to \cP _n$ is a relative $Q$-cocell, factorising as the acyclic small extensions $\cP _{n+1}\to D/t^n(t-1)^{n-1}(t,dt)\to \cP _n$. Since the systems $\{J^n+tJ^{n-1}\}$ and $\{J^n\}$ of ideals  define the same topology, and $\cP _1=A$, this means that $\ev_0$ is a relative $Q$-cocell, as is $\ev_1$, by symmetry.

For the final statement, it suffices to consider the case when $A \to B$ is in $Q$, with kernel $I$. Then $\cP A\by_{\cP B}B \to A\by_BA$ has kernel $t(t-1)I$, and the system $t^{n}(t-1)^{n}I \to t^{n}(t-1)^{n-1}I \to t^{n-1}(t-1)^{n-1}I$ of ideals gives rise to a sequence of acyclic small extensions, as required.
\end{proof}

\begin{corollary}
If $f:R\to S$ in $dg_{\Z}\hat{\C}_{\L}$ is $Q$-projective, then there are $P$-projective maps $\iota_0,\iota_1:S \to CS\ten_{CR}R$, with $\iota_0\ten \iota_1:S\ten_R S \to CS\ten_{CR}R$ $Q$-projective. If $f$ is moreover $P$-projective, then so is $\iota_0\ten \iota_1$.
\end{corollary}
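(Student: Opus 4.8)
The plan is to reduce the statement to lifting problems and solve them using Lemma \ref{dgpathworks} together with the defining adjunction $\Hom_{dg_{\Z}\hat{\C}_{\L}}(CR,A)\cong\Hom_{dg_{\Z}\hat{\C}_{\L}}(R,\cP A)$. Recall first that a morphism of $dg_{\Z}\hat{\C}_{\L}$ is $P$-projective (resp. $Q$-projective) exactly when it has the left lifting property against every morphism of $P$ (resp. $Q$); since the class of maps with a given left lifting property is closed under pullbacks and transfinite compositions of the opposing class, this is the same as having the LLP against every relative $P$-cocell (resp. relative $Q$-cocell). So it suffices to produce, for each small extension $g\colon A\to B$ (and, for the $Q$-projectivity assertion, each $g$ in $Q$), lifts in the relevant squares for $\iota_i$ and for $\iota_0\ten\iota_1$.

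Next I would unwind the maps. By the universal property of the pushout $CS\ten_{CR}R$ together with the adjunction, a morphism $CS\ten_{CR}R\to B$ is the same datum as a pair $(\hat\phi\colon S\to\cP B,\ \rho\colon R\to B)$ with $\hat\phi\circ f=(\text{section})\circ\rho$, and under this description $\iota_i$ becomes $(\hat\phi,\rho)\mapsto\ev_i\circ\hat\phi$ while $\iota_0\ten\iota_1$ becomes $(\hat\phi,\rho)\mapsto(\ev_0\hat\phi,\ev_1\hat\phi)$. Transporting a lifting square for $\iota_i$ against $g$ through the same adjunction on the right-hand side then converts it into a lifting square for $f\colon R\to S$ against the relative evaluation map $\ev_i\colon\cP A\by_{\cP B}B\to A$, and a lifting square for $\iota_0\ten\iota_1$ against $g$ into one for $f$ against $(\ev_0,\ev_1)\colon\cP A\by_{\cP B}B\to A\by_B A$ — precisely the maps occurring in Lemma \ref{dgpathworks}. (The $B$-factor of $\cP A\by_{\cP B}B$ records the value $a\circ f$ to which $\rho$, hence the $R$-component of any lift, is forced; this is what makes the transported problem one about $f$ rather than merely about $S$.)

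It then remains to conclude. If $g$ is a relative $P$-cocell, Lemma \ref{dgpathworks} gives that $\ev_i\colon\cP A\by_{\cP B}B\to A$ is a relative $Q$-cocell and that $(\ev_0,\ev_1)\colon\cP A\by_{\cP B}B\to A\by_B A$ is a relative $P$-cocell; if $g$ is a relative $Q$-cocell, it gives that $(\ev_0,\ev_1)$ is a relative $Q$-cocell. Hence a $Q$-projective $f$ has lifts against all relative $Q$-cocells, producing lifts against the $\ev_i$ for $g$ in $P$ and against $(\ev_0,\ev_1)$ for $g$ in $Q$; translating back, $\iota_0,\iota_1$ are $P$-projective and $\iota_0\ten\iota_1$ is $Q$-projective. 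If moreover $f$ is $P$-projective, then $f$ lifts against all relative $P$-cocells, in particular against $(\ev_0,\ev_1)$ for every small extension $g$, so $\iota_0\ten\iota_1$ is $P$-projective.

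The main obstacle is the bookkeeping in the middle step: one must check that, after applying the adjunction on both sides, the lifting problem for $\iota_i$ (resp. $\iota_0\ten\iota_1$) against $g$ really does reproduce the square against the evaluation map of Lemma \ref{dgpathworks}, keeping careful track of the section maps and of the forced value of $\rho$ that couples the problem to $f$. Should the map that appears directly be a base change of the one in Lemma \ref{dgpathworks} rather than literally it, one invokes stability of relative $Q$-cocells and relative $P$-cocells under pullback, or else re-runs the filtration argument of Lemma \ref{dgpathworks} for that slightly larger map.
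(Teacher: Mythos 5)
Your overall strategy is the paper's own: its proof of this corollary is literally the one line ``apply $\Hom(CR,A)=\Hom(R,\cP A)$ to Lemma \ref{dgpathworks}'', and your plan of transporting lifting squares through that adjunction is the right expansion of it. However, the bookkeeping step you yourself single out as the main obstacle goes wrong. Decompose the bottom map $w\co CS\ten_{CR}R\to B$ as a pair $(\hat\phi\co S\to\cP B,\ \rho\co R\to B)$ with $\hat\phi f=c_B\rho$ (where $c_B\co B\to\cP B$ is the canonical section). In a lift $(\hat\Phi\co S\to\cP A,\ P\co R\to A)$ the $R$-component is indeed forced ($P=uf$, and $gP=\rho$ is then automatic), but the constraint $\cP g\circ\hat\Phi=\hat\phi$ survives in full: $\hat\phi$ is an arbitrary map to $\cP B$, not a constant path $c_B\lambda$. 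So the transported square for $\iota_i$ against $g\co A\to B$ is a lifting square for $f$ against the map $(\ev_i,\cP g)\co \cP A\to A\by_{g,B,\ev_i}\cP B$, and for $\iota_0\ten\iota_1$ it is one against $((\ev_0,\ev_1),\cP g)\co \cP A\to (A\by_kA)\by_{B\by_kB}\cP B$; it is \emph{not} a square against $\ev_i\co\cP A\by_{\cP B}B\to A$ or $(\ev_0,\ev_1)\co\cP A\by_{\cP B}B\to A\by_BA$. Solving your squares does not solve the original ones, precisely because your target only accommodates constant paths in $B$.

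Your first fallback does not repair this: the maps of Lemma \ref{dgpathworks} are the base changes of the maps above along $A\cong A\by_BB\to A\by_B\cP B$ (induced by $c_B$), not the other way round, so closure of relative $P$- and $Q$-cocells under pullback deduces the lemma from what you need, never what you need from the lemma. The correct completion is your second contingency, and it must actually be carried out: show, by the same ideal filtration as in the proof of Lemma \ref{dgpathworks}, that for $g$ a small extension with kernel $I$ the kernel of $(\ev_i,\cP g)$ (resp.\ of $((\ev_0,\ev_1),\cP g)$) is $I$ tensored with the ideal of $k[t,dt]$ vanishing at $t=i$ (resp.\ at $t=0,1$), filtered so that the graded pieces are acyclic small extensions (resp.\ small extensions, which are acyclic when $I$ is). This gives that $(\ev_i,\cP g)$ is a relative $Q$-cocell for $g\in P$, and that $((\ev_0,\ev_1),\cP g)$ is a relative $P$-cocell for $g\in P$ and a relative $Q$-cocell for $g\in Q$; with that supplement, the rest of your lifting argument does yield the corollary.
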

\begin{proof}
Apply the description $\Hom(CR,A)=\Hom(R,\cP A)$ to Lemma \ref{dgpathworks}.
\end{proof}

\begin{definition}
Say that a map $p:X \to Y$ in $DG_{\Z}\Sp$ is quasi-smooth (resp. trivially quasi-smooth) if it is dual to a $Q$-projective (resp. a $P$-projective).
\end{definition}

\begin{definition}\label{dgzweakdef}
Given $X \in DG_{\Z}\Sp$, given by $\Spf R$ for $R \in dg_{\Z}\hat{\C}_{\L}$  set $X^I:=\Spf(CR)$. Given a quasi-smooth map $p:X \to Y$ in $DG_{\Z}\Sp$,
and a morphism $U\to Y$ in $sc\Sp$, define $[U,X]_Y$ to be the coequaliser
$$
\xymatrix@1{\Hom_{DG_{\Z}\Sp\da Y}(U,X^I\by_{Y^I}Y) \ar@<.5ex>[r]\ar@<-.5ex>[r] &\Hom_{DG_{\Z}\Sp\da Y}(U,X) \ar[r] &[U,X]_Y},
$$ 
similarly to Definition \ref{Xpdef}.

Say that a map $f:U \to V$ in $DG_{\Z}\Sp$ is 
a weak equivalence if for all quasi-smooth maps $p:X\to V$, 
$$
f^*:[V,X]_V \to [U,X]_V
$$
is an isomorphism.
\end{definition}

\begin{proposition}\label{dgspmodel}
There is a cofibrantly generated closed model structure on $dg_{\Z}\hat{\C}_{\L}$ with cogenerating fibrations $P$ and cogenerating trivial fibrations $Q$. Weak equivalences are as in Definition \ref{dgzweakdef}.
\end{proposition}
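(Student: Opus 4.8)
The plan is to verify the hypotheses of \cite{Hovey} Theorem 2.1.19 for the category $DG_{\Z}\Sp=(dg_{\Z}\hat{\C}_{\L})^{\opp}$, with generating cofibrations and generating trivial cofibrations the morphisms dual to $P$ and to $Q$; equivalently, one equips $dg_{\Z}\hat{\C}_{\L}$ with the structure fibrantly cogenerated by $P$ and $Q$, taking the cofibrations to be the retracts of relative $P$-cocell complexes, the trivial cofibrations the retracts of relative $Q$-cocell complexes, and the weak equivalences those dual to Definition \ref{dgzweakdef}. Since the isomorphism classes of $dg_{\Z}\C_{\L}$ form a set, $P$ and $Q$ may be replaced by small subsets, every object of $dg_{\Z}\C_{\L}$ is cosmall, and the cosmall object argument then supplies the two functorial factorisations: every morphism factors as a $P$-projective (resp.\ $Q$-projective) map followed by a relative $P$-cocell (resp.\ $Q$-cocell) complex. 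I would also note at the outset that each structure map $A\to k$ is a surjection, hence a composition of small extensions as in Lemma \ref{small}, so a relative $P$-cocell complex; it follows that every $P$-projective morphism of $dg_{\Z}\hat{\C}_{\L}$ lifts against $A\to k$ and so has a retraction, and dually every trivially quasi-smooth morphism of $DG_{\Z}\Sp$ has a section.

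The next step is to set up the homotopy theory of weak equivalences by means of the path object $\cP$ and cylinder object $C$ of Definition \ref{dgpathcyl}, which here take over the role of $-\ten\Delta^1$ from Section \ref{defcat} (this is forced on us because $DG_{\Z}\Sp$ is set-valued, so only the acyclic part of quasi-smoothness carries content). Combining the section just obtained with Lemma \ref{dgpathworks} and its corollary, one proves the analogue of Lemma \ref{smallworks}: a quasi-smooth morphism $p\colon X\to Y$ in $DG_{\Z}\Sp$ is a weak equivalence iff it admits a section $s$ together with a fibrewise homotopy, constructed from the cylinder object $CS\ten_{CR}R$, between $\id_X$ and $sp$. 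From this, arguing as in the two lemmas preceding Theorem \ref{scspmodel} — with the small object argument replaced throughout by the cosmall object argument, and the deformation of quasi-smooth maps obtained from the path-object computations of Lemma \ref{dgpathworks} — one deduces that the weak equivalences have the two-out-of-three property and are closed under retracts, and that the trivially quasi-smooth morphisms are exactly the quasi-smooth weak equivalences.

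It then remains to check the inclusions required by \cite{Hovey} Theorem 2.1.19, which dually assert: every relative $Q$-cocell complex is a weak equivalence and a relative $P$-cocell complex; every $P$-projective map is a weak equivalence and a $Q$-projective map; and
$$
(\text{weak equivalence})\cap(Q\text{-projective})\ \subseteq\ P\text{-projective}.
$$
The first is immediate from Lemma \ref{dgpathworks} together with $Q\subseteq P$, and the second from the section-and-homotopy criterion above (again using $Q\subseteq P$). The hard part will be the last inclusion, i.e.\ that a cofibration which is a weak equivalence is a trivial cofibration; this is the analogue of the corresponding step in the proof of Theorem \ref{dequiv}, and its difficulty is precisely that, as remarked before Definition \ref{dgpathcyl}, an acyclic surjection in $dg_{\Z}\hat{\C}_{\L}$ need not be a relative $Q$-cocell complex, so no naive factorisation argument is available. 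Instead, given a $Q$-projective weak equivalence $f\colon R\to S$, I would apply the functorial factorisation and then use the cylinder object $CS\ten_{CR}R$, via the corollary to Lemma \ref{dgpathworks}, to exhibit $f$ as a retract over $S$ of a $P$-projective map, which forces $f$ itself to be $P$-projective; here the path-object calculations of Lemma \ref{dgpathworks} do the work that Lemma \ref{dunit} does in the proof of Theorem \ref{dequiv}. With these inclusions in hand, \cite{Hovey} Theorem 2.1.19 produces the closed model structure, cofibrantly generated on $DG_{\Z}\Sp$, with $P$ and $Q$ the cogenerating fibrations and trivial fibrations on $dg_{\Z}\hat{\C}_{\L}$ and weak equivalences as in Definition \ref{dgzweakdef}.
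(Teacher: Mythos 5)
Your proposal is correct and follows essentially the same route as the paper, whose proof is simply that the argument for Theorem \ref{scspmodel} carries over: you dualise Hovey's recognition theorem with the cosmall object argument, let the path/cylinder objects of Definition \ref{dgpathcyl} and Lemma \ref{dgpathworks} play the role of the simplicial enrichment, and handle the hard inclusion (quasi-smooth weak equivalences are trivially quasi-smooth) via the section-plus-fibrewise-homotopy criterion, i.e.\ the analogue of Lemma \ref{smallworks}, exactly as intended. One slip of terminology only: in your opening sentence the retracts of relative $P$-cocell (resp.\ $Q$-cocell) complexes in $dg_{\Z}\hat{\C}_{\L}$ are the fibrations (resp.\ trivial fibrations), not the cofibrations --- the cofibrations and trivial cofibrations are the $Q$-projectives and $P$-projectives, as indeed you use correctly in the rest of the argument.
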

\begin{proof}
The proof of Theorem \ref{scspmodel} carries over.
\end{proof}

\begin{definition}
Given $X \in DG_{\Z}\Sp$ and $A \in dg_{\Z}\hat{\C}_{\L}$, write
$$
X[A]:= [\Spf A,  X] =\Hom_{\Ho(DG_{\Z}\Sp)}(\Spf A,  X).
$$
\end{definition}

\begin{definition}
Given $V \in dg_{\Z}\widehat{\FD\Vect}_k$ and $X \in DG_{\Z}\Sp$, define
$$
\H^n(X\hat{\ten} V):= X[k \oplus V[-n]\eps].
$$
Let $\H^n(X):= \H^n(X \ten k)$, and observe that 
$$
\H^n(X\hat{\ten} V)\cong \prod_{i \in \Z} \H^{n+i}(X)\hat{\ten} \H_i(V),
$$ 
where for  $U\in \Vect_k$ and $W = \{W_{\alpha}\}_{\alpha\in I} \in \widehat{\FD\Vect}_k$, we write $U\hat{\ten}W:= \Lim_I U\ten W_{\alpha}$.
\end{definition}

\begin{lemma}\label{dgcohocalc}
If $X \in DG_{\Z}\Sp$ is quasi-smooth, then $\H^n(X\ten V)$ can be calculated as the quotient space
$$
X(k \oplus V[-n]\eps)/X(k \oplus (V[-n]\ten L^0)  \eps),
$$
for $L^0$ as in Definition \ref{knlndef}.
\end{lemma}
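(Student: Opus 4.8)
The plan is to unwind the definition of $\H^{n}(X\ten V)$ as a set of homotopy classes and then to compute that set directly.  By the definition recalled just above, $\H^{n}(X\ten V)=X[k\oplus V[-n]\eps]=\Hom_{\Ho(DG_{\Z}\Sp)}(\Spf(k\oplus V[-n]\eps),X)$.  Since every surjection in $dg_{\Z}\hat{\C}_{\L}$ is a fibration, the algebra map $k\oplus V[-n]\eps\to k$ is a fibration, so $\Spf(k\oplus V[-n]\eps)$ is cofibrant; and $X$ is fibrant because it is quasi-smooth.  Hence this $\Hom$-set in the homotopy category is the honest $\Hom$-set modulo homotopy, and, using the path object $X^{I}=\Spf(CR)$ (with $R=O(X)$) and the coequaliser of Definition~\ref{dgzweakdef}, this says precisely that $\H^{n}(X\ten V)=X(k\oplus V[-n]\eps)\big/\!\simeq$, where $f\simeq g$ if and only if there is $H\in X(\cP(k\oplus V[-n]\eps))$ with $\ev_{0}H=f$ and $\ev_{1}H=g$.

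Next I would linearise.  For any complex $M$ the square-zero extension $k\oplus M\eps$ satisfies $X(k\oplus M\eps)=\Hom_{dg_{\Z}\hat{\C}_{\L}}(R,k\oplus M\eps)=\Hom_{dg_{\Z}}(\cot R,M)$, a $k$-vector space depending additively on $M$; the same applies to $\cP(k\oplus V[-n]\eps)$ (which for a square-zero extension is $k\oplus(V[-n]\ten k[t,dt])\eps$) and to its two evaluations, which are $k$-linear.  Reflexivity of $\simeq$ (via the constant path) together with the group structure shows that $\simeq$ is a linear equivalence relation, so $\H^{n}(X\ten V)=X(k\oplus V[-n]\eps)/W$ where $W=\ev_{1}\big(\ker(\ev_{0}\colon X(\cP(k\oplus V[-n]\eps))\to X(k\oplus V[-n]\eps))\big)$.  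Now $\ev_{0}$ is a trivial fibration (Lemma~\ref{dgpathworks}), hence surjective on $X$ because $R$ is cofibrant, and its kernel ideal is $(V[-n]\ten P)\eps$ for the acyclic complex $P:=\ker(\ev_{0}\colon k[t,dt]\to k)$, so that $\ker(X(\ev_{0}))=X(k\oplus(V[-n]\ten P)\eps)$.

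The key step is to replace $P$ by the two-term acyclic "relative interval" $L^{0}$ of Definition~\ref{knlndef}: one identifies $X(k\oplus(V[-n]\ten P)\eps)$ with $X(k\oplus(V[-n]\ten L^{0})\eps)$, compatibly with $\ev_{1}$, so that $W$ becomes exactly the image of $X(k\oplus(V[-n]\ten L^{0})\eps)$ in $X(k\oplus V[-n]\eps)$, giving the claimed formula.  Concretely this is cleaner to organise by sidestepping $k[t,dt]$: the short exact sequence $0\to V[-n]\to V[-n]\ten L^{0}\to V[-(n+1)]\to 0$ exhibits $k\oplus V[-n]\eps$ as a fibre product, so by left-exactness of $X$ one has $X(k\oplus V[-n]\eps)=\ker\big(X(k\oplus(V[-n]\ten L^{0})\eps)\to X(k\oplus V[-(n+1)]\eps)\big)$, and (using Lemma~\ref{weakext}, which guarantees $k\oplus(V[-n]\ten L^{0})\eps$ is weakly trivial) $k\oplus(V[-n]\ten L^{0})\eps$ supplies enough of a path object to detect $\simeq$; one then checks that the induced map on $X$ realises $W$, and compares with the long exact obstruction sequence of Theorem~\ref{robs} to finish.

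The linear algebra in the second paragraph is routine once the comparison is in place; the real obstacle is that comparison in the third paragraph, namely matching the genuine but unbounded sub-path-object with the bounded complex $L^{0}$ of Definition~\ref{knlndef} while keeping track of the single homological shift, so that the evaluation $\ev_{1}$ is sent to the correct comparison map $X(k\oplus(V[-n]\ten L^{0})\eps)\to X(k\oplus V[-n]\eps)$.
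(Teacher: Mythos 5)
Your first two paragraphs are fine (and are essentially the unwinding the paper leaves implicit): with $\Spf(k\oplus V[-n]\eps)$ cofibrant and $X$ fibrant, $X[k\oplus V[-n]\eps]$ is the set of elements of $X(k\oplus V[-n]\eps)$ modulo a homotopy relation, and by left-exactness and square-zero linearity the relation is translation by the image of the "homotopies". The problem is that you never produce the comparison that the lemma actually needs, and you say so yourself: the identification of the $k[t,dt]$-homotopies with $X(k\oplus(V[-n]\ten L^0)\eps)$ "compatibly with $\ev_1$" is exactly the content of the lemma, and it is left as "one identifies\dots", "one then checks\dots". The paper's proof consists of precisely this step, done directly: it exhibits $k\oplus(V[-n]\ten L^0\oplus V[-n])\eps$ as a path object for $k\oplus V[-n]\eps$ in $dg_{\Z}\hat{\C}_{\L}$ (the inclusion of the second summand is a weak equivalence because $V[-n]\ten L^0$ is acyclic, cf.\ Lemma \ref{weakext}/\ref{dgweakext}, and the two evaluations surject onto $(k\oplus V[-n]\eps)\by_k(k\oplus V[-n]\eps)$). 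Once this small, bounded path object is in hand, a homotopy \emph{is} an element of $X(k\oplus(V[-n]\ten L^0)\eps)\by X(k\oplus V[-n]\eps)$ and the quotient description drops out by linearity; no comparison with $\cP A$, and no completion issues with $V[-n]\ten k[t,dt]$, ever arise. By routing through $\cP A$ you have moved the entire difficulty into the unproved "replacement" step, so the proposal does not yet constitute a proof.

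Your suggested "cleaner" fallback does not repair this. The short exact sequence $0\to V[-n]\to V[-n]\ten L^0\to V[-(n+1)]\to 0$ (with the paper's convention for $L^0$, in which $V[-n]$ is the \emph{sub}object and the natural surjection hits $V[-(n+1)]$) exhibits $X(k\oplus V[-n]\eps)$ as a kernel inside $X(k\oplus(V[-n]\ten L^0)\eps)$; it does not give you a map $X(k\oplus(V[-n]\ten L^0)\eps)\to X(k\oplus V[-n]\eps)$ whose image you can quotient by -- that map comes from the path-object structure (the difference $\ev_1-\ev_0$ of the two evaluations), i.e.\ again from the statement you are trying to prove, and getting its direction and the one-step shift right is exactly the "obstacle" you flag at the end. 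Finally, the appeal to Theorem \ref{robs} is misplaced: that result lives in $sc\Sp$ (simplicial Artinian rings), not in $DG_{\Z}\Sp$; the $\Z$-graded analogue (Proposition \ref{dgobs}) appears later and is proved by the same mapping-cone/path-object technique, so invoking it here would be circular in spirit. The fix is to drop the $k[t,dt]$ detour and verify directly that $k\oplus(V[-n]\ten L^0\oplus V[-n])\eps$ is a path object, as the paper does.
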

\begin{proof}
$
k \oplus (V[-n]\ten L^0\oplus V[-n] )  \eps
$ 
is a path object for $k \oplus V[-n]\eps$ in $dg_{\Z}\hat{\C}_{\L}$.
\end{proof}

\begin{proposition}\label{dgobs}
If $X \in DG_{\Z}\Sp $, then for any small extension $I \xra{e} A \xra{f} B$, there is a sequence of sets 
$$
X[A]\xra{f_*} X[B] \xra{o_e} \H^1(X\hat{\ten} I), 
$$  
exact in the sense that the fibre of $o_e$ over $0$ is the image of $f_*$. Moreover,  there is a group action of $\H^0(X \hat{\ten} I)$ on $X[A]$ whose orbits are precisely the fibres of $f_*$. 
\end{proposition}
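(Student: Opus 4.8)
The plan is to follow the proof of Theorem~\ref{robs} line for line, the only essential change being that, since $X$ is now set-valued, the simplicial sets $F(A)$ used there are replaced by the homotopy function complexes $\bR\Map_{DG_{\Z}\Sp}(\Spf A, X)$, whose $\pi_0$ recovers $X[A]$. First I would reduce to the case where $X$ is quasi-smooth: both $X[-]$ and each $\H^n(X\hat{\ten} I)=X[k\oplus I[-n]\eps]$ depend only on the weak equivalence class of $X$, so I may replace $X$ by a fibrant (hence quasi-smooth) model.

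Next, exactly as in Theorem~\ref{robs}, I would form the mapping cone $C=C(A,I):=(A\oplus I\ten L^0\eps)/(e+\eps)I$ of $e$, with $\eps^2=0$ and $L^0$ as in Definition~\ref{knlndef}, and import from there the two ring-theoretic facts: (i) the surjection $(f,0)\colon C\to B$ is a small extension whose kernel is the cone on $\id_I$, hence acyclic, so by Lemma~\ref{dgweakext} it is a weak equivalence and $X[C]\cong X[B]$; and (ii) there is a surjection $C\to k\oplus I[-1]\eps$ with $A=C\by_{k\oplus I[-1]\eps}k$. Since $C\to k\oplus I[-1]\eps$ is surjective, hence a fibration in $dg_{\Z}\hat{\C}_{\L}$, identity (ii) exhibits $\Spf A$ as the homotopy pushout of $\Spf C\leftarrow\Spf(k\oplus I[-1]\eps)\to\Spf k$ in $DG_{\Z}\Sp$ (the left map is a geometric cofibration and all objects are cofibrant, every surjection in $dg_{\Z}\hat{\C}_{\L}$ being a fibration).

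Applying $\bR\Map_{DG_{\Z}\Sp}(-,X)$, which carries homotopy pushouts in the source to homotopy pullbacks and has $\pi_0\bR\Map(\Spf B,X)=X[B]$, I obtain a homotopy fibre sequence
$$
\bR\Map(\Spf A,X)\lra\bR\Map(\Spf C,X)\xra{o_e}\bR\Map(\Spf(k\oplus I[-1]\eps),X),
$$
the fibre taken over the point determined by $\Spf(k\oplus I[-1]\eps)\to\Spf k\to X$, which corresponds to $0$. By definition $\pi_0\bR\Map(\Spf(k\oplus I[-n]\eps),X)=\H^n(X\hat{\ten} I)$; to compute $\pi_1$ of the base I would argue as in Corollary~\ref{cohowelldfn}, using the small extension $I\eps\to(I\ten L^0)\eps\to I[-1]\eps$ — with $I\ten L^0$ acyclic by the K\"unneth formula, so $k\oplus(I\ten L^0)\eps\simeq k$ — together with $k\oplus I\eps=(k\oplus(I\ten L^0)\eps)\by_{k\oplus I[-1]\eps}k$, to exhibit $\Spf(k\oplus I\eps)$ as the suspension of $\Spf(k\oplus I[-1]\eps)$; hence $\pi_1\bR\Map(\Spf(k\oplus I[-1]\eps),X)\cong\pi_0\bR\Map(\Spf(k\oplus I\eps),X)=\H^0(X\hat{\ten} I)$. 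The long exact sequence of the fibre sequence then gives the sequence $\H^0(X\hat{\ten}I)\to X[A]\xra{f_*}X[B]\xra{o_e}\H^1(X\hat{\ten}I)$, with image of $f_*$ equal to $o_e^{-1}(0)$, and the action of $\pi_1$ of the base (namely $\H^0(X\hat{\ten}I)$) on $\pi_0$ of the fibre (namely $X[A]$) is the required group action, with orbits the fibres of $f_*$.

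The main obstacle will be the homotopy-theoretic bookkeeping in the possibly non-simplicial model category $DG_{\Z}\Sp$: one must check that $\bR\Map(-,X)$ behaves correctly with respect to homotopy pushouts and that all relevant objects are cofibrant, so that the naive pushout squares are genuine homotopy pushouts, and — most delicately — justify the shift identification $\pi_1\bR\Map(\Spf(k\oplus I[-1]\eps),X)\cong\H^0(X\hat{\ten}I)$, where one invokes Lemma~\ref{dgcohocalc} and the acyclicity of $I\ten L^0$. By contrast, all the algebra (the kernel of $C\to B$, the identity $A=C\by_{k\oplus I[-1]\eps}k$) is literally that of Theorem~\ref{robs} and can simply be cited.
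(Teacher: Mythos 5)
Your proposal is correct, and its algebraic core is exactly the paper's: the paper's proof of Proposition \ref{dgobs} also forms the cone $C(A,I)$, notes that $(f,0)\co C(A,I)\to B$ is an acyclic small extension so that $X[C(A,I)]\cong X[B]$, and uses $A=C(A,I)\by_{k\oplus I[-1]\eps}k$ with $C(A,I)\to k\oplus I[-1]\eps$ surjective to realise $A$ as a homotopy fibre product. Where you diverge is in how the conclusions are extracted. The paper stays entirely at the level of the set-valued functor $X[-]$ on the homotopy category: exactness comes from surjectivity of $X[A]\to X[C(A,I)]\by_{\H^1(X\hat{\ten}I)}\{0\}$, and the group action is obtained directly from the second ring identity $A\by_BA\cong A\by_k(k\oplus I\eps)$ together with the surjection $X[A]\by\H^0(X\hat{\ten}I)=X[A\by_BA]\onto X[A]\by_{X[B]}X[A]$ — no mapping spaces and no $\pi_1$ ever appear. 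You instead transplant the proof of Theorem \ref{robs} wholesale to derived function complexes $\bR\Map_{DG_{\Z}\Sp}(\Spf(-),X)$, get both statements from the long exact sequence of the resulting homotopy fibre sequence, and replace the paper's use of $A\by_BA$ by the suspension identification $\pi_1\bR\Map(\Spf(k\oplus I[-1]\eps),X)\cong\H^0(X\hat{\ten}I)$ via the extension $I\eps\to(I\ten L^0)\eps\to I[-1]\eps$. Both routes work: yours is more uniform (one fibre sequence yields everything) but must invoke framings to have well-behaved $\bR\Map$ in the non-simplicial model category $DG_{\Z}\Sp$ and must check the homotopy pushout claims, which your cofibrancy remarks do cover; the paper's is more elementary but leaves the group structure of the action more implicit. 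Two small corrections: the acyclicity of $C(A,I)\to B$ should be justified by the fact that small extensions with acyclic kernel lie in $Q$ and are therefore trivial fibrations in the model structure of Proposition \ref{dgspmodel} (Lemma \ref{dgweakext} concerns $cdg\hat{\C}_{\L}$, not $dg_{\Z}\hat{\C}_{\L}$), and your parenthetical ``every surjection in $dg_{\Z}\hat{\C}_{\L}$ being a fibration'' should read that every surjection is a relative $P$-cocell, hence dual to a cofibration in $DG_{\Z}\Sp$.
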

\begin{proof}
This is similar to Theorem \ref{robs}. Let $C(A,I):= (A \oplus (I\ten L^0\eps))/(e +\eps)I$ be the mapping cone of $e$, where $\eps^2=0$. Then $C(A,I) \xra{(f,0)} B$ is a small acyclic surjection, so $X[C(A,I)] \to X[B]$ is an isomorphism. 

Now,
$$
A = C(A,I)\by_{k \oplus I[-1]\eps}k,
$$ 
and since $C(A,I)\to k \oplus I[-1]\eps$ is a fibration, $A$ is the homotopy fibre product, and
$$
X[A] \to X[C(A,I)]\by_{\H^1(X \ten I)}\{0\}
$$
is surjective. This proves the first part.

For the second, note that $A\by_BA \cong A\by_k(k \oplus I\eps)$, so
$$
X[A] \by \H^0(X \ten I) =X[A\by_k(k \oplus I\eps)]\cong  X[A\by_BA] \onto X[A]\by_{X[B]}X[A]. 
$$
\end{proof}

\begin{corollary}\label{dgcohoweak}
A map $f: X \to Y$ in $DG_{\Z}\Sp $ is a weak equivalence if and only if $f_*:\H^*(X) \to \H^*(Y)$ is an isomorphism. 
\end{corollary}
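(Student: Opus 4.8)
The plan is to treat the two implications separately, modelled on the arguments for the simplicial analogues Corollary \ref{weak} and Corollary \ref{weakchar}. The forward direction is immediate: if $f$ is a weak equivalence it is an isomorphism in $\Ho(DG_{\Z}\Sp)$, and since $\H^n(X\hat{\ten}V)=X[k\oplus V[-n]\eps]$ is by definition the functor on $\Ho(DG_{\Z}\Sp)$ corepresented by $\Spf(k\oplus V[-n]\eps)$, the map $f_*$ is automatically a bijection on all of these groups; taking $V=k$ gives the isomorphism $\H^*(X)\to\H^*(Y)$, and more generally bijections on all $\H^n(X\hat{\ten}V)$, which I shall also need.

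For the converse, suppose $f_*$ is an isomorphism on $\H^*$. I claim then that $f_*\colon X[A]\to Y[A]$ is a bijection for every $A\in dg_{\Z}\hat{\C}_{\L}$. Granting this, since every object of $DG_{\Z}\Sp$ is some $\Spf A$ and $X[A]=\Hom_{\Ho(DG_{\Z}\Sp)}(\Spf A,X)$, Yoneda shows $f$ is an isomorphism in $\Ho(DG_{\Z}\Sp)$, i.e. a weak equivalence. I would prove the bijectivity first for Artinian $A$, by induction on the length of $\m(A)$ (the base case $A=k$ being trivial, as both sides are a point), and then deduce it for a general pro-Artinian $A=\Lim_{\alpha}A_{\alpha}$ with surjective transition maps by passing to the cofiltered limit, using the continuity of the path-object construction of Definition \ref{dgpathcyl} to identify $X[\Lim_{\alpha}A_{\alpha}]$ with $\Lim_{\alpha}X[A_{\alpha}]$ — the same device by which functors are extended to $\hat{\C}_{\L}$ elsewhere in the paper.

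For the inductive step I would factor the surjection $A\to B$ into a composite of small extensions whose kernels $I$ are either acyclic or one-dimensional, of the form $k[-n]$; this is possible since $\m(A)\cdot I=0$ forces every graded subcomplex of a kernel to be an ideal, and a finite-dimensional $\Z$-graded complex over $k$ is filtered by subcomplexes with such graded pieces. An acyclic small extension lies in the class $Q$, so its dual is a geometric weak equivalence in $DG_{\Z}\Sp$, and hence $X[A]\to X[B]$ and $Y[A]\to Y[B]$ are bijections; so the only substantive case is a small extension $I\xra{e}A\xra{\pi}B$ with $I\cong k[-n]$. Here I would feed $e$ into Proposition \ref{dgobs}, using $\H^m(X\hat{\ten}k[-n])\cong\H^{m+n}(X)$, to obtain exact sequences $X[A]\xra{\pi_*}X[B]\xra{o_e}\H^{n+1}(X)$ for $X$ and for $Y$, together with the compatible actions of $\H^n$ whose orbits are the fibres of $\pi_*$; the inductive hypothesis (bijectivity on the $[B]$ terms) and the standing hypothesis (bijectivity on the $\H^*$ terms) then feed a diagram chase giving bijectivity on the $[A]$ terms.

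The step I expect to cost the most work is the injectivity half of that chase. Proposition \ref{dgobs} only exhibits the fibres of $\pi_*$ as orbits of the $\H^n$-action, not as principal homogeneous spaces, so two elements of $X[A]$ with the same image in $Y[A]$ are a priori only known to lie in a single $\H^n(X)$-orbit. To close this I would prolong the exact sequence one more term to the left, writing $A=C(A,I)\by_{k\oplus I[-1]\eps}k$ and re-running the cone argument of the proof of Proposition \ref{dgobs}; this identifies the stabiliser of a point of $X[A]$ with the image of $\H^{n-1}(X)$, after which a five-lemma argument for exact sequences of pointed sets and groups finishes the step. Everything else — naturality of $o_e$ and of the group action, and the limit interchange above — is routine.
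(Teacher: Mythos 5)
Your overall strategy -- reduce to small extensions, run Proposition \ref{dgobs} for $X$ and $Y$, and chase -- is the natural way to read the corollary, and your surjectivity chase is fine, since it uses only transitivity of the $\H^n$-action together with naturality of $o_e$ and of the action. The genuine gap is exactly at the step you flagged, and your proposed repair is incorrect. Re-running the cone argument does not identify the stabiliser of $x\in X[A]$ inside $\H^n(X)=\H^0(X\hat{\ten}I)$ with the image of $\H^{n-1}(X)$. Writing $A=C(A,I)\by_{k\oplus I[-1]\eps}k$ and mapping into a fibrant replacement of $X$ exhibits $X[A]$ as $\pi_0$ of the homotopy fibre of $\bR\Map(\Spf C(A,I),X)\to\bR\Map(\Spf(k\oplus I[-1]\eps),X)$ over $0$, and the associated long exact sequence (compare the last terms of Theorem \ref{robs}) identifies the stabiliser of $x$ with the image, under the connecting map, of $\pi_1(\bR\Map(\Spf B,X),x_B)$. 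This group depends on $B$ and on the basepoint $x_B$ and is not $\H^{n-1}(X)$ except in the degenerate case $B=k$; for instance for $X=\mc(L)$ it involves terms of the type $\H^0(L\ten\m(B))$, which your hypothesis on $\H^*(f)$ does not control. Moreover, even with the correct identification, your five-lemma chase needs $f$ to be surjective on these $\pi_1$'s of mapping spaces, which is strictly stronger than the inductive hypothesis you carry (bijectivity of $X[B]\to Y[B]$, a $\pi_0$-level statement). The standard repair is to strengthen the induction to the space level: prove by induction on small extensions that $\bR\Map(\Spf A,X)\to\bR\Map(\Spf A,Y)$ is a weak equivalence of mapping spaces, using the fibre sequence above together with the fact that its fibres and loop groups are governed by $\H^*(X\hat{\ten}I)$; the set-level bijectivity you want is then the $\pi_0$-consequence.

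A second, related, problem is your passage from Artinian to pro-Artinian $A$. Continuity of $\cP$ gives $\Hom(\Spf A,X)\cong\Lim_{\alpha}\Hom(\Spf A_{\alpha},X)$, and likewise for the sets of homotopies, but it does not give $X[\Lim_{\alpha}A_{\alpha}]\cong\Lim_{\alpha}X[A_{\alpha}]$: a cofiltered limit of quotients by the homotopy relation need not be the quotient of the limit (this is a $\lim^1$-type obstruction, exactly the phantom-map phenomenon). The mapping-space formulation removes this issue too, since one extends along pro-objects by $\holim$ exactly as in the proof of Theorem \ref{schrep}, where the $\lim^1$ terms are visible and controlled by the surjectivity of the transition maps; alternatively one can sidestep pro-objects entirely by factoring $f$, via Proposition \ref{dgspmodel}, as a trivial cofibration followed by a quasi-smooth map and showing that a quasi-smooth map inducing an isomorphism on $\H^*$ satisfies the lifting condition against all small extensions, hence is a trivial fibration.
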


\begin{proposition}\label{tqiscor}
If $\L=k$, then the category $\Ho(DG_{\Z}\Sp)$ is equivalent to the  category of SHLAs localised at tangent quasi-isomorphisms. 
\end{proposition}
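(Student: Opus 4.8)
The plan is to use the anti-equivalence of Remark~\ref{profd2} to replace $dg_{\Z}\hat{\C}_k$ by a category of coalgebras, and there to recognise the category of SHLAs as the full subcategory of quasi-smooth (fibrant) objects of $DG_{\Z}\Sp$. Concretely, Remark~\ref{profd2} supplies a contravariant equivalence $A\mapsto \m(A)^{\vee}$ between $dg_{\Z}\hat{\C}_k$ and the category $\mathcal D$ of ind-conilpotent cocommutative $\Z$-graded dg coalgebras over $k$, with quasi-inverse $C\mapsto k\oplus C^{\vee}$; hence $\Spf A\mapsto \m(A)^{\vee}$ is an equivalence $DG_{\Z}\Sp\xrightarrow{\ \sim\ }\mathcal D$, and $\Hom_{DG_{\Z}\Sp}(\Spf A,\Spf B)=\Hom_{\mathcal D}(\m(A)^{\vee},\m(B)^{\vee})$. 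Under this equivalence the coalgebra $\m(A)^{\vee}$ is cofree, i.e.\ an SHLA in the sense of Definition~\ref{linfty}, precisely when the underlying graded algebra of $A$ is a completed symmetric algebra $k[[V]]$.

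Next I would pin down the cofibrant--fibrant objects. Every object of $dg_{\Z}\hat{\C}_k$ is fibrant, because every surjection onto $k$ factors as a composition of small extensions and hence is a relative $P$-cell; dually, every object of $DG_{\Z}\Sp$ is cofibrant. In characteristic $0$ the quasi-smooth (fibrant) objects of $DG_{\Z}\Sp$ are exactly those $\Spf A$ with $A$ free as a graded algebra: this is the char-$0$ PBW/K\"unneth splitting used in the minimal-model discussion of \S\ref{minimal} (cf.\ Lemma~\ref{decompcot}), which forces $A^{\#}\cong k[[V]]$ when $A$ lifts against the acyclic small extensions $Q$, and conversely shows any such $A$ is $Q$-projective. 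Combining this with the first paragraph, the full subcategory of fibrant objects of $DG_{\Z}\Sp$ is anti-equivalent, via $\Spf A\mapsto \m(A)^{\vee}$, to the category of SHLAs. Since every object of $DG_{\Z}\Sp$ is cofibrant and the fibrant ones are the quasi-smooth ones, the usual description of the homotopy category yields
$$\Ho(DG_{\Z}\Sp)\ \simeq\ \{\text{quasi-smooth objects of }DG_{\Z}\Sp\}/(\text{homotopy})\ \simeq\ \mathrm{SHLA}/(\text{homotopy}),$$
the homotopy relation being the one cut out by the cylinder $CR$ of Definition~\ref{dgpathcyl}, which on the coalgebra side is the classical homotopy of SHLA morphisms.

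It then remains to match weak equivalences with tangent quasi-isomorphisms. For quasi-smooth $\Spf A$ with $A^{\#}=k[[V]]$ one has $\cot A=V$ (with its differential), so Lemma~\ref{dgcohocalc} gives $\H^n(\Spf A)\cong\H^n(V^{\vee})$, and $V^{\vee}$ is precisely the cogenerator complex $\tan(\m(A)^{\vee})$ (up to the standard degree shift $C(U[1])\leftrightarrow U$). Hence, through the equivalence of the first paragraph, a morphism of SHLAs is a tangent quasi-isomorphism in the sense of Definition~\ref{tqis} if and only if the corresponding morphism of $DG_{\Z}\Sp$ induces isomorphisms on all $\H^n$, i.e.\ (by Corollary~\ref{dgcohoweak}) is a weak equivalence. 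Since all SHLAs are cofibrant--fibrant objects of $DG_{\Z}\Sp$, a map between them is a weak equivalence iff it is a homotopy equivalence; therefore the tangent quasi-isomorphisms of SHLAs are exactly the homotopy equivalences, and localising the category of SHLAs at them returns $\mathrm{SHLA}/(\text{homotopy})\simeq\Ho(DG_{\Z}\Sp)$ by the previous paragraph. This gives the asserted equivalence.

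The main obstacle is the identification in the second paragraph: showing that the quasi-smooth (fibrant) objects of $DG_{\Z}\Sp$ are precisely the $\Spf A$ with $A$ a free graded algebra, hence precisely the SHLAs, and that the model-categorical homotopy relation on these cofibrant--fibrant objects agrees with the naive homotopy of SHLA morphisms used to form the localisation. Both points depend essentially on $\mathrm{char}\,k=0$ (semisimplicity of $k[S_n]$), which is what makes cofree/free objects closed under retracts and guarantees the symmetric-algebra splittings; the cohomological bookkeeping in the third paragraph (the shift between $\tan$ and $\cot$) is routine.
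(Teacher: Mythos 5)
Your argument is correct and follows essentially the same route as the paper: identify the SHLAs with the quasi-smooth (fibrant) objects of $DG_{\Z}\Sp$ via the duality of Remark \ref{profd2} (the quasi-smooth $\Spf A$ being exactly those with $A$ a completed free graded algebra), and then match weak equivalences with tangent quasi-isomorphisms using Lemma \ref{dgcohocalc} and Corollary \ref{dgcohoweak}. The extra paragraphs on cofibrancy of all objects and on describing $\Ho(DG_{\Z}\Sp)$ as homotopy classes between cofibrant--fibrant objects merely spell out model-category bookkeeping that the paper leaves implicit, so no substantive difference.
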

\begin{proof}
Observe that an object $R_{\bt} \in dg_{\Z}\hat{\C}_k$ is quasi-smooth precisely when the underlying graded pro-algebra $R_*$ is of the form $R_*= k[[V_*]]$, for some graded pro-finite-dimensional vector space $V_*$. As in Remark \ref{profd2}  the dual  $\m(R_{\bt})^{\vee}$  is a dg coalgebra, and 
$$
\m(R_*)^{\vee} \cong \bigoplus_{n>0} \Gamma^n(V_*)^{\vee},
$$ 
so $\m(R)^{\vee}$ is an SHLA (as in Definition \ref{linfty}). Conversely, given an SHLA $C$, the object $k \oplus C^{\vee} \in dg_{\Z}\hat{\C}_k$ is quasi-smooth. Therefore the functor
$$
C \mapsto \Spf(k \oplus C^{\vee})
$$
gives an equivalence between the category of SHLAs and the full subcategory of quasi-smooth (i.e. fibrant) objects in $DG_{\Z}\Sp$.

It therefore remains only to show that the morphisms $f: C \to D$ of SHLAs which become weak equivalences in $DG_{\Z}\Sp$ are precisely the tangent quasi-isomorphisms.

Set $\cot(S):= \m(S)/(\m(S)^2)$ and $t(S):= \cot(S)^{\vee}$. If $S \in dg_{\Z}\hat{\C}_k$ is quasi-smooth, then Lemma \ref{dgcohocalc} implies that $\H^*(\Spf S)\cong \H^*(t(S))$. Therefore for an SHLA $C$, 
$$
\H^*(\Spf(k \oplus C^{\vee})) \cong \H^*(\tan(C)).
$$
Corollary \ref{dgcohoweak} then implies that $\Spf(k \oplus C^{\vee}) \to \Spf(k \oplus D^{\vee})$ is a weak equivalence if and only if $\tan(C) \to \tan(D)$ is a quasi-isomorphism, as required.
\end{proof}

\begin{definition}%%%changed
A functor $F:dg_{\Z}\hat{\C}_{\L}\to \Set$ is said to be homotopy representable if there is an object $X \in DG_{\Z}\Sp$ and a natural isomorphism
$$
F(A)\cong X[A].
$$
 \end{definition}

\begin{lemma}\label{manrep}%%%corrected
A  functor $F:dg_{\Z}\hat{\C}_{\L}\to \Set$ is homotopy representable if and only if it satisfies the following conditions:
\begin{enumerate}
\item For any (possibly empty) set of of objects $A_i \in dg_{\Z}\hat{\C}_{\L} $, the map $F(\prod_iA_i) \to \prod_i F(A_i)$    is an isomorphism.

\item For all surjections $A \onto B$ and morphisms $C \to B$ in $dg_{\Z}\hat{\C}_{\L} $, the  map 
$$
F(A\by_BC)\to F(A)\by_{F(B)}F(C)
$$ is surjective. 

\item For all trivial fibrations $A \onto B$ in $dg_{\Z}\hat{\C}_{\L}$, the map $F(A) \to F(B)$ is an isomorphism.
\end{enumerate}
\end{lemma}
\begin{proof}
The final condition  ensures that $F$ this descends to a functor $F:\Ho(dg_{\Z}\hat{\C}_{\L})\to \Set$. The other conditions ensure that this functor is half-exact, and Corollary \ref{dgcohoweak} implies that the spaces
$\{K(n):= \Spf(k \oplus k[-n]\eps)\}_{n \in \Z}$ are right adequate, so $DG_{\Z}\Sp$ satisfies the conditions of
Heller's Theorem (\cite{heller} Theorem 1.3).
\end{proof}

\begin{corollary}\label{manho}
Let $\L =k$. Given an $L_{\infty}$-algebra $V$, and $A \in dg_{\Z}\C_k$, there is a canonical isomorphism
$$
\ddef(V) \cong \Hom_{\Ho(DG_{\Z}\Sp)}(\Spf A, \mc(V)).
$$
\end{corollary}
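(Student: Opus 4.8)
The plan is to identify $\mc(V)$, as an object of $DG_{\Z}\Sp$, with the functor it represents on $dg_{\Z}\C_{k}$, and then to show that the associated homotopy-class functor $A\mapsto\mc(V)[A]=\Hom_{\Ho(DG_{\Z}\Sp)}(\Spf A,\mc(V))$ is the deformation functor $\mc(V)^{+}=\ddef(V)$, by proving it has the universal property characterising $\mc(V)^{+}$ (\cite{Man2} Theorem 2.8). Two structural inputs are needed at the outset. First, by the very definition of $\mc(V)$ we have $\mc(V)(A)=\Hom_{dg_{\Z}\hat{\C}_{k}}(k\oplus C(V[1])^{\vee},A)$, so $\mc(V)$ is represented by the object $\Spf(k\oplus C(V[1])^{\vee})$ of $DG_{\Z}\Sp$; second, since $C(V[1])$ is an SHLA this object is quasi-smooth, i.e.\ fibrant, as established in the proof of Proposition \ref{tqiscor}. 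As every surjection in $dg_{\Z}\hat{\C}_{k}$ is a fibration, every object of $DG_{\Z}\Sp$ is cofibrant; hence for $A\in dg_{\Z}\C_{k}$ the set $\mc(V)[A]$ is the quotient of $\mc(V)(A)$ by right homotopy, which is already an equivalence relation, realised via the path object $\cP A$ of Definition \ref{dgpathcyl}: $f_{0}\sim f_{1}$ precisely when there is $H\colon k\oplus C(V[1])^{\vee}\to\cP A$ with $\ev_{i}\circ H=f_{i}$.

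I would then check that $F:=\mc(V)[-]$ satisfies the axioms (A0)--(A2) of Definition \ref{mandef}. (A0) is immediate, as $\Spf k$ is the initial object. For (A2), an acyclic small extension $A\onto B$ gives a weak equivalence $\Spf B\to\Spf A$ of cofibrant objects, which induces a bijection on homotopy classes of maps into the fibrant object $\mc(V)$, so $F(A)\cong F(B)$. For (A1), the surjection $A\onto B$ makes $\Spf B\to\Spf A$ a cofibration, so $\Spf(A\by_{B}C)=\Spf A\sqcup_{\Spf B}\Spf C$ is a homotopy pushout; mapping into $\mc(V)$ and passing to derived function complexes gives $\bR\Map(\Spf(A\by_{B}C),\mc(V))\simeq\bR\Map(\Spf A,\mc(V))\by^{h}_{\bR\Map(\Spf B,\mc(V))}\bR\Map(\Spf C,\mc(V))$, so surjectivity of $F(A\by_{B}C)\to F(A)\by_{F(B)}F(C)$ follows from Remark \ref{byh}, and bijectivity when $B\simeq k$ follows because then $\Spf B$ is weakly equivalent to the initial object, so the apex of the homotopy pullback is weakly contractible.

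Next I would show that the tautological surjection $q\colon\mc(V)(-)\to F$ is initial among transformations from the predeformation functor $\mc(V)(-)$ to deformation functors. Uniqueness of a factorisation is automatic since $q$ is objectwise surjective, so it suffices to show every transformation $\phi\colon\mc(V)(-)\to G$ to a deformation functor $G$ is constant on homotopy classes. Extend $G$ to $dg_{\Z}\hat{\C}_{k}$ by inverse limits. By Lemma \ref{dgpathworks} the maps $\ev_{0},\ev_{1}\colon\cP A\to A$ are relative $Q$-cocells, hence composites of pullbacks of acyclic small extensions, so by (A2) for $G$ both $G(\ev_{i})$ are bijections; as they share the section $A\to\cP A$ of Definition \ref{dgpathcyl}, $G(\ev_{0})=G(\ev_{1})$. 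Applying naturality of $\phi$ to a homotopy $H$ between $f_{0},f_{1}\in\mc(V)(A)$ then gives $\phi_{A}(f_{0})=G(\ev_{0})(\phi_{\cP A}(H))=G(\ev_{1})(\phi_{\cP A}(H))=\phi_{A}(f_{1})$, so $\phi$ factors through $q$. Thus $q$ and the canonical map $\mc(V)(-)\to\mc(V)^{+}(-)=\ddef(V)(-)$ are both initial in the category of transformations from $\mc(V)(-)$ to deformation functors, whence there is a unique isomorphism $\ddef(V)\cong F$ compatible with these maps; that is, $\ddef(V)(A)\cong\Hom_{\Ho(DG_{\Z}\Sp)}(\Spf A,\mc(V))$, naturally in $A$. (As a sanity check, Lemma \ref{manrep} shows independently that $\ddef(V)$ is homotopy pro-representable, and comparing cohomology via Corollary \ref{dgcohoweak} with the tangent-complex computation in the proof of Proposition \ref{tqiscor} again identifies the representing object as $\mc(V)$.)

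The main obstacle is the verification of axiom (A1) for $\mc(V)[-]$: one must be confident that $\Spf(A\by_{B}C)$ genuinely computes the homotopy pushout in $DG_{\Z}\Sp$ and that $[\,\cdot\,,\mc(V)]$ turns it into the homotopy fibre product of the relevant mapping spaces, so that Remark \ref{byh} applies; the surrounding model-categorical bookkeeping (which objects are fibrant or cofibrant, stability of acyclic small extensions under pullback, exactness of the limit functor on pro-objects) is routine but must be handled with care, and naturality in $A$ must be tracked throughout.
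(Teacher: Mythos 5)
Your proposal is correct in substance but takes a genuinely different route from the paper's. The paper's proof is short: it observes that Manetti's deformation functors (Definition \ref{mandef}) are exactly the functors satisfying the conditions of Lemma \ref{manrep}, i.e.\ the homotopy pro-representable ones; given a deformation functor $G$ under $\mc(V)$, it invokes this representability to write $G(A)\cong \Hom_{\Ho(DG_{\Z}\Sp)}(\Spf A, X)$, extends $\eta\co\mc(V)\to G$ to $dg_{\Z}\hat{\C}_k$ and evaluates it on the identity of $\mc(V)$ to get a class in $\Hom_{\Ho(DG_{\Z}\Sp)}(\mc(V),X)$, hence a transformation $F\to G$ by Yoneda in the homotopy category; universality then identifies $F$ with $\mc(V)^+=\ddef(V)$. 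You instead verify by hand, using cofibrancy of all objects, fibrancy of $\mc(V)$, homotopy pushouts along cofibrations and Remark \ref{byh}, that $A\mapsto[\Spf A,\mc(V)]$ satisfies (A0)--(A2), and you prove the universal property directly via the path object $\cP A$ and Lemma \ref{dgpathworks}, showing that any transformation to a deformation functor is constant on homotopy classes (so factorisation exists, and is unique because the quotient map is surjective). Your route is more self-contained, needing only axiom (A2) for the target $G$ rather than its homotopy pro-representability; the paper's route gets the deformation-functor axioms for $F$ for free from Lemma \ref{manrep} but leans on the asserted ``key observation'' and on Heller representability for $G$.

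One step needs care: the bijectivity clause of (A1) in the case $B\simeq k$. If this is read as $B=k$ (which is what Lemma \ref{manrep} requires, and what Manetti's condition amounts to), your argument is fine: the mapping space out of the initial object $\Spf k$ is a point, the homotopy pullback over it is a product, and bijectivity on $\pi_0$ follows. But if $B\simeq k$ means merely quasi-isomorphic to $k$, your assertion that $\Spf B$ is then weakly equivalent to the initial object is not justified: in the $\Z$-graded setting weak equivalences in $DG_{\Z}\Sp$ are detected by the cohomology of Corollary \ref{dgcohoweak}, not by quasi-isomorphism of the underlying complexes, and the paper explicitly warns that not every acyclic surjection in $dg_{\Z}\hat{\C}_{\L}$ is a relative $Q$-cocell (compare also Proposition \ref{tqiscor}, where the relevant equivalences are tangent quasi-isomorphisms). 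The paper's own proof buries the same point inside the unproved identification of the axioms of Lemma \ref{manrep} with those of Definition \ref{mandef}, so this is not a defect peculiar to your argument, but you should either restrict the verification to $B=k$ or give a separate argument for Artinian $B$ with acyclic maximal ideal.
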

\begin{proof}%%%corrected proof
The key observation is that functors satisfying the conditions of Lemma \ref{manrep} induce  deformation functors (in the sense of Definition \ref{mandef}) on restriction to $\Ho(dg_{\Z}\C_k)$.
% 
% Let $F(A):=\Hom_{\Ho(DG_{\Z}\Sp)}(\Spf A, \mc(V))$. Since $\mc(V)(A)=  \Hom_{DG_{\Z}\Sp}(\Spf A, \mc(V))$, there is a canonical natural transformation $\mc(V) \to F(V)$ of functors on $dg_{\Z}\C_k$. Since $F$ is a deformation functor and   $\mc(V)$  a predeformation functor  with  $\mc(V)^+=\ddef(V)$, there is an induced  natural transformation $\ddef(V) \to F$. On tangent spaces, the induced map is the identity $V[1] \to V[1]$, so \cite[Corollary 3.3]{Man2} implies that $\eta$ is an isomorphism.

Now take  a deformation functor $G$ and a natural transformation $\eta:\mc(V) \to G$. Since $G$ is a deformation functor, for any   path object $PA \in dg_{\Z}\C_k$ of an object $A$, the maps $G(PA) \Rightarrow G(A)$ are isomorphisms, and thus $\eta_A$ factors through the coequaliser of $\mc(V)(PA) \Rightarrow \mc(V)(A)$, which is $FA$.  

Therefore $F$ is universal among deformation functors under $\mc(V)$, so $F= \mc(V)^+=\ddef(V)$, as required.
\end{proof}

\begin{remark}\label{manchar}%%%rewritten
The proof of Corollary \ref{manho} shows that any homotopy representable functor $F= X[-]$ on $dg_{\Z}\hat{\C}_{\L}$ restricts to a deformation functor on  $dg_{\Z}\C_{\L}$, and that this functor is ``geometric'' in the sense of \cite{Man2} \S7, since 
$X[-] =\Hom_{DG_{\Z}\Sp}(\Spf -, X)^+$.

The deformation functors $F:dg_{\Z}\C_{\L}\to \Set$ of Definition \ref{mandef} satisfy the restriction to  $dg_{\Z}\C_{\L}$ of the conditions of Lemma \ref{manrep}, whenever they make sense. However,   lifting deformation functors to the functors of Lemma \ref{manrep} is not straightforward; as Andrey Lazarev has pointed out to the author, this question is a form of Adams representability. 
\end{remark}

\subsection{The total functor}

\begin{definition}
Define the total complex  functor $\Tot^{\Pi}:DGdg\hat{\C}_{\L}\to  dg_{\Z}\hat{\C}_{\L}$ by the formula of Definition \ref{totprod}, with product coming from that on $R$.
\end{definition}

\begin{theorem}\label{totequiv}
$\Tot^{\Pi}:DGdg\hat{\C}_{\L}\to  dg_{\Z}\hat{\C}_{\L}$ is a right Quillen equivalence.
\end{theorem}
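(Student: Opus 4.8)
The plan is to exhibit $\Tot^{\Pi}$ as a right adjoint, check it is a right Quillen functor, and then verify it is a Quillen equivalence by passing to homotopy categories and using the representability results already established. First, $\Tot^{\Pi}$ is continuous (it is built from products and limits commute with them), so it has a left adjoint $\Tot^{\Pi*}$, extending Definition \ref{tot*} to the pro-Artinian setting by continuity; thus $\Tot^{\Pi}$ is a right adjoint. To see it is right Quillen, recall that in both $DGdg\hat{\C}_{\L}$ (Definition \ref{dgdgdefcatdef}, via Theorem \ref{dequiv}) and $dg_{\Z}\hat{\C}_{\L}$ (Proposition \ref{dgspmodel}) fibrations and trivial fibrations are detected by the cogenerating classes $P$ and $Q$ of small extensions, so it suffices to check that $\Tot^{\Pi}$ carries a small extension $A \to B$ in $DGdg\C_{\L}$ with kernel $V$ to a small extension $\Tot^{\Pi}A \to \Tot^{\Pi}B$ with kernel $\Tot^{\Pi}V$, and that when $\H_*(\Tot^{\Pi}V)=0$ the image is again trivial. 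Surjectivity of $\Tot^{\Pi}A \to \Tot^{\Pi}B$ and the square-zero condition $\m(\Tot^{\Pi}A)\cdot\Tot^{\Pi}V=0$ are immediate from the definition of the product on $\Tot^{\Pi}$, and the triviality criterion is literally the condition $\H_*(\Tot^{\Pi}V)=0$ appearing in Lemma \ref{weakext} (via Lemma \ref{dgweakext}, which identifies weak equivalences among small extensions in $cdg\hat{\C}_{\L}$, combined with Theorem \ref{dequiv}'s identification of weak equivalences in $DGdg\hat{\C}_{\L}$). So $\Tot^{\Pi}$ is right Quillen.

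For the equivalence, I would argue on homotopy categories, reducing to a statement about cohomology of representing objects. By Remark \ref{ntanfdef2}, for $R \to S$ cofibrant in $DGdg\hat{\C}_{\L}$ one has $\H^*(\Spf D^*S/\Spf D^*R)\cong \H^*(\Tot t(S/R))$, and $\H^*$ detects weak equivalences between cofibrant objects; dually, in $dg_{\Z}\hat{\C}_{\L}$ the cohomology $\H^*$ of Definition following Lemma \ref{dgcohocalc} detects weak equivalences by Corollary \ref{dgcohoweak}. The key computation is that for $S \in DGdg\hat{\C}_{\L}$ (quasi-smooth, say), the cotangent complex of $\Tot^{\Pi}S$ is $\Tot^{\Pi}$ applied to that of $S$, i.e. $\cot(\Tot^{\Pi}S)\cong \Tot^{\Pi}\cot(S)$ (this follows since $\m(\Tot^{\Pi}S)/\m(\Tot^{\Pi}S)^2 = \Tot^{\Pi}(\m(S)/\m(S)^2)$, using that the product decomposition is compatible with the multiplicative filtration), so that $\H^n(\Spf \Tot^{\Pi}S)\cong \H^n(\Tot t(S))\cong \H^n(\Spf D^*S)$, compatibly with maps. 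Hence $\Tot^{\Pi}$ preserves and reflects weak equivalences between cofibrant objects.

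Then I would combine this with the unit/counit check. For $S \in dg_{\Z}\hat{\C}_{\L}$ cofibrant, one shows the unit $S \to \Tot^{\Pi}\Tot^{\Pi*}S$ is a weak equivalence, by a filtration/associated-graded argument (the explicit formula in Definition \ref{tot*} shows $\Tot^{\Pi}\Tot^{\Pi*}$ is built from mapping-cone-type data that is levelwise contractible onto $S$), exactly parallel to Lemma \ref{dunit}. For $T \in DGdg\hat{\C}_{\L}$, take a cofibrant approximation $q:\widetilde{\Tot^{\Pi}T}\to \Tot^{\Pi}T$ and consider the counit $\vareps: \Tot^{\Pi*}\widetilde{\Tot^{\Pi}T}\to T$; since $\widetilde{\Tot^{\Pi}T}\to \Tot^{\Pi}\Tot^{\Pi*}\widetilde{\Tot^{\Pi}T}$ and $q$ are both weak equivalences, $\Tot^{\Pi}\vareps$ is a weak equivalence, and because $\Tot^{\Pi}$ reflects weak equivalences between cofibrant (hence also fibrant--cofibrant) objects — or more simply because $\Tot^{\Pi}$ creates them by the cohomology computation above — $\vareps$ is a weak equivalence. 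This gives the Quillen equivalence.

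The main obstacle I expect is the cohomology/cotangent computation $\cot(\Tot^{\Pi}S)\cong\Tot^{\Pi}\cot(S)$ together with its compatibility with the already-defined cohomology functors on both sides: one must be careful that the product $\Tot^{\Pi}$ interacts correctly with the pro-structure (the products run over an index that is a priori infinite, but the bidegree constraints keep each graded piece finite-dimensional, matching the Artinian hypotheses) and that the differential $d = d_c + (-1)^b d^s$ is exactly the one making the dual statement match $\Tot t(S/R)$ from Definition \ref{ntanfdef}. Once this bookkeeping is in place, the rest is the now-standard template (right-adjoint, preserves (trivial) fibrations via cogenerating small extensions, unit/counit are weak equivalences) used repeatedly in this section, e.g. in Theorems \ref{nequiv} and \ref{dequiv}.
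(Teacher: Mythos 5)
Your right-Quillen check and your counit-side computation are essentially the paper's: the paper likewise reduces everything to cohomology (Corollaries \ref{weak} and \ref{dgcohoweak}), replaces $S$ by a cofibrant approximation, uses that cofibrant objects of $DGdg\hat{\C}_{\L}$ are free so that $\Tot^{\Pi}S$ is free, and then combines $t(\Tot^{\Pi}S)=\Tot t(S)$ with Proposition \ref{totcoho} and Lemma \ref{dgcohocalc}. The genuine gap is on the unit side. Definition \ref{tot*} describes the left adjoint of $\Tot^{\Pi}$ only on pro-finite-dimensional (bi)complexes; the left adjoint $\Tot^{\Pi*}\co dg_{\Z}\hat{\C}_{\L}\to DGdg\hat{\C}_{\L}$ on \emph{algebras} exists abstractly by continuity but is not computed by that formula, so the claim that ``the explicit formula in Definition \ref{tot*} shows $\Tot^{\Pi}\Tot^{\Pi*}$ is built from mapping-cone-type data levelwise contractible onto $S$'' does not stand as written. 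To repair it along your lines you would have to genuinely imitate Lemma \ref{dunit}: filter a cofibrant (free) $R$ and $\Tot^{\Pi}\Tot^{\Pi*}R$ by powers of the maximal ideal, identify the graded pieces as symmetric powers of the cotangent spaces --- where the vector-space-level formula of Definition \ref{tot*} does apply --- and then prove a characteristic-zero comparison of the type $\Tot^{\Pi}\Symm^i(\Tot^{\Pi*}\cot R)\simeq \Symm^i\cot R$; none of this is supplied in your sketch. The paper sidesteps the issue entirely: since $\Tot^{\Pi}K(n)=K(n)$, adjunction on homotopy classes gives, for cofibrant $R$, $\H^n(\Spf \Tot^{\Pi*}R)=[\bL\Tot^{\Pi*}R,K(n)]=[R,\Tot^{\Pi}K(n)]=\H^n(\Spf R)$, and no description of $\Tot^{\Pi*}$ on algebras is ever needed.

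A smaller point: your counit step invokes ``$\Tot^{\Pi}$ reflects weak equivalences'', but your cohomology comparison $\H^*(\Spf\Tot^{\Pi}S)\cong\H^*(\Spf S)$ is only available when $S$ is cofibrant (freeness is needed on both sides, for Remark \ref{ntanfdef2} and Lemma \ref{dgcohocalc}), whereas the target of the counit is an arbitrary object of $DGdg\hat{\C}_{\L}$. As in the paper, you should first note that the comparison is unchanged under cofibrant approximation and work with the approximated object, rather than asserting reflection of weak equivalences between arbitrary objects.
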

\begin{proof}
$\Tot^{\Pi}$ is clearly right Quillen. Denote its left adjoint by $\Tot^{\Pi*}$. We need to show that for all $S \in DGdg\hat{\C}_{\L}$ the co-unit $\Tot^{\Pi*}Q\Tot^{\Pi}S \to S$ is a weak equivalence for a cofibrant approximation $Q\Tot^{\Pi}S\to \Tot^{\Pi}S $, and that for all   cofibrant $R \in dg_{\Z}\hat{\C}_{\L}$ the unit $R \to \Tot^{\Pi}\Tot^{\Pi*}R$ is a  weak equivalence. 

Since weak equivalences in both categories are determined by cohomology groups (Corollary \ref{weak} and Corollary \ref{dgcohoweak}), it suffices to show that there are canonical isomorphisms
$$
\H^*(\Spf(\Tot^{\Pi*}R)) \cong \H^*(\Spf(R)),\quad \H^*(\Spf(\Tot^{\Pi} S)) \cong \H^*(\Spf(S)).
$$

For the first, observe that $\Tot^{\Pi} K(n)= K(n)$, so
$$
\H^n(\Spf(\Tot^{\Pi*}R))= [\Tot^{\Pi*}R,K(n)] =[\bL\Tot^{\Pi*}R,K(n)]
=[R,\Tot^{\Pi} K(n)]=\H^n(\Spf(R)).
$$ 

For the second, begin by noting that the comparison is unchanged if we replace $S$ by a cofibrant approximation. In $DGdg\hat{\C}_{\L}$, every cofibrant object is free as a pro-Artinian bigraded algebra (although for our purposes, we need only observe that any object of the form $D^*T$, for $T \in cdg\hat{\C}_{\L}$ cofibrant, must be free). Therefore $\Tot^{\Pi}S$ is free as a pro-Artinian graded algebra. If $\cot(S):= \m(S)/(\m(S)^2+\mu)$ and $t(S):= \cot(S)^{\vee}$, then Proposition \ref{totcoho} gives an isomorphism
$$
\H^*(\Spf(S))\cong \H^*(\Tot t(S)).
$$
But $\Tot t(S)= t(\Tot^{\Pi}S)$, and by Lemma \ref{dgcohocalc}, 
$$
\H^*(\Spf(\Tot^{\Pi}S )) \cong \H^*(t(\Tot^{\Pi}S)),
$$
since  $\Tot^{\Pi}S$ is free, hence cofibrant.
\end{proof}

\begin{corollary}\label{allequiv}
Whenever $k$ has characteristic $0$, the categories $\Ho(sc\Sp)$ and $\Ho(DG_{\Z}\Sp)$ are canonically equivalent.
\end{corollary}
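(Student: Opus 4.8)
The plan is to assemble into a single zig-zag the three Quillen equivalences already established in this section. First recall the relevant identifications of opposite categories: $sc\Sp$ is equivalent (via $\Spf$) to $(cs\hat{\C}_{\L})^{\op}$, while by definition $sDG\Sp = (cdg\hat{\C}_{\L})^{\op}$, $dgDG\Sp = (DGdg\hat{\C}_{\L})^{\op}$ and $DG_{\Z}\Sp = (dg_{\Z}\hat{\C}_{\L})^{\op}$. Since forming the homotopy category commutes with passing to opposite categories, any right (or left) Quillen equivalence $\C \to \cD$ induces an equivalence $\Ho(\C) \simeq \Ho(\cD)$, hence also $\Ho(\C^{\op}) \simeq \Ho(\cD^{\op})$.

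Now I would proceed as follows. Theorem \ref{nequiv} gives a right Quillen equivalence $\Spf N^* : sDG\Sp \to sc\Sp$, so $\Ho(sDG\Sp) \simeq \Ho(sc\Sp)$. Theorem \ref{dequiv} gives a right Quillen equivalence $D : DGdg\hat{\C}_{\L} \to cdg\hat{\C}_{\L}$, which upon passing to opposites yields $\Ho(dgDG\Sp) \simeq \Ho(sDG\Sp)$. Theorem \ref{totequiv} gives a right Quillen equivalence $\Tot^{\Pi} : DGdg\hat{\C}_{\L} \to dg_{\Z}\hat{\C}_{\L}$, which dually yields $\Ho(dgDG\Sp) \simeq \Ho(DG_{\Z}\Sp)$. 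Composing these (replacing the equivalence induced by $D$ with a quasi-inverse) produces the canonical equivalence
$$
\Ho(sc\Sp) \simeq \Ho(sDG\Sp) \simeq \Ho(dgDG\Sp) \simeq \Ho(DG_{\Z}\Sp),
$$
which is the assertion.

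I do not expect any genuine obstacle here: all the substance lies in Theorems \ref{nequiv}, \ref{dequiv} and \ref{totequiv}, and the remaining work is the formal bookkeeping of stringing together a zig-zag of derived equivalences while keeping the opposite-category conventions straight. The one point worth recording is where characteristic zero is used in the chain: it enters exactly once, in Theorem \ref{nequiv} (through Lemma \ref{nworks} and the Quillen equivalence of \cite{QRat} between simplicial and chain commutative algebras), which is precisely why the statement is restricted to $\Char k = 0$. If desired, one may also track the representable objects through this chain to recover the identification of $\mc(V)$ for $L_{\infty}$-algebras $V$ alluded to in Remarks \ref{cfothers}, but that refinement is not needed for the bare statement.
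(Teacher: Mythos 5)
Your proposal is correct and is essentially the paper's own proof: the paper simply cites Theorems \ref{nequiv}, \ref{dequiv} and \ref{totequiv} and chains the resulting zig-zag $sc\Sp \leftrightarrow sDG\Sp \leftrightarrow dgDG\Sp \leftrightarrow DG_{\Z}\Sp$ of Quillen equivalences. (Only your side remark that characteristic $0$ enters ``exactly once'' is slightly optimistic, since the comparison of symmetric powers underlying Theorem \ref{dequiv} also relies on it, but the whole section assumes $\Char k=0$ anyway.)
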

\begin{proof}
We have the following chain of left Quillen equivalences:
$$
sc\Sp \xra{\Spf N} sDG\Sp \xla{\Spf D} dgDG\Sp \xra{\Spf \Tot}  DG_{\Z}\Sp,
$$
by Theorems \ref{nequiv}, \ref{dequiv} \and \ref{totequiv}.
\end{proof}

\subsection{Differential $\Z$-graded Lie algebras}

For the purposes of this section, assume that $\L=k$. 

\begin{lemma}\label{mccoho}
The functor $\mc:DG_{\Z}\LA \to DG_{\Z}\Sp$ of Definition \ref{mcdef} is right Quillen. Its left adjoint $\cL$ is given by
$$
\cL(\Spf A) = \cL_q(A^{\vee}),
$$ 
for $\cL_q$ as in Definition \ref{clqdef}.

There are canonical isomorphisms $\H^n(\mc(L))\cong \H^{n+1}(L)$, for all $n\in \Z$, $L \in DG_{\Z}\LA$.
\end{lemma}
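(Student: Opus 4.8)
The plan is to establish the three claims in order: first that $\mc:DG_{\Z}\LA\to DG_{\Z}\Sp$ is continuous (hence a right adjoint), then identify the left adjoint, then verify that it sends fibrations and trivial fibrations to the corresponding classes, and finally compute the cohomology.

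\begin{proof}
First I would check that $\mc$ is a right adjoint by exhibiting its left adjoint explicitly. For $A \in dg_{\Z}\hat{\C}_k$, Remark \ref{profd2} identifies $A^{\vee}$ with a conilpotent dg coalgebra, in fact with the continuous dual $\m(A)^{\vee}$ (plus counit), and by definition $\mc(L)(A)= \Hom_{dg_{\Z}\hat{\C}_k}(k\oplus C(L[1])^{\vee}, A)$. Dualising (using Lemma \ref{profd}), this is $\Hom_{DG_{\Z}\mathrm{CU}_k}(A^{\vee}, \C_q(L))= \Hom_{DG_{\Z}\mathrm{CU}_k}(A^{\vee}, k\oplus C(L[1]))$, so
$$
\Hom_{DG_{\Z}\Sp}(\Spf A, \mc(L)) = \Hom_{DG_{\Z}\mathrm{CU}_k}(A^{\vee}, \C_q(L)) = \Hom_{DG_{\Z}\LA}(\cL_q(A^{\vee}), L),
$$
the last step by the adjunction $\cL_q \dashv \C_q$ of Definition \ref{clqdef}. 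Since every object of $DG_{\Z}\Sp$ is a filtered colimit (in the opposite category, a filtered limit of Artinian quotients) of objects of the form $\Spf A$, this identifies $\cL(\Spf A)=\cL_q(A^{\vee})$ as the left adjoint of $\mc$, extended to all of $DG_{\Z}\Sp$ by colimits; continuity of $\mc$ follows since $\Hom_{DG_{\Z}\LA}$ turns the relevant colimits into limits.

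Next I would show $\mc$ is right Quillen. By Proposition \ref{dgspmodel}, the fibrations (resp. trivial fibrations) in $DG_{\Z}\Sp$ are the maps dual to relative $P$-cocells (resp. $Q$-cocells), where $P$ is the class of small extensions and $Q$ those with acyclic kernel. A surjection $L \onto M$ of DGLAs induces $C(L[1]) \onto C(M[1])$, and dually an injection $k\oplus C(M[1])^{\vee} \into k\oplus C(L[1])^{\vee}$, i.e. a surjection $\mc(M)(A) \to \mc(L)(A)$ ... more precisely one checks, using the filtration of $C(L[1])$ by the $\Gamma^{\le m}$, that $\mc(L) \to \mc(M)$ is a composition of small extensions whose kernels are the duals of $\Gamma^n$ of the fibre; when $L \to M$ is a quasi-isomorphism these kernels are acyclic (by the Künneth formula, since $\Gamma^n$ over a characteristic-zero field preserves quasi-isomorphisms), so $\mc(L)\to\mc(M)$ is then a composition of acyclic small extensions. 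Hence $\mc$ preserves fibrations and trivial fibrations, so it is right Quillen.

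Finally, for the cohomology computation: since $\mc(L)$ is quasi-smooth (being fibrant, as $L \to 0$ is a fibration), Lemma \ref{dgcohocalc} gives $\H^*(\mc(L)) \cong \H^*(t(\mc(L)))$ where $t(\mc(L))= \cot(k\oplus C(L[1])^{\vee})^{\vee}$. But $\cot$ of a cofree coalgebra's dual is the space of cogenerators, namely $C(L[1])^{\vee}/(C(L[1])^{\vee})^2$ dually is $L[1]$ itself (the cogenerators of $C(L[1])$), so $t(\mc(L)) \cong L[1]$ as cochain complexes. Therefore $\H^n(\mc(L)) \cong \H^n(L[1]) = \H^{n+1}(L)$, naturally in $L$.
\end{proof}

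The main obstacle I expect is the bookkeeping in the second paragraph: verifying carefully that $\mc$ applied to a (trivial) fibration of DGLAs is literally a transfinite composition of pushouts of maps in $P$ (resp. $Q$) in $DG_{\Z}\Sp$, rather than merely "surjective with the right kind of kernel." This requires working with the filtration of $C(L[1])$ by symmetric-power degree, checking that the successive quotients give small extensions on the dual side, and invoking the Künneth formula in characteristic zero to control acyclicity of $\Gamma^n$ of an acyclic complex. The cohomology identification, by contrast, is essentially formal once one has Lemma \ref{dgcohocalc} and the observation that cotangent space of a cofree coalgebra dual is the cogenerator space.
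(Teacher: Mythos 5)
Your identification of the left adjoint (first paragraph) and your computation of $\H^n(\mc(L))$ via the tangent space (last paragraph) are both sound; indeed the cohomology argument is exactly the computation the paper itself makes in the proof of Proposition \ref{tqiscor}, where $\H^*(\Spf(k\oplus C^{\vee}))\cong\H^*(\tan C)$ and $\tan C(L[1])\cong L[1]$. Since the paper's own proof of the lemma is just ``Immediate'', your write-up of these two parts is a reasonable expansion of it.

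The genuine gap is in the middle paragraph, the verification that $\mc$ is right Quillen. You assert that the fibrations (resp.\ trivial fibrations) of $DG_{\Z}\Sp$ are the maps dual to relative $P$-cocells (resp.\ $Q$-cocells); this is backwards. In these formal-spectrum categories the duals of compositions of small extensions (surjections of pro-rings) are the \emph{cofibrations}, exactly as in $sc\Sp$, where the generating cofibrations are dual to small extensions (cf.\ $I_{\Sp}$ and Lemma \ref{spsmall}); the fibrations of $DG_{\Z}\Sp$ are the quasi-smooth maps, i.e.\ those whose dual is $Q$-projective (has the left lifting property against acyclic small extensions) --- see the parenthetical ``quasi-smooth (i.e.\ fibrant)'' in the proof of Proposition \ref{tqiscor}. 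Moreover the factorisation you propose cannot exist: for a surjection $L\onto M$ the dual ring map $k\oplus C(M[1])^{\vee}\to k\oplus C(L[1])^{\vee}$ is an \emph{injection}, as you yourself note, so $\mc(L)\to\mc(M)$ is not a composition of small extensions with kernels ``dual to $\Gamma^n$ of the fibre''; structurally it is a twisted power-series (free) extension, which is the shape of a fibration rather than of a closed immersion. What is actually needed is the lifting property: for $L\onto M$ surjective and $I\to A\to B$ an acyclic small extension (resp.\ $L\onto M$ a surjective quasi-isomorphism and any small extension), the map $\mc(L)(A)\to\mc(L)(B)\times_{\mc(M)(B)}\mc(M)(A)$ is surjective. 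This is the standard Maurer--Cartan obstruction argument: lift a compatible pair to some $x\in\bigoplus_n L^{n+1}\ten\m(A)_n$, using surjectivity of $L\ten\m(A)\to(L\ten\m(B))\times_{M\ten\m(B)}(M\ten\m(A))$; the curvature $dx+\tfrac{1}{2}[x,x]$ then lies in $K\ten I$ (with $K=\ker(L\to M)$), is closed because $\m(A)\cdot I=0$, and changing $x$ by $u\in K\ten I$ changes it by $du$, so the obstruction lives in $\H^2(\Tot(K\ten I))$, which vanishes by the K\"unneth formula over $k$ whenever $I$ (resp.\ $K$) is acyclic. Your K\"unneth/characteristic-zero remark has the right flavour, but it is attached to the wrong structural claim; the paragraph should be replaced by this lifting argument (or by a citation to Manetti's corresponding results for $\mc$ of a surjection of DGLAs).
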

\begin{proof}
Immediate.
\end{proof}

We wish to show that $\mc$ is a right Quillen equivalence. To do this, it will suffice to show that there are canonical isomorphisms $\H^n(\cL(X)) \cong \H^{n-1}(X)$, as the unit and co-unit of the adjunction will then be weak equivalences. Our proof will be based on \cite{QRat} Proposition B.6.1, but we need to take more care, since trivial fibration in $dg_{\Z}\hat{\C}_{k}$ is a more restrictive notion than acyclic surjection.

\begin{definition}
Given $L \in DG_{\Z}\LA$, $X \in DG_{\Z}\Sp$, and $\omega \in \mc(L)(X)$, define the total space $E(\omega) \in DG_{\Z}\Sp$ as in \cite{QRat} Proposition B.5.3. There is an isomorphism of graded algebras $O(E(\omega))= O(X)[[L^{\vee}]]$.
\end{definition}

%% $E(\omega)= X\by_{\omega, \mc(L), 0*}\Gg(L)$ 

\begin{lemma}\label{Ggprops}
There is a canonical fibration $p_{\omega}:E(\omega) \to X$ in $DG_{\Z}\Sp$. The group space $\exp(L) \in DG_{\Z}\Sp$ given by $\exp(L)(A):= \exp(\z^0\Tot(L\ten \m(A)))$ has a canonical action on $E(\omega)$, with respect to which it is principal bundle over $X$. In particular, the fibre of $p_{\omega}$ over $\Spf k$ is isomorphic to $\exp(L)$.
\end{lemma}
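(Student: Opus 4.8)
The plan is to imitate \cite{QRat} Proposition B.5.3, but working in the category $DG_{\Z}\Sp$ of pro-Artinian chain algebras rather than with simplicial sets, and paying attention to the fact that ``trivial fibration'' is more restrictive than ``acyclic surjection.'' First I would spell out the algebra: on the dual side, $O(E(\omega)) = O(X)[[L^{\vee}]]$, and the differential is a twisted differential built from the one on $O(X)$, the Chevalley--Eilenberg differential of $L$ (corresponding to the $L_{\infty}$-structure on $L$ of Lemma \ref{dglainfty}), and the twisting term determined by the Maurer--Cartan element $\omega$. The projection $p_{\omega}: E(\omega) \to X$ is dual to the inclusion $O(X) \into O(X)[[L^{\vee}]]$; since this is visibly a surjection of graded algebras with $O(E(\omega))$ free over $O(X)$ on $L^{\vee}$, I would check that $p_{\omega}$ is a fibration (i.e. its dual is $P$-projective) by exhibiting $O(X)[[L^{\vee}]]$ as a $P$-cocell over $O(X)$: filter by powers of the augmentation ideal of $L^{\vee}$ and note each graded piece is a free module, so each small extension in the tower splits; this shows $p_\omega$ has the RLP against all trivial quasi-smooth maps, which is what ``fibration'' means here.

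Next I would construct the action of the group space $\exp(L)$. The key point is that $\exp(\z^0\Tot(L\ten\m(A)))$ acts on $\mc(L)(A)$ by the gauge action (Definition \ref{mcdef}, suitably transported through the total complex), and this action is compatible with the twisting term in the differential of $O(E(\omega))$; concretely, a gauge transformation $g$ induces an automorphism of $O(X)[[L^{\vee}]]$ over $O(X)$ intertwining the differentials for $\omega$ and $g\cdot\omega$, and when $\omega$ is fixed this gives the desired $\exp(L)$-action on $E(\omega)$ over $X$. That this action is \emph{principal} (i.e. free and transitive on geometric fibres, so that $E(\omega) \to X$ is an $\exp(L)$-torsor locally trivial in the relevant sense) again reduces to the analogous statement for the gauge action on Maurer--Cartan sets over Artinian bases, which is classical; I would verify freeness by the usual computation that $g\cdot\omega = \omega$ forces $g=1$ in the pro-nilpotent group, and the fact that it is a torsor by checking that $E(\omega)\by_X E(\omega) \cong E(\omega)\by \exp(L)$ via $(e_1,e_2)\mapsto (e_1, e_1^{-1}e_2)$, which is an isomorphism of schemes over $E(\omega)$ by the explicit formulae.

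Finally, setting $X = \Spf k$ (the final object) and $\omega$ the unique Maurer--Cartan element there, $E(\omega)$ has coordinate algebra $k[[L^{\vee}]]$ with the Chevalley--Eilenberg differential, which is exactly $\exp(L)$ as defined, and the torsor structure degenerates to the group structure; so the fibre of $p_\omega$ over $\Spf k$ is $\exp(L)$ as claimed. The main obstacle I anticipate is the bookkeeping around $\Tot$: the DGLA $L$ lives in $DG_{\Z}\LA$ while $E(\omega)$ lives in the pro-Artinian chain algebra world, so one has to be careful that the gauge action and the twisted differential are correctly transported across the equivalences of Theorem \ref{totequiv} and that ``fibration'' is checked in the right model structure (Proposition \ref{dgspmodel}), rather than merely ``surjection''; once the filtration argument establishing $P$-projectivity is in place, the rest is a transcription of \cite{QRat} B.5.3.
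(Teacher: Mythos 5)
Your overall shape (free graded algebra map gives the fibration; the action comes from integrating the structure of \cite{QRat} \S B.5; the fibre over $\Spf k$ is $E(0)$) matches the paper's very short proof, but two of your key steps are wrong as stated. First, the fibration claim: in $DG_{\Z}\Sp$ a fibration is by definition a quasi-smooth map, i.e.\ one whose dual in $dg_{\Z}\hat{\C}_{k}$ is $Q$-\emph{projective} (LLP against acyclic small extensions), not $P$-projective; $P$-projectivity is the strictly stronger ``trivially quasi-smooth'' condition and fails here in general, since lifting against a non-acyclic small extension meets obstructions in $\H^*(L\hat{\ten}I)$. Your actual argument is also aimed at the wrong class: exhibiting $O(X)[[L^{\vee}]]$ as a relative $P$-cocell over $O(X)$ (via the augmentation filtration) would show the \emph{quotient} $O(X)[[L^{\vee}]]\to O(X)$ is a fibration in $dg_{\Z}\hat{\C}_{k}$ -- i.e.\ that the zero section is a cofibration in $DG_{\Z}\Sp$ -- and ``RLP against trivial fibrations'' characterises cofibrations, not fibrations. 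The correct (and one-line) argument is the paper's: the dual of $p_{\omega}$ is $O(X)\to O(X)[[L^{\vee}]]$, which is free on the underlying graded algebras, and freeness is exactly what gives $Q$-projectivity (choose lifts of the generators, then correct them using acyclicity of the kernel; compare the characterisation of quasi-smooth objects by freeness in the proof of Proposition \ref{tqiscor}).

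Second, your verification of principality via ``$g\cdot\omega=\omega$ forces $g=1$'' is false: the gauge stabiliser of a Maurer--Cartan element is generally nontrivial -- for $\omega=0$ the whole group $\exp(\z^0\Tot(L\ten\m(A)))$ fixes it, since its elements are exponentials of cocycles. Freeness of the action on $E(\omega)$ is not a statement about gauge stabilisers of $\omega$; in Quillen's construction the group acts (after integrating the $L$-module structure of \cite{QRat} \S B.5) essentially by translation on the fibre coordinates, and principality is read off from that. Relatedly, your description of the fibre over $\Spf k$ as $k[[L^{\vee}]]$ with the Chevalley--Eilenberg differential cannot be right: that object pro-represents $\mc(L)$ (with a degree shift), not $\exp(L)$; the functor $A\mapsto\exp(\z^0\Tot(L\ten\m(A)))$ is pro-represented by the free algebra on $L^{\vee}$ with only the differential dual to $d_L$ (no quadratic term), consistently with $\H^*(\exp(L))\cong\H^*(L)$, which is what Corollary \ref{lcoho} uses. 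So the identification of the fibre needs the correct differential, not the CE one.
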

\begin{proof}
It is immediate that $p_{\omega}$ is a fibration, since the associated map of graded algebras is free. The $L$-module structure of \cite{QRat} \S B.5 integrates to give the $\exp(L)$ action. The fibre over $\Spf k$ is $E(0)$, for $0 \in \mc(L)(k)$, which is easily seen to be isomorphic to $L$. 
\end{proof}

\begin{proposition}
For any space $X \in DG_{\Z}\Sp$, the total space $E(\eta(X))$, associated to the unit $\eta(X) \in \mc(\cL(X))(X)$ of the adjunction $\cL\dashv \mc$, is contractible.
\end{proposition}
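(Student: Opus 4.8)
The plan is to reduce the statement to the vanishing of cohomology and then to a bar--cobar acyclicity of the type established in \cite{QRat} Proposition B.6.1. Since $\Spf k$ is the terminal object of $DG_{\Z}\Sp$ and $\H^*(\Spf k)=0$, Corollary \ref{dgcohoweak} tells us that ``$E(\eta(X))$ is contractible'' means exactly $\H^n(E(\eta(X)))=0$ for all $n\in\Z$. First I would reduce to the case where $X$ is quasi-smooth, by passing to a fibrant model of $X$ and checking that $E(\eta(-))$ is unchanged up to weak equivalence (this uses that $\cL$ carries the relevant weak equivalences to quasi-isomorphisms of DGLAs, and that the total-space construction depends only on the quasi-isomorphism class of the Maurer--Cartan datum). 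For quasi-smooth $X$ we have $O(X)=k[[V_*]]$ for a graded pro-finite-dimensional $V_*$, so by Lemma \ref{Ggprops} the algebra $O(E(\eta(X)))=O(X)[[\cL(X)^{\vee}]]=k[[V_*\oplus\cL(X)^{\vee}]]$ is again a free pro-Artinian graded algebra; hence, exactly as in the proof of Proposition \ref{tqiscor}, $\H^*(E(\eta(X)))\cong\H^*\bigl(\cot(O(E(\eta(X))))^{\vee}\bigr)$, the cohomology of the $\Z$-graded chain complex $\cot(O(E(\eta(X))))^{\vee}=V_*^{\vee}\oplus\cL(X)$.

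The next step is to identify this complex. Unwinding the construction of $E(\omega)$ (the differential on $O(X)[[\cL(X)^{\vee}]]$ is the sum of the internal differentials on $O(X)$ and $\cL(X)^{\vee}$ together with the term twisting by the universal Maurer--Cartan element $\eta(X)$), and using that $\cL(X)=\cL_q(O(X)^{\vee})$ is the Lie-algebra cobar construction on the dg coalgebra $O(X)^{\vee}$, one sees that $\cot(O(E(\eta(X))))^{\vee}$ is (up to a shift) the mapping cone of the map
$$
\tau\colon \cot(O(X))^{\vee}\longrightarrow\cL(X)
$$
obtained by linearising $\eta(X)$, i.e. the restriction to $\cot(O(X))^{\vee}\subset\m(O(X))^{\vee}$ of the universal twisting cochain $O(X)^{\vee}\to\cL_q(O(X)^{\vee})$ (the co-unit of the cobar adjunction). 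So the problem is reduced to showing that $\tau$ is a quasi-isomorphism, equivalently that this cone is acyclic.

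This is the heart of the matter and is the adaptation of \cite{QRat} Proposition B.6.1. I would filter $O(X)^{\vee}$ by its coradical (conilpotency) filtration; on the associated graded the coalgebra is primitively generated, and there the cone of the universal twisting cochain is acyclic by an explicit contracting homotopy. The associated spectral sequence converges --- the filtration is exhaustive, and in each internal degree it is bounded since we work with pro-Artinian objects --- giving acyclicity of $\cot(O(E(\eta(X))))^{\vee}$ and hence $\H^*(E(\eta(X)))=0$. The main obstacle is twofold. First, the convergence and degreewise-boundedness bookkeeping for the spectral sequence in the $\Z$-graded pro-Artinian setting needs care. Second --- the point flagged before the statement --- a trivial fibration in $dg_{\Z}\hat{\C}_{k}$ is stronger than an acyclic surjection, being a transfinite composition of small extensions whose kernel $V$ satisfies $\H_*(V)=0$ (cf.\ Lemma \ref{dgweakext} and Definition \ref{dgzweakdef}); so rather than merely establishing acyclicity of the tangent complex one must run the filtration on $E(\eta(X))$ itself, verifying at each stage that the structure maps are genuine fibrations with acyclic kernels, so that $E(\eta(X))\to\Spf k$ is a weak equivalence in $DG_{\Z}\Sp$ and not just a quasi-isomorphism.
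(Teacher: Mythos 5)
The heart of your argument (for quasi-smooth $X$: identify the tangent complex of $E(\eta(X))$ with the cone of the linearised universal twisting cochain $\tau\colon\cot(O(X))^{\vee}\to\cL(X)$, prove acyclicity by a weight filtration, conclude by Corollary \ref{dgcohoweak} and Lemma \ref{dgcohocalc}) is a viable route, but it is genuinely different from the paper's, and your very first step is a real gap. You assert that one may replace $X$ by a fibrant model because ``the total-space construction depends only on the quasi-isomorphism class of the Maurer--Cartan datum''. In this setting that is exactly the kind of statement that cannot be waved through: weak equivalence in $DG_{\Z}\Sp$ is not quasi-isomorphism of underlying complexes, and to compare $E(\eta(X))\to E(\eta(\tilde X))$ for a fibrant replacement $X\to\tilde X$ you would need a base-and-fibre comparison for the fibrations of Lemma \ref{Ggprops} (a long exact sequence for fibrations in $DG_{\Z}\Sp$ together with identification of strict and homotopy fibres over a non-fibrant base, or right properness), none of which is available at this point --- Proposition \ref{dgobs} only gives a partial sequence, and the full fibration sequence is only used later, \emph{after} this proposition, in Corollary \ref{lcoho}. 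The reduction is not cosmetic: for $X=\Spf k[\eps]/(\eps^2)$ the complex $\cot(O(E(\eta(X))))^{\vee}$ is the cone of $\tau\colon k\to\cL(X)$ with $\cL(X)$ the free graded Lie algebra on one odd generator $e$, and the class of $[e,e]$ survives in that cone even though the proposition asserts $E(\eta(X))$ is contractible; so for non-fibrant $X$ your tangent-complex computation simply does not compute $\H^*(E(\eta(X)))$, and the statement for general $X$ stands or falls with the unproved invariance step.

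The paper avoids this entirely: it expresses $O(X)\to k$ as a composition of small extensions and shows, for each small extension $A\to B$ in $dg_{\Z}\C_k$, that $E(\eta(\Spf B))\to E(\eta(\Spf A))$ is a trivial fibration, using Quillen's identification $O(E(\eta(\Spf A)))^{\vee}\cong A^{\vee}\ten T(\m(A)^{\vee}[1])$ and the filtration by tensor length, each stage of the dual tower being an \emph{acyclic small extension}; this is precisely how it handles the point you flag (trivial fibrations being stronger than acyclic surjections), with no fibrancy hypothesis on $X$ and no tangent-complex computation. Two further remarks on your sketch: the acyclicity of the cone of $\tau$ on the associated graded of the coradical filtration is not given by an evident contracting homotopy --- it is the Lie/commutative bar--cobar (Koszul duality) statement, true in characteristic $0$ but with real content (acyclicity of the weight-graded Harrison-type complexes), so this step needs a citation or proof rather than the phrase used; and your final paragraph is redundant given your own reduction, since once $E(\eta(X))$ is quasi-smooth, Corollary \ref{dgcohoweak} already converts acyclicity of the tangent complex into contractibility, whereas ``running the filtration on $E(\eta(X))$ itself'' is in effect the paper's argument, not a supplement to yours.
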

\begin{proof}
We need to show that $\Spf k \to E(\eta(X))$ is a weak equivalence. By expressing $O(X)\to k$ as a composition of small extensions, it suffices to show that for any small extension $A \to B$ in $dg_{\Z}\C_{k}$, the map $E(\eta(\Spf B)) \to E(\eta(\Spf A))$ is a weak equivalence.

Now, the proof of \cite{QRat} Proposition B.6.1 shows that as a graded coalgebra,
$$
O(E(\eta(\Spf A)))^{\vee} \cong A^{\vee}\ten T(\m(A)^{\vee}[1]),
$$
where $T(V)$ denotes the free tensor algebra on generators $V$, given the coproduct $\Delta(v) =v\ten 1 +1\ten v$. If we write $T_n(V):= \bigoplus_{m \le n} V^{\ten m}$, then  we may define an increasing filtration of sub-DG-coalgebras
 by
$$
F_nO(E(\eta(\Spf A)))^{\vee}:=(\m(A)^{\vee}\ten T_{n-1}(\m(A)^{\vee}[1])) \oplus (k\ten T_n(\m(A)^{\vee}[1])).
$$

Let $U_n(A)$ be the dual of this, so $O(E(\eta(\Spf A)))= \Lim U_n(A)$. It will suffice to show that for all $n$, 
$
f_n:U_n(A) \to U_n(B)
$
is a trivial fibration. We now proceed by induction. If $f_n$ is a trivial fibration, then so is
$$
U_{n}(A)\by_{U_n(B)}U_{n+1}(B) \to U_{n+1}(B),
$$
so it suffices to show that
$$
U_{n+1}(A) \to U_{n}(A)\by_{U_n(B)}U_{n+1}(B)
$$
is a trivial fibration. The kernel $J$ of this map is just 
$$
(I\ten I[1]^{\ten n}) \by (k \ten I[1]^{\ten (n+1)})\cong (k[-1] \oplus k)\ten( I[1]^{\ten n}),
$$
which is acyclic, with $\m(U_{n+1}(A))\cdot J=0$, so this is an acyclic small extension, and hence a trivial fibration.
\end{proof}

\begin{corollary}\label{lcoho}
For all $X \in DG_{\Z}\Sp$, there are canonical isomorphisms $\H^n(\cL(X)) \cong \H^{n-1}(X)$.
\end{corollary}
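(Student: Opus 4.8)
The plan is to deduce this from the two preceding results — that $p_{\eta(X)}\colon E(\eta(X))\to X$ is a fibration whose fibre over the (unique, since $\Spf k$ is a zero object) point is $\exp(\cL(X))$, and that $E(\eta(X))$ is contractible — together with the long exact sequence for relative cohomology. First I would reduce to the case where $X$ is quasi-smooth: every object of $DG_{\Z}\Sp$ is cofibrant and $\cL$ is left Quillen, so replacing $X$ by a fibrant model changes neither $\H^*(X)$ nor $\H^*(\cL(X))$, while making both $E(\eta(X))\to X$ and $X\to\Spf k=\bt$ quasi-smooth. By Lemma \ref{Ggprops}, $\exp(\cL(X))\cong E(\eta(X))\by_X\Spf k$, so the (unlabelled) pullback lemma preceding Proposition \ref{longexact} gives a canonical isomorphism $\H^n(\exp(\cL(X)))=\H^n(\exp(\cL(X))/\Spf k)\cong\H^n(E(\eta(X))/X)$.

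Next I would apply Proposition \ref{longexact} to the composite $E(\eta(X))\xra{p}X\to\bt$, obtaining a long exact sequence
$$
\cdots\to\H^n(E(\eta(X))/X)\to\H^n(E(\eta(X)))\to\H^n(X)\xra{\pd}\H^{n+1}(E(\eta(X))/X)\to\cdots .
$$
By the preceding Proposition, $\Spf k\to E(\eta(X))$ is a weak equivalence, so $\H^*(E(\eta(X)))=\H^*(\Spf k)=0$; hence $\pd$ is an isomorphism, and combining with the previous paragraph we get $\H^n(\exp(\cL(X)))\cong\H^{n-1}(X)$ for all $n$. (If one prefers to avoid invoking Proposition \ref{longexact} in this category, one can run the mapping-cone argument of Theorem \ref{robs} and Proposition \ref{dgobs} directly on the small extension $I\to C(A,I)\to B$ attached to a presentation of $E(\eta(X))\to X$.)

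It then remains to identify $\H^n(\exp(L))$ with $\H^n(L)$ for arbitrary $L\in DG_{\Z}\LA$ — applied with $L=\cL(X)$, this finishes the proof. The point is that in the test algebra $k\oplus V\eps$ the maximal ideal is square-zero, so the induced Lie bracket on $L\ten V\eps$ vanishes and $\exp$ restricts to the identity map there; hence $\exp(L)(k\oplus V\eps)=\z^0\Tot(L\hat\ten V)$. Since $\exp(L)\cong E(0)$ is quasi-smooth by Lemma \ref{Ggprops}, Lemma \ref{dgcohocalc} computes $\H^n(\exp(L))$ as the quotient of $\z^0\Tot(L\hat\ten k[-n])=\z^n(L)$ by the image of $\z^0\Tot(L\hat\ten(k[-n]\ten L^0))$, and because $L^0$ is acyclic this image is exactly $\b^n(L)$; thus $\H^n(\exp(L))\cong\z^n(L)/\b^n(L)=\H^n(L)$. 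Feeding this into the isomorphism of the previous paragraph gives $\H^n(\cL(X))\cong\H^{n-1}(X)$, as required.

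The step I expect to be the main obstacle is this last one: pinning down the cohomology of the merely group‑valued functor $\exp(L)$, checking that $E(0)$ (hence $\exp(L)$) really is quasi-smooth so that the quotient description of Lemma \ref{dgcohocalc} applies, and keeping the degree conventions straight so that the shift comes out as exactly one — all the rest is a formal consequence of the fibration sequence and the contractibility of $E(\eta(X))$.
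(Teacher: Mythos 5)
Your argument is correct and follows essentially the same route as the paper: the fibration sequence $\exp(\cL(X))\to E(\eta(X))\to X$ from Lemma \ref{Ggprops}, the contractibility of $E(\eta(X))$, and the resulting long exact sequence forcing $\H^n(\text{fibre})\cong\H^{n-1}(X)$. The only difference is that you spell out the identification $\H^*(\exp(L))\cong\H^*(L)$ via the square-zero linearisation and Lemma \ref{dgcohocalc} (and add a harmless fibrant replacement of $X$), whereas the paper simply asserts that identification.
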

\begin{proof}
Consider the fibration $\exp(L) \to E(\eta(X)) \xra{p_{\eta}} X$. Since $p_{\eta}$ is a fibration,  $\exp(L)$ is the homotopy fibre, and we have a long exact sequence
$$
\ldots \to\H^{-1}(X) \to \H^0(\exp(L)) \to \H^0( E(\eta(X))) \to \H^0(X) \to \ldots.
$$
However, $E(\eta(X))$ is contractible, so $\H^*(E(\eta(X)))=0$. Since $\H^*(\exp(L))=\H^*(L)$, this gives  $ \H^{n-1}(X) \cong \H^n(\cL(X))$, as required.
\end{proof}

\begin{theorem}\label{mcequiv}
The functor $\mc: DG_{\Z}\LA \to DG_{\Z}\Sp$ is a right Quillen equivalence.
\end{theorem}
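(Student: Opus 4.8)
The plan is to verify that $\mc$ is a right Quillen equivalence by checking that the derived unit and derived counit of the adjunction $\cL\dashv\mc$ are weak equivalences; since weak equivalences in both $DG_{\Z}\LA$ (Lemma \ref{dglamodel}) and $DG_{\Z}\Sp$ (Corollary \ref{dgcohoweak}) are detected on cohomology, it suffices to produce natural cohomology isomorphisms. We already know from Lemma \ref{mccoho} that $\H^n(\mc(L))\cong\H^{n+1}(L)$, and from Corollary \ref{lcoho} that $\H^n(\cL(X))\cong\H^{n-1}(X)$ for all $X\in DG_{\Z}\Sp$.

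First I would record that $\mc$ is right Quillen (Lemma \ref{mccoho}), so by \cite{Hovey} Corollary 1.3.16 it is a Quillen equivalence iff $\mc$ reflects weak equivalences between fibrant objects and, for every cofibrant $L$, the composite $L\to\mc(\cL(L))^c\to\mc(Q\cL(L))$ of the unit with a fibrant approximation is a weak equivalence; equivalently (since every object of $DG_{\Z}\LA$ is fibrant, and cofibrant replacement in $DG_{\Z}\Sp$ is available), that the derived unit and counit are weak equivalences. For the derived counit at a fibrant $X\in DG_{\Z}\Sp$: take a cofibrant replacement $L\to\cL(X)$ in $DG_{\Z}\LA$; then the counit composite is $\cL(X)\to X$ read through $\mc$, and $\H^n(\mc(L))\cong\H^{n+1}(L)\cong\H^{n+1}(\cL(X))\cong\H^n(X)$ by Lemma \ref{mccoho} and Corollary \ref{lcoho}, with the composite inducing exactly the identification, so the counit $\mc(\cL(X))\to X$ is a weak equivalence. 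Dually, for the derived unit at a cofibrant $L\in DG_{\Z}\LA$: we have $\H^n(\cL(\mc(L)))\cong\H^{n-1}(\mc(L))\cong\H^{n-1}(\H^{n+1}\text{-shift})\cong\H^n(L)$ by Corollary \ref{lcoho} and Lemma \ref{mccoho}, and one checks this agrees with the map induced by the unit $L\to\cL(\mc(L))$; hence the unit is a weak equivalence too.

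The one point requiring genuine care is the compatibility of these abstract cohomology isomorphisms with the actual unit and counit maps — it is not enough that the groups have the same dimension; the maps $\H^n(L)\to\H^n(\mc(\cL(X)))$ etc. must be shown to be these isomorphisms. I would handle this by tracing through the construction: the isomorphism $\H^n(\mc(L))\cong\H^{n+1}(L)$ of Lemma \ref{mccoho} is induced by $\tan$ (the coalgebra tangent complex), so it is natural in $L$; the isomorphism of Corollary \ref{lcoho} arises from the fibration sequence $\exp(L)\to E(\eta(X))\to X$ with $E(\eta(X))$ contractible, and the connecting map of that sequence is precisely what the unit $\eta(X)$ encodes. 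Chasing the two fibration sequences (the tautological one over $X$ via $\cL(X)$, and its image under $\mc$) shows the composites are the claimed isomorphisms. This is the main obstacle — making the naturality square commute — but it is a bookkeeping argument rather than a new idea, running exactly parallel to \cite{QRat} Proposition B.6.1 which our Corollary \ref{lcoho} already adapts.

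With both derived unit and counit established as weak equivalences, $\mc$ is a Quillen equivalence, completing the proof. In particular, combining with Corollary \ref{allequiv}, the homotopy category $\Ho(DG_{\Z}\LA)$ of DGLAs is equivalent to $\Ho(sc\Sp)$, and via Proposition \ref{tqiscor} also to the category of SHLAs localised at tangent quasi-isomorphisms, yielding the promised reconciliation of the approaches of \cite{Kon}, \cite{Man2} and \cite{hinstack}.
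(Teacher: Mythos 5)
Your argument is correct and is essentially the paper's own proof: the paper likewise deduces the Quillen equivalence directly from Lemma \ref{mccoho} and Corollary \ref{lcoho}, using that weak equivalences on both sides are detected by cohomology (``the same reasoning as Theorem \ref{totequiv}''), with the compatibility of the cohomology isomorphisms with the adjunction being exactly what the contractibility of $E(\eta(X))$ encodes, since that total space is built from the unit. (One small slip: you have the unit and counit directions interchanged in places --- the unit is $X\to\mc(\cL X)$ in $DG_{\Z}\Sp$ and the counit is $\cL(\mc L)\to L$ in $DG_{\Z}\LA$, and there is no natural map $\mc(\cL(X))\to X$ --- but this does not affect the substance of the argument.)
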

\begin{proof}
With the same reasoning as Theorem \ref{totequiv}, this follows from Lemma \ref{mccoho} and Corollary \ref{lcoho}.
\end{proof}

\begin{corollary}\label{cfhin}
For the model category  $DG\mathrm{CU}(k)$ of DG coalgebras  from Lemma \ref{hinmodel}, the equivalence $\iota\co DG_{\Z}\Sp \to DG\mathrm{CU}(k)$ of categories (given by $\Spf A \mapsto A^{\vee}$) identifies the respective model structures. In particular, this implies that weak equivalences in $DG\mathrm{CU}(k)$ between SHLAs are precisely the tangent quasi-isomorphisms (Definition \ref{tqis}) of \cite{Kon}.
\end{corollary}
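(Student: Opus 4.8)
The plan is to show that $\iota$ is a left Quillen functor, and then to identify its derived functor via the factorisation $\cL = \cL_q\circ\iota$ from Lemma \ref{mccoho}, so that the Quillen equivalence follows formally from Theorem \ref{mcequiv} and Lemma \ref{hinmodel} rather than from a direct verification of the equivalence axioms. The one place that requires real work is showing that $\iota$ carries trivial cofibrations to weak equivalences, since ``weak equivalence'' in $DG\mathrm{CU}(k)$ is the a priori stronger condition of becoming a quasi-isomorphism after applying $\cL_q$; this too will be extracted from the fact that $\cL$ is left Quillen.

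First I would record that $\iota$ is a left adjoint, its right adjoint sending a DG coalgebra to its maximal ind-conilpotent subcoalgebra. Since $\iota$ preserves colimits and retracts, to see it is left Quillen it suffices to check its behaviour on the generating (trivial) cofibrations of $DG_{\Z}\Sp$, which by Proposition \ref{dgspmodel} are dual to the small extensions $A\twoheadrightarrow B$ of $dg_{\Z}\C_k$ lying in $P$ (resp.\ $Q$). Applying $\iota$ to such a map gives the injection $B^{\vee}\into A^{\vee}$ of DG coalgebras, which is a cofibration in $DG\mathrm{CU}(k)$ by Lemma \ref{hinmodel}; and when the extension lies in $Q$, so that its dual $f$ is a trivial cofibration in $DG_{\Z}\Sp$, the map $\cL(f) = \cL_q(\iota(f))$ is a trivial cofibration in $DG_{\Z}\LA$ because $\cL$ is left Quillen (Theorem \ref{mcequiv}), hence a quasi-isomorphism, which by Lemma \ref{hinmodel} is precisely the condition for $\iota(f)$ to be a weak equivalence. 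Thus $\iota$ is a left Quillen functor.

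To upgrade this to a Quillen equivalence I would pass to homotopy categories: since $\iota$ and $\cL_q$ are left Quillen and $\iota$ preserves cofibrant objects, $\bL\cL = \bL\cL_q\circ\bL\iota$; as $\bL\cL$ (Theorem \ref{mcequiv}) and $\bL\cL_q$ (Lemma \ref{hinmodel}) are equivalences of categories, so is $\bL\iota \simeq (\bL\cL_q)^{-1}\circ\bL\cL$, whence $\iota$ is a left Quillen equivalence.

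For the last assertion, given SHLAs $C$, $D$ I would use Remark \ref{profd2} to write $C = \iota(\Spf(k\oplus C^{\vee}))$ and $D = \iota(\Spf(k\oplus D^{\vee}))$, so that a coalgebra map $f\co C\to D$ is $\iota(g)$ for a unique morphism $g$ of $DG_{\Z}\Sp$. By Lemma \ref{hinmodel}, $f$ is a weak equivalence iff $\cL_q(f) = \cL(g)$ is a quasi-isomorphism; by the natural isomorphism $\H^n(\cL(X))\cong\H^{n-1}(X)$ of Corollary \ref{lcoho} this is equivalent to $g$ inducing isomorphisms on all $\H^{*}$, iff (Corollary \ref{dgcohoweak}) $g$ is a weak equivalence in $DG_{\Z}\Sp$, iff (by the identification $\H^{*}(\Spf(k\oplus C^{\vee}))\cong\H^{*}(\tan C)$ in the proof of Proposition \ref{tqiscor}) $\tan(f)$ is a quasi-isomorphism, i.e.\ $f$ is a tangent quasi-isomorphism.
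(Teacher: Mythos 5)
Your proof is correct and follows essentially the same route as the paper: the paper also deduces the Quillen equivalence from the factorisation $\cL=\cL_q\circ\iota$ together with Theorem \ref{mcequiv} and Lemma \ref{hinmodel}, and obtains the final statement from Proposition \ref{tqiscor}. Your explicit check of left-Quillenness on the generating (trivial) cofibrations and the cohomological unwinding of the tangent quasi-isomorphism criterion merely fill in details the paper leaves implicit.
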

\begin{proof}
Since  $\iota$ is an equivalence, it is a left adjoint. To see that it is a left Quillen equivalence, observe that for the left  Quillen functor $\cL_q: DG\mathrm{CU}(k)\to DG_{\Z}\LA$ of Definition \ref{clqdef}, we have $\cL=\cL_q\circ \iota$. Since $\cL$ and $\cL_q$ are both Quillen equivalences, $\iota$ must also be so. Now, all objects of $DG_{\Z}\Sp$ are cofibrant, so $\iota$ preserves and reflects weak equivalences, and $\iota$  preserves and reflects cofibrations by definition. Since the model categories have the same cofibrations and weak equivalences, they must also have the same fibrations (by the right lifting property).

For the final statement, just apply Proposition \ref{tqiscor}.
\end{proof}

\begin{corollary}\label{mcallequiv}
Whenever $\L=k$, a field of characteristic $0$, the categories $\Ho(sc\Sp)$, $\Ho(sDG\Sp)$ and $\Ho(DG_{\Z}\LA)$ are canonically equivalent.
\end{corollary}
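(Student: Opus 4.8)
The plan is to obtain the claimed equivalences by simply concatenating the chains of Quillen equivalences established earlier in this section, all of which are available under the standing hypothesis that $k$ (and hence $\L=k$) has characteristic $0$.

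First, Theorem \ref{nequiv} asserts that $\Spf N^* : sDG\Sp \to sc\Sp$ is a right Quillen equivalence, and therefore induces a canonical equivalence of homotopy categories
$$
\bR\Spf N^* : \Ho(sDG\Sp) \xra{\ \simeq\ } \Ho(sc\Sp).
$$
Next, Corollary \ref{allequiv} supplies a canonical equivalence $\Ho(sc\Sp) \simeq \Ho(DG_{\Z}\Sp)$; recall that this is itself the composite of the derived functors attached to the left Quillen equivalences $\Spf N$, $\Spf D$ and $\Spf\Tot$ of Theorems \ref{nequiv}, \ref{dequiv} and \ref{totequiv}. Finally, Theorem \ref{mcequiv} states that $\mc : DG_{\Z}\LA \to DG_{\Z}\Sp$ is a right Quillen equivalence, hence induces a canonical equivalence $\Ho(DG_{\Z}\LA) \simeq \Ho(DG_{\Z}\Sp)$.

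Composing these three equivalences of homotopy categories (and their quasi-inverses) yields the asserted canonical equivalences between $\Ho(sc\Sp)$, $\Ho(sDG\Sp)$ and $\Ho(DG_{\Z}\LA)$, all passing through $\Ho(DG_{\Z}\Sp)$. There is really no obstacle here beyond bookkeeping: the only point to check is that the characteristic-zero hypothesis, which is needed for Corollary \ref{allequiv} and for Theorems \ref{mcequiv} and \ref{nequiv} (via the standing assumption of this section and of \S\ref{alternative}), is indeed in force, and that "canonical" is meant in the sense that each arrow is the derived functor of an explicitly named Quillen functor, so that the composite is again canonical.
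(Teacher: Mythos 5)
Your proof is correct and is essentially the paper's own argument: the paper simply says ``combine Corollary \ref{allequiv} with Theorem \ref{mcequiv}'', where the chain of Quillen equivalences in Corollary \ref{allequiv} already passes through $sDG\Sp$ via Theorem \ref{nequiv}, exactly as you spell out. Your extra bookkeeping about where the characteristic-zero hypothesis enters is harmless and accurate.
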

\begin{proof}
Combine Corollary \ref{allequiv} with Theorem \ref{mcequiv}.
\end{proof}

\begin{proposition}\label{hinnerveprop}
The functor $\Ho(DG_{\Z}\LA) \simeq \cS'$ given by combining Corollary \ref{mcallequiv} with Theorem \ref{dgschrep} is equivalent to Hinich's simplicial nerve functor $\Sigma$ (see Definition \ref{hinnerve}).
\end{proposition}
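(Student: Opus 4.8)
The plan is to compute both functors as one and the same derived mapping space --- into Hinich's DG-coalgebra $\C_q(L)$ --- via Proposition~\ref{hinnerve2}. \emph{Reducing $\Sigma$.} First one checks that $\Sigma$ carries quasi-isomorphisms of DGLAs to weak equivalences: by Ken Brown's lemma, since $\C_q$ is right Quillen (Lemma~\ref{mccoho}), every object of $DG_{\Z}\LA$ is fibrant (Lemma~\ref{dglamodel}), and $\sA_n$ is $k$-flat; alternatively one cites \cite{hinstack}. Hence $\Sigma$ descends to a functor on $\Ho(DG_{\Z}\LA)$. Fixing $A \in dg\C_k$ and a DGLA $L$, and taking $X = \C_q(L)$ in Proposition~\ref{hinnerve2}, and using that the co-unit $\cL_q(\C_q(L)) \to L$ is a quasi-isomorphism --- it is the derived co-unit of the Quillen equivalence $\cL_q \dashv \C_q$ of Lemma~\ref{hinmodel}, since every DGLA is fibrant and every object of $DG_{\Z}\mathrm{CU}_k$ is cofibrant --- I obtain a natural (in $A$ and $L$) weak equivalence
$$
\Sigma(L)(A) \simeq \Sigma(\cL_q\C_q(L))(A) \simeq \bR\Map_{DG_{\Z}\mathrm{CU}_k}(A^{\vee},\C_q(L)).
$$

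\emph{Identifying the transported functor.} Write $\Phi\colon \Ho(DG_{\Z}\LA)\xrightarrow{\sim}\Ho(\cS')$ for the composite equivalence. By the proof of Theorem~\ref{dgschrep} (which mimics that of Theorem~\ref{schrep}), the equivalence $\Ho(sDG\Sp)\simeq\Ho(\cS')$ sends a quasi-smooth $Y$ to $A\mapsto \underline{\Hom}_{sDG\Sp}(\Spf A, Y)$, and this agrees with $\bR\Map_{sDG\Sp}(\Spf A, Y)$ since $\Spf A$ is cofibrant (being dual to the surjection $A\to k$) and $Y$ is fibrant; hence $\Phi(L)(A)\simeq \bR\Map_{sDG\Sp}(\Spf A, Y_L)$, where $Y_L$ is a quasi-smooth model for the image of $L$ in $\Ho(sDG\Sp)$ under Corollary~\ref{mcallequiv}. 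Next I trace the two arguments through the chain of Quillen equivalences $sDG\Sp\xleftarrow{\Spf D}dgDG\Sp\xrightarrow{\Spf\Tot}DG_{\Z}\Sp\xrightarrow{\iota}DG\mathrm{CU}(k)$ of Corollaries~\ref{allequiv} and~\ref{cfhin}, together with $\mc\colon DG_{\Z}\LA\to DG_{\Z}\Sp$ of Theorem~\ref{mcequiv}. On one side, $L$ is sent to $\mc(L)=\Spf(k\oplus C(L[1])^{\vee})$ and thence, under $\iota$, to $(k\oplus C(L[1])^{\vee})^{\vee}=k\oplus C(L[1])=\C_q(L)$ (identifying the double dual via Lemma~\ref{profd} and Remark~\ref{profd2}); equivalently, the resulting equivalence $\Ho(DG_{\Z}\LA)\simeq\Ho(DG\mathrm{CU}(k))$ is the one induced by $\C_q$. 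On the other side, the constant cosimplicial object $\Spf A\in sDG\Sp$ is $\Spf D$ of the bigraded algebra concentrated in cochain degree $0$ whose total complex is $A$ itself, hence is carried to $\Spf A\in DG_{\Z}\Sp$ and then to $A^{\vee}\in DG\mathrm{CU}(k)$.

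\emph{Conclusion and main obstacle.} Since $\Spf D,\Spf\Tot,\iota$ and $\mc$ are Quillen equivalences, they preserve derived mapping spaces (\cite{Hovey}), so $\Phi(L)(A)\simeq\bR\Map_{DG\mathrm{CU}(k)}(A^{\vee},\C_q(L))$, naturally in $A$ and $L$; comparing with the first paragraph yields a natural weak equivalence $\Phi(L)\simeq\Sigma(L)$, i.e. $\Phi\simeq\Sigma$ as functors $\Ho(DG_{\Z}\LA)\to\Ho(\cS')$ (and, incidentally, re-proves $\Sigma(L)\in\cS'$). The substantive work is the bookkeeping in the middle paragraph: handling the contravariance of $\Spf$ through the passage between cosimplicial, denormalised bigraded, and $\Z$-graded pro-Artinian chain algebras, verifying that $\Spf A$ and $\mc(L)$ really do map to $A^{\vee}$ and $\C_q(L)$, and checking that the chain of Quillen equivalences matches up the derived function complexes compatibly; a lesser point is confirming that the equivalence of Proposition~\ref{hinnerve2} is natural enough to be composed with the co-unit $\cL_q\C_q\Rightarrow\mathrm{id}$.
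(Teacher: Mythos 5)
Your argument is correct and shares its skeleton with the paper's proof: both identify the transported functor value $\Phi(L)(B)$ with $\underline{\Hom}_{sDG\Sp}(\Spf B, Y_L)\simeq \bR\Map_{sDG\Sp}(\Spf B, Y_L)$ and then move this derived mapping space along the chain of Quillen equivalences, using that $\Spf B$ is sent to itself and $L$ to $\mc(L)$. The difference is in the last step. The paper stops in $DG_{\Z}\Sp$ and evaluates $\bR\Map_{DG_{\Z}\Sp}(\Spf B,\mc(L))$ directly, using (as in the proof of Proposition \ref{hinnerve2}) that $[n]\mapsto \mc(L\ten\sA_n)$ is a fibrant simplicial resolution of $\mc(L)$; this produces $\Sigma(L)(B)$ on the nose, with no further input. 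You instead push one step further to $DG\mathrm{CU}(k)$ via $\iota$ and quote Proposition \ref{hinnerve2} as a black box at $X=\C_q(L)$, which costs you two extra ingredients: the derived co-unit $\cL_q\C_q(L)\to L$ being a quasi-isomorphism (fine, since all coalgebras are cofibrant and all DGLAs fibrant) and the invariance of $\Sigma$ under quasi-isomorphisms of DGLAs. Your Ken Brown sketch for the latter is too loose as stated ($\mc(-\ten\sA_n)(A)$ does not send quasi-isomorphisms to bijections levelwise, so one really needs the statement about the whole nerve), but the citation of \cite{hinstack} covers it, so the proof stands; the paper's route is marginally more economical precisely because it never needs this invariance. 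Both treatments leave the same bookkeeping implicit, namely that the zigzag through $dgDG\Sp$ really does carry the constant cosimplicial $\Spf B$ to $\Spf B\in DG_{\Z}\Sp$ at the derived level, which you at least flag explicitly.
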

\begin{proof}
Take $L \in DG_{\Z}\LA$, corresponding under Corollary \ref{mcallequiv} to a fibrant  object $X \in sDG\Sp$. Take $B \in dg\C_k$, and note that
$$
X(B)= \HHom_{sDG\Sp}(\Spf B, X).
$$
Since $\Spf B$ is cofibrant and $X$ is fibrant, this is weakly equivalent to $\bR \Map_{sDG\Sp}(\Spf B, X)$, for $\bR\Map$ as in  Proposition \ref{hinnerve2}. We may regard $B$ as an object in $dg_{\Z} \C_k$, and  the equivalences of Corollary \ref{allequiv} send $\Spf B$ to itself in $DG_{\Z}\Sp$. 

Since $ \bR \Map$ is invariant under Quillen equivalences, this means that
$$
X(B) \simeq \bR \Map_{DG_{\Z}\Sp}( \Spf B, \mc(L)).
$$
Now, as in Proposition \ref{hinnerve2}, $[n] \mapsto \mc(L \ten \sA_n)$ is a fibrant simplicial resolution of $\mc(L)$, so $X(B)$ is weakly equivalent to the simplicial set given by
$$
[n] \mapsto \Hom_{DG_{\Z}\Sp}(\Spf B, \mc(L\ten \sA_n)) = \mc(L\ten \sA_n)(B) = \Sigma(L)(B)_n.
$$

Thus $X(B) \simeq \Sigma(L)(B)$, as required.
\end{proof}

Now, we are in a position to answer  Question 4.6 posed  in \cite{toenseattle} 4.4.2. Take a geometric $D^-$-stack $F$ over $k$ (in the sense of Remark \ref{dgtoen})
with a $k$-valued point $x$, let $\Omega_xF$ be   the loop space of $F$ at $x$, and $L_x(F)$ its tangent space at $x$. [loc. cit.] then asserts that $L_x(F)$ is ``a Lie algebra (or at least an $L_{\infty}$-algebra)", and asks whether the functor $F_x: dg\C_k \to \bS$ (defined analogously to Definition \ref{toenstalk}) is weakly equivalent to Hinich's simplicial nerve $\Sigma(L_x(F))$. 

\begin{proposition}\label{seattle}
In the scenario above, $L_x(F)$ has the natural structure of an $L_{\infty}$-algebra, and the functors $F_x$ and  $\Sigma(\cL_q C_x(F))$ are weakly equivalent, where $C_x(F)$ is the dg coalgebra generated by $L_x(F)$.
\end{proposition}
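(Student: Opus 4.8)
The strategy is to run $F_x$ through the equivalences of Sections~\ref{back}--\ref{alternative} until it becomes a differential $\Z$-graded Lie algebra, to identify that DGLA with $L_x(F)$ (made into a DGLA, equivalently an $L_\infty$-algebra, by homotopy transfer of structure), and to read off the conclusion from Proposition~\ref{hinnerveprop}.

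First, in the chain-algebra formulation of $D^-$-stacks (Remark~\ref{dgtoen}), the formal neighbourhood $F_x\colon dg\C_k\to\bS$ of Definition~\ref{toenstalk} satisfies the analogues of (A0)--(A2): this is the $dg$-version of Proposition~\ref{toencomp}, whose proof applies mutatis mutandis (alternatively, transport Proposition~\ref{toencomp} along Lemma~\ref{nworks} and Theorem~\ref{nequiv}). Hence $F_x\in\cS'$, so by Theorem~\ref{dgschrep} it is represented by a quasi-smooth $X\in sDG\Sp$. Under the equivalence $\Ho(sDG\Sp)\simeq\Ho(DG_\Z\LA)$ of Corollary~\ref{mcallequiv} --- built from $\Spf D,\Spf\Tot$ of Corollary~\ref{allequiv} and $\mc$ of Theorem~\ref{mcequiv} --- the object $X$ corresponds to a DGLA $L\in DG_\Z\LA$. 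By Proposition~\ref{hinnerveprop} the composite equivalence $\Ho(DG_\Z\LA)\simeq\cS'$ is Hinich's nerve $\Sigma$, so already $F_x\simeq\Sigma(L)$.

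It remains to identify $L$, up to weak equivalence, with $\cL_q C_x(F)$; equivalently, to show that the Chevalley--Eilenberg coalgebra $C(L[1])$ attached to $\mc(L)$ is tangent-quasi-isomorphic to the SHLA cofreely cogenerated by $L_x(F)$. For this, compare tangent data. All functors above preserve cohomology (Corollaries~\ref{weak} and~\ref{dgcohoweak}), and by Lemma~\ref{mccoho} together with Corollary~\ref{lcoho} the cohomology of $L$ agrees with that of the loop space $\Omega_x\mc(L)$, hence with $\H^*(L_x(F))$. Since $k$ is a field, $L$ and $L_x(F)$ are then quasi-isomorphic as cochain complexes; transporting the DGLA structure of $L$ across such a quasi-isomorphism endows $L_x(F)$ with an $L_\infty$-structure --- equivalently, its coalgebra $C_x(F):=C(L_x(F)[1])$ with a codifferential --- and the transfer morphism is an $L_\infty$-quasi-isomorphism, hence a tangent quasi-isomorphism in the sense of Definition~\ref{tqis}. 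By Proposition~\ref{tqiscor} this makes $\Spf(k\oplus C_x(F)^\vee)$ weakly equivalent to $\mc(L)$ in $DG_\Z\Sp$, whence Corollary~\ref{cfhin} and Lemma~\ref{mccoho} give $\cL_q C_x(F)\simeq\cL\bigl(\Spf(k\oplus C_x(F)^\vee)\bigr)\simeq L$. Therefore $F_x\simeq\Sigma(L)\simeq\Sigma(\cL_q C_x(F))$, since $\Sigma$ sends weak equivalences of DGLAs to weak equivalences.

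The main obstacle is the identification in the previous paragraph: one must keep careful track of all the degree shifts --- the loop-space shift, the $[1]$ built into the definition of an $L_\infty$-structure, and the shift $\H^n(\mc L)\cong\H^{n+1}(L)$ of Lemma~\ref{mccoho} --- so as to conclude that the transferred structure genuinely lives on $L_x(F)=T_x(\Omega_xF)$ rather than on a shift of it, and that it is canonical up to $L_\infty$-quasi-isomorphism. With the bookkeeping in place, every remaining step is a formal consequence of the Quillen equivalences established earlier in the paper.
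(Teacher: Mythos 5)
Your reduction to a DGLA $L$ with $F_x\simeq\Sigma(L)$ (via Theorem \ref{dgschrep}, Corollary \ref{mcallequiv} and Proposition \ref{hinnerveprop}) is exactly the paper's first step, and your closing chain (tangent quasi-isomorphism $\Rightarrow$ weak equivalence in $DG_{\Z}\Sp$ $\Rightarrow$ $\cL_q C_x(F)\simeq L$ $\Rightarrow$ $\Sigma(\cL_q C_x(F))\simeq\Sigma(L)\simeq F_x$) is unobjectionable. The gap is the middle step, which is where the content of the proposition lies. You put an $L_{\infty}$-structure on $L_x(F)$ by choosing, merely because the cohomologies agree over a field, \emph{some} quasi-isomorphism of complexes between $L_x(F)$ and $L$ and transferring the structure of $L$ across it. With that definition the equivalence $F_x\simeq\Sigma(\cL_q C_x(F))$ holds essentially by construction, but the structure is not ``natural'': it depends on an arbitrary, non-functorial choice and is not produced from the loop space of $F$ at $x$, which is precisely what the question of \cite{toenseattle} asks for. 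The paper does the work you omit: it identifies $\Omega F_x$ with $\exp(L)$ (Lemma \ref{Ggprops}, contractibility of $E(\eta)$, and preservation of loop spaces under the pointed Quillen equivalences), observes that the fibrant resolution $\exp(L\ten\sA_{\bt})$ has tangent spaces $t(\exp(L\ten\sA_n))=\sigma^{\ge 0}(L\ten\sA_n)$ forming a simplicial DGLA, uses the Eilenberg--Zilber product to make $\Tot N^s t(\exp(L\ten\sA_{\bt}))$ a DGLA \emph{canonically} quasi-isomorphic to $L_x(F)$, and then shows via the integration map $\int:\Tot N^s\sA_{\bt}\to k$ that the resulting DGLA map $\theta$ to $L$ is a quasi-isomorphism (using $\H^i(\Omega F_x)\cong\H^i(L)$). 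The Eilenberg--Zilber construction and the quasi-isomorphism $\theta$ have no analogue in your proposal; without them the proposition degenerates into a near-tautology about an arbitrarily transported structure.

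Two subsidiary points. Your cohomology comparison ``$\H^*(L)\cong\H^*(\Omega_x\mc(L))\cong\H^*(L_x(F))$'' cites Lemma \ref{mccoho} and Corollary \ref{lcoho}, neither of which computes a loop space: you need either the identification $\Omega\mc(L)\simeq\exp(L)$ from Lemma \ref{Ggprops}, or a direct argument that looping shifts $\H^*$ by one, \emph{and} you need that the chain of Quillen equivalences carries $\Omega F_x$ to $\Omega\mc(L)$ so that this computes $\H^*(L_x(F))$. Finally, the degree bookkeeping that you yourself flag as ``the main obstacle'' is never carried out; as written, the proposal asserts rather than verifies that the transferred structure lives on $L_x(F)$ (and not on a shift of it), whereas in the paper this is automatic because the DGLA structure is constructed directly on a canonical model of the tangent space of the loop space.
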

\begin{proof}
By Corollary \ref{mcallequiv} and Proposition \ref{hinnerveprop}, there exists a $\Z$-graded DGLA $L$ (unique up to quasi-isomorphism), such that $F_x \simeq \Sigma(L)$. Lemma \ref{Ggprops} implies that  $\mc(L)$ is a classifying space for $\exp(L)$ in $DG_{\Z}\Sp$, so $\exp(L)$ is the loop space of $\mc(L)$. Since loop space constructions are preserved by Quillen equivalences of pointed model categories, $\Omega F_x$ corresponds under the equivalence of Corollary \ref{allequiv} to $\exp(L) \in DG_{\Z}\Sp$.

Now, the simplicial complex $\exp(L\ten \sA_{\bt})$ (given in level $n$ by
 $ \exp(L \ten \sA_n)$) is a fibrant simplicial resolution for $\exp(L)$ in $DG_{\Z}\Sp$, so (similarly to Proposition \ref{hinnerveprop}),  $\Omega F_x$ is weakly equivalent to the functor $\exp(L \ten \sA_{\bt}):  dg\C_k \to \bS$. 
 
Therefore, for $t:\cS' \to sDG\Vect$ as in  Remark \ref{ntanfdef2}, there is an equivalence 
$$
L_x(F):= \Tot N^s t(\Omega F_x) \simeq \Tot N^s t\exp((L \ten \sA_{\bt}))
$$
of total tangent spaces in $DG_{\Z}\Vect$.

Now, the tangent space of $\exp(L \ten \sA_{\bt})$ is given by
$$
t(\exp(L \ten \sA_n))= \sigma^{\ge 0}(L \ten \sA_n),
$$
where $\sigma^{\ge 0}$ denotes brutal truncation in non-negative degrees. This has the natural structure of a simplicial complex of DGLAs, so applying the simplicial normalisation functor $N^s$ makes $N^st(\exp(L \ten \sA_{\bt}))$ into a bigraded DGLA (using the Eilenberg-Zilber shuffle product as in \cite{QRat}).

Therefore the cochain complex $\Tot N^st(\exp(L \ten \sA_{\bt}))$ is a DGLA, and is canonically quasi-isomorphic to $L_x(F)$. This gives $L_x(F)$ an $L_{\infty}$-structure, unique up to $L_{\infty}$-equivalence. Thus the dg coalgebra $C_x(F)$ generated by $L_x(F)$ is equivalent to $\C_q\Tot N^st(\exp(L \ten \sA_{\bt})) $, and
$$
\Sigma(\cL_qC_x(F)) \simeq \Sigma(\Tot N^st(\exp(L \ten \sA_{\bt}))).
$$

As in \cite{HinSch}, integration  gives  a quasi-isomorphism
$$
\int:\Tot (N^s\sA_{\bt})\to k 
$$
of DG algebras. Since $\Tot N^st(\exp(L \ten \sA_{\bt}))$ is a sub-DGLA of $L \ten \Tot (N^s\sA_{\bt})$, this gives us a morphism
$$
\theta:\Tot N^st(\exp(L \ten \sA_{\bt})) \to L
$$
of DGLAs.

Since $F_x$ is equivalent to $\mc(L)$ via  Corollary \ref{mcallequiv}, we have $\H^i(F_x) \cong  \H^{i+1}(L)$, so $\H^i(\Omega F_x) \cong \H^i(L)$. Therefore $\theta$ is a quasi-isomorphism, so
$$
\Sigma(\Tot N^st(\exp(L \ten \sA_{\bt}))) \simeq \Sigma(L),
$$
which in turn is equivalent to $F_x$ by  Proposition \ref{hinnerveprop}.

Thus we have shown that 
$$
\Sigma(\cL_qC_x(F)) \simeq F_x,
$$
as required.
\end{proof}

\section{Operations on  cohomology}\label{sopsH}

\subsection{Homology of symmetric products}

\begin{definition}
Recall that $V \in cs\widehat{\FD\Vect}$ is said to be quasi-smooth if  $\H^n(  N_c V_i)=0$ for all $n,i\ge 0$ and $\H_i(N_cV)^n=0$ for all $i>0$ and $n>0$.
\end{definition}

\begin{definition}\label{corner}
Given $V \in cs\widehat{\FD\Vect}$ quasi-smooth, define a cochain complex $N_c\lrcorner V$ in $sDG\widehat{\FD\Vect}$ by:
$$
(N_c\lrcorner V)^n:= \left\{ \begin{matrix} V^0 & n=0\\ \H_0(N_c^nV) & n >0, \end{matrix} \right. 
$$
then set $\lrcorner V:= N_c^{-1}N_c\lrcorner V \in cs\widehat{\FD\Vect}$.
\end{definition}

\begin{lemma}
For $V \in cs\widehat{\FD\Vect}$ quasi-smooth, the projection map $q:V \to \lrcorner V$ is a Reedy weak equivalence, i.e. for all $n$, $q^n: V^n \to (\lrcorner V)^n$ is a weak equivalence in  $s\widehat{\FD\Vect}$.
\end{lemma}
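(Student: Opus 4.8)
The plan is to unwind the definition of quasi-smoothness for $V$ and show that $q^n \colon V^n \to (\lrcorner V)^n$ is a weak equivalence in $s\widehat{\FD\Vect}$ for each fixed $n$. First I would treat the case $n>0$: here $(\lrcorner V)^n = N_c^{-1}\H_0(N_c^nV)$, viewed as a simplicial complex with trivial homotopy in positive degrees. The quasi-smoothness hypothesis says precisely that $\H_i(N_cV)^n = 0$ for all $i>0$ when $n>0$, i.e.\ that the simplicial complex $N_c^nV$ (equivalently $V^n$, since homotopy groups of a simplicial vector space depend only on the underlying object up to the relevant normalisation) has vanishing homotopy in strictly positive degrees. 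Hence $V^n$ is weakly equivalent to the constant (or rather discrete, concentrated in degree $0$) simplicial complex on $\H_0(V^n) = \H_0(N_c^nV)$, and that is exactly $(\lrcorner V)^n$. So the map $q^n$ induces an isomorphism on all homotopy groups: in degree $0$ both sides are $\H_0(N_c^nV)$ and $q^n$ is the identity there, and in positive degrees both sides vanish.

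For the case $n=0$, I would simply observe that $(\lrcorner V)^0 = V^0$ by Definition \ref{corner}, and the projection $q^0$ is the identity map, which is trivially a weak equivalence.

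The only point requiring a little care is matching up the two possible descriptions of homotopy groups: the homotopy groups of the simplicial vector space $V^n$ agree with the homology of its normalised chain complex $N^s(V^n)$, and one must check that "$\H_i(N_cV)^n$'' in the quasi-smoothness definition refers to the same homology groups. This is immediate because $N_c$ here is the cosimplicial conormalisation, which acts in the cosimplicial direction and commutes with taking homotopy/homology in the simplicial direction; so $\H_i(N_c^nV)$ and $\pi_i(V^n)$ coincide for all $i$, once $n$ is fixed. I do not expect any genuine obstacle here — the statement is essentially a bookkeeping consequence of Definition \ref{corner} together with the fact, already recorded in the excerpt (after Definition \ref{N^s} and in the discussion of the Dold--Kan correspondence), that a simplicial vector space with homology concentrated in degree $0$ is weakly equivalent to the discrete object on its $\H_0$. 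The proof will therefore be two or three lines: dispose of $n=0$ by the identity map, and for $n>0$ note that quasi-smoothness forces $V^n$ to have homology only in degree $0$, whence $q^n$ is a levelwise weak equivalence.
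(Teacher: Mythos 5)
Your conclusion is correct but the argument as written contains false intermediate claims, so it does not constitute a proof. (The paper itself states this lemma without proof, treating it as immediate from cosimplicial Dold--Kan; the expected argument is the one sketched below.) The error is in your identification of both sides at a fixed cosimplicial level $n>0$: you take $(\lrcorner V)^n$ to be the constant simplicial object on $\H_0(N_c^nV)$ and assert that $\pi_i(V^n)=0$ for $i>0$. Neither is true, because the denormalisation $N_c^{-1}$ (and, dually, the Dold--Kan decomposition of $V$ itself) reassembles \emph{all} lower conormalised pieces into level $n$: one has $V^n\cong\bigoplus_{m\le n}(N_c^mV)^{\oplus\binom{n}{m}}$ and $(\lrcorner V)^n\cong V^0\oplus\bigoplus_{m=1}^n\H_0(N_c^mV)^{\oplus\binom{n}{m}}$. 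In particular both sides contain $V^0$ as a retract, and quasi-smoothness does \emph{not} kill $\pi_i(V^0)$ for $i>0$ --- for the cotangent complexes to which this lemma is applied, $\pi_i(V^0)$ is dual to $\H^i(F)$ (Lemma \ref{cohochar}), which is generally nonzero. For the same reason $\H_0(V^n)\ne\H_0(N_c^nV)$ in general, so your check on $\pi_0$ also fails. The parenthetical ``equivalently $V^n$, since homotopy groups depend only on the underlying object up to the relevant normalisation'' is exactly where this goes wrong: the relevant normalisation is the cosimplicial one, and what commuting with simplicial homotopy gives you is $\pi_i(N_c^nV)\cong N_c^n(\pi_iV^{\bt})$, not $\pi_i(V^n)$.

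The repair is short and stays levelwise, but must respect the decomposition rather than discard it: $q^n$ is the direct sum, over the $\binom{n}{m}$ summands for $0\le m\le n$, of the identity on $V^0$ (for $m=0$) and of the projections $N_c^mV\to\H_0(N_c^mV)$ (for $m>0$). The second quasi-smoothness condition, $\H_i(N_cV)^m=0$ for $i>0$, $m>0$, says precisely that each $N_c^mV$ with $m>0$ has homotopy concentrated in degree $0$, so each such projection is a weak equivalence in $s\widehat{\FD\Vect}$; since homotopy groups of simplicial vector spaces commute with finite direct sums, $q^n$ is a weak equivalence for every $n$ (and $q^0$ is the identity, as you note). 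Note that the first quasi-smoothness condition is not needed.
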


\begin{definition}
For  $V \in \widehat{\FD\Vect}$, define $\Symm(V)$ to be the free power series algebra $k[[V]]$ on generators $V$.
\end{definition}

\begin{lemma}
For $V \in cs\widehat{\FD\Vect}$ quasi-smooth, the projection map $\Symm(q):\Symm(V) \to \Symm(\lrcorner V)$ is a Reedy weak equivalence.
\end{lemma}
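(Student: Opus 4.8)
The plan is to reduce to a statement at each cosimplicial level and then to show that the completed symmetric algebra functor preserves weak equivalences. By the usual characterisation of Reedy weak equivalences, $\Symm(q)$ is a Reedy weak equivalence exactly when, for each cosimplicial degree $n$, the map $\Symm(q^n)\colon \Symm(V^n)\to\Symm((\lrcorner V)^n)$ is a weak equivalence in $s\hat{\C}_k$. By the preceding lemma $q^n\colon V^n\to(\lrcorner V)^n$ is a weak equivalence in $s\widehat{\FD\Vect}$ for every $n$, so it is enough to prove that the functor $W\mapsto\Symm(W)=k[[W]]$ sends weak equivalences in $s\widehat{\FD\Vect}$ to weak equivalences in $s\hat{\C}_k$.

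For this I would filter $\Symm(W)$ by powers of its maximal ideal $\m$. Each Artinian quotient $\Symm(W)/\m^j$ carries a finite filtration by the ideals $\m^i/\m^j$, whose associated graded pieces are the simplicial pro-finite-dimensional vector spaces $\Symm^i(W)$ for $0\le i<j$, and a map $W\to W'$ induces $\Symm^i(W)\to\Symm^i(W')$ on the $i$-th piece. The key input is that, over a field of any characteristic, the levelwise $i$-th symmetric power functor $\Symm^i$ preserves weak equivalences of simplicial pro-finite-dimensional vector spaces: indeed every object of $s\widehat{\FD\Vect}$ is cofibrant (under Dold--Kan it is just a complex of vector spaces), so the levelwise symmetric power already computes the derived functor $\bL\Symm^i$, which preserves weak equivalences; this is the same observation used in \S\ref{minimal}, that the free (completed) algebra on an acyclic complex is trivially cofibrant. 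Granting this, $\Symm^i(q^n)$ is a weak equivalence for all $i$; applying the five lemma to the long exact sequences of homotopy groups and working up the finite filtration shows $\Symm(V^n)/\m^j\to\Symm((\lrcorner V)^n)/\m^j$ is a weak equivalence for every $j$; and since $\varprojlim$ is exact on strict pro-objects --- precisely the device used in the proof of Proposition \ref{smcl} --- passing to the limit over $j$ gives that $\Symm(q^n)$ is a weak equivalence. This holds for every $n$, so $\Symm(q)$ is a Reedy weak equivalence.

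The only step that requires genuine care is the claim that $\Symm^i$ preserves weak equivalences in positive characteristic. This is a statement about derived symmetric powers of simplicial vector spaces, and should not be confused with the corresponding --- false --- assertion for symmetric powers of cochain complexes under denormalisation; it is what forces us to work with the levelwise (simplicial) symmetric algebra throughout. Everything else is routine bookkeeping: the reduction to levelwise data, the $\m$-adic five-lemma induction, and the interchange of $\varprojlim$ with homotopy groups of strict pro-objects. A more streamlined variant sidesteps the characteristic discussion altogether: $\Symm$ is left adjoint to the augmentation-ideal functor $A\mapsto\m(A)$, hence left Quillen, so Ken Brown's lemma applies directly to the cofibrant objects $V^n$ and $(\lrcorner V)^n$ and gives the result at once.
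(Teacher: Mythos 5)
Your argument is correct, but it takes a more laborious route than the paper, whose entire proof is a citation: it invokes Dold's theorem that any functor applied levelwise to simplicial modules over a field preserves weak equivalences (because such weak equivalences are automatically simplicial homotopy equivalences, all objects being fibrant--cofibrant, and levelwise application of any functor preserves simplicial homotopies). That one observation, applied in each cosimplicial degree, already gives that $\Symm(q^n)\colon\Symm(V^n)\to\Symm((\lrcorner V)^n)$ is a weak equivalence in every characteristic, with no need for the $\m$-adic filtration, the five-lemma induction, or the passage to the limit over Artinian quotients that you carry out by hand. Your reduction to levelwise data and your bookkeeping with strict pro-objects are fine, and your ``streamlined variant'' ($\Symm\dashv\m$ is a Quillen adjunction for the model structure of Proposition \ref{smcl}, all objects of $s\widehat{\FD\Vect}$ are cofibrant, apply Ken Brown's lemma) is in fact the cleanest complete justification you give; I would lead with it, or with Dold's homotopy-equivalence argument, since your middle step as written (``the levelwise symmetric power already computes $\bL\Symm^i$, which preserves weak equivalences'') is slightly circular --- homotopy invariance of the levelwise functor on cofibrant objects is precisely what is being asserted, and it is exactly what Ken Brown's lemma or Dold's theorem supplies. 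The only other caveat is presentational: the paper never formally equips $s\widehat{\FD\Vect}$ with a model structure, so if you use the Quillen-adjunction phrasing you should note that the standard simplicial-module structure (fibrations the maps surjective on $N_i$ for $i>0$, weak equivalences the quasi-isomorphisms) is what is meant, and that $\m$ visibly preserves fibrations and trivial fibrations there.
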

\begin{proof}
This follows from \cite{dold}, which shows that $\Symm$ preserves weak equivalences. 
\end{proof}

\begin{definition}
Given a positively graded pro-finite-dimensional $k$-vector space $V_*$, we define 
$$
\fS(V)_*:= \H_*(\Symm ((N^s)^{-1}V_*)).
$$
Given a non-positively graded pro-finite-dimensional $k$-vector space $V_*$, write  $\breve{V}$ for the graded vector space $\breve{V}^i:=U_{-i}$, and set 
$$
\fS(V)_*:= \H^{-*}(\Symm (N_c^{-1}\breve{V}^*)).
$$
Finally, for a $\Z$-graded vector pro-finite-dimensional $k$-vector space $V_*$, set
$$
\fS(V)_n:=\prod_{i+j=n} \fS(V_{>0})_i\ten \fS(V_{\le 0})_j \in \widehat{\FD\Vect}.
$$
\end{definition}

\begin{proposition}
For $V \in cs\widehat{\FD\Vect}$ quasi-smooth, $\H_*(\Tot^{\Pi}N\Symm(V))\cong \fS(\H_*(\Tot^{\Pi}NV))$, for $\Tot^{\Pi}$ as in Definition \ref{totprod}.
\end{proposition}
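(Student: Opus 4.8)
The plan is to identify $\Tot^{\Pi}N\Symm(V)$, up to quasi-isomorphism, with the free graded-commutative algebra on the $\Z$-graded complex $\Tot^{\Pi}NV$ carrying the zero differential, and then to read off its homology as $\fS$ of the homology, using Dold's computation \cite{dold} of the homology of symmetric powers of Eilenberg--MacLane objects. First I would reduce to the case $V=\lrcorner V$. By the two lemmas immediately above, $q\colon V\to\lrcorner V$, and hence $\Symm(q)$, is a Reedy weak equivalence; and any Reedy weak equivalence $f$ in $cs\widehat{\FD\Vect}$ induces a quasi-isomorphism on $\Tot^{\Pi}N(-)$, because $N(f)=N^sN_c(f)$ is a quasi-isomorphism of chain complexes in each cosimplicial degree and the spectral sequence of the (fourth-quadrant) cochain-chain bicomplex $NV$ is strongly convergent, as in the proof of Proposition \ref{totcoho}. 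So replacing $V$ by $\lrcorner V$ changes neither side. After this reduction $N_cV$ has discrete (constant) simplicial vector spaces in each positive cosimplicial degree, so $NV=N^sN_cV$ has nonzero chain degree only in cosimplicial degree $0$.

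Next, the Eilenberg--Zilber shuffle maps, applied in both the simplicial and the cosimplicial variables, give natural $S_n$-equivariant quasi-isomorphisms $N(V^{\otimes n})\to(NV)^{\otimes n}$ of cochain-chain bicomplexes; taking $S_n$-coinvariants (exact in characteristic $0$) and summing over $n$ yields $N\Symm(V)\simeq\Symm(NV)$, compatibly with the filtration by symmetric-power degree, and hence $\Tot^{\Pi}N\Symm(V)\simeq\Tot^{\Pi}\Symm(NV)$. Since $V$ is quasi-smooth, $NV$ is quasi-smooth in the sense needed for Lemma \ref{decompcot}, which gives $NV\cong U\oplus\Tot^{\Pi*}(H_*)$ with $\Tot^{\Pi}U$ acyclic, where $H_*:=\H_*(\Tot^{\Pi}NV)$ carries the zero differential. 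Then $\Symm(NV)\cong\Symm(U)\otimes\Symm(\Tot^{\Pi*}H_*)$, and since $\Tot^{\Pi}U$ is acyclic and reduced the K\"unneth formula (over a field of characteristic $0$) makes $\Tot^{\Pi}\Symm(U)$ acyclic; thus $\Tot^{\Pi}N\Symm(V)\simeq\Tot^{\Pi}\Symm(\Tot^{\Pi*}H_*)$.

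It remains to compute $\H_*\Tot^{\Pi}\Symm(\Tot^{\Pi*}H_*)$ and recognise it as $\fS(H_*)$. Writing $H_*=H_{>0}\oplus H_{\le 0}$ and using that $\Tot^{\Pi*}$ is additive, $\Symm(\Tot^{\Pi*}H_*)\cong\Symm(\Tot^{\Pi*}H_{>0})\otimes\Symm(\Tot^{\Pi*}H_{\le 0})$, and applying $\Tot^{\Pi}$ to a tensor product of a non-negatively graded and a non-positively graded complex produces exactly the product $\prod_{i+j=n}$ in the definition of $\fS(V)_n$. From the formula for $\Tot^{\Pi*}$ (Definition \ref{tot*}), $\Tot^{\Pi*}H_{>0}$ surjects, with acyclic kernel, onto the constant cosimplicial object on the Eilenberg--MacLane simplicial vector space $(N^s)^{-1}H_{>0}$; since $\Symm$ preserves such weak equivalences, $\H_m\Tot^{\Pi}\Symm(\Tot^{\Pi*}H_{>0})=\H_m(\Symm((N^s)^{-1}H_{>0}))=\fS(H_{>0})_m$ by definition, and symmetrically $\H_m\Tot^{\Pi}\Symm(\Tot^{\Pi*}H_{\le 0})=\H^{-m}(\Symm(N_c^{-1}\breve{H}))=\fS(H_{\le 0})_m$. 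Assembling the two factors gives $\fS(H_*)_n$, as required.

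The step I expect to be the main obstacle is the comparison $\Tot^{\Pi}N\Symm(V)\simeq\Tot^{\Pi}\Symm(NV)$ and the accompanying bookkeeping: one must make the Eilenberg--Zilber identification precise at the level of cochain-chain bicomplexes and check that the \emph{product} total complex $\Tot^{\Pi}$ respects it (which is why the convergence input of Proposition \ref{totcoho} is needed), and one must verify that $\Tot^{\Pi}$ of the tensor product of the bounded-below and bounded-above factors is genuinely the product $\prod_{i+j=n}$ rather than the direct sum --- it is precisely this that dictates the split, product-form definition of $\fS$.
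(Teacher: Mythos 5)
Your argument has a genuine gap: it is a characteristic-zero proof of a statement that the paper needs in arbitrary characteristic. The proposition sits in the section on cohomology operations, where $k$ is allowed to have characteristic $p$ (this is the only setting in which it has content: for $p=0$ one has $\fS=\Symm$ and the statement is essentially formal, whereas the whole point of $\fS$ is to record the divided-power phenomena computed by Dold, Milgram and Smirnov). Your two crucial steps fail in characteristic $p$. First, the identification $N\Symm(V)\simeq\Symm(NV)$ via Eilenberg--Zilber ``taking $S_n$-coinvariants (exact in characteristic $0$)'' breaks down: the shuffle map $N(V^{\ten n})\to (NV)^{\ten n}$ is an equivariant quasi-isomorphism, but coinvariants are not exact over $\mathbb{F}_p$, and indeed the conclusion is false there --- the homology of the levelwise symmetric power of a simplicial vector space is governed by the derived functors of $\Symm$ (which is exactly what $\fS$ encodes), not by the graded symmetric algebra of its normalisation. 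Consequently any route that reduces to $\Symm$ of a complex with zero differential, as your reduction to $\Symm(\Tot^{\Pi*}H_*)$ does, can only ever produce $\Symm(\H_*(\Tot^{\Pi}NV))$ rather than $\fS(\H_*(\Tot^{\Pi}NV))$. Second, the K\"unneth/acyclicity claim for $\Symm(U)$ again invokes characteristic $0$ (via $\Symm^n$ being a summand of the tensor power), and the final step silently conflates $\Symm$ of a bicomplex with the levelwise $\Symm$ of a cosimplicial simplicial vector space, which is precisely the identification that is unavailable in characteristic $p$.

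The paper avoids all of this by never leaving the (co)simplicial world. After the same reduction to $\lrcorner V$ (your first step agrees with the paper's), it splits $\lrcorner V\cong U\oplus W$ with $U=\H_0(V)$ purely cosimplicial and $W$ purely simplicial, uses $\Symm(U\oplus W)=\Symm(U)\ten\Symm(W)$ together with the simplicial and cosimplicial Eilenberg--Zilber theorems to obtain the completed K\"unneth product $\H_n(\Tot^{\Pi}N\Symm(V))\cong\prod_{i+j=n}\H_i(\Tot^{\Pi}N\Symm(U))\hat{\ten}\H_j(\Tot^{\Pi}N\Symm(W))$ (valid over any field), and then invokes the homotopy invariance of levelwise $\Symm$ on simplicial and on cosimplicial vector spaces separately (Dold, Milgram, Smirnov), so that each factor is $\fS$ of the corresponding half of $\H_*(\Tot^{\Pi}NV)$ by the very definition of $\fS$. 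To repair your proof you would need to replace the passage through graded symmetric algebras by this kind of argument, or restrict the statement to characteristic $0$, where it no longer says anything beyond $\Symm$-invariance.
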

\begin{proof}
Consider the spectral sequence
$$
E^2_{a,-b}=\H^b(\H_a(N\Symm(V))) \abuts \H_{a-b}(\Tot^{\Pi}N\Symm(V)).
$$  
Since $q:V \to \lrcorner V$ is a Reedy weak equivalence, it gives an isomorphism on the $E^2$ term of the respective spectral sequences, and thus we get
$$
\H_{*}(\Tot^{\Pi}N\Symm(V))\cong \H_{*}(\Tot^{\Pi}N\Symm(\lrcorner V))
$$
at the limit.

We may now choose a decomposition $\lrcorner V= \H_0(V) \oplus W$, and write $U=\H_0(V)$. Thus $U$ is a cosimplicial complex, and $W$ a simplicial complex.
As $\Symm (U \oplus W) = \Symm(U) \ten\Symm(W) $, the simplicial and cosimplicial Eilenberg-Zilber theorems together show that
$$
\H_n(\Tot^{\Pi}N\Symm(V) )\cong \prod_{i+j=n}\H_i(\Tot^{\Pi}N\Symm(U))\hat{\ten} \H_j(\Tot^{\Pi}N\Symm(W)).
$$

Now, $N\Symm(W)$ is just the chain  complex $N^s\Symm(W)$ concentrated in cochain degree $0$, and $N\Symm(U)$ is just the cochain  complex $N_c\Symm(U)$ concentrated in chain degree $0$, so
$$
\H_*(\Tot^{\Pi}N\Symm(W))=\H_*(\Symm(W)), \quad \H_*(\Tot^{\Pi}N\Symm(U))=\H^{-*}(\Symm(U)).
$$

Finally, the results of \cite{Milgram} and \cite{Smirnov} show that $\Symm$ preserves weak equivalences of both simplicial and cosimplicial complexes, so
\begin{eqnarray*}
\H_*(\Symm(W))&=& \fS(\H_*(W))=\fS(\H_{>0}(\Tot^{\Pi}NV)),\\ 
\H_*(\Symm(U))&=& \fS(\H_*(U))=\fS(\H_{\le 0}(\Tot^{\Pi}NV)),
\end{eqnarray*}
as required.
\end{proof}

\begin{remark}\label{symmj}
If $p$ is the characteristic of $k$, then  for $j<p$ (or $p=0$) note that  $\fS^j=\Symm^j$, the graded symmetric power. In general, $\fS$ is very complicated, and  has been computed in \cite{Milgram} and \cite{Smirnov}.  In the notation of \cite{Milgram} Theorem 4.2, for $n>0$, $\fS(k[-n])= \sR(A(\Z,n); k)$. In the notation of \cite{Smirnov} Theorems 1 and 2, $\fS(k[n]) =\H^*(\underline{\sE}_n)$, $\fS(k[-n]) =\H^*(\overline{\sE}_n)^{\vee}$.
\end{remark}

\subsection{The Adams spectral sequence}\label{adams}

For any quasi-smooth left-exact functor $F:s\C_{\L}\to \bS$, the cohomology groups $\H^*(F)$ form a $\Z$-graded vector space. Let $F$ be pro-represented by $R$, and write $\H_i(\cot R)$ for the  pro-finite-dimensional vector space dual to $\H^i(F)$. 

Now, there is a decreasing filtration on $R$ given by $F^i R= \m(R)^i + \mu \m(R)^{i-2}$, and since $F$ is quasi-smooth, 
\begin{eqnarray*}
 \Gr^0 R &=& k\\
\Gr^1 R  &=& \cot R\\
\Gr^a R  &=& \Symm^a \cot R \oplus \bigoplus_{r=1}^{a-1} (\mu^r/\mu^{r+1})\ten \Symm^{a-1-r}\cot R
\end{eqnarray*}
for $a>1$, so that
\begin{eqnarray*}
\H_{*}( \Tot \Gr^0 R)&=& k\\
\H_{*}( \Tot Gr^1 R)&=& H^*(F)^{\vee}\\
\H_{*}( \Tot \Gr^a R) &=& \fS^a H^*(F)^{\vee} \oplus \bigoplus_{r=1}^{a-1} (\mu^r/\mu^{r+1})\ten \fS^{a-1-r}H^*(F)^{\vee}.
\end{eqnarray*}
There is then a convergent spectral sequence 
$$
E^1_{ab}= \H_{a+b} (\Tot \Gr^{-a}  R) \Rightarrow \Gr^{-a}\H_{a+b}(\Tot R)
$$
of pro-Artinian $\L$-modules,  respecting the multiplicative structure.

Studying this spectral sequence yields universal operations on cohomology.
For instance:
\begin{proposition} 
Let $p$ be the characteristic of $k$. If $p \ne 2$, there is a graded Lie bracket
$$
[-,-]: \H^m\by \H^n \to \H^{m+n+1},
$$
such that $[a,b]= (-1)^{mn+m+n}[b,a]$. For $p \ne 3$, this satisfies the Jacobi identity
$$
[[a,b],c]= [a,[b,c]] +(-1)^{mn+m+n}[b,[a,c]].
$$
\end{proposition}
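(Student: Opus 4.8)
The plan is to extract the bracket from the first differential $d_1$ of the Adams spectral sequence, in the same way that the bracket of a DGLA is recovered from the quadratic part of its Chevalley--Eilenberg differential. Pro-represent $F$ by $R$, taken to be the minimal model of \S\ref{minimal} so that it is canonical up to isomorphism. The total complex $\Tot R$ is a commutative DG algebra under the Eilenberg--Zilber shuffle product, the filtration $F^i R = \m(R)^i + \mu\,\m(R)^{i-2}$ is by DG ideals, and hence the spectral sequence is multiplicative and $d_1$ is a derivation of the graded algebra $E^1 = \bigoplus_a \H_*(\Tot \Gr^{-a}R)$ which raises the filtration degree by one. Taking $\L = k$ for clarity (for general $\L$ the $\mu$-summands in the formulas above split off and contribute operations not involving the bracket), the identification $\H_*(\Tot^{\Pi} N\Symm V)\cong \fS(\H_*(\Tot^{\Pi} NV))$ established above makes $E^1\cong\bigoplus_{a\ge 0}\fS^a(\H^\vee)$ as graded algebras, where $\H^\vee := \H^*(F)^\vee$ is regarded as a complex with zero differential (so that $\H^i(F)^\vee$ sits in homological degree $i$).

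For the bracket, I would take the map $\beta\colon\H^\vee \to \fS^2(\H^\vee)$ obtained from $d_1\colon\H_*(\Tot\Gr^1 R)\to\H_*(\Tot\Gr^2 R)$ by projecting onto the $\fS^2$-summand; it has degree $-1$. When $p\ne 2$, Remark \ref{symmj} gives $\fS^2 = \Symm^2$, the graded-symmetric square, so dualising $\beta$ (via Lemma \ref{profd}) produces a degree $+1$ map $\Symm^2\H^*(F)\to\H^*(F)$, that is, a pairing $[-,-]\colon\H^m\times\H^n\to\H^{m+n+1}$. Graded symmetry of $\Symm^2$ gives $[a,b] = (-1)^{mn+m+n}[b,a]$: the factor $(-1)^{mn}$ is the Koszul sign of the transposition, and the extra $(-1)^{m+n}$ reflects that $[-,-]$ is an odd (degree-$1$) operation, so a symmetric form on the suspension of $\H^*(F)$ is an antisymmetric form on $\H^*(F)$ (this is consistent with the DGLA bracket under Corollary \ref{allequiv} when $p=0$). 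Independence of the choice of $R$ is automatic: any other pro-representative is linked to $R$ by weak equivalences, which by Corollary \ref{weak} induce isomorphisms on $\H^*(F)$ and hence on the spectral sequence from the $E^1$-page onwards, compatibly with $d_1$.

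For the Jacobi identity I would use that $d_1^2 = 0$ on $E^1$. Since (for $p\ne 2$) $\fS^2 = \Symm^2$ is a quotient of $\H^\vee\otimes\H^\vee$, the derivation $d_1$ on $\fS^2(\H^\vee)$ is determined by $\beta$ via the Leibniz rule; hence for $\xi\in\H^\vee$ with $\beta(\xi) = \sum_i\eta_i\cdot\zeta_i$ one finds $d_1^2(\xi) = \sum_i\bigl(\beta(\eta_i)\cdot\zeta_i \pm \eta_i\cdot\beta(\zeta_i)\bigr) \in \fS^3(\H^\vee)$, which therefore vanishes. This is exactly the co-Jacobi relation for $\beta$; when moreover $p\ne 3$, Remark \ref{symmj} gives $\fS^3 = \Symm^3$, so the relation holds in the honest symmetric cube and dualises to the stated graded Jacobi identity, with the same suspension sign bookkeeping as for antisymmetry.

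The step I expect to be the main obstacle is verifying that $d_1$ on $E^1$ is precisely the quadratic derivation dictated by $\beta$ — equivalently, that the Eilenberg--Zilber comparison $\H_*(\Tot^{\Pi}N\Symm V)\cong\fS(\H_*(\Tot^{\Pi}NV))$ is compatible with products and with the maximal-ideal filtration, so that the structure matches the DGLA model — together with the honest Koszul and suspension sign computation needed to land the antisymmetry and Jacobi signs exactly as stated. The hypotheses $p\ne 2$ and $p\ne 3$ enter only to ensure $\fS^2 = \Symm^2$ and $\fS^3 = \Symm^3$, so that the targets of $\beta$ and of $d_1^2$ are the honest symmetric powers in which the Lie relations live; in the excluded characteristics $\fS$ acquires Frobenius-type summands and these operations would have to be examined separately.
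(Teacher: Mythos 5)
Your proposal follows essentially the same route as the paper's proof: both extract the bracket by dualising the differential $d^1\colon E^1_{-1,*}\to E^1_{-2,*}$ of the Adams spectral sequence of \S\ref{adams} with $\L=k$, using $\fS^2=\Symm^2$ for $p\ne 2$ to identify the target, and both obtain the Jacobi identity from $d^1\circ d^1=0$ together with $\fS^3=\Symm^3$ for $p\ne 3$. The extra details you supply (multiplicativity of the spectral sequence, the Leibniz-rule description of $d^1$ on $\fS^2$, the suspension sign bookkeeping, and independence of the pro-representative) are exactly the points the paper leaves implicit, and they check out.
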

\begin{proof}
Take $\L=k$, and look at $d^1_{-1,m+n+2}:E^1_{-1, m+n+2} \to E^1_{-2, m+n+2}$. Since $p \ne 2$, by Remark \ref{symmj} we have $\fS^2= \Symm^2$, so $d^1_{-1,m+n+2}$ is dual to an antisymmetric product. For $p\ne 3$,  $\fS^3= \Symm^3$, so  the condition $d^1_{-2,m+n+2}\circ d^1_{-1,m+n+2}=0$ gives the Jacobi identity.
\end{proof}

\subsection{Operations on cohomology}

\begin{definition}
Given a collection $\{X_{\alpha}\}$ of objects of $\Sp$, define $\bigvee X_{\alpha}$ to be the coproduct in $\Sp$ (given by
$O(\bigvee X_{\alpha}):= {\prod}_k O(X_{\alpha})$).  
\end{definition}

Recall the definition of the objects $K(n) \in sc\Sp$ from \S \ref{srepcoho}, which have the property that  
 $\H^n(X)=[ K(n), X]$. The cohomology groups $\H^n$ define a functor on $\Ho(sc\Sp)$, and we have the following observation.

\begin{proposition}\label{cohoops}
The set of natural transformations $\H^{m_1}(X)\by \ldots \by \H^{m_r}(X) \to \H^n(X)$, functorial in $X \in \Ho(sc\Sp)$, is naturally isomorphic to 
$$
\H^n(\bigvee_{i=1}^r K(m_r)). 
$$
\end{proposition}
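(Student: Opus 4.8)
The plan is to exhibit each $\H^n$ as a corepresentable functor on $\Ho(sc\Sp)$ and then invoke the Yoneda lemma. The first step is to record that $\H^n(X)=[K(n),X]=\Hom_{\Ho(sc\Sp)}(K(n),X)$ for all $X\in sc\Sp$ and all $n\in\Z$: this is Lemma \ref{repcoho} when $X$ is quasi-smooth, and Definition \ref{generalcoho} (together with Corollary \ref{weakchar}, so that the construction is homotopy invariant) extends it to arbitrary $X$ by passing to a fibrant approximation. Thus $\H^n$ is the functor $\Hom_{\Ho(sc\Sp)}(K(n),-)$, naturally in $X$.

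The second step is to show that $\bigvee_{i=1}^r K(m_i)$ — the coproduct formed in $sc\Sp$, which on structure sheaves is the levelwise product of the $O(K(m_i))$ — corepresents $X\mapsto\prod_{i=1}^r\H^{m_i}(X)$ on $\Ho(sc\Sp)$. For this it suffices to check that each $K(m)$ is cofibrant, since a finite coproduct of cofibrant objects is cofibrant and is a homotopy coproduct, hence represents the coproduct in the homotopy category, so that $\Hom_{\Ho(sc\Sp)}(\bigvee_i K(m_i),X)\cong\prod_i\Hom_{\Ho(sc\Sp)}(K(m_i),X)$ naturally in $X$. Cofibrancy of $K(m)$ is then immediate: for $m\ge0$ the map $\Spf k\to K(m)=\Spf(k\oplus K^m\eps)$ from the initial object is dual to the small extension $k\oplus K^m\eps\onto k$, hence is a generating cofibration (see Definition \ref{idef}); for $m\le0$, writing $K(m)$ as the pushout of Definition \ref{eilmac}, the map $\Spf k\to K(m)$ is a cobase change of $\Spf k[\eps]\otimes\pd\Delta^{-m}\to\Spf k[\eps]\otimes\Delta^{-m}$, which is a cofibration because it is the pushout-product of the cofibration $\Spf k\to\Spf k[\eps]$ with $\pd\Delta^{-m}\into\Delta^{-m}$ (using $\Spf k\otimes K\cong\Spf k$).

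Combining the two steps, the set of natural transformations $\H^{m_1}(X)\times\cdots\times\H^{m_r}(X)\to\H^n(X)$ functorial in $X\in\Ho(sc\Sp)$ is precisely the set of natural transformations $\Hom_{\Ho(sc\Sp)}(\bigvee_i K(m_i),-)\to\Hom_{\Ho(sc\Sp)}(K(n),-)$, and the Yoneda lemma applied in the category $\Ho(sc\Sp)$ identifies this with $\Hom_{\Ho(sc\Sp)}(K(n),\bigvee_i K(m_i))=\H^n(\bigvee_i K(m_i))$; one then checks that this bijection is the natural one. The only real content is the second step, and within it the claim that the coproduct $\bigvee$ genuinely computes the homotopy coproduct of the $K(m_i)$ in $\Ho(sc\Sp)$; the cofibrancy input for $m\le0$ is the one place requiring a short unwinding of Definition \ref{eilmac}, while the remainder is formal.
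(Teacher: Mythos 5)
Your argument is correct and is essentially the paper's own proof: the paper likewise identifies $\H^n$ with $[K(n),-]$ and obtains the answer by Yoneda, leaving the homotopy-coproduct point implicit. Your cofibrancy checks are fine but in fact redundant, since every object of $sc\Sp$ is cofibrant (each augmentation $O(X)^n_i \to k$ is surjective), so $\bigvee_i K(m_i)$ is automatically a homotopy coproduct.
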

\begin{proof}
Since $\H^n$ is represented by $K(n)$, this set of natural transformations is just
$$
[K(n),\bigvee_{i=1}^r K(m_i)]=\H^n(\bigvee_{i=1}^r K(m_r)),
$$
as required.
\end{proof}

\begin{corollary}\label{andreq}
If all $m_r \ge 0$, the natural transformations $\H^{m_1}(X)\by \ldots \by \H^{m_r}(X) \to \H^n(X)$ are the same as the natural transformations
$$
\bD_{\L}^{m_1}(R,k)\by \ldots \by \bD_{\L}^{m_r}(R,k)  \to \bD^n_{\L}(R,k)
$$
on Andr\'e-Quillen cohomology groups over $\L$, functorial in $R \in s\C_{\L}$.
\end{corollary}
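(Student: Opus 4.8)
The plan is to exhibit both kinds of operation as the André--Quillen cohomology $\bD^n_\L(A,k)$ of one and the same square-zero extension $A$ of $k$, and then to check that the resulting bijection is simply restriction along $\Spf$. By Proposition \ref{cohoops}, the natural transformations $\H^{m_1}(X)\times\dots\times\H^{m_r}(X)\to\H^n(X)$ functorial in $X\in\Ho(sc\Sp)$ form the set $\H^n(\bigvee_{i=1}^r K(m_i))$. Since each $m_i\ge 0$ we have $K(m_i)=\Spf(k\oplus K^{m_i}\eps)$ by Definition \ref{eilmac}, and the coproduct $\bigvee$ in $\Sp$ is computed by the categorical product in $\hat\C_\L$, which for square-zero extensions of the terminal object $k$ is the fibre product over $k$; hence $\bigvee_{i=1}^r K(m_i)=\Spf A$ with
$$
A:=k\oplus\Bigl(\bigoplus_{i=1}^r K^{m_i}\Bigr)\eps\ \in\ s\C_\L.
$$

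The essential step is to identify $\H^m(\Spf R)$ with $\bD^m_\L(R,k)$, naturally in $R\in s\C_\L$, for $m\ge 0$. By Definition \ref{generalcoho} and $K(m)=\Spf(k\oplus K^m\eps)$, we have $\H^m(\Spf R)=\Hom_{\Ho(sc\Sp)}(K(m),\Spf R)$; replacing $\Spf R$ by a quasi-smooth model and applying Corollary \ref{homrep}, this is $\pi_0$ of the value of that model at $k\oplus K^m\eps$. By Lemma \ref{cohochar} the relevant (co)tangent complex of the quasi-smooth model is the cotangent complex of $R$ over $\L$ with coefficients in $k$, and $K^m$ is the simplicial Eilenberg--MacLane object with $\pi_* K^m=k[-m]$; so this $\pi_0$ is precisely $\bD^m_\L(R,k)$, and homotopy-invariance of the cotangent complex makes the identification natural in $R$. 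In particular $\H^n(\bigvee_{i=1}^r K(m_i))=\H^n(\Spf A)\cong\bD^n_\L(A,k)$.

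On the André--Quillen side, the same description gives $\bD^{m}_\L(R,k)\cong\Hom_{\Ho(s\C_\L)}(R,k\oplus K^{m}\eps)$; since $k$ is terminal in $s\C_\L$ and every object of $s\C_\L$ is fibrant, the iterated fibre product over $k$ is a homotopy product, so $\prod_{i=1}^r\bD^{m_i}_\L(R,k)\cong\Hom_{\Ho(s\C_\L)}(R,A)$. Each functor $\bD^{m_i}_\L(-,k)$ and $\bD^n_\L(-,k)$ carries weak equivalences to isomorphisms, so a transformation functorial in $R\in s\C_\L$ is the same as one functorial in $R\in\Ho(s\C_\L)$, and the Yoneda lemma (extending the functors to the pro-category by continuity) classifies the natural transformations $\prod_i\bD^{m_i}_\L(-,k)\to\bD^n_\L(-,k)$ by $\bD^n_\L(A,k)$. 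Both sides are therefore canonically $\bD^n_\L(A,k)=\H^n(\Spf A)$; in each case an operation corresponds to its value on the tautological classes lying in $\bD^{m_i}_\L(A,k)=\H^{m_i}(\Spf A)$, and tracing this through shows that the bijection is restriction of operations along $\Spf\colon s\C_\L\to\Ho(sc\Sp)^{\op}$, which is the asserted identification.

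I expect the main obstacle to be the identification in the second paragraph. One must make the quasi-smooth replacement of $\Spf R$ explicit through the equivalence $\Ho(sc\Sp)\simeq\Ho(\cS)$ of Theorem \ref{schrep}, verify that evaluating it at $k\oplus K^m\eps$ recovers the genuine derived module of $\L$-derivations into $k$ rather than a truncated surrogate, and check that passing between $s\C_\L$ and its pro-category, and between the strict and homotopy categories, leaves these groups unchanged. This is routine in the infinitesimal setting, but requires care with the shift and duality conventions relating $\H^*(\Spf R)$, the cotangent complex, and $\bD^*_\L(R,k)$.
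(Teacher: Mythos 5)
Your overall strategy is the paper's: classify the $\H^*$-operations by $\H^n(\bigvee_i K(m_i))$ via Proposition \ref{cohoops}, classify the Andr\'e--Quillen operations by $\bD^n_{\L}(A,k)$ for the representing square-zero extension $A=k\oplus(\bigoplus_i K^{m_i})\eps$ (a Yoneda step the paper leaves implicit, and which you handle correctly), and reduce everything to the identification $\H^m(\Spf R)\cong \bD^m_{\L}(R,k)$ for $m\ge 0$. The gap is precisely the step you defer: the claim ``by Lemma \ref{cohochar} the relevant cotangent complex of the quasi-smooth model is the cotangent complex of $R$'' is not something Lemma \ref{cohochar} delivers for an arbitrary fibrant replacement in $sc\Sp$. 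That lemma only expresses $\H^*$ in terms of the cotangent space of whatever cosimplicial simplicial model you happen to have chosen; relating that bicomplex to the Andr\'e--Quillen complex of $R$ requires choosing the model so that its cosimplicial level-zero part is an honest resolution of $R$, and this choice is the entire content of the paper's proof.

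Concretely, the paper takes a weak equivalence from $Z$ (or $\Spf R$) to a quasi-smooth object $Y$ \emph{inside} $c\Sp$ --- by Lemma \ref{qscofib} this is a levelwise-smooth simplicial resolution, whose cotangent space computes $\bD^*_{\L}(R,k)$ --- and then passes to $\underline{Y}$, which is quasi-smooth in $sc\Sp$ (Lemma \ref{settotop}) and weakly equivalent to the original object; Lemmas \ref{cohochar} and \ref{underlinecoho} then give $\H^n(\underline{Y})=\H^n(Y)=\bD^n_{\L}(R,k)$ for $n\ge 0$, and one checks in addition that $\H^{-n}(\underline{Y})=0$ for $n>0$ since $\underline{Y}(k[\eps])$ is essentially constant. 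Your proposed detour through Theorem \ref{schrep} to make the quasi-smooth replacement explicit is unnecessary and heavier than needed: the fibrant replacement in the $c\Sp$ model structure of Proposition \ref{smcl}, followed by $\underline{(-)}$, is the device that closes the gap. With that inserted, your argument is complete and coincides with the paper's.
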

\begin{proof}
Since all $m_r \ge 0$,  $Z:=\bigvee_{i=1}^r K(m_i)$ is an object of $c\Sp$. Take a weak equivalence $ Z \to Y$ to a quasi-smooth object $Y$ of $c\Sp$, and note that $Z$ is then weakly equivalent in $sc\Sp$ to $\underline{Y}$, which is quasi-smooth.

Observe that $\H^n(\underline{Y})=\H^n(Y)$ for all $n \ge 0$, trivially. Moreover,  $ \underline{Y}(k[\eps])_n= Y(k[\eps])$ for all $n$, so $\H^{-n}(\underline{Y})= \pi_n\underline{Y}(k[\eps])_n=0$ for all $n>0$. 

Since $\H^n(\underline{Y})= \H^n(Z)$, and $\H^n(Y)=\bD_{\L}^n(Z,k)$, the result follows.
\end{proof}

\begin{corollary}
If $\L=k$, a field of characteristic $0$, then the only operations on cohomology are generated by the Lie bracket, subject to the Jacobi identity. 
\end{corollary}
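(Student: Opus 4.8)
The plan is to reduce via Proposition~\ref{cohoops} to a computation in $\Ho(DG_\Z\LA)$. By that proposition the natural transformations $\H^{m_1}(X)\by\dots\by\H^{m_r}(X)\to\H^n(X)$ form the group $\H^n(\bigvee_{i=1}^rK(m_i))$, so everything comes down to computing this group when $\L=k$ has characteristic $0$. The first step is to identify, for each integer $d$, a corepresenting object for $\H^d(-)$ on $\Ho(DG_\Z\LA)$: I would take the free differential graded Lie algebra $\mathfrak f(d)$ on a single generator $x_d$ of degree $d$ with zero differential. It is cofibrant in the model structure of Lemma~\ref{dglamodel} (being $\mathrm{FreeLie}$ of a cofibrant cochain complex with zero differential), every object of $DG_\Z\LA$ is fibrant, and a direct computation with the Reedy-fibrant simplicial resolution $[n]\mapsto L\ten\sA_n$ by polynomial differential forms gives $\pi_0\Map(\mathfrak f(d),L)\cong\H^d(L)$. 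Combining this with $\H^n(\mc(L))\cong\H^{n+1}(L)$ (Lemma~\ref{mccoho}), with the fact that $K(n)$ corepresents $\H^n$ on $\Ho(sc\Sp)$, and with the equivalences of Corollaries~\ref{allequiv} and~\ref{mcallequiv}, one concludes that $K(n)$ corresponds to $\mc(\mathfrak f(n+1))$.

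Next I would observe that each $K(m_i)$ is cofibrant in $sc\Sp$ (the augmentation $O(K(m_i))\onto k$ is surjective, so $\Spf k\to K(m_i)$ is a geometric cofibration), so the wedge $\bigvee_iK(m_i)$, being the coproduct of cofibrant objects, is a homotopy coproduct. Since the equivalences above are induced by left Quillen functors (and $\mc$, being a Quillen equivalence, also preserves homotopy colimits), $\bigvee_iK(m_i)$ corresponds to the coproduct $\coprod_i\mathfrak f(m_i+1)$ in $DG_\Z\LA$, and a coproduct of free DG Lie algebras is free on the union of the generators, so this is the free graded Lie algebra $\mathfrak f(m_1+1,\dots,m_r+1)$ on generators $\xi_1,\dots,\xi_r$ with $\deg\xi_i=m_i+1$. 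Applying Lemma~\ref{mccoho} once more,
$$
\H^n\!\Big(\bigvee_{i=1}^r K(m_i)\Big)\;\cong\;\H^{n+1}\!\big(\mathfrak f(m_1+1,\dots,m_r+1)\big)\;=\;\mathfrak f(m_1+1,\dots,m_r+1)^{\,n+1},
$$
the last equality because the differential is zero.

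Thus the space of operations with inputs in degrees $m_1,\dots,m_r$ and output in degree $n$ is exactly the degree-$(n+1)$ part of the free graded Lie algebra on the classes $\xi_i\in\H^{m_i}$; the element $[\xi_i,\xi_j]$, of degree $m_i+m_j+2$, is the operation $\H^{m_i}\by\H^{m_j}\to\H^{m_i+m_j+1}$ of the preceding Proposition, and the sign relation $[\xi_i,\xi_j]=-(-1)^{(m_i+1)(m_j+1)}[\xi_j,\xi_i]$ rearranges to the stated $[\xi_i,\xi_j]=(-1)^{m_im_j+m_i+m_j}[\xi_j,\xi_i]$. Since the free graded Lie algebra is spanned by iterated brackets of its generators, with the only relations generated by graded antisymmetry and the graded Jacobi identity, every cohomology operation in characteristic $0$ is a linear combination of iterated applications of the bracket, and all relations among them follow from antisymmetry and Jacobi. (Consistently, $\fS=\Symm$ in characteristic $0$, Remark~\ref{symmj}, so the Adams spectral sequence of \S\ref{adams} produces no further operations.)

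The main obstacle is the corepresentability step together with tracking $\bigvee_iK(m_i)$ through the chain of Quillen equivalences: one must check that $\mathfrak f(d)$ is genuinely cofibrant and compute $\pi_0\Map(\mathfrak f(d),L)$ honestly, and one must use that the homotopy coproduct of the $\mathfrak f(m_i+1)$ is again free (hence formal with zero differential). The remaining ingredients --- the classical description of the free graded Lie algebra as iterated brackets modulo antisymmetry and Jacobi, and matching the two sign conventions --- are routine.
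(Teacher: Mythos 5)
Your proof is correct, and it lands in the same place as the paper's --- everything reduces, via Proposition \ref{cohoops}, to seeing that under the equivalence of Corollary \ref{mcallequiv} the wedge $\bigvee_i K(m_i)$ corresponds to the free graded Lie algebra on generators of degrees $m_i+1$ with zero differential, whose degree-$(n+1)$ part gives the operations into $\H^n$. Where you differ is in how that identification is established. The paper's own argument is a two-line direct computation: $K(n)$ corresponds to $\Spf(k\oplus k[-n]\eps)$ in $DG_{\Z}\Sp$ (cf.\ Remark \ref{ntanfdef2} and the proof of Theorem \ref{totequiv}), the left adjoint $\cL$ applied to the honest wedge (a square-zero extension of $k$) is literally the free graded Lie algebra on $\bigoplus_i k[-m_i-1]$, and Corollary \ref{lcoho} --- which holds for \emph{every} $X\in DG_{\Z}\Sp$, not just fibrant or derived-well-behaved ones --- then gives $\H^n(\bigvee_i K(m_i))\cong\H^{n+1}(\cL(\bigvee_i K(m_i)))$ with no homotopy-invariance bookkeeping at all. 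You instead identify $K(n)\simeq\mc(\mathfrak{f}(n+1))$ by corepresentability (computing $\pi_0\Map(\mathfrak{f}(d),L)\cong\H^d(L)$ via the resolution $L\ten\sA_{\bt}$, then Yoneda in the homotopy category), and transport the wedge as a homotopy coproduct. That works, and the extra hypotheses you check are essentially automatic (every object of $sc\Sp$ and of $DG_{\Z}\Sp$ is cofibrant, so coproducts are homotopy coproducts), but it does make you responsible for two things the paper's route sidesteps: the naturality of the comparison $\H^n(X)\cong\H^{n+1}(L)$ across the chain of equivalences, and the preservation of homotopy coproducts --- for the latter, your parenthetical that ``$\mc$, being a Quillen equivalence, also preserves homotopy colimits'' is loosely put (right Quillen functors do not in general preserve homotopy colimits); the clean statement is either that the derived functor of a Quillen equivalence is an equivalence, or simply to use $\bL\cL$, which as a derived left adjoint preserves homotopy coproducts and agrees with $\cL$ on these cofibrant objects. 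Your sign and degree bookkeeping matches the bracket $\H^m\by\H^n\to\H^{m+n+1}$ of the preceding Proposition, so the conclusion stands in both versions.
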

\begin{proof}
$K(n)$ corresponds to $k \oplus k[-n]\eps \in dg_{\Z}\C_k$. By Corollary \ref{lcoho}, we thus have $\H^n(\bigvee_i K(m_i))=\H^{n-1}(\cL(\bigvee_i K(m_i)))$, and $\cL(\bigvee_i K(m_i))$ is the free graded Lie algebra on generators $\bigoplus_i k[-m_i -1]$, with differential $0$. 
\end{proof}

\begin{remarks}
\begin{enumerate}
\item
 In positive characteristic, the operations are much harder to compute, but for characteristic $2$, \cite{goerss2} can be applied to Corollary \ref{andreq} to give the operations on non-negative cohomology groups. 

\item Operations on negative cohomology groups seem much harder to describe exhaustively. Since most deformation problems do not have any cohomology groups below $\H^{-1}$, Corollary \ref{andreq} still gives a fairly full description for many cases.

\item The functor $\fS$  contains  divided $p$th powers in addition to the usual symmetric powers, so the Adams spectral sequence gives several  cohomology operations in addition to the Lie bracket.

\item It seems plausible that in finite characteristic, there should be a notion of differential Artinian $\fS$-algebras, to whose homotopy category $sc\Sp$ should be Quillen equivalent.
Although $\fS$ is not a quadratic operad, the results of \cite{goerss2} suggest that there should be some form of ``Koszul'' dual operad $\fL$, and a result corresponding to  Theorem \ref{mcequiv}, with the cohomology groups being $\fL$-algebras. 
\end{enumerate}
\end{remarks}

If $\L$ is not a field, we have the following:
\begin{lemma}
$$
\H^n((\bigvee_{i=1}^r K(m_r))/\L)=\H^n((\bigvee_{i=1}^r K(m_r))/k)\oplus \bD^n_{\L}(k,k).
$$
\end{lemma}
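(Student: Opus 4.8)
The plan is to realise the identity as an instance of the long exact sequence of Proposition~\ref{longexact} applied to a suitable tower $\bigvee_i K(m_i)\to\Spf k\to\Spf\L$ in $sc\Sp$, and then to split that sequence by means of an augmentation. Write $R:=O(\bigvee_{i=1}^r K(m_i))$. The first thing to record is that $R$ is a $k$-algebra carrying a natural augmentation $a\colon R\to k$: indeed each $O(K(n))$ is a $k$-algebra — for $n\ge 0$ it is the square-zero extension $k\oplus K^n\eps$, and for $n\le 0$ it is assembled from the $k$-algebras $k$ and $k[\eps]$ by the (co)limit operations of Definition~\ref{eilmac} — and each kills $\eps$ to $k$, so $R$ inherits an augmentation $a$. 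Consequently the structure map $\L\to R$ factors as $\L\twoheadrightarrow k\xrightarrow{u}R$ with $a u=\id_k$, and dually we obtain maps $\bigvee_i K(m_i)=\Spf R\xrightarrow{\Spf u}\Spf k\to\Spf\L$ together with a section $\Spf a\colon\Spf k\to\Spf R$ of $\Spf u$ over $\Spf\L$. (If $\L=k$ there is nothing to prove, so assume $\L\neq k$; then $\Spf k$ is \emph{not} quasi-smooth over $\Spf\L$, so the fibrant replacements of Definition~\ref{generalcoho} enter below.)

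Next I would run the long exact sequence of Proposition~\ref{longexact} on this tower — after replacing the three objects by quasi-smooth/fibrant models over $\Spf\L$, which is legitimate by Definition~\ref{generalcoho} and under which the statement of Proposition~\ref{longexact} extends routinely — obtaining
$$
\cdots\to\H^j(\Spf R/\Spf k)\to\H^j(\Spf R/\Spf\L)\to\H^j(\Spf k/\Spf\L)\xrightarrow{\pd}\H^{j+1}(\Spf R/\Spf k)\to\cdots .
$$
Here $\H^j(\Spf R/\Spf k)=\H^j((\bigvee_i K(m_i))/k)$ and $\H^j(\Spf R/\Spf\L)=\H^j((\bigvee_i K(m_i))/\L)$ by definition, while the middle term is identified, exactly as in the proof of Corollary~\ref{andreq} but now applied to a cofibrant model of $k$ over $\L$ and using Lemma~\ref{cohochar} together with Proposition~\ref{totcoho}, with Andr\'e--Quillen cohomology: $\H^j(\Spf k/\Spf\L)\cong\bD^j_\L(k,k)$ (which vanishes for $j\le 0$ since $\L\to k$ is surjective).

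Finally I would split the sequence. By Lemma~\ref{repcoho} the functor $\H^j(-/\Spf\L)=[K(j),-]_{\Spf\L}$ is covariant in its first argument, so $\Spf u$ induces the map $\H^j(\Spf R/\Spf\L)\to\H^j(\Spf k/\Spf\L)$ occurring in the sequence, and $\Spf a$ induces a section of it because $a u=\id$. Hence $\pd=0$, the sequence breaks into split short exact sequences, and
$$
\H^n\bigl((\textstyle\bigvee_i K(m_i))/\L\bigr)\;\cong\;\H^n\bigl((\textstyle\bigvee_i K(m_i))/k\bigr)\,\oplus\,\bD^n_\L(k,k)
$$
for all $n$. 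The genuinely delicate point is the middle step: pinning down $\H^*(\Spf k/\Spf\L)\cong\bD^*_\L(k,k)$ through the various (co)normalisation dictionaries, and — hand in hand with it — checking that the short exact sequence of binormalised cotangent complexes underlying Proposition~\ref{longexact} for this tower really is the Jacobi--Zariski sequence of $\L\to k\to R$. The splitting argument itself is purely formal.
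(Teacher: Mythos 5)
Your proposal is correct and is essentially the paper's own argument: the paper likewise applies the long exact sequence of Proposition \ref{longexact} to $\bigvee_i K(m_i) \to \Spf k \to \Spf \L$, splits it using the section of $\bigvee_i K(m_i) \to \Spf k$ (dual to the augmentation, exactly your $\Spf a$), and identifies $\H^n(k/\L)$ with $\bD^n_{\L}(k,k)$. Your additional remarks on the $k$-algebra structure, the fibrant replacements needed to invoke the long exact sequence, and the Andr\'e--Quillen identification only make explicit what the paper leaves implicit.
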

\begin{proof}
Letting $Z:=\bigvee_{i=1}^r K(m_i)$, the diagram $Z \to \Spf k \to \Spf \L$ gives the long exact sequence 
$$
\ldots \to \H^n(Z/k) \to \H^n(Z/\L) \to \H^n(k/\L)\to \ldots, 
$$ 
but $Z \to \Spf k$ has a section, giving the required splitting. Finally, $\H^n(k/\L)=\bD^n_{\L}(k,k)$, the Andr\'e-Quillen cohomology group, which is $0$ for $n<0$.
\end{proof}

\bibliographystyle{alphanum}
\addcontentsline{toc}{section}{Bibliography}
\bibliography{references}

\def\cprime{$'$}
\begin{thebibliography}{Man2}

\bibitem[AM]{arma}
M.~Artin and B.~Mazur.
\newblock {\em Etale homotopy}.
\newblock Lecture Notes in Mathematics, No. 100. Springer-Verlag, Berlin, 1969.

\bibitem[Bou]{Bou}
A.~K. Bousfield.
\newblock Cosimplicial resolutions and homotopy spectral sequences in model
  categories.
\newblock {\em Geom. Topol.}, 7:1001--1053 (electronic), 2003.

\bibitem[Dol]{dold}
Albrecht Dold.
\newblock Homology of symmetric products and other functors of complexes.
\newblock {\em Ann. of Math. (2)}, 68:54--80, 1958.

\bibitem[Dus]{duskin}
J.~Duskin.
\newblock Simplicial methods and the interpretation of ``triple'' cohomology.
\newblock {\em Mem. Amer. Math. Soc.}, 3(issue 2, 163):v+135, 1975.

\bibitem[GJ]{sht}
Paul~G. Goerss and John~F. Jardine.
\newblock {\em Simplicial homotopy theory}, volume 174 of {\em Progress in
  Mathematics}.
\newblock Birkh{\"a}user Verlag, Basel, 1999.

\bibitem[Goe]{goerss2}
Paul~G. Goerss.
\newblock On the {A}ndr{\'e}-{Q}uillen cohomology of commutative {${\bf F}\sb
  2$}-algebras.
\newblock {\em Ast{\'e}risque}, (186):169, 1990.

\bibitem[Gro]{descent}
Alexander Grothendieck.
\newblock Technique de descente et th{\'e}or{\`e}mes d'existence en
  g{\'e}om{\'e}trie alg{\'e}brique. {II}. {L}e th{\'e}or{\`e}me d'existence en
  th{\'e}orie formelle des modules.
\newblock In {\em S{\'e}minaire Bourbaki, Vol.\ 5}, pages Exp.\ No.\ 195,
  369--390. Soc. Math. France, Paris, 1995.

\bibitem[Hel]{heller}
Alex Heller.
\newblock On the representability of homotopy functors.
\newblock {\em J. London Math. Soc. (2)}, 23(3):551--562, 1981.

\bibitem[Hin]{hinstack}
Vladimir Hinich.
\newblock D{G} coalgebras as formal stacks.
\newblock {\em J. Pure Appl. Algebra}, 162(2-3):209--250, 2001.

\bibitem[Hir]{Hirschhorn}
Philip~S. Hirschhorn.
\newblock {\em Model categories and their localizations}, volume~99 of {\em
  Mathematical Surveys and Monographs}.
\newblock American Mathematical Society, Providence, RI, 2003.

\bibitem[Hov]{Hovey}
Mark Hovey.
\newblock {\em Model categories}, volume~63 of {\em Mathematical Surveys and
  Monographs}.
\newblock American Mathematical Society, Providence, RI, 1999.

\bibitem[HS]{HinSch}
V.~A. Hinich and V.~V. Schechtman.
\newblock On homotopy limit of homotopy algebras.
\newblock In {\em {$K$}-theory, arithmetic and geometry ({M}oscow,
  1984--1986)}, volume 1289 of {\em Lecture Notes in Math.}, pages 240--264.
  Springer, Berlin, 1987.

\bibitem[Isa]{isaksenlim}
Daniel~C. Isaksen.
\newblock Calculating limits and colimits in pro-categories.
\newblock {\em Fund. Math.}, 175(2):175--194, 2002.

\bibitem[Kon]{Kon}
Maxim Kontsevich.
\newblock Topics in algebra --- deformation theory.
\newblock Lecture Notes, available at
  http://www.math.brown.edu/$\sim$abrmovic/kontsdef.ps, 1994.

\bibitem[Lur]{lurie}
J.~Lurie.
\newblock {\em Derived Algebraic Geometry}.
\newblock PhD thesis, M.I.T., 2004.
\newblock www.math.harvard.edu/$\sim$lurie/papers/DAG.pdf or
  http://hdl.handle.net/1721.1/30144.

\bibitem[Man1]{Man}
Marco Manetti.
\newblock Deformation theory via differential graded {L}ie algebras.
\newblock In {\em Algebraic Geometry Seminars, 1998--1999 (Italian) (Pisa)},
  pages 21--48. Scuola Norm. Sup., Pisa, 1999.
\newblock arXiv math.AG/0507284.

\bibitem[Man2]{Man2}
Marco Manetti.
\newblock Extended deformation functors.
\newblock {\em Int. Math. Res. Not.}, (14):719--756, 2002.

\bibitem[Mil]{Milgram}
R.~James Milgram.
\newblock The homology of symmetric products.
\newblock {\em Trans. Amer. Math. Soc.}, 138:251--265, 1969.

\bibitem[Pri1]{paper1}
J.~P. Pridham.
\newblock Deforming {$l$}-adic representations of the fundamental group of a
  smooth variety.
\newblock {\em J. Algebraic Geom.}, 15(3):415--442, 2006.

\bibitem[Pri2]{paper2}
J.~P. Pridham.
\newblock Deformations of schemes and other bialgebraic structures.
\newblock {\em Trans. Amer. Math. Soc.}, 360(3):1601--1629, 2008.

\bibitem[Pri3]{ddt2}
J.~P. Pridham.
\newblock Derived deformations of schemes.
\newblock {\em Comm. Anal. Geom.}, 20(3):529--563, 2012.
\newblock arXiv:0908.1963v1 [math.AG].

\bibitem[Pri4]{stacks2}
J.~P. Pridham.
\newblock Presenting higher stacks as simplicial schemes.
\newblock {\em Adv. Math.}, 238:184--245, 2013.
\newblock arXiv:0905.4044v4 [math.AG].

\bibitem[Qui]{QRat}
Daniel Quillen.
\newblock Rational homotopy theory.
\newblock {\em Ann. of Math. (2)}, 90:205--295, 1969.

\bibitem[Sch]{Sch}
Michael Schlessinger.
\newblock Functors of {A}rtin rings.
\newblock {\em Trans. Amer. Math. Soc.}, 130:208--222, 1968.

\bibitem[Smi]{Smirnov}
V.~A. Smirnov.
\newblock Homology of symmetric products.
\newblock {\em Mat. Zametki}, 49(1):104--113, 1991.

\bibitem[Swe]{Sweedler}
Moss~E. Sweedler.
\newblock {\em Hopf algebras}.
\newblock Mathematics Lecture Note Series. W. A. Benjamin, Inc., New York,
  1969.

\bibitem[To{\"e}]{toenseattle}
Bertrand To{\"e}n.
\newblock Higher and derived stacks: a global overview.
\newblock In {\em Algebraic geometry---{S}eattle 2005. {P}art 1}, volume~80 of
  {\em Proc. Sympos. Pure Math.}, pages 435--487. Amer. Math. Soc., Providence,
  RI, 2009.
\newblock arXiv math/0604504v3.

\bibitem[TV]{hag2}
Bertrand To{\"e}n and Gabriele Vezzosi.
\newblock Homotopical algebraic geometry. {II}. {G}eometric stacks and
  applications.
\newblock {\em Mem. Amer. Math. Soc.}, 193(902):x+224, 2008.
\newblock arXiv math.AG/0404373 v7.

\bibitem[Wei]{W}
Charles~A. Weibel.
\newblock {\em An introduction to homological algebra}.
\newblock Cambridge University Press, Cambridge, 1994.

\end{thebibliography}
\end{document}